\begin{document}

\newtheorem{theorem}{Theorem}[section]
\newtheorem{lemma}[theorem]{Lemma}
\newtheorem{proposition}[theorem]{Proposition}
\newtheorem{corollary}[theorem]{Corollary}
\newtheorem{conjecture}[theorem]{Conjecture}
\newtheorem{question}[theorem]{Question}
\newtheorem{problem}[theorem]{Problem}
\newtheorem*{claim}{Claim}
\newtheorem*{criterion}{Criterion}
\newtheorem*{hyperbolic_thm}{Hyperbolic Geodesic Theorem~\ref{hyperbolic_geodesic_theorem}}
\newtheorem*{upper_bound_thm}{Universal Upper Bound Theorem~\ref{universal_upper_bound_random_walk}}
\newtheorem*{hyperbolic_lb_thm}{Hyperbolic Lower Bound Theorem~\ref{thm:hyperbolic_lower_bound}}
\newtheorem*{reducible_cor}{Reducible Bound Corollary~\ref{cor:reducible_lower_bound}}
\newtheorem*{growth_obstruction_cor}{Growth Obstruction Corollary~\ref{cor:growth_obstruction}}
\newtheorem*{random_walk_thm}{Random Walk Theorem~\ref{random_walk_theorem}}
\newtheorem*{linear_translation_thm}{Linear Translation Length Theorem~\ref{theorem:linear_translation}}
\newtheorem*{area_winding_thm}{Area and Winding Theorem~\ref{random_area_theorem}}

\newtheorem*{rotation_clt_thm}{Rotation Number Central Limit Theorem~\ref{rotation_clt}}

\theoremstyle{definition}
\newtheorem{definition}[theorem]{Definition}
\newtheorem{construction}[theorem]{Construction}
\newtheorem{notation}[theorem]{Notation}

\theoremstyle{remark}
\newtheorem{remark}[theorem]{Remark}
\newtheorem{example}[theorem]{Example}

\numberwithin{equation}{subsection}

\newcommand\id{\textnormal{id}}

\newcommand\Z{\mathbb Z}
\newcommand\R{\mathbb R}
\newcommand\Q{\mathbb Q}
\renewcommand\H{\mathbb H}
\renewcommand\O{\mathcal O}

\renewcommand\Pr{{\mathbf P}}
\newcommand\Ex{{\mathbf E}}

\newcommand\cl{\textnormal{cl}}
\newcommand\scl{\textnormal{scl}}
\newcommand{\con}[1]{C_{#1}}
\newcommand{\length}{\textnormal{length}}
\newcommand{\area}{\textnormal{area}}
\newcommand{\wind}{\textnormal{wind}}
\newcommand{\eval}{\textnormal{eval}}
\newcommand{\word}{\textnormal{word}}
\newcommand{\path}{\textnormal{path}}
\newcommand{\cone}{\textnormal{cone}}
\newcommand{\homeo}{\textnormal{Homeo}}
\newcommand{\rot}{\textnormal{rot}}
\newcommand{\PSL}{\textnormal{PSL}}
\newcommand{\MCG}{\textnormal{MCG}}
\newcommand{\Out}{\textnormal{Out}}

\newcommand\til{\widetilde}
\newcommand\Tri{\Delta}
\newcommand\SL{\textnormal{SL}}
\newcommand\1{{\bf 1}}

\newcommand{\norm}[1]{\left|#1\right|}

\title{Statistics and compression of scl}
\author{Danny Calegari}
\address{University of Chicago \\ Chicago, Ill 60637 USA}
\email{dannyc@math.uchicago.edu}
\author{Joseph Maher}
\address{Department of Mathematics \\ CUNY College of Staten Island \\
Staten Island, NY 10314; and Department of Mathematics, CUNY Graduate
Center; New York, NY 10016}
\email{joseph.maher@csi.cuny.edu}
\date{version 0.41; February 12, 2013}

\begin{abstract}
We obtain sharp estimates on the growth rate of stable commutator length on random (geodesic)
words, and on random walks, in hyperbolic groups and groups acting nondegenerately on hyperbolic spaces.
In either case, we show that with high probability stable commutator length of an element of
length $n$ is of order $n/\log{n}$. 

This establishes quantitative refinements of qualitative results of Bestvina-Fujiwara and
others on the infinite dimensionality of 2-dimensional bounded cohomology in groups acting
suitably on hyperbolic spaces, in the sense that we
can control the geometry of the unit balls in these normed vector spaces (or rather, in random
subspaces of their normed duals).

As a corollary of our methods, we show that an element obtained by random walk of
length $n$ in a mapping class group cannot be written as a product of fewer than 
$O(n/\log{n})$ reducible elements, with probability going to $1$ as
$n$ goes to infinity. We also show that the translation length on the
complex of free factors of a random walk of length $n$ on the outer 
automorphism group of a free group grows linearly in $n$.
\end{abstract}

\maketitle
\tableofcontents

\section{Introduction}

The goal of this paper is to use probabilistic and ergodic theoretic
methods to obtain bounds and estimates for the growth rate of
characteristic norms in certain classes of groups, especially {\em stable commutator length}
in {\em hyperbolic groups} (and some groups acting on hyperbolic spaces). These growth rates are
a {\em quantitative} obstruction to the existence of nontrivial homomorphisms between groups,
which refine known qualitative obstructions due to Bestvina--Fujiwara \cite{Bestvina_Fujiwara}
and others. They also illustrate a {\em probabilistic}
connection between the growth rate of geometric and algebraic quantities, which
are a manifestation of the phenomenon of Mostow Rigidity in a broad context.

\subsection{Concentration and hyperbolicity}

Let $\xi_n$ be a sequence of random real variables. For many natural sequences $\xi_n$
one can prove a {\em law of large numbers}, i.e.\/ the almost sure existence of a limit
$L:=\lim_{n \to \infty} \xi_n/n$ (this is colloquially known as {\em convergence to the
mean}). Such laws of large numbers hold under remarkably broad hypotheses.

It is natural to look for examples in geometric group theory,
and it is not hard to find them. If $G$ is a group and $\xi:G \to \R^+$ any
subadditive function, one can take $\xi_n$ to be the value of $\xi$
on some Markov process on $G$. For any group $G$, random walk is the
most obvious and important example of a natural Markov process, but
for specific classes of groups, other natural Markov processes exist, 
arising (for example) from an automatic structure on $G$. The law of large
numbers then follows from an application of Kingman's subadditive ergodic theorem.

For example, when $G$ is hyperbolic, Cannon \cite{Cannon} 
showed how to construct a stationary finite Markov chain which gives
a geodesic {\em combing} of $G$. And when $G$ is the fundamental group
of a closed, negatively curved manifold, Ratner \cite{Ratner} showed how to build a
Markov coding for the geodesic flow, whose closed orbits are naturally
in bijection with conjugacy classes in $G$. Roughly speaking, the
{\em geometric} hyperbolicity implies {\em dynamical} hyperbolicity for the
geodesic flow, and this dynamical hyperbolicity gives rise to laws of large numbers.

It is rarer to find examples where $\xi_n$ has {\em sublinear} growth,
but nevertheless concentrates at some deterministic scale.
Colloquially, we say that a sequence of non-negative random variables
$\xi_n$ is {\em concentrated} if there is some deterministic function $f(n)$
so that $\xi_n/f(n)$ converges in probability to a Dirac mass at $1$, and
we say that $\xi_n$ is {\em compressed} if (again for some $f(n)$) every
weak limit of $\xi_n/f(n)$ is a probability measure on $\R^+$
with support bounded away from $0$ and $\infty$.

\subsection{Bounded cohomology}

Bounded cohomology, as introduced by Gromov \cite{Gromov_bounded}, is
(among other things) a functor from the category of groups and 
homomorphisms to the category of normed vector spaces and norm-decreasing
linear maps. One of the main virtues of this functor is its
{\em monotonicity}: if the invariants associated to a group $G$ are 
``smaller'' than the invariants associated to a group $H$, there are
no interesting homomorphisms from $G$ to $H$. As a well-known
example, Bestvina--Fujiwara \cite{Bestvina_Fujiwara} used $2$-dimensional
bounded cohomology to show that every homomorphism from a higher rank
lattice to a mapping class group factors through a finite group (this
fact was known earlier by work of Farb--Masur \cite{Farb_Masur},
building on work of Kaimanovich--Masur \cite{Kaimanovich_Masur}).

To this date, such tools have generally been used somewhat crudely,
because of the enormous difficulty in computing bounded cohomology,
or deriving useful invariants from it. Most authors have concentrated
on bounded cohomology in dimension 2, and have focused almost
exclusively on a trichotomous distinction: namely for a given group $G$,
whether $H^2_b(G)$ (i.e.\/ 2-dimensional bounded cohomology with
real coefficients) is trivial, nontrivial but finite dimensional,
or infinite dimensional. 

In a way, this misses the main point, which
is that $H^2_b(G)$ is canonically a {\em Banach space}. Almost
nothing is known about the (large scale) geometry of this Banach space 
in any nontrivial cases. In this paper, we are able to derive strong geometric
information about the geometry of these Banach spaces in the important
case of hyperbolic groups (and some groups acting suitably on hyperbolic
spaces). This is done via the relationship between 2-dimensional
bounded cohomology, quasimorphisms, and stable commutator length.

For any group $G$ there is an exact sequence of real vector spaces
$$0 \to H^1(G) \to Q(G) \to H^2_b(G) \to H^2(G)$$
where $Q$ denotes the space of {\em homogeneous quasimorphisms} on $G$
(see \S~\ref{scl_background_section} for a precise definition, 
and e.g.\/ \cite{Calegari_scl} Thm.~2.50 for a proof). For a finitely
presented group, $H^1$ and $H^2$ are finite dimensional, so $H^2_b$
and $Q$ carry (almost) the same information. Moreover, $H^2_b$
and $Q/H^1$ are Banach spaces in a functorial way, and the map 
$Q/H^1 \to H^2_b$ is $2$-bilipschitz. We focus on $Q/H^1$ in this paper,
although it would be straightforward to reinterpret our results in
terms of $H^2_b$.

Bavard \cite{Bavard} interpreted $Q/H^1$ in terms of an {\it a priori}
algebraic invariant called {\em stable commutator length} (hereafter
scl). If $G$ is a group and $[G,G]$ denotes its commutator subgroup,
the {\em commutator length} of any $g\in [G,G]$ (denoted $\cl(g)$)
is the least number of commutators in $G$ whose product is equal to $g$,
and the {\em stable commutator length} of $g$ is the limit
$$\scl(g):=\lim_{n \to \infty} \cl(g^n)/n$$
In fact, $\scl$ extends in a natural way to a (pseudo)-norm on the
vector space $B_1(G)$ of real group $1$-boundaries, and descends
to the quotient $B_1^H(G):=B_1(G)/\langle g^n-ng, g-hgh^{-1}\rangle$.
In many important cases, $\scl$ is a {\em norm} on $B_1^H$. In every
case it turns out that $Q/H^1$ is the {\em dual} of $B_1^H$ with its
$\scl$ (pseudo)-norm, where $Q/H^1$ carries the so-called {\em defect
norm} $2D(\cdot)$ (see \S~\ref{scl_background_section}). This statement
is usually known as Generalized Bavard Duality, and in this generality is
proved in \cite{Calegari_scl}.

\subsection{stable commutator length as a random variable}

In this paper we study the growth rate of $\scl$ and the geometry of
$B_1^H$ on {\em random elements} and in {\em random subspaces} of
hyperbolic (and other) groups, with respect to two natural families of
probability measures. First, we obtain results for {\em random
  geodesics} of length $n$ in hyperbolic groups. Secondly, we obtain
results for {\em random walks} of length $n$ in hyperbolic groups, and
in groups acting suitably on hyperbolic spaces. These two senses of
random are conceptually related, but in general become mutually
singular as $n \to \infty$.

There are technical subtleties in either case which are somewhat
complementary. The theory of stable commutator length is
well-adapted to the geometry of quasi-geodesics in hyperbolic groups, since quasi-geodesity
can be certified --- and stable commutator length estimated --- from {\em local} contributions.
A combing determines a geodesic representative of each element in a group,
and the Markov process associated to an automatic structure allows one to pick 
a random element of prescribed word length. But there is no guarantee that the Markov
process in question is {\em ergodic}. Complementarily, the Markov process defining a random
walk is always ergodic, but random paths in hyperbolic spaces are typically not quasi-geodesic. 
Nevertheless, in either context we are able to obtain compression results for $\scl$,
at the deterministic growth rate of $n/\log{n}$.

\subsection{The geometry of $Q$ as an obstruction}\label{Q_obstruction_subsection}

If $G \to H$ is a surjective homomorphism, the pullback $Q(H) \to Q(G)$ is
injective. Hence if $Q(G)$ vanishes but $Q(H)$ does not, no such
surjective homomorphism exists. More generally, no such surjective
homomorphism exists if $Q(G)$ is finite dimensional
but $Q(H)$ is infinite dimensional.

In their seminal paper \cite{Bestvina_Fujiwara}, Bestvina--Fujiwara showed that
if $H$ is a nonelementary subgroup of a mapping class group, then
$Q(H)$ is infinite dimensional. On the other hand, $Q(\Gamma)$ is finite
dimensional whenever $\Gamma$ is a lattice in a higher rank Lie group; consequently
every homomorphism from such a lattice to a mapping class group factors
through an elementary subgroup, and therefore (by Margulis) through a
finite group. Independently Hamenst\"adt \cite{Hamenstadt}
and Bestvina--Feighn \cite{Bestvina_Feighn} proved a similar theorem for
subgroups of $\Out(F_n)$ (Hamenst\"adt's construction considers bounded cohomology
with twisted coefficients).

This application of bounded cohomology makes use of $Q$ as an
$\R$-vector space, but completely neglects its (Banach) geometry.  Any
homomorphism $G \to H$ is nonincreasing for $\scl$. A random walk on
$G$ pushes forward under a homomorphism to a random walk on $H$; the
growth rate of $\scl$ under random walks is therefore an obstruction
to the existence of such a homomorphism. To apply these ideas in
practice one must be able to understand the growth rate of $\scl$
under random walks in the target group with respect to a measure whose
support can be arbitrary; when $H$ is the target group one must be
prepared to consider the case that the support of the measure
generates a subgroup which is not quasiconvex. Carrying this out for
hyperbolic groups, and groups acting in a suitable way on hyperbolic
spaces, is one of the main goals of this paper.

\subsection{Random dynamical systems in the continuum limit}

In 2-dimensions, Pesin theory says that all the entropy of a 
diffeomorphism is carried by Smale horseshoes, which is to say, by 
periodic orbits with hyperbolic dynamics. Up to ``zero entropy'' therefore,
one aims to recover the essential dynamics of a diffeomorphism from an inverse
system of combinatorial data, namely the braid type of the diffeomorphism
modulo a finite invariant subset.

Ghys \cite{Ghys_knots} (building on work of Arnold, Sullivan, Tresser, 
Fathi, Gambaudo and others)
has proposed to understand the group of area-preserving diffeomorphisms
of a surface as a kind of ``limit'' of braid groups of finer and finer meshes 
of discrete points in the surface, and has explained how to use quasimorphisms
on such braid groups to construct similar quasimorphisms on the transformation
groups. This gives highly nontrivial information about the algebraic structure
of such groups, but it is difficult in practice to interpret this information
dynamically. 

One can approach this question from the other direction, by taking families of
random walks in the braid group with respect to finer and finer meshes of discrete 
particles, and trying to obtain a law on random measure-preserving
transformations in the limit. In order to do this, one needs to obtain a
uniform modulus of continuity on the motion of the discrete particles on every
fixed scale, to ensure that the limiting transformations are {\em continuous}.
One way to obtain such estimates would be to show that a random element of a braid
or mapping class group has a short factorization as a product of elements with
support in subsurfaces with simple topology --- i.e.\/ to given an upper bound
on its {\em reducible length}. In fact, our methods give {\em lower bounds}
on reducible length of the same order of magnitude as (stable) commutator length.
It therefore becomes a provocative question whether there are complementary
upper bounds of the same order of magnitude.

\subsection{Summary of results}

We now summarize the remainder of the paper. \S~\ref{scl_background_section}
contains a brief review of the theory of stable commutator length and
quasimorphisms that we use in the sequel. This material is standard.

In \S~\ref{hyperbolic_groups_background_section} we discuss the
ergodic theory of geodesic combings on hyperbolic groups, and establish the
main technical results necessary to obtain estimates on $\scl$ of random
geodesics. Roughly speaking, a combing for a hyperbolic group determines
a directed graph with edges labeled by generators, so that directed paths of
length $n$ in the graph correspond bijectively to elements of the group of
word length $n$ (with respect to some fixed generating set). The directed
graph can be thought of as a topological Markov chain; assigning transition
probabilities to edges determines a stationary Markov process (in the usual sense).
There is an assignment of transition probabilities of maximal entropy corresponding
to the Patterson--Sullivan measure on the group; this allows us to define what
is meant by a ``random element of $G$ of length $n$'' in such a way that we can
produce such elements by a Markov process. 

The first main technical result in
this section is Proposition~\ref{Chernoff_estimate}. In words, this Proposition
gives a Chernoff-type estimate for the number of times any given path
$\sigma$ of length $\ell \log{n}/\log{\lambda}$ occurs in a random path of 
length $n$ in an ergodic Markov process, 
where $\ell<1$ is fixed, and $\lambda$ is the entropy of the 
Markov process. The key subtlety is that the length of the subpaths we are
counting is not fixed, but depends on the length of the big random path.
The second main technical result in this section is the content of
Lemmas~\ref{mu_i_measures_roughly_equal} and \ref{mu_approximate_inverse} 
which says that the subwords of fixed length appearing in
different ergodic components of the Patterson--Sullivan Markov process are
``coarsely equivalent'' (as isometry classes of geodesic
segments in the group) in distribution.

In \S~\ref{random_geodesic_section} we apply the estimates obtained in
\S~\ref{hyperbolic_groups_background_section} to prove concentration
for $\scl$ on random geodesics in hyperbolic groups at the scale
$n/\log{n}$:

\begin{hyperbolic_thm}
Let $G$ be a hyperbolic group, and $S$ a finite generating set. There are
constants $\con{1}$, $\con{2}>0$, $\con{3}>0$, $\con{4}>1$, $\con{5}>0$ so that if $g$ is a random element
with $|g|_S \in [n-\con{1},n+\con{1}]$ conditioned to lie in $[G,G]$, then 
$$\Pr(\con{2}n/\log{n} \le \scl(g) \le \con{3}n/\log{n})=1-O(\con{4}^{-n^{\con{5}}})$$
In fact, we obtain the stronger result $\cl(g)\le \con{3}n/\log{n}$, with the same
estimate in probability.
\end{hyperbolic_thm}

The proofs of the upper and lower bounds are somewhat different. The proof of the upper bound is
related directly to the definition of (stable) commutator length, 
and depends on writing a random $g$ efficiently as a product of commutators by 
directly pairing almost inverse subwords of $g$ of length $O(\log{n})$, using the
Chernoff-type estimate in Proposition~\ref{Chernoff_estimate}. The proof of the lower bound is
obtained via Bavard duality, by constructing a homogeneous quasimorphism with controlled defect
whose value on $g$ is large. The homogeneous quasimorphism is a variant on the
{\em small counting quasimorphisms} considered by Epstein--Fujiwara \cite{Epstein_Fujiwara},
for which, in contrast with the ``big'' counting quasimorphisms considered by Brooks
\cite{Brooks}, controlling the defect is easy. On the other hand, while obtaining the correct
order of magnitude estimate of the defect is straightforward, obtaining exact estimates is
substantially harder; this explains the gap between the bounds $\con{2}n/\log{n}$, $\con{3}n/\log{n}$
in the Hyperbolic Geodesic Theorem, which we believe to be an artifact of the method of
proof, rather than a genuine statistical phenomenon. In fact, the authors of \cite{Calegari_Walker}
conjectured that one should be able to take $\con{2}$ and $\con{3}$ arbitrarily close to
$\log{\lambda}/6$, where $\lambda$ is the growth entropy of $G$ with respect to $S$ 
(i.e.\/ the number such that the ball of radius $n$ in $G$ in the word metric
with respect to $S$ has $\Theta(\lambda^n)$ elements).

In \S~\ref{random_walk_section} we obtain {\em universal} estimates on
$\scl$ on random walks in finitely generated groups, and obtain
precise order of magnitude estimates for hyperbolic groups and certain
groups acting on hyperbolic spaces. For simplicity we consider random
walks obtained by repeatedly convolving a symmetric measure of finite
support, although our methods could presumably be extended to a more
general context. Because of the symmetry of the Cayley graph of a free
group with respect to a free generating set, our sharp estimates for
$\scl$ on random geodesics in free groups gives sharp estimates for
$\scl$ on random walks in free groups; together with the monotonicity
of $\scl$ under homomorphisms, this gives the following universal
upper bound:

\begin{upper_bound_thm}
Let $G$ be a group with a finite symmetric generating set $S$, and let $|S|=2k$. 
Let $g$ be obtained
by random walk on $G$ (with respect to $S$) of length $n$ (even), conditioned to lie in $[G,G]$.
Then for any $\epsilon>0$ there are constants $\con{1}>1$, $\con{2}>0$ so that with
probability $1-O(\con{1}^{-n^{\con{2}}})$ there is an inequality
$$\scl(g) \le ((k-1)\log(2k-1)+\epsilon)/6k\cdot n/\log{n}$$
\end{upper_bound_thm}

The remainder of the section is devoted to finding lower bounds of the same order of magnitude
for groups acting in a suitable way on hyperbolic spaces. We consider a group $G$
acting simplicially on a $\delta$-hyperbolic simplicial complex $Y$, and let $\mu$ be a symmetric
probability measure with support equal to a finite subset $S$ of $G$ 
(note that we do not assume that $S$ generates $G$). We say that the triple $(G,\mu,Y)$ is
{\em nondegenerate} if it satisfies a short list of axioms, spelled out in
Definition~\ref{def:nondegenerate}. These axioms are satisfied in the following important
cases:
\begin{enumerate}
\item{$G$ is a hyperbolic group, $Y$ is the Cayley graph of $G$, and $\langle S \rangle$ is
a nonelementary subgroup;}
\item{$G$ is hyperbolic relative to a family of subgroups, $Y$ is the Groves--Manning
hyperbolic complex certifying relative hyperbolicity (see \cite{Groves_Manning}, \S~3), 
and $\langle S \rangle$ is nonelementary;}
\item{$G$ is the mapping class group of a surface, $Y$ is the complex of curves, and $\langle S\rangle$
is not reducible or virtually abelian.}
\end{enumerate}
A discussion of more general examples (including the action of $\Out(F_n)$ on
certain hyperbolic complexes) is deferred to \S~\ref{subsection:outfn}.

In these terms, the main theorem proved in this section is the Hyperbolic Lower Bound Theorem:

\begin{hyperbolic_lb_thm}
Let $Y$ be a $\delta$-hyperbolic simplicial complex (not assumed to be locally finite),
and let $G$ be a finitely generated group that
acts simplicially on $Y$. Let $\mu$ be a symmetric probability measure of
finite support on $G$ so that $(G,\mu,Y)$ is nondegenerate, in the sense of 
Definition~\ref{def:nondegenerate}.

Let $g$ be obtained by random walk on $G$ (with respect to $\mu$) of length $n$. Then
for any $\con{1}$ there is a $\con{2}>0$ and $\con{3}$ depending only on $\delta$ and $G$,
so that with probability at least $1-n^{-\con{1}}$ there is a homogeneous quasimorphism
$\phi$ on $G$ satisfying the following properties:
\begin{enumerate}
\item{$\phi(g)\ge n\con{2}/\log{n}$;}
\item{$D(\phi)\le \con{3}$;}
\item{$|\phi(h)|\le 2d_Y(q,hq)\con{3}/\log{n}$ for any $g \in G$ and any $q\in Y$.}
\end{enumerate}
In particular, for any $\con{1}>0$ there is a constant $C>1$ 
so that if we condition on $g \in [G,G]$ (for $n$ even), then
$$\Pr(C^{-1}n/\log{n} \le \scl(g) \le Cn/\log{n}) \ge 1-n^{-\con{1}}$$
\end{hyperbolic_lb_thm}

Two significant corollaries follow immediately:

\begin{reducible_cor}
Let $\mu$ be a symmetric probability measure on the mapping class group of $\Sigma$
of finite support, and suppose the subgroup it generates is not reducible or virtually
abelian. Then for any $\con{1}$ there is $C$ so that if $g$ is obtained by random walk
in $G$ of length $n$, then $g$ cannot be expressed as a product of fewer than
$Cn/\log{n}$ reducible elements, with probability at least $1-n^{-\con{1}}$.
\end{reducible_cor}

\begin{growth_obstruction_cor}
Let $G$ be a group and $\mu$ a symmetric probability measure with finite support which
generates $G$. Suppose that for any $\epsilon>0$ there is a $\delta>0$ so that if $g$ is
obtained by random walk on $G$ of length $n$ (even) conditioned to lie in $[G,G]$, we have
$$\Pr(\scl(g)<\epsilon\cdot n/\log{n})>\delta$$
Then every homomorphism from $G$ to a hyperbolic group or to a mapping class group has
virtually abelian image.
\end{growth_obstruction_cor}

The Growth Obstruction Corollary is the promised quantitative strengthening of 
\cite{Bestvina_Fujiwara} alluded to in \S~\ref{Q_obstruction_subsection}.

In \S~\ref{random_norm_subsection} we spell out the analogs of these theorems for 
parametric families of elements, in terms of the geometry of the unit ball in the
$\scl$ norm of a random subspace of $B_1^H(G)$ for $G$ as above. In words: we show that the
unit ball in a random subspace has uniformly bounded geometry (up to rescaling
by a deterministic factor), and conjecture that it is $C^0$ close to a cross-polytope (i.e.\/
to a unit ball in an $L^1$ norm). This conjecture is only known to be true for free groups,
by \cite{Calegari_Walker}.

In \S~\ref{subsection:outfn} we consider random walks on non-locally compact Gromov
hyperbolic spaces in some generality, and prove (under mild conditions)
that all the axioms from Definition~\ref{def:nondegenerate} are satisfied except possibly the
condition of acylindricity. A key intermediate step is to show
that random walks converge to the (Gromov) boundary almost surely.
Although this does not have immediate implications
for $\scl$, it enables us to obtain linear lower bounds on translation length
of elements obtained by random walk. By ``translation length'' $\tau(g)$ of an
isometry $g$ on a metric space $Y$ we mean here {\em asymptotic translation length};
i.e.\/  $\lim_{n \to \infty} d_Y(y,g^n(y))/n$ for arbitrary $y \in Y$.
Precisely, we show:

\begin{linear_translation_thm}
Let $G$ be a group of isometries of a Gromov hyperbolic space $Y$,
which is not necessarily locally compact. Let $\mu$ be a probability
distribution with finite support on $G$, such that the support of
$\mu$ generates a non-elementary subgroup of $G$. Then
there are constants $L > 0$ and $c < 1$ such that
\[ \Pr( \tau(w_n) \le Ln ) \le O(c^n), \]
where $w_n$ is the group element obtained by a random walk of length
$n$, and $\tau(w_n)$ is the translation length of $w_n$ acting on $Y$.
\end{linear_translation_thm}

This theorem applies (for example) to the action of $\Out(F_n)$ on
the free splitting complex, and on the complex of free factors, both of which are
(Gromov) hyperbolic; see Bestvina-Feighn \cite{Bestvina_Feighn, Bestvina_Feighn_free_factor} and 
Handel-Mosher \cite{Handel_Mosher} for details.

Finally in \S~\ref{lower_bound_section} we survey what is known for
arbitrary groups, and deduce universal lower bounds on $\scl$ (known to
be sharp in certain cases) from the work of Bj\"orklund--Hartnick
\cite{Bjorklund_Hartnick}. Burger--Monod \cite{Burger_Monod} showed
that any quasimorphism differs by a bounded amount from a {\em
  harmonic function}. In particular, the expectation of this
(adjusted) quasimorphism on a random walk is constant; i.e.\/ the
value of this function is a martingale. By applying the martingale
central limit theorem, Bj\"orklund--Hartnick were able to obtain a
central limit theorem for the distribution of values of a
quasimorphism under a random walk. Their result applies in great
generality, but if one specializes to finitely generated groups $G$
with $Q(G)$ finite dimensional and $H_1(G)$ torsion for simplicity,
Bavard duality plus the main theorem of \cite{Bjorklund_Hartnick}
implies that for any $\epsilon$ there are positive constants $a,b$
(depending only on $\epsilon$) so that $\Pr(a < \scl_n/\sqrt{n} <
b)\ge 1-\epsilon$, where $\scl_n$ denotes the value of $\scl$ under a
random walk of length $n$. Colloquially one could say that $\scl$ has
growth rate of order $\sqrt{n}$ in this case. We obtained a special
case of the theorem of Bj\"orklund--Hartnick before their work was
announced; because the method of proof is more geometric, we decided
it was worth including in \S~\ref{circle_clt_subsection}. One nice
geometric corollary we obtain is as follows:

\begin{area_winding_thm}
Fix some angle $\alpha$ and length $\ell$. Let $P_n$ be a random polygon in the hyperbolic plane
with (cyclic) vertices $p_0,p_1,\cdots,p_n$, where $d(p_i,p_{i+1})=\ell$ for each $0\le i\le n-1$
and an angle of $\pm \alpha$ at each $p_i$ with $0<i<n$, with signs independently and
uniformly chosen from $\pm 1$. Let $A_n$ be the algebraic area enclosed by $P_n$,
and $W_n$ the winding number of $\partial P_n$. Then
$A_n$ and $W_n$ both satisfy a central limit theorem with mean $0$.
\end{area_winding_thm}

\section{Stable commutator length}\label{scl_background_section}

We recall some standard definitions and facts for the convenience of the reader.
A basic reference for the material in this section is \cite{Calegari_scl}.

\subsection{Stable commutator length}

\begin{definition}
Let $G$ be a group, and $G'$ its commutator subgroup. Given $g \in G'$, the {\em commutator
length} of $g$, denoted $\cl(g)$, is the least number of commutators in $G$ whose product is
$g$, and the {\em stable commutator length}, denoted $\scl(g)$, is the limit
$$\scl(g) = \lim_{n \to \infty} \frac {\cl(g^n)} {n}$$
\end{definition}

The following estimates are elementary:

\begin{lemma}\label{product_estimate}
Let $G$ be a group, and $g$, $h$ elements of $G'$. Then there is an inequality
$$\scl(gh) \le \scl(g) + \scl(h) + 1/2$$
\end{lemma}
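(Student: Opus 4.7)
The plan is to use Bavard duality (reviewed in \S\ref{scl_background_section}), which for any $x \in [G,G]$ identifies
$$\scl(x) \;=\; \sup_{\phi}\frac{|\phi(x)|}{2\,D(\phi)},$$
the supremum running over nonzero homogeneous quasimorphisms on $G$ with positive defect.

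Given any such $\phi$, the definition of the defect gives $|\phi(gh)-\phi(g)-\phi(h)|\le D(\phi)$, and hence
$$|\phi(gh)| \;\le\; |\phi(g)| + |\phi(h)| + D(\phi).$$
Dividing by $2D(\phi)$ and applying the individual Bavard bounds $|\phi(g)|/(2D(\phi))\le \scl(g)$ and $|\phi(h)|/(2D(\phi))\le \scl(h)$, I obtain
$$\frac{|\phi(gh)|}{2\,D(\phi)} \;\le\; \scl(g)+\scl(h)+\tfrac{1}{2}.$$
Taking the supremum over $\phi$ and re-applying Bavard duality (now to $gh \in [G,G]$) finishes the proof.

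There is essentially no obstacle once Bavard duality is in hand; the factor $\tfrac{1}{2}$ is precisely the single extra defect term divided by the normalization $2D(\phi)$ built into the duality. The same inequality can be obtained geometrically, by the dual observation that a pair of pants $P$ has $\chi(P)=-1$ and admits a map to $K(G,1)$ whose three boundary loops wrap $g$, $h$, $(gh)^{-1}$ once each (well defined since $g\cdot h\cdot(gh)^{-1}=1$ in $G$). This $P$ is an admissible surface for the $1$-chain $g+h-gh\in B_1^H(G)$ with $n(P)=1$, giving $\scl(g+h-gh)\le -\chi(P)/(2n(P))=1/2$; combined with the triangle inequality for the $\scl$-seminorm on $B_1^H$, this reproduces the claim. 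Either formulation makes the inequality completely elementary.
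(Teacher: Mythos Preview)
Your proof via Bavard duality is correct, and your alternative pair-of-pants argument is correct as well. The paper itself supplies no proof here, simply labeling the estimate ``elementary''. One expository point: Bavard duality is stated \emph{after} this lemma in the paper (as Theorem~\ref{Bavard_duality_theorem}), so invoking it reverses the order of presentation, though there is no logical circularity. Your second argument --- bounding $\scl(g+h-gh)\le 1/2$ via a pair of pants and then applying the triangle inequality for the $\scl$ pseudo-norm on $B_1^H$ --- is closer to what ``elementary'' presumably signals, and is essentially the proof given in \cite{Calegari_scl}.
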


\begin{lemma}\label{length_bounds_scl}
Let $G$ be a group with finite symmetric generating set $S$. Let $g \in G'$. Then there
is a constant $\con{1}$ depending on $G$ and $S$ so that
$$\scl(g) \le \con{1}|g|_S$$
where $|\cdot|_S$ denotes word length with respect to $S$.
\end{lemma}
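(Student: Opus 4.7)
The plan is to prove the stronger bound $\cl(g)\le \con{1}|g|_S$, from which $\scl(g)\le\cl(g)\le\con{1}|g|_S$ follows. Writing $g = s_{i_1}^{\epsilon_1}\cdots s_{i_n}^{\epsilon_n}$ with $n = |g|_S$, I will decompose $g$ as a telescoping product of $n$ factors in $G'$, each with uniformly bounded commutator length, so that subadditivity of $\cl$ on $G'$ immediately yields the linear bound. Concretely, fix a set-theoretic section $\sigma\colon G^{\textnormal{ab}}\to G$ of the abelianization $\pi\colon G\to G^{\textnormal{ab}}$ with $\sigma(0)=e$, and set $g_j := s_{i_1}^{\epsilon_1}\cdots s_{i_j}^{\epsilon_j}$, $\tau_j := \sigma(\pi(g_j))$, $u_j := \tau_{j-1}s_{i_j}^{\epsilon_j}\tau_j^{-1}$. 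Since $g\in G'$, one has $\tau_0 = \tau_n = e$, so $g = \prod_{j=1}^n u_j$ and each $u_j$ lies in $G'$.

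The section is taken to be a canonical normal form on $G^{\textnormal{ab}} = \Z^r\oplus T$ based on fixed bounded-length lifts $\tilde e_1,\ldots,\tilde e_r\in G$ of a basis of the free part and on lifts of torsion representatives. The key calculation will show that each $u_j$ is a conjugate of a product of a bounded number of commutators of generators: when $s_{i_j}^{\epsilon_j}$ is a basis generator $\tilde e_l$ and $\sigma(a) = \tilde e_1^{a_1}\cdots\tilde e_r^{a_r}$, a direct rearrangement (cancelling the mutually inverse blocks $\tilde e_{l-1}^{-a_{l-1}}\cdots\tilde e_1^{-a_1}\cdot\tilde e_1^{a_1}\cdots\tilde e_{l-1}^{a_{l-1}}$) yields
\[
u_j = \sigma(a+e_l)\cdot\bigl[\tilde e_r^{-a_r}\cdots\tilde e_{l+1}^{-a_{l+1}},\,\tilde e_l^{-1}\bigr]\cdot\sigma(a+e_l)^{-1},
\]
a conjugate of a single commutator with $\cl(u_j) = 1$. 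Contributions from redundant generators (those whose abelianizations are nontrivial integer combinations of basis generators) and from torsion wrap-arounds introduce only a bounded number of additional commutator corrections plus occasional copies of a fixed element such as $\tilde t^m$ (for $\tilde t$ a lift of a torsion element of order $m$), whose commutator length is a fixed finite constant. Conjugation invariance of $\cl$ then gives a uniform bound $\cl(u_j)\le C_0$ for a constant $C_0 = C_0(G,S)$, and subadditivity of $\cl$ on $G'$ yields $\cl(g)\le \sum_{j=1}^n\cl(u_j)\le C_0 n$.

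The main obstacle is precisely this uniform bound on $\cl(u_j)$: the free-part case is the clean commutator identity above, but the torsion and redundant-generator cases require a finite amount of combinatorial bookkeeping to exhibit each $u_j$ as a conjugate of a fixed, bounded-length product of commutators whose total count is independent of $j$ and $n$.
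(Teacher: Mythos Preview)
Your approach is correct and gives the stronger bound $\cl(g)\le C_0|g|_S$. The telescoping identity $g=\prod_{j=1}^n u_j$ with $u_j=\tau_{j-1}s_{i_j}^{\epsilon_j}\tau_j^{-1}\in G'$ is clean, and your explicit computation that each $u_j$ is (in the basic case) a conjugate of a single commutator is right. The redundant-generator and torsion cases do work as you sketch: for $s\in S$ with $\pi(s)=\sum_l c_l e_l$, writing $w=\tilde e_1^{c_1}\cdots\tilde e_r^{c_r}$ splits $u_j=(\tau_{j-1}w\tau_j^{-1})(\tau_j w^{-1}s\tau_j^{-1})$; the first factor costs $O(r^2)$ commutators (each block swap $[\tilde e_i^{a_i},\tilde e_k^{c_k}]$ is a \emph{single} commutator regardless of the exponent $a_i$), and the second is a conjugate of one of finitely many fixed elements of $G'$. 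The torsion wrap-arounds likewise contribute conjugates of finitely many fixed elements.

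The paper states this lemma as elementary without proof, but proves it again later as Lemma~\ref{homologically_trivial_bound} by a genuinely different route: instead of a section of the abelianization, it uses the fact that the homologically trivial multisets of generators form a finitely generated abelian semigroup (a Hilbert basis argument), so any geodesic word for $g\in[G,G]$ contains a bounded-size sub-multiset of letters already summing to zero in $H_1$; moving those letters to the end costs boundedly many commutators and strictly shortens the remaining word, and one inducts. Your decomposition is more explicit and avoids the Hilbert-basis input, at the price of the combinatorial bookkeeping you mention; the paper's argument hides that bookkeeping inside the finiteness of the Hilbert basis but is less constructive about the actual commutators produced.
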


\subsection{Quasimorphisms}

There is a duality between stable commutator length and certain functions on $G$ 
called {\em homogeneous quasimorphisms}.

\begin{definition}
Let $G$ be a group. A function $\phi:G \to \R$ is a {\em quasimorphism} if there is some 
least non-negative number $D(\phi)$ called the {\em defect}, so that for all $g,h \in G$, there is an inequality
$$|\phi(gh) - \phi(g) - \phi(h)| \le D(\phi)$$
A quasimorphism is {\em homogeneous} if, further, it satisfies $\phi(g^n)=n\phi(g)$ for all
$g \in G$ and all $n \in \Z$.
\end{definition}

Denote the vector space of all quasimorphisms on $G$ by $\widehat{Q}(G)$, and the subspace of homogeneous
quasimorphisms by $Q(G)$. 

\begin{lemma}[\cite{Calegari_scl}, Lem.~2.21 and 2.58.]\label{homogenization_is_quasimorphism}
Given any $\psi \in \widehat{Q}$, the {\em homogenization} $\overline{\psi}$,
defined by
$$\overline{\psi}(g) = \lim_{n \to \infty} \psi(g^n)/n$$
exists and satisfies $|\overline{\psi} -\psi| \le D(\psi)$. 
Moreover, $\overline{\psi}$ is a homogeneous quasimorphism with 
$D(\overline{\psi}) \le 2D(\psi)$.
\end{lemma}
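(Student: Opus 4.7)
\emph{Existence and $|\overline\psi - \psi| \le D(\psi)$.} The plan is to establish the three conclusions in order. Iterating the defining inequality on $g^{mn} = g^n \cdot g^n \cdots g^n$ ($m$ copies) yields $|\psi(g^{mn}) - m\psi(g^n)| \le (m-1)D(\psi)$. Swapping $m$ and $n$ in this bound and combining gives the Cauchy estimate
$$\left|\frac{\psi(g^n)}{n} - \frac{\psi(g^m)}{m}\right| \le D(\psi)\left(\frac{1}{m} + \frac{1}{n}\right),$$
so $\psi(g^n)/n$ converges to a limit $\overline\psi(g)$; setting $m = 1$ and letting $n \to \infty$ gives $|\overline\psi(g) - \psi(g)| \le D(\psi)$.

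\emph{Homogeneity.} For positive integers $k$, $\overline\psi(g^k) = \lim_n \psi(g^{kn})/n = k \lim_n \psi(g^{kn})/(kn) = k\overline\psi(g)$. For negative powers, apply quasi-additivity to $g \cdot g^{-1} = e$ together with $|\psi(e)| \le D(\psi)$ (which follows from $\psi(e \cdot e) = \psi(e)$) to obtain $|\psi(g^n) + \psi(g^{-n})| \le 2D(\psi)$; dividing by $n$ and passing to the limit gives $\overline\psi(g^{-1}) = -\overline\psi(g)$.

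\emph{Defect.} Exploit homogeneity to dilute the defect: for any $n \ge 1$,
$$n\bigl|\overline\psi(gh) - \overline\psi(g) - \overline\psi(h)\bigr| = \bigl|\overline\psi((gh)^n) - \overline\psi(g^n) - \overline\psi(h^n)\bigr| \le \bigl|\psi((gh)^n) - \psi(g^n) - \psi(h^n)\bigr| + 3D(\psi),$$
where the equality uses $\overline\psi(x^n) = n\overline\psi(x)$ and the inequality applies $|\overline\psi - \psi| \le D(\psi)$ three times. The main obstacle lies in sharpening the right-hand side: naive telescoping of quasi-additivity across $(gh)^n$, $g^n$, and $h^n$ yields only $(4n-3)D(\psi)$, producing the suboptimal bound $D(\overline\psi) \le 4D(\psi)$. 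To recover the factor of two, route through the intermediate element $g^n h^n$, for which $|\psi(g^n h^n) - \psi(g^n) - \psi(h^n)| \le D(\psi)$ is immediate by a single application of quasi-additivity, and then argue that $|\psi((gh)^n) - \psi(g^n h^n)|$ grows like $2nD(\psi) + O(D(\psi))$ by exploiting the conjugation symmetry $(gh)^n = g(hg)^{n-1}h$, which lets one pair defects from the alternating decomposition against those from the segregated one. Dividing by $n$ and sending $n \to \infty$ delivers $D(\overline\psi) \le 2D(\psi)$.
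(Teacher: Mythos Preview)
The paper does not give its own proof of this lemma; it simply cites \cite{Calegari_scl}, Lem.~2.21 and 2.58. So there is no paper argument to compare against, and the question is just whether your proof stands on its own.

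Your treatment of existence, the bound $|\overline\psi-\psi|\le D(\psi)$, and homogeneity is correct and standard. The gap is in the defect bound. You correctly observe that the naive route through $|\psi((gh)^n)-\psi(g^n)-\psi(h^n)|$ gives only $4D(\psi)$, and you propose to repair this by showing $|\psi((gh)^n)-\psi(g^nh^n)|\le 2nD(\psi)+O(D(\psi))$. But this key inequality is asserted, not proved: the phrase ``pair defects from the alternating decomposition against those from the segregated one'' is not an argument. The identity $(gh)^n=g(hg)^{n-1}h$ you invoke does give the recursion
\[
|\psi((gh)^n)-\psi(g^nh^n)|\le |\psi((hg)^{n-1})-\psi(g^{n-1}h^{n-1})|+4D(\psi),
\]
but the roles of $g$ and $h$ on the right are swapped relative to the left, so this does not telescope cleanly; chasing it through conjugacy estimates only produces linear bounds with worse constants. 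As written, your sketch does not establish the factor $2$.

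If you only need the qualitative conclusion that $\overline\psi$ is a homogeneous quasimorphism with defect bounded in terms of $D(\psi)$ --- which is all the paper actually uses downstream --- then your easy $4D(\psi)$ bound already suffices and you should say so. If you want the sharp constant $2$, you need to supply a genuine argument; the cited reference \cite{Calegari_scl} contains one.
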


Bavard duality is the following theorem:

\begin{theorem}[Bavard \cite{Bavard}; \cite{Calegari_scl} Thm.~2.70]\label{Bavard_duality_theorem}
If $G$ is a group and $g\in G'$ then
$$\scl(g) = \sup_{\phi \in Q(G)} \frac {\phi(g)} {2D(\phi)}$$
\end{theorem}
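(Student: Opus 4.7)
The plan is to prove the two inequalities separately: (i) $\scl(g) \ge |\phi(g)|/(2D(\phi))$ for every nonzero homogeneous quasimorphism $\phi$, so that the supremum is at most $\scl(g)$; and (ii) the supremum is attained by some $\phi$. For (i), I would first show that every homogeneous quasimorphism is a class function. The bound $|\phi(hg^nh^{-1}) - \phi(g^n)| \le 2D(\phi)$ is uniform in $n$, while $\phi(hg^nh^{-1}) = n\phi(hgh^{-1})$ and $\phi(g^n) = n\phi(g)$ by homogeneity; dividing by $n$ and letting $n \to \infty$ forces $\phi(hgh^{-1}) = \phi(g)$. Writing $[a,b] = a \cdot (ba^{-1}b^{-1})$, the quasimorphism inequality combined with conjugation invariance and $\phi(a^{-1}) = -\phi(a)$ gives $|\phi([a,b])| \le D(\phi)$. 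A telescoping application of the quasimorphism inequality then yields $|\phi(c_1 \cdots c_k)| \le (2k-1)D(\phi)$ for any product of $k$ commutators. Applied to $g^n$ this gives $n|\phi(g)| = |\phi(g^n)| \le (2\cl(g^n)-1)D(\phi)$; dividing by $n$ and taking the limit gives $|\phi(g)| \le 2\scl(g) D(\phi)$.

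For (ii), the proof is a Hahn--Banach style duality argument. The idea is to extend $\scl$ from $G'$ to a seminorm on the real vector space $B_1^H(G)$ of real $1$-boundaries modulo the homogenization relations $g^n - ng$ and $g - hgh^{-1}$, and to identify the topological dual of $(B_1^H(G), \scl)$ with $(Q(G)/H^1(G), 2D)$ via the pairing $(\phi, c) \mapsto \phi(c)$. The extension of $\scl$ comes from reinterpreting commutator length via admissible surfaces: for $g \in G'$, $\scl(g) = \inf_S -\chi(S)/(2 n(S))$, where $S$ ranges over maps of compact oriented surfaces to a $K(G,1)$ whose boundary wraps $g$ with net degree $n(S)$, and this definition extends linearly and continuously to real boundaries. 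Once the duality is established, the Hahn--Banach theorem supplies, for any $g$ with $\scl(g) > 0$, a norming linear functional; by the duality, this functional is represented by a homogeneous quasimorphism $\phi$ with $\phi(g)/(2D(\phi)) = \scl(g)$.

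The main obstacle is the duality statement itself: showing that every linear functional bounded by $\scl$ on $B_1^H(G)$ is represented (modulo $H^1$) by a homogeneous quasimorphism, and that its operator norm equals exactly half its defect. Concretely, one shows that a norm-one functional differs by a bounded amount from a function satisfying a $2$-cocycle bound coming from the definition of $\cl$; the homogenization (as in Lemma~\ref{homogenization_is_quasimorphism}) is then a homogeneous quasimorphism of defect at most $2$. The factor of $2$ here matches the factor $(2k-1) \sim 2k$ appearing in direction (i), a sanity check that both bounds are tight. With this identification in hand, the theorem is immediate.
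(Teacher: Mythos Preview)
The paper does not prove this theorem; it is stated as a citation to Bavard \cite{Bavard} and \cite{Calegari_scl}, Thm.~2.70, and used as a black box in the sequel. So there is no proof in the paper to compare against.

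That said, your outline is the standard one and is essentially correct. Direction (i) is complete as written: conjugation invariance of homogeneous quasimorphisms, the bound $|\phi([a,b])|\le D(\phi)$, the telescoping estimate $|\phi(c_1\cdots c_k)|\le (2k-1)D(\phi)$, and the limit argument are all sound. Direction (ii) is correctly identified as a Hahn--Banach argument on the space $B_1^H(G)$, and this is exactly Bavard's approach as presented in \cite{Calegari_scl}. Your sketch is honest about where the work lies --- namely in identifying the dual of $(B_1^H(G),\scl)$ with $(Q(G)/H^1(G),2D)$ --- but you do not actually carry this out. The missing piece is the verification that a bounded linear functional on $B_1^H(G)$ pulls back to a genuine quasimorphism on $G$ with defect controlled by (in fact equal to twice) the operator norm; this uses the topological interpretation of $\scl$ via admissible surfaces and the fact that a genus-one surface with two boundary components certifies $\scl(g+g^{-1}h^{-1}gh)\le 1/2$, which is what forces the defect bound. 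If you want a self-contained proof, you should fill in that step; otherwise, citing \cite{Calegari_scl}, \S~2.6 and Thm.~2.70, as the paper does, is appropriate.
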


We use Bavard duality to obtain lower bounds on stable commutator length.

\section{Hyperbolic groups}\label{hyperbolic_groups_background_section}

A standard introduction to the theory of hyperbolic groups is \cite{Gromov_hyperbolic}. More
specialized references for the material in this section are 
\cite{Cannon}, \cite{Coornaert} and \cite{Calegari_Fujiwara}.

\subsection{Hyperbolic groups}

Let $G$ be a group with a generating set $S$. Let $C_S(G)$ (or just $C$ for brevity) denote the
Cayley graph of $G$ with respect to the generating set $S$. The Cayley graph $C_S(G)$ can be
made into a geodesic metric space, by giving each edge length $1$. With this metric,
$d(\id,g) = |g|_S$.

\begin{definition}
A group $G$ is {\em hyperbolic} if $C_S(G)$ is hyperbolic as a metric space. That is,
if there is some $\delta$ so that if $pqr$ is a geodesic triangle, any point on the
geodesic $pq$ is within distance $\delta$ of $qr \cup rp$.
\end{definition}

We are casual about identifying $G$ with the vertices of $C_S(G)$, and in this way
think of $G$ as a metric space. If $S$ is given, we say $G$ is {\em $\delta$-hyperbolic}
for any $\delta$ as above.

There is an equivalence relation on proper geodesic rays in $C_S(G)$, where two
rays are equivalent iff they are a finite Hausdorff distance apart. The set of
equivalence classes is itself a compact space called the {\em Gromov boundary} of $G$,
and denoted $\partial_\infty G$. The left action of $G$ on itself (or on $C_S(G)$)
extends to an action on $\partial_\infty G$ by homeomorphisms.

\subsection{Combings}

For an introduction to combings, regular languages etc. see \cite{Epstein_et_al}.

Let $S^*$ denote the set of finite words in the generating set $S$, let $|\cdot|$
denote word length in $S^*$, and let $\eval:S^* \to G$ denote the evaluation map. 
A word $w\in S^*$ is a {\em geodesic} if $|\eval(w)|_S = |w|$. Under evaluation of
prefixes, a word $w\in S^*$ determines a {\em directed path} in the Cayley graph $C_S(G)$ from
$\id$ to $\eval(w)$. We denote this path by $\path(w)$.

Suppose $X$ is a finite directed
graph (hereafter digraph) with a distinguished initial vertex, and with edges
labeled by elements of $S$ in such a way that there is at most one outgoing edge
from each vertex with a given label. 

Let $\Gamma$ denote the set of finite directed simplicial paths in $X$, and
$\Gamma_0$ the subset starting at 
the initial vertex. There is an injective map $\word:\Gamma_0 \to S^*$ which takes a
path $\gamma$ to the string of edge labels on the edges it traverses, in order.
The composition $\path\circ\word$ takes paths in $\Gamma_0$ to paths in $C_S(G)$ starting
at $\id$. When the meaning is clear from context, we denote $\path\circ\word(\gamma)$ simply
by $\gamma$.

A subset of the form $\word(\Gamma_0)\subset S^*$ is necessarily prefix-closed,
since $\Gamma_0$ is closed under taking initial subpaths. Furthermore, $\word(\Gamma_0)$
is a regular language; in fact, a subset $L\subset S^*$ is a prefix-closed regular
language if and only if there is some $X$ with $\word(\Gamma_0)=L$.

\begin{definition}\label{combing_definition}
A {\em combing} of $G$ (with respect to a generating set $S$)
is a subset $L \subset S^*$ for which there is some labeled
digraph $X$ as above with $\word(\Gamma_0)=L$ such that
\begin{enumerate}
\item{the evaluation map $\eval:L \to G$ is a bijection; and}
\item{the words of $L$ are geodesics.}
\end{enumerate}
We say the digraph $X$ {\em parameterizes} the combing.
\end{definition}

Note that $L$ is prefix-closed by our conventions. Note also that for a combing, the image
of $\gamma \in \Gamma_0$ under $\path\circ\word$ is a geodesic in $C_S(G)$.

\begin{remark}
Many conflicting definitions of combings appear in the literature. Our definition is
by no means standard.
\end{remark}

\begin{theorem}[Cannon \cite{Cannon}]
Let $G$ be a hyperbolic group, and let $S$ be a generating set. Then there is
a combing for $G$ with respect to $S$.
\end{theorem}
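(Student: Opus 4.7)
The plan is to follow Cannon's original approach via the theorem that a $\delta$-hyperbolic group has only finitely many \emph{cone types}. Recall that the cone type of $g\in G$ is the set $\cone(g) = \{h\in G : |gh|_S = |g|_S + |h|_S\}$, consisting of those $h$ reachable from $g$ by extending a geodesic from $\id$ through $g$. The first and central step is to prove that in a $\delta$-hyperbolic group there are only finitely many distinct cone types. The key lemma, proved by a thin-triangles argument, is that whether a generator $s$ lies in $\cone(g)$ depends only on the ball of radius $2\delta+1$ around $g$ in $C_S(G)$, together with which vertices of that ball lie in $B(\id, |g|_S)$. A straightforward induction on $|h|_S$ then shows that $\cone(g)$ itself is determined by this bounded-radius local picture. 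Since $G$ is locally finite, there are only finitely many such pictures, hence finitely many cone types.

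With finite cone types in hand, I would realize the combing as the set of ShortLex-least geodesic representatives. Fix a total order on $S$ and, for each $g\in G$, let $w(g)\in S^*$ denote the lexicographically smallest among all geodesic words evaluating to $g$; set $L = \{w(g) : g\in G\}$. By construction $\eval\colon L \to G$ is a bijection and every word of $L$ is a geodesic. Prefix-closedness is a one-line argument: if $w(g) = uv$ but $u \ne w(\eval(u))$, then substituting $w(\eval(u))$ for $u$ yields a lex-smaller geodesic for $g$, contradicting the minimality of $w(g)$.

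The remaining task is to produce a finite labeled digraph $X$ with $\word(\Gamma_0) = L$. The states of $X$ should encode just enough information about $g$ to decide, given a proposed next generator $s$, both whether $s \in \cone(g)$ (so that $w(g)\cdot s$ is still a geodesic) and whether $w(g)\cdot s = w(gs)$ (so that the extension remains ShortLex-minimal). The first question is purely a function of $\cone(g)$; the second reduces to a local condition involving $\cone(g)$ together with a bounded-length suffix of $w(g)$, via the observation that any improvement of $w(g)\cdot s$ within its own geodesic class must already affect the last few letters. Thus a state is a pair (cone type, bounded suffix), of which there are finitely many by the first step. Transitions are functions of the state and the input letter, so the requirement that each vertex have at most one outgoing edge with a given label is automatic, and an induction on word length identifies $\word(\Gamma_0)$ with $L$.

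The principal obstacle is the first step: establishing that $\cone(g)$ is determined by a bounded-radius neighborhood, with a bound depending only on $\delta$ and $|S|$. This is where $\delta$-hyperbolicity is essentially used, via a careful thin-triangle argument that traces a geodesic from $g$ to $gh$ one step at a time and shows the necessary verification never strays more than $2\delta+1$ from the current endpoint. Everything afterward is combinatorial bookkeeping that is routine once finiteness of cone types is in hand.
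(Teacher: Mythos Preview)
The paper does not give a proof of this theorem: it is stated with attribution to Cannon and immediately followed by an example noting that the language of lexicographically first geodesic representatives is a combing. Your proposal is a correct sketch of Cannon's own argument (finiteness of cone types, then ShortLex), and it lines up with both the citation and the example the paper provides.

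One point to tighten: your justification that ShortLex-minimality of $w(g)\cdot s$ can be checked from ``$\cone(g)$ together with a bounded-length suffix of $w(g)$'' is a little glib. The clean way to see regularity of $L$ is first to observe (via cone types) that the language of \emph{all} geodesics is regular, and then to use the synchronous $2\delta$-fellow-traveler property of geodesics in a $\delta$-hyperbolic space: any competing geodesic to $gs$ stays within $2\delta$ of $w(g)\cdot s$ at every time step, so a finite automaton tracking the ``difference'' element (of word length $\le 2\delta$) between the current prefix and a hypothetical lex-smaller competitor can detect whether such a competitor exists. This is the standard falsification-by-fellow-traveler argument, and it makes precise the local condition you allude to. With that said, your outline is sound and is exactly the route the paper is pointing to.
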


\begin{example}
An ordering of $S$ determines a lexicographic order (i.e.\/ dictionary order)
on $S^*$. In any hyperbolic group $G$, the language $L$ of {\em lexicographically
first geodesic representatives} is an example of a combing in the sense of
Definition~\ref{combing_definition}.
\end{example}

\subsection{Markov chains}

The digraph $X$ is a topological Markov chain. Associated to $X$ is the {\em transition
matrix} $M$ whose $M_{ij}$ entry counts the number of directed edges from vertex $i$
to vertex $j$. Note that $M$ has a non-negative real eigenvalue $\lambda$ do that
$|\xi|\le \lambda$ for every other eigenvalue $\xi$ of $M$.

Two vertices in a digraph are said to be {\em communicating} if there are
directed paths between the vertices in either direction. This defines an equivalence
relation on $X$, and we define a {\em component} to be a maximal subgraph whose vertices
are a communicating class. There is a natural digraph $C(X)$ obtained as a quotient
of $X$, whose vertices are the communicating classes of $X$. From the definition it
is clear that the digraph $C(X)$ has no directed loops.

If $C$ is a component of $X$, the adjacency matrix $M_C$ of $C$ has biggest real
eigenvalue $\lambda_C\le \lambda$. 

\begin{definition}
A component $C$ is {\em maximal} if $\lambda_C = \lambda$.
\end{definition}

In general, there are no constraints on $C(X)$ other than the fact that it has no
directed loops. However, for $G$ a hyperbolic group, and $X$ a digraph parameterizing
a combing, we have the following, due (implicitly) to Coornaert:

\begin{theorem}[Coornaert \cite{Coornaert}; \cite{Calegari_Fujiwara} Lem.~4.15]
Let $G$ be a hyperbolic group, and let $X$ be a digraph parameterizing a combing.
Then each directed path in $C(X)$ contains at most one maximal component.
Equivalently, if $\lambda$ is the maximal real eigenvalue of $M$, there are positive
constants $\con{1}$, $\con{2}$ so that
$$\con{1}\lambda^n \le |G_n| \le \con{2}\lambda^n$$
\end{theorem}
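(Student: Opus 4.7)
The plan is to split the theorem into two logically independent pieces: a linear-algebraic equivalence between the combinatorial assertion about $C(X)$ and the growth assertion $|G_n|=\Theta(\lambda^n)$, valid for any combing digraph; and a sharp growth estimate $|G_n|=O(\lambda^n)$ that genuinely uses hyperbolicity. Only the second piece is substantive.

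First I would set up the equivalence. Because $\eval \circ \word$ identifies $\Gamma_0$ bijectively with $G$, the number of directed paths in $X$ of length $n$ starting at the initial vertex equals $|G_n|$. Let $M$ be the transition matrix and $\lambda$ its Perron eigenvalue, and decompose the support of paths starting at the initial vertex according to which sequence of communicating classes they visit. Standard Perron--Frobenius for reducible non-negative matrices says that the number of paths of length $n_i$ staying inside a single communicating class $C$ is $\Theta(\lambda_C^{n_i})$, while transitions between classes contribute bounded factors. Hence if a directed path in $C(X)$ meets maximal components $C_{j_0},\dots,C_{j_k}$, summing over the compositions $n_0+\dots+n_k=n-O(1)$ yields $\Theta(n^{k}\lambda^n)$ paths. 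So the ``at most one maximal component per directed path in $C(X)$'' condition is equivalent to the upper bound $|G_n|\le \con{2}\lambda^n$; the lower bound $|G_n|\ge \con{1}\lambda^n$ is automatic, since $M$ itself has at least one maximal component and that component contributes $\Theta(\lambda^n)$ paths on its own.

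Next I would prove the sharp upper bound $|G_n|=O(\lambda^n)$ via the Patterson--Sullivan construction, which is the one place hyperbolicity enters. Consider the Poincar\'e series $Z(s)=\sum_{g\in G}e^{-s|g|_S}$; it has critical exponent $\log\lambda$ by the trivial bound $|G_n|\le \lambda^n\cdot\mathrm{poly}(n)$. Normalized limits of $Z(s)^{-1}\sum_g e^{-s|g|_S}\delta_g$ as $s\searrow\log\lambda$ produce a probability measure $\mu$ on $\partial_\infty G$. Defining the shadow $\O(g,r)\subset\partial_\infty G$ of points $\xi$ for which some geodesic from $\id$ to $\xi$ passes within $r$ of $g$, the Shadow Lemma (proved using thin triangles in $C_S(G)$) gives a constant $r_0=r_0(\delta)$ with
\[
\mu(\O(g,r_0))\;\asymp\;\lambda^{-|g|_S}
\]
uniformly in $g$. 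A second $\delta$-hyperbolicity argument shows that $\{\O(g,r_0)\}_{g\in G_n}$ covers $\partial_\infty G$ with multiplicity bounded independently of $n$. Summing the masses and using $\mu(\partial_\infty G)=1$ yields $|G_n|\cdot \lambda^{-n}\lesssim 1$, i.e.\ $|G_n|=O(\lambda^n)$. Combined with the equivalence above, this gives both assertions of the theorem.

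The main obstacle is the Shadow Lemma, whose proof requires the careful use of Gromov products and quasi-projections in a $\delta$-hyperbolic space; once this is in hand, everything else is bookkeeping. Because this material is classical, in practice I would simply cite Coornaert \cite{Coornaert} for the estimate $|G_n|=\Theta(\lambda^n)$ and Lemma 4.15 of \cite{Calegari_Fujiwara} for the translation into the combinatorial statement about $C(X)$, rather than reproduce the Patterson--Sullivan argument in full.
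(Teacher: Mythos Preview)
Your proposal is correct, and in fact goes further than the paper itself: the paper does not prove this theorem at all, but simply states it with attributions to Coornaert \cite{Coornaert} for the growth estimate and to \cite{Calegari_Fujiwara} Lem.~4.15 for the combinatorial reformulation. Your final sentence --- cite rather than reproduce --- is exactly what the paper does.

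Your sketch of the underlying argument is accurate and matches the content of those references. The split into (i) a linear-algebraic equivalence (maximal components appearing $k+1$ times along a path in $C(X)$ force $n^k\lambda^n$ growth) and (ii) the Patterson--Sullivan/Shadow Lemma upper bound is precisely how Coornaert's theorem gets translated into the statement about $C(X)$. One small point worth making explicit in (i): the lower bound $|G_n|\ge \con{1}\lambda^n$ requires that the initial vertex can reach some maximal component; this holds because every vertex of $X$ used by the combing is reachable from the initial vertex by construction, so in particular the maximal components are.
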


\begin{remark}
Actually, Coornaert does not use the language of combings, and
only proves the {\it a priori} weaker fact that
$C_1\lambda^n \le |G_{\le n}| \le C_2 \lambda^n$. The stronger fact with
$G_{\le n}$ replaced by $G_n$ follows immediately once one realizes that $G_n$
counts the number of elements of length $n$ in a prefix-closed regular language.
As far as we know, this sharper observation appears for the first time in print 
in \cite{Calegari_Fujiwara}.
\end{remark}

There is a natural stationary Markov
chain with states the vertices of $X$, for which the
whose positive transition probabilities are a subset of the (directed) edges of $X$.
This Markov chain is described in detail in \cite{Calegari_Fujiwara}, \S~4. 
We let $N_{ij}$ denote the probability of a transition from state $i$ to state $j$
(note $N_{ij}=0$ if $M_{ij}=0$) and $\mu$ a certain stationary measure for $N$ of maximal
entropy.

\begin{lemma}[\cite{Calegari_Fujiwara} Lem.~4.9--10]
$N$ is a stochastic matrix, and $\mu$ is stationary for $N$. The support of $\mu$
is the union of the maximal components of $X$.
\end{lemma}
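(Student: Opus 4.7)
The plan is to exhibit $N$ and $\mu$ as the Parry measure of maximal entropy adapted to our (possibly reducible) topological Markov chain. Let $\lambda$ be the maximal real eigenvalue of $M$. First I would produce a non-negative right eigenvector $u$ and a non-negative left eigenvector $v$ of $M$ with eigenvalue $\lambda$, and then set
$$N_{ij} := \frac{M_{ij}\, u_j}{\lambda\, u_i} \quad \text{for } i \text{ with } u_i>0, \qquad \mu_i := \frac{u_i v_i}{Z},$$
where $Z = \sum_k u_k v_k$ normalizes $\mu$ to a probability measure. (On vertices with $u_i = 0$ we may define $N_{ij}$ arbitrarily; such vertices will lie off the support of $\mu$.) By construction $N_{ij}=0$ whenever $M_{ij}=0$, as required.

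Once $u$ and $v$ are in hand, the verifications are one-line computations. The matrix $N$ has non-negative entries and its rows sum to
$$\sum_j N_{ij} \;=\; \frac{1}{\lambda u_i}\sum_j M_{ij}\, u_j \;=\; \frac{(Mu)_i}{\lambda u_i} \;=\; 1,$$
so $N$ is stochastic. Dually, stationarity of $\mu$ follows from $vM = \lambda v$:
$$(\mu N)_j \;=\; \sum_i \frac{u_i v_i}{Z}\cdot\frac{M_{ij} u_j}{\lambda u_i} \;=\; \frac{u_j}{\lambda Z}(vM)_j \;=\; \frac{u_j v_j}{Z} \;=\; \mu_j.$$
For the support claim I would argue that $u_i > 0$ exactly when some directed path out of $i$ enters a maximal component (including the case that $i$ itself lies in one), and dually that $v_i > 0$ exactly when $i$ is reachable from some maximal component. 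Coornaert's theorem says each directed path in $C(X)$ meets at most one maximal component, so both conditions hold simultaneously precisely when $i$ lies in a maximal component; hence $\mathrm{supp}\,\mu$ is the union of the maximal components.

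The main obstacle is producing the non-negative eigenvectors $u$ and $v$ in this reducible setting: since $C(X)$ may have several maximal components and nontrivial directed structure between them and the other components, Perron--Frobenius does not apply directly to $M$ as a whole. I would construct $u$ by first taking the strictly positive Perron right eigenvector on each maximal component separately (where Perron--Frobenius does apply, since each maximal component has adjacency matrix with spectral radius $\lambda$ and is irreducible by construction), and then extending to vertices $i$ outside the maximal components by setting $u_i$ to be a weighted sum of contributions propagated backwards through $M$ from the maximal components forward-reachable from $i$; the construction of $v$ is dual. The crucial point, and the reason Coornaert's theorem is invoked, is that if two maximal components could be joined by a directed path in $C(X)$, then the naive backward propagation would produce only a generalized eigenvector whose iteration under $M$ accrues polynomial corrections and fails the eigenvector equation $Mu = \lambda u$; the Coornaert condition rules this out and guarantees that the extension is a genuine $\lambda$-eigenvector.
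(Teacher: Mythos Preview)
Your proof is correct. The paper itself does not prove this lemma but cites \cite{Calegari_Fujiwara}; however, the surrounding discussion (particularly in the ``Ergodicity at infinity'' subsection) indicates a different construction of $N$ and $\mu$: rather than the Parry formula, $N_{ij}$ is defined there as the ratio $\nu(\cone(h))/\nu(\cone(g))$ of Patterson--Sullivan cone measures, and $\mu$ arises as the limiting distribution of terminal vertices. Your purely linear-algebraic approach via Perron--Frobenius on each maximal component, extended through the non-maximal components using $(\lambda I - M_C)^{-1}$, is the standard ergodic-theoretic construction of the measure of maximal entropy on a subshift of finite type, adapted to the reducible setting; you have correctly identified Coornaert's ``almost semisimple'' condition (no directed path in $C(X)$ meets two maximal components) as precisely what guarantees the extension yields a genuine $\lambda$-eigenvector rather than a generalized one. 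Both routes produce the same $N$ and $\mu$. The Patterson--Sullivan viewpoint has the advantage of tying the Markov measure directly to the geometry of the group, which is used elsewhere in the paper (e.g., in Proposition~\ref{measures_quasi_equivalent} comparing $\nu$ with the uniform measure on $G_n$); your approach is more self-contained and makes the support computation transparent.
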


For each $n$, let $\Gamma^n$ denote the subset of $\Gamma$ consisting of paths of
length $n$, and let $\Gamma^n_0$ denote the subset starting at the initial vertex. Note that
$\Gamma^n_0$ bijects with $G_n$ under $\eval\circ\word$.
There is a probability measure on $\Gamma^n$ which we denote $\mu$
by abuse of notation, defined by
$$\mu(\gamma) = \mu(\gamma(0))N_{\gamma(0)\gamma(1)}N_{\gamma(1)\gamma(2)}\cdots N_{\gamma(n-1)\gamma(n)}$$
Thinking of $\Gamma^n$ as the cylinder sets in the space $\Gamma^\infty$ of infinite
paths, there is an associated probability measure on $\Gamma^\infty$ which is
invariant under the shift map $s$ which takes a path to the suffix obtained by omitting
the first vertex.

We can also define a measure $\nu$ on $\Gamma^n_0$ by
$$\nu(\gamma) = N_{0,\gamma(1)}N_{\gamma(1)\gamma(2)}\cdots N_{\gamma(n-1)\gamma(n)}$$
(since $\gamma(0)=0$ by definition for $\gamma \in \Gamma^n_0$), and extend to a measure $\nu$
on $\Gamma^\infty_0$. The shift map $s$ takes $\Gamma^\infty_0$ into $\Gamma^\infty$, and we
have $\mu = \lim_{n \to \infty} \frac 1 n \sum_{i=0}^{n-1} s^i_*\nu$. Note that
$\nu$ is just $\mu$ conditioned on $\Gamma_0 \subset \Gamma$ (or on $\Gamma_0^n \subset \Gamma^n$ for
each $n$). Note too that if the maximal components of $X$ (the support of $\mu$) are all
aperiodic, then $\mu = \lim_{n \to \infty} s^n_*\nu$, and in general $\mu = \lim_{n \to \infty} \frac
1 m \sum_{i=0}^{m-1} s^{i+n}_*\nu$ where $m$ is the lcm of the periods of the maximal components.

\subsection{Ergodicity at infinity}

For each $n$, let $\nu_n$ be the probability measure on $G$ defined by
$$\nu_n = \frac {\sum_{|g|_S\le n} \lambda^{-|g|_S}\delta_g} {\sum_{|g|_S\le n} \lambda^{-|g|_S}}$$
where $\delta_g$ is the Dirac measure supported at $g$.
This extends trivially to a probability measure on $G\cup \partial_\infty G$. It turns
out that the limit $\nu=\lim_{n \to \infty} \nu_n$ exists and is supported on
$\partial_\infty G$, where it is known as a {\em Patterson-Sullivan} measure.

\begin{theorem}[Coornaert \cite{Coornaert}, Thm.~7.7]\label{ergodic_at_infinity}
Let $\nu$ be the Patterson-Sullivan measure. The action of $G$ on $\partial_\infty G$
preserves the measure class of $\nu$, and is ergodic.
\end{theorem}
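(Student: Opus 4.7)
The plan is to follow Coornaert's original approach, which is a combinatorial analogue of Sullivan's ergodicity argument for Kleinian groups acting on the sphere at infinity. The proof splits cleanly into two pieces: a quasi-conformal transformation rule for $\nu$ under $G$ (which immediately gives measure-class invariance), followed by a Hopf-style density argument that uses that rule to rule out non-trivial $G$-invariant measurable subsets of $\partial_\infty G$.

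First I would establish a \emph{shadow lemma}. For $g \in G$ and $r>0$, let the shadow $\mathcal{O}(g,r) \subset \partial_\infty G$ consist of those $\xi$ such that some geodesic ray from $\id$ to $\xi$ passes within distance $r$ of $g$. Using the growth estimate $|G_n| \asymp \lambda^n$ (already quoted above) together with $\delta$-thinness of triangles, for all sufficiently large $r$ depending on $\delta$ there are constants $\con{1},\con{2}>0$ with $\con{1}\lambda^{-|g|_S} \le \nu(\mathcal{O}(g,r)) \le \con{2}\lambda^{-|g|_S}$; this is a counting and covering argument applied directly to the definition of $\nu$ as the weak limit of the $\nu_n$. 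Combined with the coarse identity $|g^{-1}h|_S = |h|_S + |g|_S - 2(g\mid h) + O(\delta)$ (valid for any $h$ in the Cayley graph, and hence for $h$ approximating $\xi$ in the boundary), one deduces the transformation rule
$$\con{3}^{-1}\lambda^{-2(g\mid \xi)} \le \frac{dg_*\nu}{d\nu}(\xi) \le \con{3}\lambda^{-2(g\mid \xi)}$$
for some $\con{3}$ depending on $G$ and $\delta$, where $(\cdot\mid\cdot)$ is the Gromov product based at $\id$. Since the Radon--Nikodym derivative is positive and finite $\nu$-a.e., measure-class invariance is immediate.

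For ergodicity, suppose $A \subset \partial_\infty G$ is $G$-invariant with $\nu(A)>0$. The shadows $\{\mathcal{O}(g,r)\}_{g \in G}$ behave like balls in a visual metric and form a Vitali-type system for $\nu$, so a Lebesgue differentiation argument produces a density point $\xi \in A$:
$$\lim_{\substack{|g|_S\to \infty \\ \xi \in \mathcal{O}(g,r)}} \frac{\nu(\mathcal{O}(g,r)\cap A)}{\nu(\mathcal{O}(g,r))} = 1.$$
Fix a sequence $g_n \in G$ marching along a geodesic ray from $\id$ toward $\xi$. The transformation rule and shadow lemma together imply that $g_n^{-1}$ maps $\mathcal{O}(g_n,r)$ onto a set whose $\nu$-complement lies in a uniformly small shadow around the boundary point $g_n^{-1}\cdot\infty$, and that pullback distributes mass with bounded multiplicative distortion. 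Since $g_n^{-1}A = A$ by invariance, applying $g_n^{-1}$ to the density statement above and passing to the limit forces $\nu(A) \ge 1-\epsilon$ for every $\epsilon>0$, so $\nu(A)=1$.

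The main obstacle is the last step: justifying the Lebesgue differentiation theorem relative to the shadow system, and making precise the claim that $g_n^{-1}$ nearly exhausts $\partial_\infty G$ in $\nu$-measure. Both require a careful comparison of shadows with balls in a visual metric on $\partial_\infty G$, and uniform control on the Gromov product $(g_n^{-1}\mid \eta)$ as $\eta$ ranges over a set of full $\nu$-measure. This is where the hyperbolicity of $G$ (as opposed to any merely nonpositively curved combinatorial setup) is essential, and where one uses the finer geometry of $\partial_\infty G$ worked out by Coornaert.
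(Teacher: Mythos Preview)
The paper does not give its own proof of this statement: it is quoted verbatim as Coornaert's Thm.~7.7 and used as a black box. So there is nothing in the paper to compare your argument against.

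That said, your sketch is a faithful outline of Coornaert's actual proof (shadow lemma $\Rightarrow$ quasiconformality $\Rightarrow$ Hopf--Sullivan density argument), and the structure is correct. One point to be careful about: the exact form of the Radon--Nikodym derivative is governed by the Busemann cocycle $\beta_\xi(\id,g^{-1})$ rather than the Gromov product $(g\mid\xi)$ directly; these differ by a term $|g|_S$, so your displayed formula $\lambda^{-2(g\mid\xi)}$ is off by a factor $\lambda^{|g|_S}$ from the standard conformal transformation rule. This does not affect measure-class invariance, but it does matter when you track the distortion under $g_n^{-1}$ in the final step, so it is worth getting right.
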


The relationship between the measures $\nu_n$ and the measures $\nu$ on $\Gamma^n_0$
is as follows. Let $\gamma \in \Gamma^n_0$ and let $g=\eval(\word(\gamma))$.
Define $\cone(g)$ to be the image in $G$ of the union of the $\gamma' \in \Gamma^m_0$
extending $\gamma$. Then $\nu(\gamma) = \nu(\cone(g)):=\lim_{n \to \infty} \nu_n(\cone(g))$.
Let $\gamma' \in \Gamma^{n+1}_0$ be obtained by extending $\gamma$, and let $h=\eval(\word(\gamma'))$
so that $h=gs$ for some generator $s$. Let $i$ be the terminal vertex of $\gamma$, and $j$
the terminal vertex of $\gamma'$. Then $N_{ij} = \nu(\cone(h))/\nu(\cone(g))$.
See \cite{Calegari_Fujiwara} \S~4.3. for details.

The bijection $\Gamma^n_0 \to G_n$ lets us compare the $\nu$-measure on $\Gamma^n_0$ with the
uniform measure on $G_n$. From the relationship between $\nu$ and $N_{ij}$, 
the $\nu$-measure of $g \in G_n$ (i.e.\/ $\nu(\cone(g))$) 
depends only on cone type of $g$, or equivalently on
the terminal vertex of the corresponding path in $\Gamma^n_0$. Since there are only finitely many
cone types, and since every $g\in G$ is a uniformly bounded distance from some $g'$
whose cone has growth $\Theta(\lambda^n)$ (and therefore has $\nu(g')$ bounded away from
$0$) we derive the following proposition:

\begin{proposition}[\cite{Calegari_Fujiwara}, \S~4]\label{measures_quasi_equivalent}
There are positive constants $\con{1},\con{2},\con{3}$ with the following property.
Let $g\in G_n$, and let $B$ denote the intersection of the ball of radius $\con{1}$
about $g$ (in $C_S(G)$) with $G_n$. Let $B_\Gamma$ denote the preimage of $B$ under the natural
bijection $\Gamma^n_0 \to G_n$. Then there is an inequality
$$\con{2}\nu(B_\Gamma)/\nu(\Gamma^n_0) \le |B|/|G_n| \le \con{3}\nu(B_\Gamma)/\nu(\Gamma^n_0)$$
\end{proposition}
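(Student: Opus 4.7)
The plan is to reduce the claimed sandwich to two uniform estimates on $\nu(\cone(g))$ for $g \in G_n$: an upper bound of order $\lambda^{-n}$ that holds for every $g$, and a matching lower bound of the same order that holds for at least one $g'$ lying within bounded distance of any given $g$. Under the bijection $\Gamma^n_0 \to G_n$ one has $\nu(\gamma) = \nu(\cone(g))$, and this quantity depends only on the cone type of $g$; since there are only finitely many cone types, the proposition really concerns finitely many numerical comparisons after rescaling by $\lambda^n$.

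First I would derive the universal upper bound $\nu(\cone(g)) = O(\lambda^{-n})$. Using Coornaert's growth estimate $|G_k|=\Theta(\lambda^k)$, the denominator $\sum_{|h|\le m}\lambda^{-|h|}$ in the definition of $\nu_m$ is $\Theta(m)$, while the crude bound $|\cone(g)\cap G_k|\le |G_{k-n}|$ (translating the cone to the origin inside the ball of radius $k-n$) controls the numerator as $\sum_{h\in\cone(g),|h|\le m}\lambda^{-|h|} = O((m-n)\lambda^{-n})$. Passing to the limit $\nu = \lim \nu_m$ gives $\nu(\cone(g)) = O(\lambda^{-n})$, and summing over $G_n$ yields $\nu(\Gamma^n_0) = O(1)$. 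In the opposite direction I would quote the density fact recorded in the preamble to the proposition (established in \cite{Calegari_Fujiwara} \S~4 by analyzing the finite DAG $C(X)$ of communicating classes): every $g \in G$ lies within a uniformly bounded distance $r$ of some $g'$ whose cone has maximal growth $\Theta(\lambda^n)$. For such a $g'$ the same cone-type calculation gives $\nu(\cone(g'))\ge c\lambda^{-n}$ with $c>0$ depending only on $G$ and $S$; this simultaneously gives $\nu(\Gamma^n_0)=\Theta(1)$.

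To conclude, I would take $\con{1} = r$ (or any larger constant). Then for every $g\in G_n$ the ball $B$ contains at least one maximal-type $g'$, so
\[
c\lambda^{-n} \le \nu(\cone(g')) \le \nu(B_\Gamma) \le |B|\cdot O(\lambda^{-n}),
\]
while $|B|/|G_n| = |B|\cdot \Theta(\lambda^{-n})$ and $\nu(\Gamma^n_0)=\Theta(1)$. Since $|B|$ is trivially bounded in terms of $\con{1}$ and $|S|$, both $|B|/|G_n|$ and $\nu(B_\Gamma)/\nu(\Gamma^n_0)$ lie in a common interval $[c_2\lambda^{-n},c_3\lambda^{-n}]$ with constants depending only on $G$, $S$, and $\con{1}$, which gives the sandwich. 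The hard part is not any single inequality but the density statement that maximal cone types are universally close: extracting a uniform bound $r$ requires examining how the non-maximal components of $X$ sit inside the DAG $C(X)$, which is where hyperbolicity of $G$ and Coornaert's theorem on maximal components enter essentially.
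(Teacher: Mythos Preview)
Your argument is correct and follows essentially the same approach as the paper's sketch in the paragraph preceding the proposition: both reduce to the facts that $\nu(\cone(g))$ depends only on the cone type of $g$ (and on $n$), that every $g$ lies within a uniformly bounded distance of some $g'$ of maximal cone type with $\nu(\cone(g'))=\Theta(\lambda^{-n})$, and that $\nu(\Gamma^n_0)=\Theta(1)$, so both ratios in question are $\Theta(\lambda^{-n})$. Your explicit derivation of the $O(\lambda^{-n})$ upper bound from the definition of $\nu_m$ and of the matching lower bound for maximal types simply fills in detail the paper defers to \cite{Calegari_Fujiwara}.
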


Finally, we state a proposition which describes the typical behavior of a $\nu$-random element
of $\Gamma^n_0$.

\begin{proposition}[\cite{Calegari_Fujiwara}, Lem.~4.10]\label{path_in_one_component}
Let $\gamma$ be a $\nu$-random element of $\Gamma^n_0$. Then as $n \to \infty$,
with probability $1-O(\con{1}^{-\con{2}n})$, apart from a prefix of size $\con{3}\log{n}$, 
the path $\gamma$ is entirely contained in a single maximal component $C$, and
it enters (and stays in) a given maximal component $C$ with probability $\mu(C)$.
\end{proposition}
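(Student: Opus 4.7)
The plan is to exploit the ``funnel'' structure of the Markov chain on $X$. First, I would observe that once the chain enters a maximal component $C$ it cannot leave. Recall $N_{ij}=M_{ij}v_j/(\lambda v_i)$ for the Perron--Frobenius right eigenvector $v$ of $M$; by Coornaert's theorem, any $j$ reachable from $i\in C$ but outside $C$ lies downstream of $C$ in $C(X)$ and hence cannot itself reach any maximal component (this would place two maximals on a single path through $i$). So $v_j=0$, forcing $N_{ij}=0$. Every trajectory $\gamma$ therefore decomposes as a prefix confined to non-maximal vertices, followed by a suffix confined to a single maximal component $C$.

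Second, I would estimate the length $T$ of the non-maximal prefix. The restriction $N_{\mathrm{nm}}$ of $N$ to non-maximal vertices is strictly substochastic with spectral radius $r<1$, controlled by $\max_{C'\text{ non-max}} \lambda_{C'}/\lambda$ using the strict Coornaert inequality $\lambda_{C'}<\lambda$. A standard Perron--Frobenius estimate then gives
$$\Pr[T>k]=\sum_{j\in V_{\mathrm{nm}}}(N_{\mathrm{nm}}^k)_{0,j}=O(r^k).$$
At $k=\con{3}\log n$ with $\con{3}$ large, this is smaller than any prescribed polynomial power of $n$; the catastrophic event that $\gamma$ never enters any maximal component requires $T\ge n$, which has $\nu$-probability at most $O(r^n)$, matching the exponential-in-$n$ error bound appearing in the statement.

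Third, I would identify the absorption distribution with $\mu$. Using the Cesaro convergence $\mu=\lim_{n}\tfrac{1}{m}\sum_{i=0}^{m-1}s^{i+n}_*\nu$ already recorded in the excerpt, the time-$(n+i)$ marginal of $\gamma$ under $\nu$ converges in Cesaro sense to $\mu$, a probability measure supported on $\bigsqcup_C C$. Because $\gamma$ is trapped in whichever maximal component it first enters, the probability of being absorbed into a specific $C$ must coincide with the Cesaro limit of $\Pr[\gamma(n+i)\in C]$, which is $\mu(C)$.

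The main obstacle I anticipate is the first step: confirming rigorously that $v_j=0$ for every non-maximal vertex $j$ downstream of some maximal component, so that the chain really does get trapped, and extracting a uniform spectral radius $r<1$ for $N_{\mathrm{nm}}$ from the finitely many strict inequalities $\lambda_{C'}<\lambda$. Both are bookkeeping exercises built on the structural results of Coornaert quoted in the excerpt, and once they are in place, the prefix estimate and the identification of the absorption distribution follow from standard finite-state Markov chain theory.
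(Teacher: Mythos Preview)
Your argument is correct and is essentially the standard one; note that the paper does not give its own proof of this statement but simply cites \cite{Calegari_Fujiwara}, Lem.~4.10. The trapping step is exactly right: since the Patterson--Sullivan transition matrix has the Parry form $N_{ij}=M_{ij}v_j/(\lambda v_i)$ (this is what the identity $N_{ij}=\nu(\cone(h))/\nu(\cone(g))$ amounts to), and Coornaert's theorem forbids two maximal components on any directed path in $C(X)$, every $j$ reachable from a maximal component but outside it has $v_j=0$. The similarity $N_{\mathrm{nm}}=D^{-1}(M_{\mathrm{nm}}/\lambda)D$ with $D=\mathrm{diag}(v)$ then gives the spectral radius bound $r=\max_{C'\text{ non-max}}\lambda_{C'}/\lambda<1$ directly, so the ``obstacle'' you flag is indeed just bookkeeping. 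Your identification of the absorption law with $\mu$ via the Ces\`aro relation $\mu=\lim\tfrac1n\sum s^i_*\nu$ is also clean and correct.

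One point worth making explicit: you rightly separate the two error regimes, and in doing so you are being more careful than the printed statement. The event $T>\con{3}\log n$ has probability $O(r^{\con{3}\log n})=O(n^{-c})$, which is only polynomially small; the genuinely exponential-in-$n$ bound $O(\con{1}^{-\con{2}n})$ attaches to the event $T\ge n$ that $\gamma$ never reaches a maximal component. The later applications in the paper are consistent with the polynomial bound (e.g.\ the proof of Proposition~\ref{homology_local_limit_theorem} only uses $\Pr(|\gamma_1|\le\con{1})\ge 1/2$), so this is a harmless imprecision in the statement rather than a gap in your reasoning.
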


The collective significance of these propositions can be summarized in the following way:
every $g\in G_n$ is a uniformly bounded distance away from some $h \in G_n$ corresponding to
a path $\gamma \in \Gamma^n_0$ which is generated (apart from a prefix of length $O(\log{n})$)
by one of finitely many ergodic stationary Markov processes, corresponding to one of the
maximal components of $X$.

We will establish properties which hold for typical sequences in each of these stationary
Markov processes. Sequences in different Markov processes can be compared using 
Theorem~\ref{ergodic_at_infinity}. This will let us establish properties which hold for
$\nu$-typical $g\in G_n$, and we will deduce that the properties also hold for
typical $g\in G_n$ (with respect to the uniform measure).

\subsection{Mixing times and a Chernoff-type estimate}

Let $X$ be a finite stationary ergodic Markov chain with transition matrix $N$, 
and $\mu$ the stationary measure. Define measures $\mu$ on $\Gamma^n$ and $\Gamma^\infty$
as before. In our applications, $X$ will be a maximal component $C(X)$ of $X$ as above, and $\mu$
will be $\mu$ conditioned on $C(X)$.

If $\sigma,\gamma \in \Gamma$, define $C_\sigma(\gamma)$ to be the number of times $\sigma$
appears as a subsequence of $\gamma$. Now if $\gamma_n$ is a $\mu$-random element of $\Gamma_n$,
then ergodicity of $X$ implies that
$\frac 1 n C_\sigma(\gamma_n) \to \mu(\sigma)$ in probability.
A {\em Chernoff-type estimate} says that the probability that the deviation 
$|C_\sigma(\gamma_n)-n\mu(\sigma)|$ is of size $\Theta(n^{\delta})$ decays exponentially
in $n$, for $\delta$ in a suitable range (the critical exponent is half the exponent of $n\mu(\sigma)$,
corresponding to the distribution in the central limit theorem). 
For our purposes it is necessary to estimate the probability when $|\sigma|=O(\log{n})$.

Let $\lambda$ be the entropy of $X$ (as above), and let $m=\log_\lambda n = \log{n}/\log{\lambda}$
so that there are $O(n^\ell)$ elements of $\Gamma$ of length $\ell m$.

\begin{proposition}\label{Chernoff_estimate}
Suppose $\sigma$ has length $\ell m$ with $\ell < 1$. Then for any $\epsilon>0$ there
are constants $\con{1}>1$ and $\con{2}>0$ (depending on $X$ and $\ell$ 
but not on $n$ or $\sigma$) so that
$$\Pr\left(|C_\sigma(\gamma_n) - n\mu(\sigma)| < n^{\epsilon + (1-\ell)/2}\right) = 1 - O(\con{1}^{-n^{\con{2}}})$$
In fact, there is a fixed integer $L$ so that for each residue $i$ mod $Lm$, if $\gamma_{i,k}$ is the
subpath of $\gamma$ of length $\ell m$ starting at $i+kLm$, and $C_{\sigma,i}(\gamma_n)$ is the
number of copies of $\sigma$ amongst the $\gamma_{i,k}$, then
$$\Pr\left(|C_{\sigma,i}(\gamma_n) - \frac {n} {Lm} \mu(\sigma)| < n^{\epsilon+(1-\ell)/2}\right) = 1-O(\con{1}^{-n^{\con{2}}})$$
\end{proposition}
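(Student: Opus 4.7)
The plan is to run a standard blocking-with-Bernstein argument, tailored to the logarithmic scaling of $|\sigma|=\ell m$. I would prove the per-residue statement first; the aggregate bound on $C_\sigma(\gamma_n)$ then follows by summing over the $Lm$ residue classes and applying the triangle inequality together with a union bound.

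The first step is to decouple using exponential mixing of $N$. Since the chain on the maximal component is finite and ergodic, the transition matrix has a spectral gap, giving $\|N^k(x,\cdot)-\mu\|_{TV}\le C\rho^k$ for some $\rho\in(0,1)$. Fix a residue class $i$ and consider the $N:=\lfloor n/(Lm)\rfloor$ subpaths $\gamma_{i,0},\ldots,\gamma_{i,N-1}$. Choosing the fixed integer $L$ with $L>\ell$ makes consecutive subpaths non-overlapping, with a gap of at least $(L-\ell)m$ free transitions between them. By resampling from $\mu$ across each such gap, I would build an alternative law $\widetilde\mu$ on $\Gamma^n$ under which the $\gamma_{i,k}$ are i.i.d., each distributed as the $\mu$-measure on paths of length $\ell m$, and for which $\|\widetilde\mu-\mu\|_{TV}\le NC\rho^{(L-\ell)m}$. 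Since $m=\log n/\log\lambda$, taking $L$ larger than the structural constant $\ell+\log\lambda/\log(1/\rho)$ makes this coupling error $O(n^{-K})$ for any preassigned $K$.

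Under $\widetilde\mu$, the indicators $X_k:=\1[\gamma_{i,k}=\sigma]$ are i.i.d.\ Bernoullis with success probability $p:=\mu(\sigma)$. Since $\mu$ is the measure of maximal entropy on the maximal component, Parry's structure theorem gives $p=\Theta(\lambda^{-\ell m})=\Theta(n^{-\ell})$ uniformly over admissible $\sigma$ (while $p=0$ with $C_{\sigma,i}=0$ for inadmissible $\sigma$, for which the bound is trivial), so $Np=\Theta(n^{1-\ell}/\log n)$. Bernstein's inequality then yields
\[
\Pr\!\left(|C_{\sigma,i}-Np|>t\right)\le 2\exp\!\left(-\frac{t^2/2}{Np+t/3}\right),
\]
and plugging in $t=n^{\epsilon+(1-\ell)/2}$ gives an exponent of order $n^{2\epsilon}\log n$ in the moderate-deviation regime $t\lesssim Np$, and of order $n^{\epsilon+(1-\ell)/2}$ in the large-deviation regime $t\gtrsim Np$. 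Either way the tail is at most $\con{1}^{-n^{\con{2}}}$ for suitable $\con{1}>1$, $\con{2}>0$ depending only on $\epsilon$, $\ell$, and $X$; the transfer from $\widetilde\mu$ back to $\mu$ costs only the $O(n^{-K})$ coupling error, which is absorbed.

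The main obstacle is that the aggregate bound on $C_\sigma(\gamma_n)$ requires $\Theta(n/\log n)$ near-independence statements to hold in unison, so the per-gap mixing error must decay faster than $1/n$; this is just barely achievable because each gap has length $\Theta(\log n)$ and mixing is exponential in gap length, which is what forces $L$ to be taken larger than the structural constant above. A secondary subtlety is uniformity in $\sigma$ (whose length grows with $n$): one cannot apply Bernstein unless $\mu(\sigma)$ is controlled on the correct scale, and this is precisely what Parry's uniform estimate $\mu(\sigma)=\Theta(\lambda^{-\ell m})$ supplies, bypassing any case analysis on $\sigma$.
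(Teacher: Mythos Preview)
Your overall architecture is the same as the paper's: block into residue classes with gaps of length $(L-\ell)m$, use the spectral gap to decorrelate, and then apply a Chernoff/Bernstein inequality to nearly independent Bernoullis. The paper also proves the per-residue estimate first and sums over residues.

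There is, however, a genuine quantitative gap in your coupling step. You bound $\|\widetilde\mu-\mu\|_{TV}\le NC\rho^{(L-\ell)m}$, and since $m=\log n/\log\lambda$ and $N\asymp n/\log n$, this is $O(n^{-K})$ for a \emph{fixed} $K$ determined by $L$. But the target tail is $O(\con{1}^{-n^{\con{2}}})$, which is stretched-exponential. When you transfer from $\widetilde\mu$ back to $\mu$, the bad event picks up an additive $O(n^{-K})$, and for large $n$ this polynomial term swamps the Bernstein tail: $\con{1}^{-n^{\con{2}}}+O(n^{-K})=O(n^{-K})$. So the $O(n^{-K})$ is not ``absorbed'' as you claim, and your argument as written yields only a polynomial bound, not the stated one.

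The fix is to avoid total variation altogether and use stochastic domination, which is what the paper does. The spectral-gap estimate gives that, conditional on the entire history up to the start of $\gamma_{i,k+1}$, the probability that $\gamma_{i,k+1}=\sigma$ lies in the window $\mu(\sigma)(1\pm n^{-c})$, where $c$ can be made as large as desired by enlarging $L$. Hence $C_{\sigma,i}$ is stochastically sandwiched between two sums of genuinely i.i.d.\ Bernoullis with parameters $\mu(\sigma)(1\pm n^{-c})$. Bernstein/Chernoff applied to each of these i.i.d.\ sums gives the full stretched-exponential tail with no additive coupling loss; the shift $N\mu(\sigma)n^{-c}$ in the mean is $O(n^{1-\ell-c})$, which for $c>(1-\ell)/2$ is absorbed into the $n^{\epsilon+(1-\ell)/2}$ window. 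Equivalently, one can run a likelihood-ratio argument: the Radon--Nikodym derivative $d\mu/d\widetilde\mu$ is a product over $N$ gaps of factors $1+O(n^{-c})$, hence uniformly bounded once $c>1$, and this transfers the Bernstein tail multiplicatively rather than additively. Either route closes the gap; your Parry estimate $\mu(\sigma)=\Theta(n^{-\ell})$ and the Bernstein computation are then correct as stated.
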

\begin{proof}
This proposition is essentially a special case of a theorem of Dinwoodie \cite{Dinwoodie}, and 
we sketch the proof, referring to \cite{Dinwoodie} for details 
(also compare \cite{Calegari_Walker} \S~2). For simplicity, we assume $X$ is aperiodic (the general
case is very similar).

In fact, the first estimate follows from the second, so we prove the second.
Choose some big fixed integer $L$, and let $i$ be a residue mod $Lm$. Let $\gamma_{i,k}$ be the
subpath of $\gamma$ of length $\ell m$ starting at $i+kLm$. For fixed $i$, 
the successive $\gamma_{i,k}$ are not independent, but their correlation is extremely small.
Explicitly, if $\lambda_1<1$ is the second largest eigenvalue of the transition matrix $N$, then
conditional on any given value of $\gamma_{i,k}$, the probability that the 
first vertex of $\gamma_{i,k+1}$ is equal to some $j$ differs from $\mu(j)$ by 
$\lambda_1^{(L-\ell)m}$. So conditioned on any given value of
$\gamma_{i,k}$, the probability that $\gamma_{i,k+1}$
is equal to $\sigma$ is bounded above by $\mu(\sigma)(1+\lambda_1^{(L-\ell)m}/\mu(\sigma(0)))$ and below by
$\mu(\sigma)(1-\lambda_1^{(L-\ell)m}/\mu(\sigma(0)))$ where $\sigma(0)$ is the initial vertex of $\sigma$. If
$L$ is sufficiently large, these bounds are approximately $\mu(\sigma)(1\pm n^{-\con{3}})$
where $\con{3}$ may be taken to be as large as we like.

Let $C_{\sigma,i}(\gamma_n)$ be the number of copies of $\sigma$ amongst the $\gamma_{i,k}$. By
the argument above, $C_{\sigma,i}(\gamma_n)$ may be estimated from the Chernoff bound for 
{\em independent} Bernoulli variables; see e.g.\/ \cite{Stroock}, Thm.~1.3.13. 
Explicitly, we obtain an estimate of the form
$$\Pr\left(|C_{\sigma,i}(\gamma_n) - \frac {n} {Lm} \mu(\sigma)(1\pm n^{-\con{3}})| < n^{\epsilon+(1-\ell)/2}\right) = 1-O(\con{1}^{-n^{\con{2}}})$$
Taking $\con{3}$ sufficiently large, we may absorb it into the $n^{\epsilon + (1-\ell)/2}$ term, at
the cost of increasing $\epsilon$ an arbitrarily small amount.

Summing over $i$ we get
$$\Pr\left(|C_{\sigma}(\gamma_n) - \mu(\sigma)| < Lm\cdot n^{\epsilon+(1-\ell)/2}\right) = 1 - O(Lm\cdot \con{1}^{-n^{\con{2}}})$$
Since $Lm = O(\log{n})$, by adjusting constants the result holds.
\end{proof}

\begin{remark}
It is worth spelling out the meaning of $\lambda$, $\Gamma^n$, $\mu$ and so on for a simple
example. For example, if $F$ is the free group of rank $k$, after fixing a free generating
set, there is a combing of $F$ consisting of the regular language of all reduced words in the generators.
Then $\Gamma^n$ consists of the set of reduced words of length $n$, the entropy
$\lambda$ is equal to $2k-1$ since $|\Gamma^n|=2k\cdot(2k-1)^{n-1}$, and $\mu$ is the uniform
probability measure on $\Gamma^n$ for each $n$.
\end{remark}

\begin{remark}
The meaning of Proposition~\ref{Chernoff_estimate} is somewhat hard to glean directly from the
statement. It can be explained informally as follows. We would like to prove a strong equidistribution theorem
for the set of subpaths of $\gamma$ of length $\ell m$. If these subpaths were independent,
the Chernoff bound would give us what we want. But the subpaths are not independent --- in
fact, two subpaths with a big overlap are highly dependent on each other. However, the correlations
between subpaths decay {\em exponentially quickly}, and therefore a collection of subpaths with
the property that the distance (in $\gamma$) between distinct subpaths is at least $Lm$ for some
fixed $L \gg 1$ are ``independent enough'' for a modified Chernoff bound to apply. The set of
all subpaths of length $\ell m$ can be partitioned into $Lm$ subsets so that each subset satisfies
the desired separation property, and we get an equidistribution result that holds on each
subset with high probability. Combining the subsets together has the effect merely of multiplying
the error by $Lm = O(\log(n))$ which can be absorbed into one of the constants.
\end{remark}

\subsection{Anti-alignment}

\begin{definition}
Two oriented geodesic segments $\gamma_1,\gamma_2$ in $C_S(G)$ are {\em $K$ anti-aligned} if there
is $g\in G$ so that the terminal point of $g(\gamma_1)$ is within distance $K$ of the initial point
of $\gamma_2$, and vice versa.
\end{definition}

By the defining property of $\delta$-hyperbolicity, the translate of $\gamma_1$ is contained
in the $K+2\delta$-neighborhood of the translate of $\gamma_2$, and vice versa. Note the
property of being $K$ anti-aligned is a property of {\em orbit classes} of geodesic segments
in $C_S(G)$ under the action of $G$. Given any path $\gamma \in \Gamma$, the orbit class of
$\gamma$ in $C_S(G)$ is well-defined, so it makes sense to ask if a pair of elements of $\Gamma$
are $K$ anti-aligned.

\begin{proposition}\label{not_many_anti_aligned}
Fix $L>2$ and some constant $K$. 
Let $\gamma$ be a $\nu$-random element of $\Gamma^n_0$, and let $\gamma_i$ denote
the successive subpaths of $\gamma$ of length $Lm$.
Then there is some $\epsilon>0$ and constant $\con{1},\con{2}$ so that
$$\Pr\left(\#\lbrace i:\gamma_i \text{ $K$ anti-aligns some } \gamma' \subset \gamma \rbrace < n^{1-\epsilon}\right)
= 1 - O(\con{1}^{-n^{\con{2}}})$$
\end{proposition}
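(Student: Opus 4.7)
The plan is to reduce the claim to a counting problem for orbit classes of length-$Lm$ subwords of $\gamma$, bound the expectation of the bad count using asymptotic equipartition for the Patterson--Sullivan Markov chain, and then upgrade the expectation estimate to the claimed exponential concentration via Proposition~\ref{Chernoff_estimate}.

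First I would use $\delta$-hyperbolicity to reduce anti-alignment to a constraint on orbit classes. If $\gamma_i$ of length $Lm$ is $K$-anti-aligned with some subpath $\gamma' \subset \gamma$, then the appropriate translates of $\gamma_i$ and $\gamma'$ fellow-travel in reversed orientation within distance $K+2\delta$, so $\word(\gamma')$ must lie in a finite set $T(\word(\gamma_i))$ whose cardinality is bounded by a constant $C = C(K,\delta,S)$. Hence the number of bad aligned $\gamma_i$ is controlled by the number of $i$ such that some orbit class in $T(\word(\gamma_i))$ appears as a subword of $\gamma$, and the problem reduces entirely to statistics of length-$Lm$ orbit classes in the Markov chain.

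Next I would bound the expectation. By a Shannon--McMillan--Breiman-type argument for the ergodic Markov chain, most types $\sigma$ of length $Lm$ are \emph{typical}, in the sense that $\mu(\sigma) \le \lambda^{-Lm(1-o(1))} = n^{-L+o(1)}$. For such a typical $\gamma_i$, summing over the $\sim n$ possible positions of $\gamma'$ in $\gamma$ and over the $C$ anti-aligning types gives an expected contribution of $O(n^{1-L+o(1)})$ per $\gamma_i$; summing over the $n/Lm$ aligned positions yields expected bad count $O(n^{2-L+o(1)}/\log n)$, which is $o(1)$ for $L > 2$. The number of \emph{atypical} $\gamma_i$ (with $\mu$ significantly exceeding $\lambda^{-Lm}$) has expectation $O(n^{1-c}/\log n)$ for some $c > 0$, by exponential decay of the $\mu$-mass of atypical types; this contribution is what forces the slack $n^{1-\epsilon}$ in the final statement, rather than $o(1)$.

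Finally, to upgrade from expectation bounds to the claimed tail probability $1 - O(\con{1}^{-n^{\con{2}}})$, I would apply Proposition~\ref{Chernoff_estimate} to counts of short sub-blocks of length $\ell m$ ($\ell < 1$) inside each $\gamma_i$: these counts simultaneously certify atypicality and detect the specific positional matches required for anti-alignment, and the spectral gap $\lambda_1 < 1$ makes well-separated sub-blocks essentially independent. The main obstacle is coordinating two simultaneous union bounds --- one over the $\Theta(n^L)$ possible orbit classes of length $Lm$, and one over the $n$ positions of $\gamma'$ inside $\gamma$. The hypothesis $L > 2$ provides exactly the margin $n^{2-L} < n^0$ needed to absorb these union bounds while retaining a sub-linear bound on the bad count, and correlations between $\gamma_i$ and nearby $\gamma'$ are handled by the block-independence gap already built into the proof of Proposition~\ref{Chernoff_estimate}.
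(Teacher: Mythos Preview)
Your reduction of anti-alignment to a bounded set of orbit classes is correct and matches the paper. But the route through Shannon--McMillan--Breiman and a typical/atypical dichotomy is an unnecessary detour: the Patterson--Sullivan chain is the measure of \emph{maximal entropy}, so every path $\sigma$ of length $Lm$ in a maximal component already satisfies $\mu(\sigma) = \Theta(\lambda^{-Lm}) = \Theta(n^{-L})$ with uniform constants --- there are no atypical paths at all. The paper exploits this directly and in the reverse direction: rather than estimating $\mu(T(\gamma_i))$, it \emph{conditions} on $\gamma$ outside a window $\gamma'$ of length $Lm$. The $\Theta(n^{L})$ possible fillings are then approximately equiprobable, and since each of the (now fixed) $\gamma_i$ determines only boundedly many anti-aligning words, at most $O(n)$ fillings can anti-align some disjoint $\gamma_i$. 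This yields a uniform conditional bound $\Pr(\gamma'\text{ bad}\mid \gamma\setminus\gamma') = O(n^{1-L})$, from which both the expectation estimate and the stretched-exponential tail follow without any equipartition argument.

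Your sketch also has two genuine gaps. First, you sum over all positions of $\gamma'$, including those overlapping $\gamma_i$, and assert that ``correlations between $\gamma_i$ and nearby $\gamma'$ are handled by the block-independence gap'' --- but overlapping segments are not even approximately independent, and this case is not covered by Proposition~\ref{Chernoff_estimate}. The paper disposes of it by a halving trick: if $\gamma_i$ of length $Lm$ is $K$-anti-aligned with some $\gamma'$, then there are \emph{disjoint} subsegments of length at least $Lm/2$ that are $(K+2\delta)$-anti-aligned, so one may replace $L$ by $L/2 > 1$ and assume disjointness throughout. Second, your step~(5) invokes Proposition~\ref{Chernoff_estimate} on sub-blocks of length $\ell m$, but that proposition controls counts of a \emph{fixed} word $\sigma$, not the global event ``$\gamma_i$ anti-aligns something in $\gamma$''; it is not clear how sub-block counts certify the latter. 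The paper's uniform conditional bound instead gives stochastic domination of the bad count by a sum of independent Bernoulli variables (after restricting to disjoint windows and revealing them sequentially), and then the ordinary Chernoff inequality finishes.
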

\begin{proof}
By Proposition~\ref{path_in_one_component}, we can assume that apart from a prefix of size $O(\log{n})$,
$\gamma$ is contained in some maximal component $C$. 

Now, if $\gamma_i,\gamma'$ subpaths of length $Lm$ are $K$ anti-aligned, then there are
{\em disjoint} $\gamma_i' \subset \gamma_i$ and $\gamma'' \subset \gamma'$ of length at least $Lm/2$
which are $K+2\delta$ anti-aligned. Replace $K$ by $K+2\delta$ and $L$ by $L/2$, and by abuse of
notation let $\gamma_i$ denote successive subpaths of length $Lm$. Then we need only show that
$$\Pr\left(\#\lbrace i:\gamma_i \text{ $K$ anti-aligns some } \gamma' \text{ disjoint from } \gamma_i \rbrace < n^{1-\epsilon}\right)
= 1 - O(\con{1}^{-n^{\con{2}}})$$

For any given location of $\gamma'$ in $\gamma$, there are $O(n^L)$ distinct elements of
$\Gamma^n_0$ which agree with $\gamma$ outside $\gamma'$, all with approximately the same
$\mu$-measure. If $\gamma'$ anti-aligns some $\gamma_i$, then $\gamma'$ is determined by
$\gamma_i$, by $h_1,h_2\in G$ of length $\le K$, and by the initial vertex of $\gamma'$ in $X$.
Hence for each $\gamma_i$ there are only $\con{3}$ possible choices for $\gamma'$, and
therefore only $O(n)$ choices where we range over all $i$. In particular, conditioning on
$\gamma$ outside $\gamma'$, the probability that a given $\gamma'$ $K$ anti-aligns some $\gamma_i$
that it is disjoint from is at most $O(n^{1-L})$. The proposition follows.
\end{proof}

\subsection{Comparing components}

Proposition~\ref{Chernoff_estimate} lets us very accurately estimate the number
of copies of a subword $\sigma$ in a random word $\gamma_n$ of length $n$
in a maximal component $C$ of $X$, at least when $|\sigma|=\ell m$ with $\ell<1$
for $m=\log_\lambda n$.

However, it is necessary to compare such distributions for {\em different}
maximal components. We introduce some notation.

For each maximal component $C$, let $\mu|_C$ be the conditional
probability measure with support in $C$. That is, define $\mu(\cdot)|_C = \mu(\cdot)/\mu(C)$ in $C$, and $\mu(\cdot)|_C=0$ on the complement
of $C$. By abuse of notation, this defines a probability measure $\mu|_C$ on $\Gamma$,
defined on cylinder sets by
$$\mu|_C(\gamma)=\mu|_C(\gamma(0))N_{\gamma(0)\gamma(1)}\cdots N_{\gamma(n-1)\gamma(n)}$$
Proposition~\ref{Chernoff_estimate} says that for $\gamma_n$ a $\nu$-random path
in $X$ conditioned to enter a specific component $C$, we have 
$|C_\sigma(\gamma_n)-\mu|_C(\sigma)|<n^{\epsilon+(1-\ell)/2}$ with exponentially
few exceptions. Under the evaluation map, for each $M$
the measure $\mu|_C$ on $\Gamma^M$ pushes forward
to a probability measure $\mu|_C$ on $G_M$. We would like to compare the
measures $\mu|_C$ on $G_M$ for different maximal components $C$.

It is too much to hope that these measures will be {\em equal}. However, it turns out
that each $\mu|_{C_i}$ can be obtained from $\mu|_{C_j}$ by (roughly speaking)
a random convolution process, as described in the following lemma:

\begin{lemma}\label{mu_i_measures_roughly_equal}
Suppose the Cayley graph $C_S(G)$ is $\delta$-hyperbolic. Then the following is true:
for each pair of big components $C_i$, $C_j$ of $X$, and for each $M$,
there is a map $f_{i,j}:G_M \to \text{Prob}(G_M)$ satisfying
\begin{enumerate}
\item{for all $g$ and all $h$ in the support of $f_{i,j}(g)$ there is $a\in G$ with $|a|\le \delta$ 
so that $d(g,aha^{-1})\le 2\delta$; and}
\item{$\mu|_{C_j}(h) = \sum_g \mu|_{C_i}(g)f_{i,j}(g)(h)$.}
\end{enumerate}
We express bullet~(2) by saying that $\mu|_{C_j}$ is obtained by {\em convolving} 
$\mu|_{C_i}$ with $f_{i,j}$.
\end{lemma}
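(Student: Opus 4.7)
The goal is to build, for each pair of maximal components $C_i, C_j$ and each $M$, a coupling $f_{i,j}$ between the pushforwards $\mu|_{C_i}$ and $\mu|_{C_j}$ on $G_M$ whose support lies on pairs $(g,h)$ differing by short conjugation and a $2\delta$-translation, as in condition~(1). I will first match starting vertices of length-$M$ paths in $C_i$ and $C_j$, then extend this to a path-level coupling using $\delta$-hyperbolicity.

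Each vertex $v$ of $X$ is a cone type, realized (non-uniquely) by elements $b \in G$, and $\mu|_{C_i}$ on $\Gamma^M$ is obtained by choosing a starting vertex $v\in V_{C_i}$ with probability $\mu|_{C_i}(v)$ and running the Markov chain for $M$ steps. The first step is to exhibit, for each $v \in V_{C_i}$, a vertex $w \in V_{C_j}$ and a short element $a \in G$ with $|a|\le\delta$ such that an element realizing cone type $v$ lies within $\delta$ of $a$ times an element realizing cone type $w$. This follows from $\delta$-hyperbolicity combined with ergodicity at infinity (Theorem~\ref{ergodic_at_infinity}): both $\mu|_{C_i}$ and $\mu|_{C_j}$ weight cone types according to the Patterson--Sullivan measure class, which is $G$-invariant up to equivalence, so cone types of either component densely shadow those of the other on the scale of $\delta$. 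One then distributes the mass $\mu|_{C_i}(v)$ over these matched pairs $(w,a)$ so that the marginal on $w$ is exactly $\mu|_{C_j}(w)$, which can be arranged by a Hall-type measure-matching argument, using that both $\mu|_{C_i}$ and $\mu|_{C_j}$ are stationary for Markov chains on their respective components with the same dominant eigenvalue $\lambda$.

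Given the vertex-level coupling, I extend to paths as follows. Conditioned on matched $(v,w)$ with connecting element $a$, run both Markov chains in parallel; since each chain produces a genuine geodesic word in $C_S(G)$ (paths in $X$ are prefix-extensions of combing geodesics), the two resulting geodesic segments of length $M$ fellow-travel at distance $\le\delta$ after translation by $a$. If $g=\eval(\word(\gamma))$ and $h=\eval(\word(\gamma'))$, the fellow-traveling property together with the triangle inequality yields $d(g,aha^{-1}) \le 2\delta$, giving condition~(1). Define $f_{i,j}(g)(h)$ to be the conditional probability of producing $h$ given $g$ in this coupling; condition~(2) holds by construction since the marginal on $h$ is $\mu|_{C_j}$.

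The main obstacle is the \emph{exactness} of the matching: the vertex-level coupling must have marginal exactly $\mu|_{C_j}$ on $V_{C_j}$, and the extension to paths must preserve this at every step without accumulating drift. This is delicate because the detailed combinatorics of $N$ differ between $C_i$ and $C_j$, so one cannot simply transport transition probabilities verbatim. I expect the matching can be constructed directly from the cone-type geometry: since the $\nu$-weight of a cone is a function of cone type, and since cones of matched cone types in $C_i$ and $C_j$ decompose isomorphically (up to conjugation by $a$) by $\delta$-hyperbolicity, one obtains a natural iterative pairing that respects both measures exactly. Verifying the existence of this pairing is where the bulk of the technical work will lie.
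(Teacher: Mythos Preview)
Your approach has a genuine gap in the second step. You write that after matching starting vertices $v,w$ with a connecting element $a$ of length $\le\delta$, you can ``run both Markov chains in parallel'' and the resulting length-$M$ geodesic segments will fellow-travel at distance $\le\delta$ after translation by $a$. This is false: in a $\delta$-hyperbolic space two geodesics that start $\delta$ apart will typically \emph{diverge} exponentially, not fellow-travel. Fellow-traveling requires that the geodesics head toward the same point at infinity, and nothing in your construction arranges this. The Markov chains on $C_i$ and $C_j$ are independent random processes, and coupling their starting vertices does not couple their directions. So the claimed bound $d(g,aha^{-1})\le 2\delta$ does not follow, and the Hall-type matching on vertices, even if it can be made exact, does not extend to paths in the way you describe.

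The paper's argument avoids this problem by a different mechanism. Rather than attempting any step-by-step coupling of the two chains, it invokes the ergodic theorem (Kakutani's random ergodic theorem, or the Chernoff estimate of Proposition~\ref{Chernoff_estimate}) to say that for almost every \emph{single} infinite path $\gamma_i$ in $C_i$, the empirical distribution of its length-$M$ subwords is exactly $\mu|_{C_i}$. Likewise for a typical $\gamma_j$ in $C_j$. Now ergodicity of the $G$-action on $\partial_\infty G$ (Theorem~\ref{ergodic_at_infinity}) guarantees the existence of such typical $\gamma_i,\gamma_j$ and an element $a\in G$ with $a\cdot\gamma_i$ and $\gamma_j$ converging to the \emph{same} boundary point; this is what forces them to fellow-travel synchronously after a bounded initial segment. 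The coupling $f_{i,j}$ is then just the empirical pairing of length-$M$ subwords along these two specific rays. The exactness of the marginals comes for free from the ergodic theorem, so no measure-matching argument is needed.
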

\begin{proof}
In fact, the proof of this lemma is a very simple
trick, which is a variation on the main trick of \cite{Calegari_Fujiwara}.
By Kakutani's random ergodic theorem (see \cite{Kakutani}; alternately, this follows
from Proposition~\ref{Chernoff_estimate}) 
and the definition of $\mu_{i,m}$, 
almost every infinite path in $X$ that enters $C_i$
is composed of subpaths of length $M$ that are distributed in $G_M$ according to $\mu|_{C_i}$.
Call an infinite path (i.e.\/ an element of $\Gamma$) 
{\em $C_i$-typical} if its subpaths have this property.
Let $\gamma_i$ be a $C_i$-typical infinite path, and by abuse of notation, let $\gamma_i$
denote the corresponding infinite geodesic path in $G$ starting at $\id$, limiting to
some point in $\partial_\infty G$. By ergodicity of the action of $G$ on $\partial_\infty G$ 
(i.e.\ Theorem~\ref{ergodic_at_infinity}), there are typical $\gamma_i$ and $\gamma_j$ and some
$a\in G$ so that $a\cdot \gamma_i$ and $\gamma_j$ have the same endpoint, and therefore outside
some compact subset, they synchronously fellow-travel. 
It follows that we can subdivide $a\cdot\gamma_i$
and $\gamma_j$ into subpaths of length $M$, throwing away finitely many at the start, so that corresponding
subpaths are each contained in the $\delta$-neighborhood of each other. If two associated
subpaths evaluate to $g$ and $h$ in $G_M$ respectively, then $g=a_1ha_2$ where each of $a_1,a_2$ has length at most $\delta$.
Hence $d(g,a_1ha_1^{-1})\le 2\delta$. So define $f_{i,j}(g)(h)$ to be the probability that a subpath
of $\gamma_i$ evaluating to $g$ fellow-travels (as above) a subpath of $\gamma_j$ evaluating to $h$.
\end{proof}

We conclude that each individual measure $\mu|_{C_i}$ can be convolved (in the sense above)
by a weighted average $\sum p_jf_{i,j}$ to obtain a single probability measure
$\mu'$ on $G_M$. Now, though it might not be true
that $\mu'(v)=\mu'(v^{-1})$ for all $v \in G_M$, this is {\em approximately} 
true, up to convolution in the sense of Lemma~\ref{mu_i_measures_roughly_equal}:

\begin{lemma}\label{mu_approximate_inverse}
Suppose the Cayley graph $C_S(G)$ is $\delta$-hyperbolic. Then the following is true:
for each $M$ there is a map $f:G_M \to \text{Prob}(G_M)$ satisfying
\begin{enumerate}
\item{for all $g$ and all $h$ is the support of $f(g)$ there is $a\in G$ with $|a|\le \delta$ 
so that $d(g,ah^{-1}a^{-1})\le 2\delta$; and}
\item{$\mu'(h) = \sum_g \mu'(g)f(g)(h)$.}
\end{enumerate}
\end{lemma}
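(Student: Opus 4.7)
The plan is to adapt the trick from the proof of Lemma~\ref{mu_i_measures_roughly_equal}, replacing the forward-direction fellow-travel of two typical rays by a fellow-travel of a typical ray with the backward half of a bi-infinite geodesic; reading that backward half in its natural (forward-$t$) direction along $\tilde\gamma$ versus as a ray from a shifted basepoint is exactly what introduces the inversion. By Lemma~\ref{mu_i_measures_roughly_equal}, the measure $\mu'$ is obtained from any single $\mu|_{C_i}$ by a convolution of the form in (1)--(2) of that lemma, and such convolutions compose; so it suffices to prove the corresponding identity with $\mu'$ replaced on the left by $\mu|_{C_i}$ and on the right by $\mu|_{C_j}$ pushed forward under inversion, for some fixed pair of maximal components.

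To set up the pairing, I would fix a $C_i$-typical bi-infinite geodesic $\tilde\gamma$ through $\id$ whose positive half is a $C_i$-typical ray limiting to $\xi^+$, and let $\xi^-$ be the second endpoint; I would also fix a $C_j$-typical ray $\gamma'$ from $\id$ limiting to $\eta$. Applying Theorem~\ref{ergodic_at_infinity} exactly as in Lemma~\ref{mu_i_measures_roughly_equal}, find $a\in G$ with $a\cdot\xi^- = \eta$; then $a\cdot\tilde\gamma|_{t\le 0}$, reparameterized as a geodesic ray from $a$, and $\gamma'$ share a boundary point at infinity, so they $\delta$-synchronously fellow-travel past a compact initial segment. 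This pairs each length-$M$ subpath of $\gamma'$ evaluating to $h$ (a sample from $\mu|_{C_j}$) with a length-$M$ subpath of $\tilde\gamma$ on its backward half; read in the positive-$t$ direction along $\tilde\gamma$ that subpath evaluates to $g' = \tilde\gamma(-k-M)^{-1}\tilde\gamma(-k)$, but read as a subpath of $a\cdot\tilde\gamma|_{t\le 0}$ running outward from $a$ it evaluates to $(g')^{-1}$. A standard $\delta$-fellow-travel comparison of the four endpoints then yields $a_1,a_2\in G$ with $|a_1|,|a_2|\le \delta$ and $h = a_1(g')^{-1}a_2^{-1}$, which rearranges to $d(g', ah^{-1}a^{-1}) \le 2\delta$ for $a := a_1$, giving property~(1). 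Setting $f(g)(h)$ to be the conditional law of $h$ given $g = g'$ under this pairing, and averaging over $(i,j)$ with the weights defining $\mu'$, produces the kernel required in (2).

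The main obstacle is showing that the distribution of the elements $g'$, i.e.\ of the forward-$t$ subpaths of $\tilde\gamma$ on its backward half, is itself a convolution of $\mu|_{C_i}$ in the sense of Lemma~\ref{mu_i_measures_roughly_equal}, so that in the final averaging step it collapses cleanly into $\mu'$ rather than introducing a foreign measure on $G_M$. This is the analogue for the reverse Markov process on $X$ of the main statement of Lemma~\ref{mu_i_measures_roughly_equal}, and should follow from the same ergodicity trick applied to bi-infinite geodesics in place of rays from $\id$, but it is the point that requires the most care. The secondary subtlety, common to both lemmas, is justifying the exact equality $a\cdot\xi^- = \eta$ from ergodicity on $\partial_\infty G$ rather than merely approximate equality; for this we follow the approximation argument of the previous proof, noting that any bounded mismatch of endpoints is absorbed harmlessly into the $2\delta$ appearing in (1).
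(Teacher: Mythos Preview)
Your approach is in the right spirit but takes an unnecessarily complicated route, and the obstacle you flag---controlling the subword distribution on the backward half of a bi-infinite geodesic---is real and not obviously resolved by the ergodicity trick you invoke. The paper avoids this entirely with a simpler device.

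Rather than working with bi-infinite geodesics and ergodicity at $\partial_\infty G$, the paper stays with \emph{finite} paths. Fix $N\gg M$ large enough that, by Proposition~\ref{Chernoff_estimate}, almost every $\gamma\in\Gamma_0^N$ has its length-$M$ subword counts within $N^{1-c}$ of $\mu|_{C_i}$ (for whichever maximal component $C_i$ it enters); call such a path \emph{almost typical}. Now invoke Proposition~\ref{measures_quasi_equivalent}: since almost-typical paths have nearly full measure in $G_N$, there exist almost-typical $\gamma,\gamma'\in\Gamma_0^N$ evaluating to $g,g'\in G_N$ with $d(g^{-1},g')\le C$ for a uniform constant. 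The reversed-and-translated path $\gamma^{-1}$ from $\id$ to $g^{-1}$ is \emph{not} an element of $\Gamma$, but it synchronously $\delta$-fellow-travels $\gamma'$ (which \emph{is} in $\Gamma$) after discarding bounded suffixes. Since $C_\sigma(\gamma)=C_{\sigma^{-1}}(\gamma^{-1})$ tautologically, the length-$M$ subwords of $\gamma^{-1}$ are exactly the inverses of those of $\gamma$, while the subwords of $\gamma'$ are distributed as $\mu|_{C_j}$; the fellow-traveling pairs them as in~(1). Letting $N\to\infty$ with $M$ fixed gives~(2).

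The point is that both $\gamma$ and $\gamma'$ live in the combing, so their subword statistics are controlled directly by the Chernoff estimate---no analysis of a reverse Markov process or of backward halves is needed. Your bi-infinite setup forces you to understand the subword law on a ray that is not itself a combing path, which is exactly the difficulty the finite-$N$ trick is designed to dodge.
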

\begin{proof}
Choose some $N\gg M$, so that with probability $1-O(\con{1}^{-n^{\con{2}}})$, a random
path $\gamma$ in $\Gamma^N$ satisfies $|C_\sigma(\gamma)-N\mu|_{C_i}(\sigma)|<N^{1-\con{3}}$
for every $\sigma\in \Gamma^M$, where $C_i$ is the big component that $\gamma$ enters
(the existence of such an $N$ given $M$ follows from Proposition~\ref{Chernoff_estimate}).

We call a $\gamma$ for which such an estimate holds (for some $i$) {\em almost typical}.
By Proposition~\ref{measures_quasi_equivalent}, there are almost typical paths
$\gamma$ and $\gamma'$ evaluating to $g$ and $g'$ in $G_N$ respectively, and satisfying
$d(g^{-1},g')\le \con{4}$.

Let $\gamma^{-1}$
denote the path in $G$ from $\id$ to $g^{-1}$ obtained by reversing and translating
$\gamma$ (note that $\gamma^{-1}$
will not typically be an element of $\Gamma$). Since subpaths of $\gamma^{-1}$ are in bijection with
subpaths of $\gamma$ but oriented oppositely, $C_\sigma(\gamma)=C_{\sigma^{-1}}(\gamma^{-1})$
for all $\sigma$ of length $M$.
Since $\gamma^{-1}$ and $\gamma'$ synchronously fellow-travel, after throwing away suffixes of
uniformly bounded length (depending only on $\con{4}$), 
we can pair subpaths of $\gamma^{-1}$ and $\gamma'$ of length $M$ so that 
corresponding subpaths are each contained in the $\delta$-neighborhood of each other. 
Taking $N \to \infty$ (for fixed $M$) the proof follows.
\end{proof}

\section{$\scl$ of random geodesics in hyperbolic groups}\label{random_geodesic_section}

We now have nearly all the necessary tools to estimate $\scl$ on random elements in hyperbolic
groups. There is one significant additional complication for hyperbolic groups $G$ for which
$H_1(G)$ has positive rank, namely that $\scl$ is only defined on elements in the commutator
subgroup.

We will show in \S~\ref{homology_section} that the relative proportion of $[G,G]$
in the set of elements of length $[n-\con{1},n+\con{1}]$ (for a suitable constant $\con{1}$)
is of size $O(n^{-k/2})$ where $k$ is the rank of $H_1(G)$; in particular, it is
{\em polynomial} in $n$. It follows that properties of words that hold with probability
$1-O(\con{2}^{-n^{\con{3}}})$ in $G_n$ will also hold with a similar estimate in probability
for words conditioned to lie in $[G,G]$.

\subsection{Almost pairing subwords}

\begin{proposition}\label{cl_upper_bound}
Let $G$ be a hyperbolic group, and let $S$ be a generating set. Let $\lambda$ be the
growth rate for $G$ (i.e.\/ the number such that $|G_n|=\Theta(\lambda^n)$). Then for any
$\epsilon>0$, there are constants
$\con{1}>1$, $\con{2}>0$ so that if $g$ is a random element of $G_n$, with probability
$1-O(\con{1}^{-n^{\con{2}}})$ the following holds:
\begin{enumerate}
\item{there is some $h$ with $|h|_S\le 8\delta \cdot n(\log{\lambda}+\epsilon)/\log{n}+o(n/\log{n})$; and}
\item{the product $gh^{-1}$ is in $[G,G]$; and}
\item{there is an estimate $\cl(gh^{-1})\le n(\log{\lambda}+\epsilon)/2\log{n}$.}
\end{enumerate}
\end{proposition}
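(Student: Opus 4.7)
The plan is to produce, for a $\nu$-random $g \in G_n$, a short element $h$ and an expression of $gh^{-1}$ as a product of approximately $n\log\lambda/(2\log n)$ commutators. I partition $g$ into $2K$ consecutive subwords $\sigma_1,\ldots,\sigma_{2K}$ of length $M = \ell m$ (with $m = \log n/\log\lambda$ and $\ell<1$ chosen close to $1$); pair the subwords with their approximate inverses using the approximate inverse-symmetry of the Patterson--Sullivan measure; and convert each pair into one commutator modulo a short correction, accumulating these corrections into $h$.

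\textbf{Counting and pairing.} Set $\ell = \log\lambda/(\log\lambda+\epsilon/2)<1$, so $2K = \lfloor n/M\rfloor \le n(\log\lambda+\epsilon/2)/\log n$. By Proposition~\ref{path_in_one_component}, outside a Chernoff-small event, $g$ lies in a single maximal component $C$ after an $O(\log n)$ prefix (which we simply absorb into $h$). Applying Proposition~\ref{Chernoff_estimate} with Chernoff parameter $\epsilon_0 = (1-\ell)/4$, union-bounded over the at most $\lambda^M = n^\ell$ possible subwords $\sigma$, gives $|C_\sigma(g) - 2K\mu|_C(\sigma)| \le n^{3(1-\ell)/4}$ simultaneously for all $\sigma$. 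Lemmas~\ref{mu_i_measures_roughly_equal} and \ref{mu_approximate_inverse} together imply that the averaged measure $\mu'$ on $G_M$ is invariant under inversion up to short-conjugation convolution; combining with the Chernoff counts, for each $\sigma$ the count of $\sigma_j$ within short-conjugation distance of $\sigma^{-1}$ matches the count within short-conjugation distance of $\sigma$, up to an additive error $O(n^{3(1-\ell)/4})$ per type. Summing over the $\le n^\ell$ types, the total imbalance is at most $O(n^{(3+\ell)/4}) = o(n/\log n)$, so there is a partial involution $\pi$ on $\{1,\ldots,2K\}$, missing at most $o(n/\log n)$ indices, such that for each pair $(i,\pi(i))$ one has $\sigma_{\pi(i)} = \eta_i\cdot a_i\sigma_i^{-1}a_i^{-1}$ with $|a_i|_S\le\delta$ and $|\eta_i|_S\le 2\delta$.

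\textbf{Inductive commutator reduction.} For a matched pair $(i,\pi(i))$, the identity
\[
\sigma_i\,u\,\sigma_{\pi(i)} = [\sigma_i,\, u\eta_i a_i]\cdot u\eta_i\qquad\text{where }u = \sigma_{i+1}\cdots\sigma_{\pi(i)-1}
\]
replaces the pair by one commutator followed by a tail differing from $u$ only by the short correction $\eta_i$. Extracting the commutator to the front of the word by global conjugation (which preserves commutator count) gives $g = [\alpha_1,\beta_1]\cdot g'$ with $g'$ having two fewer matched subwords and one extra short correction. Iterating $K$ times produces $g = \prod_{j=1}^{K}[\alpha_j,\beta_j]\cdot h$ where $h$ is the concatenation of the $\eta_i$'s and the $o(n/\log n)$ unpaired subwords. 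Thus $|h|_S \le 2\delta K + o(n/\log n) \le 8\delta\cdot n(\log\lambda+\epsilon)/\log n + o(n/\log n)$, and $gh^{-1}\in[G,G]$ with $\cl(gh^{-1}) \le K \le n(\log\lambda+\epsilon)/(2\log n)$. Proposition~\ref{measures_quasi_equivalent} then transfers the conclusion from $\nu$-random to uniformly-random $g \in G_n$.

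\textbf{Main obstacle.} The delicate point is the joint balancing of $\ell$ and $\epsilon_0$: Chernoff forces $\ell < 1$ strictly, but the commutator count $K = n/(2M)$ exceeds $n(\log\lambda+\epsilon)/(2\log n)$ unless $\ell$ is close to $1$, and simultaneously the total Chernoff imbalance $O(n^{(3+\ell)/4})$ must remain $o(n/\log n)$ to not overwhelm the $\epsilon$-slack in commutator count. The choice $\ell = \log\lambda/(\log\lambda+\epsilon/2)$ with $\epsilon_0 = (1-\ell)/4$ is what makes everything close. A secondary subtlety is ensuring the inductive commutator reduction keeps the error-bookkeeping additive rather than multiplicative: choosing at each step an \emph{innermost} matched pair (so that the intervening word $u$ contains no unresolved pairs) and immediately extracting the commutator to the front of the word prevents the short corrections $\eta_i$ from being conjugated repeatedly into long elements.
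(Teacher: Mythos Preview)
Your proof is correct and follows essentially the same approach as the paper: partition the geodesic into subwords of length $\ell m$, use the Chernoff estimate together with Lemmas~\ref{mu_i_measures_roughly_equal} and \ref{mu_approximate_inverse} to pair most subwords with approximate inverses, and cancel each pair as a single commutator plus a short correction that accumulates into $h$. Your version supplies explicit choices of $\ell$ and $\epsilon_0$, an explicit commutator identity, and a quantitative imbalance bound $O(n^{(3+\ell)/4})$, but the strategy and the ingredients are exactly those of the paper's (terser) proof.
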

\begin{proof}
The proof follows by assembling the results of the previous section. 
Let $\gamma \in \Gamma_0^n$ correspond to $g$. By Proposition~\ref{measures_quasi_equivalent},
it suffices to let $\gamma$ be a $\nu$-random element of $\Gamma_0^n$.
With the desired probability, we can assume
apart from a prefix and suffix of size $O(\log{n})$, that $\gamma$ is contained in a single
maximal component $C$. Fix some $\ell < 1$, and consider the set of successive subpaths $\gamma_i$
of $\gamma$ of length $\ell m$, where $m=\log{n}/\log{\lambda}$. By Proposition~\ref{Chernoff_estimate},
the distribution of the $\gamma_i$ is very close to $\mu|_C$. 
By Lemma~\ref{mu_i_measures_roughly_equal} and Lemma~\ref{mu_approximate_inverse} we can
pair most of the $\gamma_i$ in such a way that the evaluation of each pair is approximately 
inverse; i.e.\/ if $g_i,g_j$ are the values of the pair of segments in $G$, 
there are $a_1,a_2$ with $|a_i|\le 4\delta$ and $g_i=a_1g_j^{-1}a_2$. 
We can therefore cancel each $g_i$ with $g_j$ at the cost of a commutator plus a word of
length at most $8\delta$. The product of these $8\delta$ words is $h$. 
All but $o(n/\log{n})$ can be paired in this way; the remainder can be absorbed into $h$
at the cost of adjusting $\epsilon$ an arbitrarily small amount (for big $n$). The proof follows.
\end{proof}

\subsection{Counting quasimorphisms}\label{counting_quasimorphism_subsection}

We obtain lower bounds on $\scl$ via Bavard duality, by constructing explicit quasimorphisms with
uniformly bounded defect, and value $O(n/\log{n})$ on random $g$. The quasimorphisms
in question are a variant on the {\em small counting quasimorphisms} of
Epstein-Fujiwara \cite{Epstein_Fujiwara}. In fact, it is useful to work in the generality
of a group acting on a $\delta$-hyperbolic complex, following \cite{Fujiwara} (also
see \cite{Calegari_scl}, \S~3.5).

Let $Y$ be a $\delta$-hyperbolic simplicial complex ({\em not} necessarily locally
finite) and let $G$ act on $Y$ simplicially.
If $\sigma$ is a finite oriented simplicial path in $Y$,
let $\sigma^{-1}$ denote the same path with the opposite orientation.
A {\em copy} of $\sigma$ is a translate $g\cdot\sigma$ for some $g\in G$.

\begin{definition}
Let $Y$ be a $\delta$-hyperbolic simplicial complex, and let $p \in Y$ be a base point.
Let $\Sigma$ be a (possibly infinite) collection of oriented simplicial paths in $Y$,
and let $\Sigma^{-1}$ denote the collection obtained by reversing the orientations on
all $\sigma \in \Sigma$. For any oriented simplicial path $\gamma$ in $Y$ define 
$$c_\Sigma(\gamma)=\text{maximal number of {\em disjoint} copies of }\sigma \in \Sigma\text{ contained in }\gamma$$
and then for $g\in G$ define
$$c_\Sigma(g)=d(p,g(p)) - \inf_\gamma(\length(\gamma)-c_\Sigma(\gamma))$$
where the infimum is taken over {\em all} oriented simplicial paths $\gamma$ from $p$
to $g(p)$. Define the {\em small counting quasimorphism} $h_\Sigma$ by the formula
$$h_\Sigma(g):=c_\Sigma(g) - c_{\Sigma^{-1}}(g)$$
\end{definition}

A path $\gamma$ from $p$ to $g(p)$
realizing the infimum of $\length(\gamma)-c_\Sigma(\gamma)$ is called a {\em realizing
path} for $\gamma$. Since the values of this function are integers, a realizing path
exists.

\begin{lemma}[Fujiwara \cite{Fujiwara}, Lem.~3.3]\label{Fujiwara_antialign_lemma}
Suppose the length of every $\sigma \in \Sigma$ is $\ge 2$. Then any realizing 
path is a $(2,4)$-quasigeodesic.
\end{lemma}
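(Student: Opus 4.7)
My plan is to exploit the realizing property directly via a surgery argument. Let $\gamma$ be a realizing path from $p$ to $g(p)$ and let $\gamma'$ be any subpath from a point $x$ to a point $y$. I would compare $\gamma$ with the path $\gamma''$ obtained by replacing $\gamma'$ with a geodesic from $x$ to $y$. Since $\gamma$ is realizing, one has $\length(\gamma) - c_\Sigma(\gamma) \le \length(\gamma'') - c_\Sigma(\gamma'')$, which rearranges to
$$\length(\gamma') - d(x,y) \le c_\Sigma(\gamma) - c_\Sigma(\gamma'').$$
The whole argument reduces to bounding the right-hand side.

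The next step is to fix a maximal family $N$ of disjoint copies of elements of $\Sigma$ contained in $\gamma$ realizing $c_\Sigma(\gamma)$, and to partition $N = A \sqcup B$, where $A$ consists of those copies lying entirely in $\gamma \setminus \gamma'$ and $B$ consists of those that meet $\gamma'$. The copies in $A$ still lie in $\gamma''$ and are still pairwise disjoint, so $c_\Sigma(\gamma'') \ge |A|$ and therefore
$$c_\Sigma(\gamma) - c_\Sigma(\gamma'') \le |N| - |A| = |B|.$$

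The crux is estimating $|B|$. Since everything is simplicial, each copy in $B$ is either entirely contained in $\gamma'$ or else crosses one of the two endpoints of $\gamma'$. At most two copies in $B$ can do the latter, one at each endpoint. Those copies entirely inside $\gamma'$ are pairwise disjoint subpaths of $\gamma'$, and the hypothesis that every $\sigma \in \Sigma$ has length $\ge 2$ gives that there are at most $\length(\gamma')/2$ such copies. Hence $|B| \le \length(\gamma')/2 + 2$, and plugging into the inequality above yields $\length(\gamma') - d(x,y) \le \length(\gamma')/2 + 2$, i.e., $\length(\gamma') \le 2 d(x,y) + 4$, which is exactly the $(2,4)$-quasigeodesic bound.

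The main subtlety — and effectively the only nontrivial point — is bookkeeping for copies that straddle the boundary of $\gamma'$; they contribute the additive constant $4$ and are the reason one cannot improve the bound to $\length(\gamma') \le 2 d(x,y)$. Note that $\delta$-hyperbolicity of $Y$ is not actually invoked here: the lemma is a purely combinatorial consequence of the minimization property and the length-$\ge 2$ hypothesis.
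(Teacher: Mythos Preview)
Your proof is correct and is essentially the standard surgery argument from Fujiwara's original paper \cite{Fujiwara}, Lem.~3.3; the present paper does not give its own proof but merely cites that reference. Your bookkeeping---splitting the maximal disjoint family into copies missing $\gamma'$ and copies meeting it, and observing that at most two of the latter can straddle an endpoint---is exactly how Fujiwara obtains the constants $(2,4)$, and your remark that $\delta$-hyperbolicity plays no role here is also correct.
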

It follows that any realizing path is within distance $\con{1}$ of a geodesic,
where $\con{1}$ depends only on $\delta$.

\begin{lemma}[Fujiwara \cite{Fujiwara}, Prop.~3.10]\label{Fujiwara_defect_lemma}
Suppose the length of every $\sigma \in \Sigma$ is $\ge 2$. Then there is a constant
$\con{2}$ depending only on $\delta$ such that $D(h_\Sigma)\le \con{2}$.
\end{lemma}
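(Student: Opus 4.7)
The plan is to reduce the defect of $h_\Sigma$ to a comparable estimate on the auxiliary functional
$$I_\Sigma(g):=\inf_\gamma(\length(\gamma)-c_\Sigma(\gamma)),$$
where the infimum is over simplicial paths from $p$ to $g(p)$. By the definition $c_\Sigma(g) = d(p,g(p))-I_\Sigma(g)$, so when one expands $h_\Sigma(gh)-h_\Sigma(g)-h_\Sigma(h)$ via $h_\Sigma=c_\Sigma-c_{\Sigma^{-1}}$, every $d(p,\cdot(p))$ term appears once with each sign and cancels, leaving only a signed combination of $I_\Sigma$ and $I_{\Sigma^{-1}}$ terms. It therefore suffices to bound $|I_\Sigma(gh)-I_\Sigma(g)-I_\Sigma(h)|$ by a constant depending only on $\delta$, and apply the same bound to $\Sigma^{-1}$.

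The easy half, $I_\Sigma(gh)\le I_\Sigma(g)+I_\Sigma(h)$, comes from concatenating realizing paths: if $\gamma_g,\gamma_h$ realize $I_\Sigma(g),I_\Sigma(h)$ then $\gamma_g\cdot g(\gamma_h)$ is a path from $p$ to $gh(p)$ whose length is additive, and the disjoint families of copies of elements of $\Sigma$ witnessing $c_\Sigma(\gamma_g)$ and $c_\Sigma(\gamma_h)$ combine (using the $G$-invariance of $c_\Sigma$) into a disjoint family in the concatenation.

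The opposite inequality is the geometric core. Let $\gamma$ be a realizing path for $I_\Sigma(gh)$. By Lemma~\ref{Fujiwara_antialign_lemma} it is a $(2,4)$-quasigeodesic, so it lies within distance $\con{3}(\delta)$ of the geodesic from $p$ to $gh(p)$; the thin-triangle condition applied to the three points $p$, $g(p)$, $gh(p)$ puts that geodesic within $2\delta$ of $g(p)$, so $\gamma$ contains a vertex $q$ with $d(q,g(p))\le\con{3}$. Cut $\gamma$ at $q$ into $\gamma_1,\gamma_2$ and attach a short simplicial path $\alpha$ of length $\le\con{3}$ from $q$ to $g(p)$, obtaining an admissible path $\gamma_1\cdot\alpha$ for $I_\Sigma(g)$ and (after translating by $g^{-1}$) an admissible path for $I_\Sigma(h)$. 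The crucial observation is that the copies of elements of $\Sigma$ realizing $c_\Sigma(\gamma)$ are pairwise disjoint as \emph{subpaths} of $\gamma$, so at most one of them has $q$ in its interior, and cutting at $q$ loses at most one copy; appending $\alpha$ can only increase $c_\Sigma$. Summing $\length-c_\Sigma$ across the two pieces gives
$$I_\Sigma(g)+I_\Sigma(h)\le I_\Sigma(gh)+2\con{3}+1.$$

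The main potential pitfall is the straddling count. Elements of $\Sigma$ may be arbitrarily long, so a priori one could worry that cutting at $q$ sacrifices many copies at once; the point is that the copies in a disjoint family are literally pairwise disjoint as subpaths of $\gamma$, so only one of them can contain $q$ in its interior, regardless of how long the $\sigma$'s are. Once this observation is in place the remaining bookkeeping is elementary, and the resulting defect bound depends only on $\delta$.
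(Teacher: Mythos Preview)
The paper does not prove this lemma; it merely cites Fujiwara~\cite{Fujiwara}, Prop.~3.10. So the relevant question is only whether your argument is correct, and it has a genuine gap.

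Your reduction to bounding $|I_\Sigma(gh)-I_\Sigma(g)-I_\Sigma(h)|$ is too strong: this quantity is \emph{not} bounded by a constant. The geometric claim ``the thin-triangle condition applied to the three points $p$, $g(p)$, $gh(p)$ puts that geodesic within $2\delta$ of $g(p)$'' is false. The closest point on $[p,gh(p)]$ to $g(p)$ is at distance roughly the Gromov product $(p\cdot gh(p))_{g(p)}$, which can be arbitrarily large (take $h=g^{-1}$, so $gh(p)=p$ and the realizing path for $gh$ is a point, while $g(p)$ is far away). Consequently there is no vertex $q$ on $\gamma$ close to $g(p)$ to cut at. For a concrete algebraic witness that $I_\Sigma$ is not a quasimorphism, take $\Sigma=\varnothing$: then $I_\Sigma(g)=d(p,g(p))$, and $d(p,g(p))+d(p,g^{-1}(p))-d(p,p)=2d(p,g(p))$ is unbounded.

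The antisymmetrization $h_\Sigma=c_\Sigma-c_{\Sigma^{-1}}$ is not a bookkeeping convenience but the heart of the matter. Fujiwara's actual argument considers realizing (quasigeodesic) paths along all three sides of the triangle $p,\,g(p),\,gh(p)$ simultaneously; thinness of the quasigeodesic triangle lets one match a disjoint $\sigma$-copy on one side either to a $\sigma$-copy on a second side traversed in the same direction, or to a $\sigma^{-1}$-copy on a side traversed in the opposite direction, with only boundedly many copies lost near the three ``centers''. The $\sigma^{-1}$ matches are exactly what the subtraction of $c_{\Sigma^{-1}}$ cancels. Your cut-at-one-point strategy cannot see this cancellation and therefore cannot work.
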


\begin{remark}
Actually, Epstein-Fujiwara only consider small counting functions for a single
$\sigma$, but the bounds on the geometry of realizing paths and on the defect of
$D(h_\Sigma)$ are valid for arbitrary collections $\Sigma$ as above. This is by
contrast with the ``big'' counting functions introduced by Brooks \cite{Brooks},
in which one counts {\em all} (possibly overlapping)
copies of $\sigma$, not just a maximal disjoint collection.
\end{remark}

\begin{remark}
As defined, the quasimorphism $c_\Sigma$ depends on the choice of 
basepoint $p$. However, different choices of points $p$ give rise to quasimorphisms
with the same homogenization. Since it is the homogenization we really care about,
we gloss over this detail.
\end{remark}

We apply this construction to the case that $Y=C_S(G)$ and $p=\id$ so that
$d(p,g(p))=|g|_S$.

\begin{proposition}\label{quasimorphism_lower_bound}
Let $G$ be a hyperbolic group, and let $S$ be a generating set. Let $\lambda$
be the growth rate for $G$. Then for any $\epsilon>0$ there are constants
$\con{1}>1,\con{2}>0$ and $\con{3}$ depending only on $\delta$
so that if $g$ is a random element of $G_n$, with
probability $1-O(\con{1}^{-n^{\con{2}}})$, there is a homogeneous
quasimorphism $\phi$ on $G$ satisfying the following properties:
\begin{enumerate}
\item{$\phi(g)\ge n\log{\lambda}/(2+\epsilon)\log{n}$;}
\item{$D(\phi)\le\con{3}$;}
\item{$|\phi(h)|\le 2|h|_S\log{\lambda}/(2-\epsilon)\log{n}$ for any $h\in G$.}
\end{enumerate}
\end{proposition}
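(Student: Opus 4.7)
\medskip
\noindent\textbf{Proof proposal.}

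The plan is to build $\phi$ as the homogenization (Lemma~\ref{homogenization_is_quasimorphism}) of an Epstein--Fujiwara small counting quasimorphism $h_\Sigma$ for a carefully chosen finite set $\Sigma$ of oriented subpaths of the combing geodesic $\gamma$ from $\id$ to $g$. Lemma~\ref{Fujiwara_defect_lemma} gives $D(h_\Sigma) \le \con{2}$ depending only on $\delta$, hence $D(\phi) \le 2D(h_\Sigma)$, establishing (2). For (3), the elementary disjointness bound $c_\Sigma(h') \le |h'|_S/L_0$ --- valid because any simplicial path of length $\ell$ contains at most $\ell/L_0$ pairwise disjoint length-$L_0$ copies, forcing $\length(\gamma') - c_\Sigma(\gamma') \ge \length(\gamma')(1-1/L_0)$ on every path $\gamma'$ --- applied to $h^k$ and divided by $k$, yields $|\phi(h)| \le 2|h|_S/L_0$ in the limit. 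With $L_0$ chosen just above $2m$ this beats the target $2|h|_S \log\lambda/((2-\epsilon)\log n)$ demanded by~(3). All the real work is in arranging~(1).

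Set $m = \log n/\log\lambda$ and $L_0 = (2+\epsilon/2)m$. Partition $\gamma$ into consecutive subpaths $\gamma_i$ of length $L_0$, and let $\Sigma$ be the set of those $\gamma_i$ which do not $K$-anti-align any subpath $\gamma' \subset \gamma$, for a constant $K$ depending only on $\delta$ to be fixed below. Since $L_0/m > 2$, Proposition~\ref{not_many_anti_aligned} applies in the $\nu$-measure setting and shows that at most $n^{1-\eta}$ of the $\gamma_i$ anti-align some other subpath, with probability $1 - O(\con{1}^{-n^{\con{2}}})$; Proposition~\ref{measures_quasi_equivalent} transfers this event to the uniform measure on $G_n$, and Proposition~\ref{path_in_one_component} ensures that $\gamma$ sits in a single maximal component off a prefix of length $O(\log n)$. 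Hence $|\Sigma| \ge n/L_0 - n^{1-\eta} - O(\log n)$ with the required probability, and $c_\Sigma(g) \ge |\Sigma|$ because $\gamma$ itself exhibits these $|\Sigma|$ pairwise disjoint copies.

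The main obstacle is to show $c_{\Sigma^{-1}}(g) = O(1)$. I would argue as follows: pick a realizing path $\tilde\gamma$ from $\id$ to $g$ for $c_{\Sigma^{-1}}$; by Lemma~\ref{Fujiwara_antialign_lemma} it is a $(2,4)$-quasigeodesic, and $\delta$-hyperbolicity places $\tilde\gamma$ within Hausdorff distance $\con{4}$ (depending only on $\delta$) of $\gamma$. Each copy of some $\sigma^{-1}$ (with $\sigma \in \Sigma$) lying inside $\tilde\gamma$ then has endpoints within $\con{4}$ of a pair of points on $\gamma$ whose order along $\gamma$ is reversed relative to the endpoints of $\sigma$; the intervening subpath $\gamma' \subset \gamma$ therefore $K$-anti-aligns $\sigma$ as soon as $K \ge 2\con{4} + 2\delta$, contradicting $\sigma \in \Sigma$. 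A uniformly bounded number of potential exceptional copies near the endpoints of $\tilde\gamma$ (where the fellow-traveling comparison degenerates) can be absorbed into an $O(1)$ error, forcing $c_{\Sigma^{-1}}(\tilde\gamma) = O(1)$ and hence $c_{\Sigma^{-1}}(g) = O(1)$.

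Combining, $\phi(g) \ge h_\Sigma(g) - D(h_\Sigma) \ge |\Sigma| - O(1) \ge n/L_0 - n^{1-\eta} - O(\log n)$. Since $1/L_0 - 1/((2+\epsilon)m) = \Theta(\log\lambda/\log n)$, the leading term $n/L_0$ exceeds $n\log\lambda/((2+\epsilon)\log n)$ by a gap of order $n/\log n$, so (1) holds for sufficiently large $n$. One subtlety worth flagging: because $L_0/m > 1$, the Chernoff estimate of Proposition~\ref{Chernoff_estimate} cannot be used to count occurrences of any specific length-$L_0$ path --- each such path has expected count $n^{1 - L_0/m} \ll 1$ --- so the argument rides entirely on the anti-alignment count supplied by Proposition~\ref{not_many_anti_aligned} rather than on direct occurrence counts.
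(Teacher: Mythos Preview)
Your argument is correct and follows essentially the same route as the paper: define $\Sigma$ as the non-anti-aligning length-$Lm$ subsegments of the combing geodesic, invoke Proposition~\ref{not_many_anti_aligned} to bound the discarded segments, and use Lemmas~\ref{Fujiwara_antialign_lemma}--\ref{Fujiwara_defect_lemma} to control $c_{\Sigma^{-1}}(g)$ and the defect. Two minor remarks: your hedge $c_{\Sigma^{-1}}(g)=O(1)$ can in fact be tightened to $0$ (the Morse fellow-traveling holds right up to the shared endpoints, so no exceptional copies arise), and your Lipschitz bound for~(3) via $\length(\gamma')-c_\Sigma(\gamma')\ge\length(\gamma')(1-1/L_0)$ is actually a slight sharpening of the paper's cruder count on a $(2,4)$-quasigeodesic realizing path.
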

\begin{proof}
By Lemma~\ref{measures_quasi_equivalent} 
it suffices to prove the theorem for a $\nu$-random element $g$.
Fix $L=2+\epsilon$, and recall the notation $m=\log{n}/\log{\lambda}$.
Let $\gamma \in \Gamma_0^n$ correspond to $g$, and let $\gamma_i$ denote the
successive subpaths of $\gamma$ of length $Lm$. For a suitable constant $K$ (to be 
determined shortly), let $\Sigma$ be the subset of the $\gamma_i$ 
which do not $K$ anti-align any $\gamma' \subset \gamma$. If $c_\Sigma$ denotes
the small counting function associated to the set $\Sigma$ (thought of as a collection
of orbit classes of subpaths of the Cayley graph $C_S(G)$) then
by Proposition~\ref{not_many_anti_aligned} we obtain an estimate
$c_\Sigma(g)\ge n\log{\lambda}/(2+\epsilon)\log{n}$.

Now, there is a constant $K$ so that any $(2,4)$-quasigeodesic in a $\delta$-hyperbolic space 
stays within distance $K$ of a genuine geodesic. If we choose $K$ with this
property, then by Lemma~\ref{Fujiwara_antialign_lemma} and the definition of
$\Sigma$ we have $c_{\Sigma^{-1}}(g)=0$, and therefore
$h_{\Sigma}(g)\ge n\log{\lambda}/(2+\epsilon)\log{n}$.
On the other hand, $D(\phi)\le\con{3}$ for some constant $\con{3}$
depending only on $\delta$, by Lemma~\ref{Fujiwara_defect_lemma}.

Finally, if $h$ is any element of word length $|h|_S$, any realizing path for
$c_\Sigma$ or $c_{\Sigma^{-1}}$ is a $(2,4)$-quasigeodesic, which therefore
has length at most $2|h|_S+4$ and therefore
contains at most $(2|h|_S+4)/Lm$ disjoint paths of length $Lm$. 
\end{proof}

In particular, if $g \in [G,G]$ then $\scl(g)\ge \con{1}n/\log{n}$ with very
high probability, and if there is $h$ with $|h|_S=O(n/\log{n})$ and
$gh^{-1}\in [G,G]$ then $\scl(gh^{-1})\ge \con{2}n/\log{n}$.

\subsection{Homology}\label{homology_section}

Proposition~\ref{cl_upper_bound} and Proposition~\ref{quasimorphism_lower_bound} 
together give upper and lower bounds
on $\scl(gh^{-1})$ for some $h$ with $|h|_S=O(n/\log{n})$ and $gh^{-1}\in[G,G]$. 
If $g$ is in $[G,G]$ then so is $h$, and $\scl(h)$ can be estimated from its length
(see Lemma~\ref{homologically_trivial_bound}). We will see in this section
that the relative proportion of homologically trivial $g$ in $G_n$ is polynomial
of bounded degree, and therefore a random element of $G_n$ {\em conditioned to
lie in $[G,G]$} will have two-sided bounds on $\scl$, with high probability.

The following lemma is elementary:

\begin{lemma}\label{homologically_trivial_bound}
Let $G$ be a group and $S$ a finite generating set. There is a constant $\con{1}$
so that if $h$ is an element of $[G,G]$ then $\cl(h) \le \con{1}|h|_S$.
\end{lemma}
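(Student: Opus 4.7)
The plan is to reduce the inequality to its analogue in a free group via abelianization. Let $F := F(S)$ denote the free group on $S$, let $\pi: F \to G$ be the natural surjection, and let $\pi^{\textnormal{ab}}: F^{\textnormal{ab}} \cong \Z^{|S|} \to G/[G,G]$ be its abelianization, with kernel $K$. Since $K$ is a subgroup of $\Z^{|S|}$ it is free abelian of finite rank; fix once and for all a $\Z$-basis $r_1, \ldots, r_m$ of $K$, and for each $r_j$ a word $w_j \in F$ whose exponent-sum vector is $r_j$. Because $\pi(w_j) \in [G,G]$, the constants $c_j := \cl_G(\pi(w_j))$ are finite and depend only on $G$ and $S$.

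Now, given $h \in [G,G]$ with $|h|_S = n$, I would first lift $h$ to $\tilde h \in F$ of word length $n$ by spelling out a geodesic, and let $d \in \Z^{|S|}$ be its exponent-sum vector. Because $h \in [G,G]$, we have $d \in K$, and clearly $\|d\|_{\ell^1} \le n$. Writing $d = \sum_j x_j r_j$, linear algebra with respect to the fixed basis gives $|x_j| \le C_0 \|d\|_{\ell^1} \le C_0 n$ for some constant $C_0$ depending only on the chosen basis. Then the element
$$u \;:=\; \tilde h \cdot \prod_j w_j^{-x_j} \;\in\; F$$
has trivial exponent-sum vector by design, hence lies in $[F,F]$, and its free-group word length satisfies $|u|_F \le n + \sum_j |x_j|\,|w_j|_F \le C_1 n$.

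The final step is the elementary free-group bound $\cl_F(v) \le |v|_F / 2$ for every $v \in [F,F]$, which I would prove by induction: for a reduced word $v$ of length at least $2$, locate the first letter $s$ and its first matching inverse $s^{-1}$, write $v = s v_1 s^{-1} v_2 = [s, v_1]\cdot(v_1 v_2)$, and apply induction to $v_1 v_2 \in [F,F]$, whose length is at most $|v|_F - 2$. Applying this to $u$ and using monotonicity of $\cl$ under $\pi$ gives $\cl_G(\pi(u)) \le \cl_F(u) \le C_1 n / 2$. Since $h = \pi(u) \cdot \prod_j \pi(w_j)^{x_j}$, subadditivity of $\cl$ and the bound $\cl(g^k) \le |k|\cl(g)$ then yield
$$\cl_G(h) \;\le\; \cl_G(\pi(u)) + \sum_j |x_j|\,c_j \;\le\; \Big(\tfrac{C_1}{2} + C_0\textstyle\sum_j c_j\Big)\,n,$$
as required. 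There is no substantive obstacle: the argument is essentially bookkeeping around the two standard facts (finite generation of $K$ and the linear free-group bound), and the constant $\con{1}$ ends up depending only on $G$ and $S$ through the fixed data $(r_j, w_j, c_j)$ and $C_0$.
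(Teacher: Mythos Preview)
Your argument is correct and takes a genuinely different route from the paper's. The paper works inside $G$: it uses a \emph{Hilbert basis} for the semigroup $K \cap L$ (where $L$ is the positive orthant in $\Z^{|S|}$) to peel off, from a geodesic spelling of $h$, a bounded-size homologically trivial subword, move it to the right at the cost of boundedly many commutators, and induct on length. You instead lift to the free group, correct the abelianization in one shot using a $\Z$-basis of the kernel $K \le \Z^{|S|}$, and then invoke the elementary linear bound $\cl_F(v) \le |v|_F/2$.

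Your approach is arguably cleaner: it avoids the Hilbert basis theorem (only the trivial fact that subgroups of $\Z^{|S|}$ are free abelian is needed), and it handles torsion in $H_1(G)$ without any extra fuss, whereas the paper explicitly assumes $H_1(G)$ torsion-free ``for simplicity''. The paper's argument, on the other hand, stays intrinsic to $G$ and makes the inductive structure visible, which is closer in spirit to how the bound is actually used later. Either way the constants are non-explicit and depend on the fixed auxiliary data, so neither approach gives a quantitative advantage.
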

\begin{proof}
For simplicity, we suppose $H_1(G)$ is torsion-free; the general case is not
significantly harder.

Assume without loss of generality that $S=S^{-1}$, and
let $L$ be the free abelian semigroup generated by $S$ as a set. We think
of $L$ as the intersection of the positive orthant in $\R^{|S|}$ with $\Z^{|S|}$.
The map from $S$ to $H_1(G;\R)$ extends by linearity to $\R^{|S|}$, and the
kernel $K$ is a rational subspace. It follows that $K\cap L$ is finitely
generated as an abelian semigroup (such a generating set is known as a
{\em Hilbert basis}); see e.g.\/ \cite{Barvinok}. It follows that if we write
$h$ as a (geodesic) word in the elements of $S$, there is a constant $\con{1}$
so that there is a subset of letters of cardinality at most $\con{1}$ whose
image in homology is trivial. We move these letters to the right, at the cost
of at most $\con{1}$ commutators. In other words, we can write $h=h_1h_2h_3$
where each $h_i$ is in $[G,G]$, where $|h_1|_S < |h|_S$, where $|h_2|_S \le \con{1}$,
and where $\cl(h_3) \le \con{1}$. Since there are only finitely many elements
of $G$ with $|\cdot|_S\le \con{1}$, there is a bound on the commutator length of
the homologically trivial ones. Hence $|h_2|_S\le \con{2}$ and therefore by
induction, $\cl(h) \le (\con{1}+\con{2})\cdot|h|_S$ as required.
\end{proof}

The main proposition of this section estimates the relative density of $[G,G]$
in $G_n$.

\begin{proposition}\label{homology_local_limit_theorem}
Let $G$ be a hyperbolic group with finite generating set $S$, and suppose the
rank of $H_1(G)$ is $k$. There are constants $\con{1}$, $\con{2}$ so that
if $g$ is a random element of $[n-\con{1},n+\con{1}]$ (with the uniform probability),
then $\Pr(g \in [G,G])\ge \con{2}\cdot n^{-k/2}$. 
\end{proposition}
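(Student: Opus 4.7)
The plan is to prove this as a local central limit theorem for the abelianization $\textnormal{ab}\colon G \to H_1(G)/\textnormal{torsion} \cong \Z^k$, evaluated at the origin and averaged over the annulus $G_{[n-\con{1},n+\con{1}]}$. Working with the combing digraph $X$ from \S 3, I will introduce the twisted transfer matrix $M(\theta)$ for $\theta \in \R^k$, whose $(i,j)$-entry is the sum of $e^{i\langle\theta,v_s\rangle}$ over edges $i \to j$ labelled by $s \in S$, where $v_s=\textnormal{ab}(s)$. Then $\widehat\mu_{n'}(\theta):=e_0^T M(\theta)^{n'}\mathbf 1 = \sum_{g\in G_{n'}}e^{i\langle\theta,\textnormal{ab}(g)\rangle}$, and Fourier inversion over the dual torus $T^k=(\R/2\pi\Z)^k$ gives
$$|G_{n'}\cap\ker(\textnormal{ab})|=\frac{1}{(2\pi)^k}\int_{T^k}\widehat\mu_{n'}(\theta)\,d\theta.$$
Summing over $n'\in [n-\con{1},n+\con{1}]$ yields the count the proposition asks for; the annulus is there to smooth out periodicity of individual maximal components of $X$, so $\con{1}$ should be at least the lcm of their periods.

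Next I would carry out a Perron--Frobenius analysis of $M(\theta)$. At $\theta=0$ the Coornaert structure theorem in \S 3.3 gives spectral radius $\lambda$, attained on each maximal component, so $|G_{n'}|\asymp\lambda^{n'}$. Away from a finite resonance set $Z\subset T^k$ (consisting of $0$ together with finitely many torsion-induced resonances from $H_1$), the spanning of $\{v_s\}_{s \in S}$ over $\R^k$, which holds because $S$ generates $G$, forces $|\lambda_C(\theta)|<\lambda$ strictly for every maximal component $C$; the corresponding part of the Fourier integral then has size $O((\lambda-\epsilon)^{n'})$, negligible compared to the target. Near $\theta=0$ a Taylor expansion gives
$$\lambda_C(\theta)=\lambda\cdot\exp\bigl(i\langle\theta,\bar v_C\rangle-\tfrac12\theta^T\Sigma_C\theta+O(|\theta|^3)\bigr),$$
with $\Sigma_C\succ 0$ by the same spanning property, and the desired $\Theta(n^{-k/2}\lambda^n)$ contribution emerges from a Gaussian integral provided the drift $\bar v_C$ vanishes on at least one maximal component.

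The main obstacle is exactly this vanishing-of-drift condition. The essential input is the involution $g\mapsto g^{-1}$ on $G_{n'}$: since $S=S^{-1}$, it is a bijection that negates $\textnormal{ab}$, so $\widehat\mu_{n'}$ is real and even; equivalently the growth series $F(z,t)=\sum_g z^{\textnormal{ab}(g)}t^{|g|}$ (rational in $t$ by Cannon) satisfies $F(z,t)=F(z^{-1},t)$. This symmetry immediately gives vanishing of the aggregate drift $\sum_C\mu(C)\bar v_C=0$, but a priori the individual $\bar v_C$'s could be nonzero with cancellation across components. I would close this gap by invoking ergodicity of the $G$-action on $\partial_\infty G$ (Theorem~\ref{ergodic_at_infinity}) together with the comparison Lemmas~\ref{mu_i_measures_roughly_equal} and~\ref{mu_approximate_inverse} of the previous section, which identify the measures $\mu|_{C_i}$ and $\mu|_{C_j}$ up to conjugation by elements of bounded length; this forces all $\bar v_C$'s to agree (they are the drifts of essentially the same measure pushed to $\R^k$), and combined with the aggregate-zero condition forces each $\bar v_C=0$. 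Finally, finite torsion in $H_1(G)$ contributes only finitely many extra resonance points, each giving another $\Theta(n^{-k/2}\lambda^n)$ term that only improves the bound.
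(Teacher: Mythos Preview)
Your approach is essentially the paper's own: apply a local limit theorem to each maximal component of the combing digraph, the key input being that the abelianization drift vanishes on every component. The paper cites Sharp \cite{Sharp} for the local limit theorem and the main theorem of Calegari--Fujiwara \cite{Calegari_Fujiwara} for the vanishing drift (a homomorphism to $\Z$ is a bicombable function), whereas you write out the twisted-transfer-matrix Fourier analysis explicitly and derive zero drift from Lemmas~\ref{mu_i_measures_roughly_equal}--\ref{mu_approximate_inverse}; but as the paper itself remarks, those lemmas \emph{are} the Calegari--Fujiwara trick, so the two arguments coincide. (In fact Lemma~\ref{mu_approximate_inverse} alone already forces each $\bar v_C=0$, so your aggregate-drift-plus-equality route is slightly roundabout.)

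One genuine correction: your last sentence about torsion is not right. Nontrivial characters $\chi$ of the torsion part of $H_1(G)$ give contributions that oscillate with $n$ and can have negative real part, so they do not ``only improve the bound''; in unlucky parities they can cancel the principal term entirely on individual $G_{n'}$. The clean fix is the one the paper uses: work with the map to $\Z^k$, obtain $\Theta(n^{-k/2}\lambda^n)$ elements with $|\textnormal{ab}(g)|$ bounded, and then post-multiply each by one of finitely many elements of bounded length to land in $[G,G]$. That correction step, not periodicity of the components, is the paper's primary reason for the annulus $[n-\con{1},n+\con{1}]$.
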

\begin{proof}
The case that $G$ is free and $S$ is a free generating set is due to
Sharp \cite{Sharp}, with $\con{1}=1$. However, the proof does not use very particular
properties of free groups, and generalizes to hyperbolic groups. The key probabilistic
component is a (standard) local limit theorem for random sums in ergodic (finite,
stationary) Markov chains.

For general hyperbolic groups, one cannot apply such local limit theorems directly
because $X$ might have more than one maximal component; however, we can apply
local limit theorems to each maximal component {\em individually}. To do this
we need to know that the {\em expected} value in $H_1(G)$ of (the evaluation of)
a random walk conditioned to lie in a maximal 
component $C_i$ of $X$ is zero. This follows from the main theorem
of Calegari-Fujiwara \cite{Calegari_Fujiwara}, since a homomorphism to $\Z$ 
is an example of a bicombable function.
It follows that the Markov chains associated to each component $C_i$ satisfy
the conditions in \cite{Sharp}, Thm.~2 and we can obtain sharp estimates of the
desired form on the probability that the evaluation of a random path in $C_i$ 
has trivial abelianization.

Now, a $\nu$-random $\gamma\in \Gamma_0^n$ is of the form $\gamma_1\gamma_2$
where $\gamma_2$ is contained in some maximal component. It is not true that
we can bound the length of $\gamma_1$, but it {\em is} true that there is
a constant $\con{1}$ so that $\Pr(|\gamma_1|\le \con{1})\ge 1/2$. If $\alpha$
denotes the abelianization map $\alpha:\Gamma^n \to H_1(G)$, then $|\gamma_1|\le \con{1}$
implies a uniform estimate $|\alpha(\gamma_1)|\le \con{2}$.

Because of the Markov property, we get an estimate for $g$ a
$\nu$-random word of length $n$ of the form
$$\Pr(|\alpha(g)|\le \con{2}) \ge \con{3}\cdot n^{-k/2}$$
By Proposition~\ref{measures_quasi_equivalent}, we get a similar estimate 
(but with different constants) for $g$ a random element of $G_n$ with the 
uniform probability. Now, there is a constant $\con{4}$ so that 
for every $g$ with $|\alpha(g)|\le\con{2}$ there is $h$ with $|h|_S\le \con{2}\con{4}$
so that $gh\in [G,G]$. The map $g \to gh$ is bounded-to-one, so the cardinality
of the intersection of $[G,G]$ with the set of words of length in the interval
$[n-\con{2}\con{4},n+\con{2}\con{4}]$ is at least $\con{5}n^{-k/2}\cdot|G_n|$.
The proof follows.
\end{proof}

Putting this all together, we obtain the main theorem in this section.

\begin{theorem}[Hyperbolic geodesic theorem]\label{hyperbolic_geodesic_theorem}
Let $G$ be a hyperbolic group, and $S$ a finite generating set. There are
constants $\con{1}$, $\con{2}>0$, $\con{3}>0$, $\con{4}>1$, $\con{5}>0$ so that if $g$ is a random element
with $|g|_S \in [n-\con{1},n+\con{1}]$ conditioned to lie in $[G,G]$, then 
$$\Pr(\con{2}n/\log{n} \le \scl(g) \le \con{3}n/\log{n})=1-O(\con{4}^{-n^{\con{5}}})$$
In fact, we obtain the stronger result $\cl(g)\le \con{3}n/\log{n}$, with the same
estimate in probability.
\end{theorem}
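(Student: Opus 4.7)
The plan is to combine the two main propositions of this section --- Proposition~\ref{cl_upper_bound} and Proposition~\ref{quasimorphism_lower_bound} --- with Proposition~\ref{homology_local_limit_theorem} to handle the conditioning on $[G,G]$. The central observation is that the bad events for a uniformly random element of length in a bounded window fail with stretched-exponential probability $O(\con{4}^{-n^{\con{5}}})$, while the conditioning event $\{g\in[G,G]\}$ has probability at least of order $n^{-k/2}$ by Proposition~\ref{homology_local_limit_theorem}; since any polynomial factor is swallowed by a stretched-exponential tail, the conditional failure probability retains a bound of the same stretched-exponential form (with possibly slightly worse constants).

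First I fix $\con{1}$ as in Proposition~\ref{homology_local_limit_theorem} and let $g$ be uniform on the set $\{h\in G:|h|_S\in[n-\con{1},n+\con{1}]\}$. By Proposition~\ref{measures_quasi_equivalent} the uniform measure is comparable to the $\nu$-measure on paths, so by a union bound the conclusions of Proposition~\ref{cl_upper_bound} and Proposition~\ref{quasimorphism_lower_bound} hold simultaneously outside an event of stretched-exponentially small probability. For the upper bound, Proposition~\ref{cl_upper_bound} produces $h\in G$ with $|h|_S=O(n/\log n)$, $gh^{-1}\in[G,G]$, and $\cl(gh^{-1})\le \con{3}n/\log n$. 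Once we condition on $g\in[G,G]$ we also get $h\in[G,G]$, and Lemma~\ref{homologically_trivial_bound} then yields $\cl(h)=O(|h|_S)=O(n/\log n)$. Subadditivity of $\cl$ gives $\cl(g)\le \cl(gh^{-1})+\cl(h)\le \con{3}'n/\log n$ after adjusting constants, which is the sharper $\cl$ form of the theorem. For the matching lower bound, Proposition~\ref{quasimorphism_lower_bound} supplies a homogeneous quasimorphism $\phi$ with $\phi(g)\ge \con{2}n/\log n$ and $D(\phi)\le \con{3}$; since $g\in[G,G]$, Bavard duality (Theorem~\ref{Bavard_duality_theorem}) gives $\scl(g)\ge \phi(g)/(2D(\phi))\ge \con{2}'n/\log n$.

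The single delicate step is the passage from the unconditional concentration estimates to the estimates conditioned on $[G,G]$; this is precisely the role played by Proposition~\ref{homology_local_limit_theorem}, whose statement is chosen to provide exactly a polynomial (rather than sharp asymptotic) lower bound on the density of $[G,G]$ in a bounded window of lengths. One should check once that $n^{k/2}\cdot \con{4}^{-n^{\con{5}}}$ is itself bounded by a stretched exponential of the same shape (it is, since $n^{k/2}=o(\con{4}^{n^{\con{5}}/2})$ for any fixed $\con{4}>1$, $\con{5}>0$). With that input in hand the theorem becomes a direct bookkeeping assembly of results already proved in this section, so no new conceptual obstacle arises.
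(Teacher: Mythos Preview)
Your proof is correct and follows essentially the same route as the paper: combine Proposition~\ref{cl_upper_bound} and Proposition~\ref{quasimorphism_lower_bound} (the latter via Bavard duality), use Lemma~\ref{homologically_trivial_bound} to control $\cl(h)$ once $g\in[G,G]$ forces $h\in[G,G]$, and invoke Proposition~\ref{homology_local_limit_theorem} to see that the polynomial cost of conditioning on $[G,G]$ is absorbed by the stretched-exponential tail. Your write-up is in fact slightly more explicit than the paper's about the subadditivity of $\cl$ and the arithmetic $n^{k/2}\cdot \con{4}^{-n^{\con{5}}}=O(\con{4}'^{-n^{\con{5}'}})$.
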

\begin{proof}
The estimates in Proposition~\ref{quasimorphism_lower_bound} and
Proposition~\ref{cl_upper_bound} hold with probability $1-O(C^{-n^c})$, and therefore
they still hold with the same order of probability (with different constants) 
conditioned on $g \in [G,G]$, by Proposition~\ref{homology_local_limit_theorem}.
Bavard duality (Theorem~\ref{Bavard_duality_theorem}) therefore gives the lower bound.

Similarly, for random $g$ conditioned to lie in $[G,G]$, Proposition~\ref{cl_upper_bound}
says we can write $g=gh^{-1}h$ where $\cl(gh^{-1})\le Cn/\log{n}$, and
where $|h|_S\le Cn/\log{n}$. Since $g\in[G,G]$ and $gh^{-1}\in[G,G]$, we have
$h\in[G,G]$. So Lemma~\ref{homologically_trivial_bound} gives
$\cl(h)\le Cn/\log{n}$. Putting this together gives the upper bound on $\cl(g)$,
with the desired estimate in probability.
\end{proof}

As remarked in the introduction, the gap between the upper and lower bounds is presumably
an artefact of the method of proof; in fact in \cite{Calegari_Walker} the authors
conjectured that there should be concentration for the random variable $\scl(g)\log{n}/n$
at $\log{\lambda}/6$, where $\lambda$ is the growth entropy of $G$ with respect to the
generating set $S$. The main theorem of \cite{Calegari_Walker} proves this for a free
group with respect to a free generating set (see Theorem~\ref{free_group_theorem} below
for a precise statement).

\section{$\scl$ of random walks in groups}\label{random_walk_section}

In this section we obtain estimates on the value of $\scl(g)$ where $g$ is
obtained by a {\em random walk} in a group $G$, providing $G$ satisfies certain
hypotheses. Even in a hyperbolic group $G$ with a fixed generating set $S$,
the probability distributions defined by random geodesics and by random walks are
not usually uniformly comparable, and typically become mutually singular at infinity.
However, there is one very important special case in which the two probability
distributions can be compared very precisely, namely the case of free groups
with a free generating set. It follows that we obtain upper bounds on $\scl$
in random words in free groups, with high probability.

The significance of this is not that we are interested in free groups {\it per se},
but rather that $\scl$ is {\em monotone} under homomorphisms. If $G$ is any group,
and $S$ any (symmetric) generating set, then there is a surjective homomorphism
$F_S \to G$ where $F_S$ is the free group on $S$, and (simple)
random walk on $F_S$ pushes
forward to random walk on $G$. It follows that any upper bound on $\scl$ on
random walks in free groups gives a universal upper bound on $\scl$ on random walks in
{\em any} group $G$.

These upper bounds are complemented by lower bounds for hyperbolic groups, and
for certain groups acting on hyperbolic spaces (e.g.\/ braid groups, mapping class
groups). Universal lower bounds, valid for all groups, are obtained by a quite
different method, and discussed in the next section.

\subsection{Special case: free groups}

For $F$ a free group of rank $k$, Calegari-Walker obtained a sharpening of
Theorem~\ref{hyperbolic_geodesic_theorem}:

\begin{theorem}[Calegari-Walker \cite{Calegari_Walker}, Thm.~4.1]\label{free_group_theorem}
Let $F$ be a free group of rank $k$, and let $g$ be a random element of
length $n$ in a free generating set where $n$ is even, and $g$ is conditioned
to lie in $[F,F]$. Then for every $\epsilon>0$ and every $C$ there is an estimate
$$\Pr(|\scl(g)\log{n}/n - \log(2k-1)/6|\le \epsilon)=1-O(n^{-C})$$
\end{theorem}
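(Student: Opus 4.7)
The statement specializes the Hyperbolic Geodesic Theorem to a free group $F = F_k$ with a free generating set, where the Cayley graph is the $2k$-regular tree, random reduced words of length $n$ are generated by the uniform non-backtracking Markov chain, and the growth rate is $\lambda = 2k-1$. The plan is to execute the upper and lower bound strategies of \S\ref{random_geodesic_section} in this setting with all constants tracked explicitly, so as to obtain concentration at $\log(2k-1)/6$ rather than just matching orders of magnitude.

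For the upper bound, set $m = \log n / \log(2k-1)$. Applying Proposition~\ref{Chernoff_estimate} to the non-backtracking chain on $F$, with exponentially high probability every reduced word $\sigma$ of length slightly less than $m$ occurs in a random $g$ of length $n$ with frequency within $n^{\epsilon + (1-\ell)/2}$ of its expectation $n(2k-1)^{-|\sigma|}$. The naive pairing construction of Proposition~\ref{cl_upper_bound} pairs consecutive subwords with inverses at a cost of one commutator per pair, giving only $\cl(g) \le (1/2 + \epsilon)\log(2k-1)\cdot n/\log n$. To sharpen the constant to $1/6$ one has to exploit that in a free group a single commutator $[u,v]$ can cancel up to $2|u|+2|v|$ letters and that nested commutators can share intermediate words. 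Concretely, one should organize the pairings into the combinatorial pattern of a pair-of-pants decomposition of a once-punctured genus-$g$ surface with $g \approx n/(6m)$, so that each genus-$1$ handle absorbs three critical-scale subwords rather than two. The Euler characteristic count for such surfaces then produces the denominator $6$ exactly.

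For the lower bound, select from among the length-$\ell m$ subwords of $g$ (for $\ell$ slightly less than $1$) those that do not $K$-anti-align any other subword. Proposition~\ref{not_many_anti_aligned} is especially clean in a tree, since anti-alignment reduces to literal equality of translated reduced words, and this yields a collection $\Sigma$ of size at least $(1-\epsilon)n/(\ell m)$. Form the Epstein--Fujiwara counting quasimorphism $h_\Sigma$. In the tree, realizing paths for $h_\Sigma$ are actual geodesics rather than $(2,4)$-quasigeodesics, so Lemma~\ref{Fujiwara_defect_lemma} can be sharpened to the explicit bound $D(h_\Sigma) \le 3$. Letting $\ell \to 1$ and invoking Bavard duality (Theorem~\ref{Bavard_duality_theorem}) gives $\scl(g) \ge h_\Sigma(g)/(2D(h_\Sigma)) \ge (1-\epsilon)\log(2k-1)\cdot n/(6\log n)$, with the same exponentially decaying tail. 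Conditioning on $g \in [F,F]$ costs only a factor $n^{-k/2}$ by Proposition~\ref{homology_local_limit_theorem} (which in the free case is Sharp's original local limit theorem), and is absorbed into the $O(n^{-C})$ probability.

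The principal obstacle is that the denominator $6$ has to be matched simultaneously on both sides. On the upper-bound side this requires the surface-topological pairing scheme to be tight, since the naive pairing loses a factor of $3$. On the lower-bound side it requires the defect bound $D(h_\Sigma) \le 3$ to be sharp, since any additional slack propagates multiplicatively through Bavard duality. The fact that both sharpenings land on the same constant reflects a duality between pants decompositions of surfaces bounding $g$ and tripod configurations in the Cayley tree that detect non-cancellation, and is the essential place where tree geometry replaces the coarser $\delta$-hyperbolic geometry of Theorem~\ref{hyperbolic_geodesic_theorem}.
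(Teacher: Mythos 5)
There is a genuine gap, and in fact the paper does not prove this statement at all: it is quoted verbatim from Calegari--Walker, and the authors explicitly remark that the counting-quasimorphism machinery of \S\ref{random_geodesic_section} --- the machinery you are trying to sharpen --- cannot reach the constant $\log(2k-1)/6$; that is exactly why Theorem~\ref{hyperbolic_geodesic_theorem} has a gap between $\con{2}$ and $\con{3}$. The most serious error is in your lower bound. You propose to take subwords of length $\ell m$ with $\ell$ slightly less than $1$ and claim that almost all of them fail to anti-align any other subword. This is false: there are only $(2k-1)^{\ell m}=n^{\ell}\ll n$ reduced words of that length, so in a random word of length $n$ each such word \emph{and its inverse} occurs roughly $n^{1-\ell}$ times; essentially every subword at that scale anti-aligns another, and your $\Sigma$ is essentially empty. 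This is precisely why Proposition~\ref{not_many_anti_aligned} requires $L>2$ (so that the $n^{2}$ pairs of positions cannot conspire against the $n^{L}$ possible subwords). With the forced choice $L=2+\epsilon$ and the defect bound $D(h_\Sigma)\le 3$, Bavard duality gives at best a constant of order $\log(2k-1)/12$, not $\log(2k-1)/6$. No tuning of this method closes that factor; the actual Calegari--Walker lower bound proceeds by an entirely different route, namely a direct entropy/counting lower bound on $-\chi$ of \emph{any} admissible surface bounding $g$, exploiting the rationality and linear-programming structure of $\scl$ in free groups.

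Your upper bound also falls short of a proof. You correctly identify that the naive pairing of Proposition~\ref{cl_upper_bound} only yields the constant $\log(2k-1)/2$ and that the factor $3$ must come from a trivalent (pair-of-pants type) gluing pattern whose Euler characteristic count produces the $6$. But asserting that the pairings ``can be organized'' this way is the entire difficulty: one must prove that, at the critical scale $m=\log n/\log(2k-1)$, the multiset of subwords of $g$ admits an essentially perfect matching with inverses compatible with a trivalent fatgraph structure, with only $o(n/\log n)$ unmatched material. That combinatorial existence statement (a Hall-type matching argument on critical-scale subwords, driven by the Chernoff estimates) is the substantive content of \cite{Calegari_Walker}, \S4.1--4.3, and is not supplied by your sketch. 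As written, your argument establishes neither inequality with the constant $1/6$.
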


In fact, the upper bound (i.e.\/ $\scl(g)\log{n}/n - \log(2k-1)/6)\le \epsilon$)
is proved to hold with probability $1-O(C^{-n^c})$ for some $C>1,c>0$; see
\cite{Calegari_Walker}, Prop.~4.2. Theorem~\ref{free_group_theorem} is derived
from a proposition, valid for $g$ random of length $n$, and then
conditioning on $g\in [F,F]$. The following proposition is implicit in
\cite{Calegari_Walker}; for completeness, we indicate how it
follows immediately from \S~4.1--3 of that paper. 

\begin{proposition}\label{free_scl_upper_bound}
Let $F$ be a free group of rank $k$. Then for any
$\epsilon>0$, there are constants
$\con{1}>1$, $\con{2}>0$ and $\delta>0$ 
so that if $g$ is a random element of $F_n$, with probability
$1-O(\con{1}^{-n^{\con{2}}})$ the following holds:
\begin{enumerate}
\item{there is some $h$ with $|h|_S\le O(n^{1/2+\epsilon})$; and}
\item{the product $gh^{-1}$ is in $[F,F]$; and}
\item{there is an estimate $\scl(gh^{-1})\le n(\log(2k-1)+\epsilon)/6\log{n}$.}
\end{enumerate}
\end{proposition}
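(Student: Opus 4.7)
The plan is to derive Proposition~\ref{free_scl_upper_bound} as a two-part argument: first handle the abelianization separately by a Chernoff bound, then invoke the explicit admissible-surface construction of \S 4.1--4.3 of \cite{Calegari_Walker} to bound $\scl$.

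For (1) and (2), I would observe that a uniformly random $g \in F_n$ is the non-backtracking random walk of length $n$ on the $2k$-regular Cayley graph, an ergodic finite Markov chain whose combing has a single aperiodic maximal component. The abelianization $\alpha(g)\in\Z^k$ is a sum of increments driven by this chain. Applying the Chernoff bound of Proposition~\ref{Chernoff_estimate} to the indicator variables counting each generator (equivalently, the classical finite-state Chernoff bound) gives
\[
\Pr\bigl(|\alpha(g)|_\infty > n^{1/2+\epsilon}\bigr)=O(\con{1}^{-n^{\con{2}}})
\]
for suitable $\con{1}>1$, $\con{2}>0$. On the complementary event, let $h$ be any geodesic word in $S$ with $\alpha(h)=\alpha(g)$; then $|h|_S\le k\cdot n^{1/2+\epsilon}=O(n^{1/2+\epsilon})$ and $gh^{-1}\in[F,F]$, which establishes (1) and (2).

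For (3), I would apply the construction of \S 4.1--4.3 of \cite{Calegari_Walker} to $gh^{-1}$. Subdivide the geodesic word of $g$ into successive subwords $\gamma_i$ of length $\ell m$ where $m=\log n/\log(2k-1)$ and $\ell$ is just larger than $1$. The Chernoff estimate together with the matching lemmas (the free-group analogues of Lemmas~\ref{mu_i_measures_roughly_equal} and~\ref{mu_approximate_inverse}, which are elementary here since inverse symmetry is exact) show that, with probability $1-O(\con{1}^{-n^{\con{2}}})$, almost all the $\gamma_i$ can be organized into sextuples that bound genus--one admissible pieces of a Van Kampen diagram for $gh^{-1}$. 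Gluing these along short bridges (absorbed into $h$ at negligible cost, since $|h|_S=o(n/\log n)$) yields an admissible surface for $gh^{-1}$ of genus at most
\[
\frac{n}{6m}\bigl(1+o(1)\bigr)=\frac{n(\log(2k-1)+\epsilon)}{6\log n},
\]
and the surface interpretation of $\scl$ delivers the required bound in (3).

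The main obstacle is the factor of $1/6$: the generic hyperbolic argument of Proposition~\ref{cl_upper_bound} pairs two near-inverse subwords per commutator and so produces only a factor of $1/2$. The improvement to $1/6$ depends on the free-group-specific combinatorial packing of matched subwords into sextuples that tile a genus--one handle, together with the statement that the empirical subword distribution supplies enough compatible sextuples with exponentially high probability. I would cite \cite{Calegari_Walker}, \S 4.2--4.3 for this content rather than reproduce it; the only new ingredient beyond that paper is checking that the polynomially short correction $h$ does not disrupt the probabilistic estimates, which is immediate from $|h|_S=O(n^{1/2+\epsilon})$.
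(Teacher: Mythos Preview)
Your treatment of (1) and (2) is essentially the same as the paper's (the paper cites Rivin rather than Proposition~\ref{Chernoff_estimate} for the $O(n^{1/2+\epsilon})$ homology bound, but the content is identical).

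The gap is in (3). The construction of \cite{Calegari_Walker}, \S4.1--4.3 does not produce an admissible surface for a single element; what Lem.~4.7 there actually gives is a formal $1$-chain $\Gamma$ with $|\Gamma|_S=O(n^{1-\delta})$ and $\scl(g-\Gamma)\le n(\log(2k-1)+\epsilon)/6\log n$. Your element $h$ is chosen independently, purely from the homology class of $g$, and there is no reason it should coincide with (or ``absorb'') the leftover $\Gamma$ coming out of the sextuple matching. Indeed $|\Gamma|_S=O(n^{1-\delta})$ is in general strictly larger than $|h|_S=O(n^{1/2+\epsilon})$, so the bridges cannot be absorbed into $h$ while preserving the bound in (1). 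Conversely, if you redefined $h$ to be a concatenation of the leftover pieces, you would recover (3) but lose the sharp bound in (1).

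The paper resolves this by working in the $\scl$ pseudo-norm on $B_1^H(F)$ rather than trying to build a single surface. One has
\[
\scl(gh^{-1}) \le \scl(g+h^{-1})+\tfrac12 \le \scl(g-\Gamma)+\scl(\Gamma+h^{-1})+\tfrac12,
\]
where the first inequality is Lemma~\ref{product_estimate} and the second is the triangle inequality for the norm. The first term on the right is the Calegari--Walker bound; the second is controlled crudely by $|\Gamma|_S+|h|_S=O(n^{1-\delta})$ via Lemma~\ref{length_bounds_scl} and is absorbed into $\epsilon$. This decoupling of the homological correction $h$ from the combinatorial remainder $\Gamma$ is exactly the step your argument is missing.
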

\begin{proof}
It is convenient to use the (well-known) extended definition of $\scl$ as a norm on homologically
trivial formal real group $1$-boundaries; see \cite{Calegari_scl}, \S~2.6 for details.
Lem.~4.7 \cite{Calegari_Walker} says that there is some formal $1$-chain
$\Gamma$ with $|\Gamma|_S = O(n^{1-\delta})$ for some $\delta>0$ so that
$\scl(g-\Gamma)\le n(\log(2k-1)+\epsilon)/6\log{n}$. By an estimate of Rivin
(see e.g.\/ \cite{Rivin}) we can assume that the $L^1$ norm of $[g]\in H_1(F)=\Z^k$
is $O(n^{1/2+\epsilon})$ for any $\epsilon$, with probability $1-O(\con{1}^{-n^{\con{2}}})$, and
therefore there is $h$ with $[g]=[h]$ and $|h|_S\le O(n^{1/2+\epsilon})$. We estimate
\begin{align*}
\scl(gh^{-1}) & \le \scl(g+h^{-1})+1/2 \\
			  & \le \scl(g-\Gamma) + \scl(\Gamma + h^{-1})+1/2  \\
			  & \le n(\log(2k-1)+\epsilon)/6\log{n} + O(n^{1-\delta})
\end{align*}
and the $O(n^{1-\delta})$ may be absorbed into the $\epsilon$.
\end{proof}

\begin{remark}
The reader who is uncomfortable with the proof of Proposition~\ref{free_scl_upper_bound}
can safely use Proposition~\ref{cl_upper_bound} instead in the sequel, after
observing that $\delta=0$ in a free group with a free generating set. 
The only cost is that the constant in Proposition~\ref{cl_upper_bound} is
worse by a factor of $6$, whereas the constant in Proposition~\ref{free_scl_upper_bound}
is sharp.
\end{remark}

\subsection{Universal upper bounds}

We now compare random words with random walks. The
Cayley graph of $F$ with respect to a free generating set is a regular $2k$-valent tree.
The group of simplicial automorphisms of this tree, fixing the origin, acts transitively
on the set of vertices at distance $m$, for any $m$. Let $\mu$ be the uniform probability
measure on the generators, and let $\mu^{*n}$ denote the $n$-fold convolution; i.e.\/ the
probability measure associated to a random walk of length $n$. Then $\mu^{*n}$ is a
weighted sum of uniform measures on the sets $F_m$ for $m\le n$. The generating function
for the weights can be determined explicitly (see e.g.\/ \cite{Woess} Lem.~1.24), and
a straightforward calculation shows that for any $\epsilon$,
all but $O(\con{1}^{-n^{\con{2}}})$ of the mass of $\mu^{*n}$ is concentrated on the
set of $F_m$ with $m/n \in [(k-1)/k-\epsilon,(k-1)/k+\epsilon]$.
We therefore we obtain the following proposition:

\begin{proposition}\label{free_scl_upper_bound_random_walk}
Let $F$ be a free group of rank $k$. Then for any
$\epsilon>0$, there are constants
$\con{1}>1$, $\con{2}>0$ and $\delta>0$ 
so that if $g$ is obtained by random walk on $F$ (in a free generating set) of length $n$, 
with probability
$1-O(\con{1}^{-n^{\con{2}}})$ the following holds:
\begin{enumerate}
\item{there is some $h$ with $|h|_S\le O(n^{1/2+\epsilon})$; and}
\item{the product $gh^{-1}$ is in $[F,F]$; and}
\item{there is an estimate $\scl(gh^{-1})\le ((k-1)\log(2k-1)+\epsilon)/6k\cdot n/\log{n}$.}
\end{enumerate}
\end{proposition}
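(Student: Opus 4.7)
The plan is to deduce Proposition~\ref{free_scl_upper_bound_random_walk} from Proposition~\ref{free_scl_upper_bound} by using the fact that random walk on a free group is, after conditioning on the length, nothing other than a random geodesic word. Everything then reduces to understanding how the displacement $|g|_S$ of a length-$n$ random walk concentrates.

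First, I would establish the \textbf{symmetry reduction}. The Cayley graph of $F$ with respect to a free generating set is the regular $2k$-valent tree, and the stabilizer of $\id$ in the simplicial automorphism group acts transitively on each sphere $F_m$. Since $\mu$ is the uniform probability on the generators, it is invariant under the induced action on $S$, and hence $\mu^{*n}$ is invariant under this stabilizer. Therefore, conditional on $|g|_S = m$, the element $g$ is uniformly distributed on $F_m$. Equivalently, $\mu^{*n}$ is a convex combination $\sum_{m \le n} p_{n,m}\,U_m$ of uniform measures $U_m$ on $F_m$.

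Second, I would establish the \textbf{length concentration}. The weights $p_{n,m}$ admit an explicit generating function (Woess \cite{Woess}, Lem.~1.24), and a routine saddle-point/Chernoff analysis shows that simple random walk on the $2k$-regular tree has drift $(k-1)/k$, so that for any $\epsilon>0$ there are constants $\con{1}>1$, $\con{2}>0$ with
$$\Pr\left(|g|_S/n \notin \left[(k-1)/k-\epsilon,(k-1)/k+\epsilon\right]\right) = O(\con{1}^{-n^{\con{2}}}).$$

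Third, I would \textbf{combine the two inputs}. Condition on the event that $|g|_S = m$ for some $m$ in the concentrated range. By the symmetry reduction, $g$ is then a random element of $F_m$, and Proposition~\ref{free_scl_upper_bound} (applied with length $m$) supplies an $h$ satisfying $|h|_S \le O(m^{1/2+\epsilon'})$, $gh^{-1} \in [F,F]$, and
$$\scl(gh^{-1}) \le m(\log(2k-1)+\epsilon')/(6\log m),$$
outside an event of probability $O(\con{1}^{-m^{\con{2}}}) = O(\con{1}^{-n^{\con{2}'}})$. Plugging in $m \le ((k-1)/k+\epsilon)n$ and $\log m = \log n + O(1)$ gives
$$\scl(gh^{-1}) \le \bigl((k-1)\log(2k-1) + \epsilon''\bigr)/(6k) \cdot n/\log n,$$
after absorbing the multiplicative slack into $\epsilon''$, and the bound $|h|_S = O(n^{1/2+\epsilon})$ is inherited directly. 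A union bound over the two exceptional events gives the overall probability estimate.

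The only step requiring care is the \textbf{symmetry reduction}, since it is what lets us import the (sharp, and delicate) random-geodesic result of \cite{Calegari_Walker} verbatim; the rest is bookkeeping on rates and constants. There is no genuinely hard step here, because we are leveraging the very special coincidence that random walk on a free group, conditioned on its length, \emph{is} a random geodesic word — a coincidence that fails for general hyperbolic groups and which is precisely why the subsequent sections need to develop separate machinery (the Hyperbolic Lower Bound Theorem) rather than simply transport the geodesic estimates along random-walk trajectories.
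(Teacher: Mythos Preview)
Your proof is correct and follows essentially the same approach as the paper: the symmetry of the tree makes $\mu^{*n}$ a mixture of uniform measures on spheres $F_m$, the weights concentrate (via \cite{Woess}, Lem.~1.24) on $m/n \approx (k-1)/k$ with stretched-exponential tails, and then Proposition~\ref{free_scl_upper_bound} is applied at scale $m$. The paper's argument is exactly this, stated more tersely in the paragraph preceding the proposition.
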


Remarkably, from this elementary estimate, we obtain a {\em universal sharp} upper bound
on $\scl$ for random walks in {\em arbitrary} finitely generated groups.

\begin{theorem}[Universal upper bound]\label{universal_upper_bound_random_walk}
Let $G$ be a group with a finite symmetric generating set $S$, and let $|S|=2k$. 
Let $g$ be obtained
by random walk on $G$ (with respect to $S$) of length $n$ (even), conditioned to lie in $[G,G]$.
Then for any $\epsilon>0$ there are constants $\con{1}>1$, $\con{2}>0$ so that with
probability $1-O(\con{1}^{-n^{\con{2}}})$ there is an inequality
$$\scl(g) \le ((k-1)\log(2k-1)+\epsilon)/6k\cdot n/\log{n}$$
\end{theorem}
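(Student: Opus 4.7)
The plan is to reduce the statement to the free group estimate (Proposition~\ref{free_scl_upper_bound_random_walk}) via the canonical surjection from the free group on $S$, exploiting monotonicity of $\scl$ under homomorphisms. Let $F$ denote the free group of rank $k$ with free generating set $\{s_1,\ldots,s_k\}$ chosen so that $S = \{s_1^{\pm 1},\ldots,s_k^{\pm 1}\}$, and let $\pi\colon F\to G$ be the canonical surjection. The uniform probability measure on $S$ is the push-forward under $\pi$ of the uniform measure on $\{s_i^{\pm 1}\}\subset F$, and convolution commutes with push-forward, so if $w_n$ denotes the simple random walk on $F$ of length $n$, then $g:=\pi(w_n)$ has the same distribution as the random walk of length $n$ on $G$ with respect to $S$.

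First I would apply Proposition~\ref{free_scl_upper_bound_random_walk} to $w_n$ in $F$: with probability $1-O(\con{1}^{-n^{\con{2}}})$ there exists $h\in F$ with $|h|_S\le O(n^{1/2+\epsilon})$, $w_n h^{-1}\in[F,F]$, and $\scl(w_n h^{-1})\le((k-1)\log(2k-1)+\epsilon)/(6k)\cdot n/\log n$. Setting $h':=\pi(h)\in G$, we have $|h'|_S\le |h|_S$, $gh'^{-1}=\pi(w_n h^{-1})\in\pi([F,F])\subseteq[G,G]$, and $\scl(gh'^{-1})\le\scl(w_n h^{-1})$ by monotonicity of $\scl$ under homomorphisms. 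Now condition on the event $g\in[G,G]$: on the intersection of this event with the good event above, $h'=(gh'^{-1})^{-1}g\in[G,G]$ as a product of two elements of the commutator subgroup, so by Lemma~\ref{length_bounds_scl} we have $\scl(h')\le C|h'|_S = O(n^{1/2+\epsilon}) = o(n/\log n)$. Applying Lemma~\ref{product_estimate} to $g=(gh'^{-1})\cdot h'$ yields
\[ \scl(g)\le\scl(gh'^{-1})+\scl(h')+\tfrac{1}{2}\le \frac{(k-1)\log(2k-1)+\epsilon}{6k}\cdot\frac{n}{\log n}+o(n/\log n), \]
and the lower-order error is absorbed by enlarging $\epsilon$ slightly for $n$ sufficiently large.

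It remains to convert the unconditional probability bound into one conditioned on $g\in[G,G]$. The abelianization map $G\to H_1(G;\Z)$ sends the random walk on $G$ to a symmetric finitely-supported random walk on the finitely generated abelian group $H_1(G;\Z)$; a standard local limit theorem then shows that the return probability at even time $n$ is at least $\con{3}n^{-r/2}$, where $r$ is the free rank of $H_1(G;\Z)$, and in particular decays only polynomially in $n$. Since this polynomial lower bound dominates the exponential tail $O(\con{1}^{-n^{\con{2}}})$ controlling the complement of the good event, dividing shows that the conditional probability of the good event given $g\in[G,G]$ is still of the form $1-O(\con{1}'^{-n^{\con{2}'}})$ for adjusted constants. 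The main obstacle, and essentially the only nontrivial step beyond the mechanical push-forward from the free case, is this local limit estimate for $\Pr(g\in[G,G])$; the parity restriction in the hypothesis that $n$ is even is precisely what ensures the abelianized walk is not excluded from the origin by parity.
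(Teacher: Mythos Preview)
Your proof is correct and follows essentially the same route as the paper's: push the random walk forward from $F_k$ via the canonical surjection, invoke Proposition~\ref{free_scl_upper_bound_random_walk} and monotonicity of $\scl$, handle the correction term $h'$ via the word-length bound on $\scl$ (the paper cites the equivalent Lemma~\ref{homologically_trivial_bound}), and absorb the conditioning on $[G,G]$ using a polynomial lower bound on the return probability of the abelianized walk. Your treatment is in fact slightly more explicit than the paper's, which compresses all of this into three sentences.
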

\begin{proof}
Let $\phi:F_k \to G$ take a free symmetric generating set for $F_k$ to $S$. Then random walk
in $F_k$ (with respect to the standard generating set) pushes forward to random walk in $G$
with respect to $S$. Since $\scl$ is monotone nonincreasing under homomorphisms, the
theorem follows from Proposition~\ref{free_scl_upper_bound_random_walk} and
Lemma~\ref{homologically_trivial_bound}, together with the fact that a random walk of length
$n$ has probability at least $\Theta(n^{-k/2})$ of being homologically trivial (for $n$ even).
\end{proof}

\subsection{Template for obtaining lower bounds}

We will obtain lower bounds, complementing Theorem~\ref{universal_upper_bound_random_walk},
for random walks in hyperbolic groups, and certain groups acting on hyperbolic spaces;
the most important example of the latter will be (not necessarily quasiconvex) finitely
generated subgroups of hyperbolic groups. The lower bounds are obtained from
the counting quasimorphism construction, described in \S~\ref{counting_quasimorphism_subsection};
however, the argument is complicated by the fact that a random walk in a hyperbolic group
(or in a hyperbolic graph) is almost certainly not quasigeodesic.

The abstract template for obtaining lower bounds is the following somewhat
technical proposition, which is basically just a restatement of some properties
of small counting quasimorphisms.

\begin{proposition}\label{lower_bound_template}
Let $G$ be a group acting by isometries on
a $\delta$-hyperbolic simplicial complex $Y$ ({\em not} assumed to be locally finite)
with a basepoint $p$. Let $g\in G$ be given, and let $\gamma(g)$ (or $\gamma$ for short)
be a geodesic in $Y$ from $p$ to $g(p)$. We set $n=\length(\gamma)$.
Fix $\con{1}>0$, and let $\gamma_i$
be the successive (nonoverlapping) subpaths of $\gamma$ of length $\con{1}\log{n}$.
Let $K(\delta)$ (or $K$ for short) be such that any $(2,4)$-quasigeodesic in a $\delta$-hyperbolic
space stays within distance $K$ of a genuine geodesic.
Suppose that there is an $\epsilon>0$ for which the following inequality holds:
$$\#\lbrace i:\gamma_i \text{ $K$ anti-aligns some } \gamma' \subset \gamma\rbrace < n^{1-\epsilon}$$
Then there are constants $\con{2}>0$ and $\con{3}$ depending only on
$\con{1}$, $\epsilon$ and $\delta$, and $\con{4}$ depending only on the action,
and a homogeneous quasimorphism $\phi$ on $G$ 
satisfying the following properties:
\begin{enumerate}
\item{$\phi(g)\ge n\con{2}/\log{n}$;}
\item{$D(\phi)\le\con{3}$;}
\item{$|\phi(h)|\le 2|h|_S\con{4}/\log{n}$ for any $h \in G$.}
\end{enumerate}
\end{proposition}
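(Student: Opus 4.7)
The plan is to mimic the proof of Proposition~\ref{quasimorphism_lower_bound}: extract from the ``good'' subpaths $\gamma_i$ a small counting quasimorphism of Fujiwara type, then homogenize. The essential asymmetry between $c_\Sigma(g)$ (large) and $c_{\Sigma^{-1}}(g)$ (zero) will come entirely from the anti-alignment hypothesis, with $K$ chosen precisely as the $(2,4)$-quasigeodesic fellow-travel constant so that the two uses of $K$ match up.

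Concretely, let $\Sigma$ be the set of orbit classes of those $\gamma_i$ that do \emph{not} $K$ anti-align any $\gamma' \subset \gamma$. By the hypothesis, $|\Sigma| \geq n/(\con{1}\log n) - n^{1-\epsilon}$, which is at least $n \con{2}/\log n$ for a suitable $\con{2}>0$ and all $n$ large (small $n$ being absorbed into constants, and $n$ large enough that $\con{1}\log n \geq 2$ so that Lemmas~\ref{Fujiwara_antialign_lemma} and \ref{Fujiwara_defect_lemma} apply). Form the small counting quasimorphism $h_\Sigma = c_\Sigma - c_{\Sigma^{-1}}$ based at $p$. Since the $\gamma_i \in \Sigma$ are pairwise disjoint copies sitting inside $\gamma$ itself, $c_\Sigma(\gamma) \geq |\Sigma|$, and using $\gamma$ as a test path in the definition of $c_\Sigma(g)$ gives $c_\Sigma(g) \geq |\Sigma|$.

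I claim $c_{\Sigma^{-1}}(g) = 0$. Any realizing path $\gamma^*$ from $p$ to $g(p)$ is a $(2,4)$-quasigeodesic by Lemma~\ref{Fujiwara_antialign_lemma}, hence stays within distance $K$ of the geodesic $\gamma$. A translate of $\sigma^{-1}$ (for some $\sigma \in \Sigma$) appearing inside $\gamma^*$ has its endpoints within $K$ of points on $\gamma$, which, by the fellow-traveling of quasigeodesics in a $\delta$-hyperbolic space, occur in the same order along $\gamma$ as along $\gamma^*$; this produces a subpath $\gamma' \subset \gamma$ that $K$ anti-aligns $\sigma$, contradicting $\sigma \in \Sigma$. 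Therefore $h_\Sigma(g) \geq n\con{2}/\log n$, while $D(h_\Sigma) \leq \con{3}$ for a constant $\con{3}$ depending only on $\delta$ by Lemma~\ref{Fujiwara_defect_lemma}. Set $\phi = \overline{h_\Sigma}$; by Lemma~\ref{homogenization_is_quasimorphism}, $\phi$ is a homogeneous quasimorphism with $D(\phi) \leq 2\con{3}$ and $|\phi - h_\Sigma| \leq \con{3}$, and the additive error $\con{3}$ can be absorbed into a slightly smaller leading constant, yielding (1) and (2).

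For (3), fix any $h \in G$. A realizing path for $c_\Sigma(h)$ or $c_{\Sigma^{-1}}(h)$ from $p$ to $h(p)$ is a $(2,4)$-quasigeodesic, so has length at most $2 d_Y(p, h(p)) + 4$, and therefore contains at most $(2 d_Y(p, h(p)) + 4)/(\con{1}\log n)$ disjoint translates of an element of $\Sigma$ or $\Sigma^{-1}$; hence $|h_\Sigma(h)| \leq 2 d_Y(p, h(p))/(\con{1}\log n) + O(1/\log n)$. Applying the same estimate to $h^m$ and dividing by $m$ gives $|\phi(h)| \leq 2 d_Y(p, h(p))/(\con{1}\log n)$. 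Since the generators of $G$ move $p$ by a bounded amount, there is a constant $\con{4}$, depending only on the action, with $d_Y(p, h(p)) \leq \con{4}|h|_S$, giving (3). The main obstacle is the fellow-traveling argument certifying $c_{\Sigma^{-1}}(g) = 0$: one has to use both that $\gamma^*$ stays $K$-close to $\gamma$ (hyperbolicity plus Lemma~\ref{Fujiwara_antialign_lemma}) and that this closeness is orientation-preserving along the quasigeodesic, so that a copy of $\sigma^{-1}$ in $\gamma^*$ really witnesses an anti-alignment within $\gamma$ itself.
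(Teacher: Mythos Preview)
Your proof is correct and follows exactly the same route as the paper's own argument: take $\Sigma$ to be the non-anti-aligning $\gamma_i$, build the Epstein--Fujiwara small counting quasimorphism $h_\Sigma$, use Lemma~\ref{Fujiwara_antialign_lemma} together with the choice of $K$ to conclude $c_{\Sigma^{-1}}(g)=0$, use Lemma~\ref{Fujiwara_defect_lemma} for the defect bound, and homogenize. Your write-up is in fact more detailed than the paper's (which is quite terse), particularly in spelling out why a copy of $\sigma^{-1}$ in a realizing path would witness a $K$ anti-alignment inside $\gamma$, and in passing from the bound on $h_\Sigma(h^m)$ to the bound on $\phi(h)$ via $d_Y(p,h^m(p))\le m\,d_Y(p,h(p))$.
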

\begin{proof}
Let $K$ be such that any $(2,4)$-quasigeodesic in a $\delta$-hyperbolic space 
stays within distance $K$ of a genuine geodesic. Choose $K$ with this
property, and let $\Sigma$ be the set of $\gamma_i$ that do not $K$ anti-align some
$\gamma' \subset \gamma$. Let $c_\Sigma$ denote the small counting function associated
to the set $\Sigma$, and similarly $c_{\Sigma^{-1}}$.
Then by Lemma~\ref{Fujiwara_antialign_lemma} and the definition of
$\Sigma$ we have $c_\Sigma(g) \ge n\con{2}/\log{n}$ and $c_{\Sigma^{-1}}(g)=0$.
Then let $\phi$ be the homogenization of $h_\Sigma$.

To obtain the last bullet point, observe that $d(p,h(p))\le \con{5}|h|_S$ for any
$h\in G$, for some constant $\con{5}$.
\end{proof}

Obtaining lower bounds on $\scl$ for random walks thus reduces to showing that for
certain $G$, the condition in Proposition~\ref{lower_bound_template} holds with
high probability for $g$ the result of a random walk of length $n/L$ (where $L$ is the
drift). The main technical issue is to relate subwalks of a random walk with subpaths of
the geodesic joining the endpoints. We address this point in what follows.

\subsection{Nondegenerate random walks}

We fix a group $G$ and probability measure $\mu$ supported on a finite subset $S$
of $G$ so that $S=S^{-1}$, and $\mu(s)=\mu(s^{-1})$ for each $s\in S$ (such a measure is said to
be {\em symmetric}). We also fix a
simplicial action of $G$ on a $\delta$-hyperbolic simplicial complex $Y$ (not assumed
to be locally finite) with a basepoint $p$.

The subset $S$ generates some subgroup $\langle S\rangle$ of $G$; we say $\mu$ is 
{\em nonelementary} if $\langle S \rangle$ does not fix any finite subset of $\partial_\infty Y$.

A random sequence $\id=g_0, g_1, g_2,\cdots$ (finite or infinite) is a
{\em $\mu$-random walk in $G$} if the successive differences
$s_i:=g_{i-1}^{-1}g_i$ are independent random elements of $S$ each
with the distribution $\mu$. For such a walk, define the associated
{\em $\mu$-random walk in $Y$}, namely the sequence $p=p_0, p_1,p_2
\cdots$ where $p_i = g_ip$. Since $S$ is finite, the set of distances
$d_Y(p_i,p_{i+1})$ is uniformly bounded; we say such a walk has {\em
  bounded increments}.

For any measure $\mu$ of finite first moment on any group acting
isometrically on any metric space, Kingman's subadditive ergodic
theorem implies that there is a constant $L\ge 0$ called the {\em
  drift} so that $L=\lim_{n \to \infty} d(p,p_n)/n$ almost surely.
  
Recall that for any points $p,q$ in a hyperbolic space $Y$ 
and any constant $K$, the {\em shadow} $S_p(q,K)$ is defined to be the set of
all points $r \in Y$ so that every geodesic from $p$ to $r$ comes within distance $K$
of $q$. This maybe expressed equivalently (up to slightly adjusting the
constant $R$) in terms of the ``Gromov product'' 
$(x\cdot y)_p:=1/2(d_Y(p,x)+d_Y(p,y)-d_Y(x,y))$ as the set of points with
$(q\cdot r)_p \ge d_Y(p,q)-R$.

We will make the following assumptions about $G,Y,\mu$.

\begin{definition}\label{def:nondegenerate}
Let $G$ be a group acting simplicially on a $\delta$-hyperbolic simplicial complex $Y$,
and let $\mu$ be a symmetric probability measure with support equal to
some set $S \subset G$.
A triple $(G,\mu,Y)$ as above is {\em nondegenerate} if it satisfies the following conditions:
\begin{enumerate}
\item{{\bf (nonelementary:)} $\langle S \rangle$ does not fix any finite subset of $\partial_\infty Y$;}
\item{{\bf (positive drift:)} the drift $L$ is positive;}
\item{{\bf (acylindricity:)} for any $G$-orbit $Gp\subset Y$ and for any $\con{1}>0$ there are constants $\con{2},\con{3}$ so that if
$q,r$ are points in $Gp$ with $d_Y(q,r)\ge \con{2}$, there are at most $\con{3}$ elements $g \in G$ 
with $d_Y(q,gq)\le \con{1}$ and $d_Y(r,gr)\le \con{1}$;}
\item{{\bf (linear progress:)} there are constants $\con{1}>1$, $\con{2}>0$ so that 
$$\Pr(d_Y(p_0,p_n) \in [L\con{1}^{-1}n,L\con{1}n]) \ge 1 - e^{-n\con{2}}$$}
\item{{\bf (exponential decay:)} there are constants $\con{1}$ and
$\con{2}>0$ such that for any $y \in Y$ and any $K$, the probability
that the result of a random walk of length $n$ lies in the shadow $S_{p_0}(y, K)$
decays exponentially in the distance to the shadow; i.e.
$$\Pr(p_n \in S_{p_0}(y,K)) \le \con{1}e^{-\con{2}(d_Y(p_0,y)-K)}$$}
\end{enumerate}
\end{definition}

\begin{remark}
We do not claim that every condition in this list is logically necessary; rather it
reflects the ingredients that go into our proof of Theorem~\ref{thm:hyperbolic_lower_bound}. 
It is natural to wonder whether the condition of acylindricity could be replaced by 
Bestvina--Fujiwara's {\em weakly properly discontinuous} condition
(see \cite{Bestvina_Fujiwara}, p.~76), since the latter condition is known to hold for
a wider class of group actions; but our arguments do not seem to easily allow it.
\end{remark}

\begin{remark}\label{orbit_acylindricity_remark}
The condition that we call ``acylindricity'' in Definition~\ref{def:nondegenerate} is weaker
than what is usually called acylindricity for an action, in that the constants $C_2$, $C_3$
are allowed to depend on both $C_1$ and the choice of $G$-orbit $G_p$. If a distinction
needs to be made, we refer to our weaker notion as {\em orbit acylindricity}.
If the action of $G$ on $Y$ is cocompact, both notions of acylindricity are equivalent. 
\end{remark}

The following proposition is largely obtained
by assembling known results:

\begin{proposition}\label{prop:nondegenerate_examples}
In each of the following cases, $(G,\mu,Y)$ is nondegenerate:
\begin{enumerate}
\item{$G$ is a hyperbolic group, $Y$ is the Cayley graph of $G$ with respect to some finite
generating set, and $\langle S \rangle$ is nonelementary.}
\item {$G$ is a (strongly) relatively hyperbolic group, $Y$ is the
    Groves--Manning space associated to $G$, and $\langle S \rangle$ is
  nonelementary.}
\item{For some surface $\Sigma$, the group $G$ is the mapping class group, $Y$ is 
the complex of curves, and $\langle S \rangle$ is not reducible or virtually abelian.}
\end{enumerate}
\end{proposition}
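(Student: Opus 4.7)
The plan is to verify each of the five axioms of Definition~\ref{def:nondegenerate} in each of the three cases, appealing to the literature wherever possible. The nonelementary condition is a hypothesis in each case, so the work is to establish positive drift, acylindricity, linear progress, and exponential decay of shadows. I will organize the verification case-by-case, but with the observation that once acylindricity has been established for the action on $Y$, the three random-walk axioms (positive drift, linear progress, exponential decay) can be handled uniformly using the general theory of random walks on (possibly non-locally-compact) Gromov hyperbolic spaces developed in \S~\ref{subsection:outfn}.

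For case (1), acylindricity is immediate: the action of a hyperbolic group on its Cayley graph is properly discontinuous and cocompact, so the number of group elements moving two vertices at distance $\ge \con{2}$ each by at most $\con{1}$ is bounded by the cardinality of a ball of radius $\con{1}$ in $G$, which is finite and depends only on $\con{1}$. Nonelementarity of $\langle S\rangle$ in $G$ is equivalent (in this setting) to nonelementarity of the action on $\partial_\infty Y$. Positive drift, linear progress, and exponential decay for the random walk on a nonelementary subgroup of a hyperbolic group are classical: they follow from Kingman's subadditive ergodic theorem together with the Chernoff-type estimates and shadow estimates going back to Kaimanovich \cite{Kaimanovich_Masur} and, in the sharp quantitative form needed here, to the arguments recapitulated in \S~\ref{subsection:outfn}.

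For case (2), acylindricity of the action of a relatively hyperbolic group on its Groves--Manning complex $Y$ is known; the point is that horoballs attached to peripheral cosets have finite intersection pattern, and outside large horoballs the action is essentially the proper and cocompact action on the Cayley graph. The nonelementary hypothesis on $\langle S\rangle$ in $Y$ implies that the induced action on $\partial_\infty Y$ is nonelementary, and hence the remaining axioms follow from the general hyperbolic framework of \S~\ref{subsection:outfn} applied to $Y$. For case (3), acylindricity of the mapping class group action on the curve complex is Bowditch's theorem; nonelementarity in $Y$ is equivalent to $\langle S\rangle$ being neither reducible nor virtually abelian by the Masur--Minsky/Ivanov classification of subgroups; and positive drift, linear progress, and exponential decay have all been established by Maher, using Klarreich's identification of $\partial_\infty Y$ with the space of minimal filling laminations and ergodicity of the boundary action.

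The main obstacle is the exponential decay axiom, which is the most delicate of the five, because it requires a quantitative control on shadows rather than just generic behavior. In the cocompact case (1) this is standard, but in the non-locally-compact cases (2) and (3) it rests on the more recent theory of random walks on Gromov hyperbolic spaces developed in \S~\ref{subsection:outfn} of this paper. Once this is in hand, the verification of the remaining axioms in each of the three cases is largely bookkeeping, and the proposition follows.
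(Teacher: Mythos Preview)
Your approach is essentially the same as the paper's: verify each of the five axioms in each of the three cases by appeal to the literature. The paper also treats nonelementarity as given by hypothesis, derives acylindricity from proper discontinuity in cases (1) and (2) and from Bowditch's theorem in case (3), and cites Kaimanovich and Maher for the random-walk properties.

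There are a few inaccuracies worth correcting. First, the Groves--Manning cusped space in case (2) is \emph{proper} (locally compact); it is only the curve complex in case (3) that is not locally finite, so your classification of which cases require the non-locally-compact machinery is off. Second, acylindricity is not an input to the arguments of \S~\ref{subsection:outfn}; that section establishes positive drift, linear progress, and exponential decay from nonelementarity alone, and explicitly notes that acylindricity is the one axiom it does \emph{not} verify. So your sentence ``once acylindricity has been established\ldots the three random-walk axioms can be handled uniformly'' has the logical dependence backwards. Third, the reference for positive drift in hyperbolic groups should be \cite{Kaimanovich}, not \cite{Kaimanovich_Masur} (the latter concerns the mapping class group). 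Finally, your justification of acylindricity in case (2) via ``horoballs have finite intersection pattern'' is more elaborate than what is needed and not quite the right mechanism: the paper simply invokes that the cusped space is proper and the action properly discontinuous. None of these issues is a genuine gap in the strategy; they are bookkeeping errors in an argument that is, as you say, largely bookkeeping.
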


\begin{proof}
The nonelementary axiom follows by hypothesis in all three cases. 
Exponential decay in case (3) follows from Maher \cite{Maher_exp}, and
again the arguments go through verbatim in the setting of a
non-elementary action of a group on a proper Gromov hyperbolic space,
so cases (1) and (2) also follow from this, though presumably the
result is standard in case (1). We now verify the other properties.

In case (1), the Cayley graph $Y$ is a proper Gromov hyperbolic space,
and acylindricity follows from the fact that the action of $G$ on
itself is properly discontinuous. Positive drift holds in cases (1) by
Kaimanovich \cite{Kaimanovich}, Thm.~7.3, and linear progress follows
from Kesten's estimate \cite{Woess}, Lem.~8.1b for a random walk on a
nonamenable group.

In case (2), given a (strongly) relatively hyperbolic group $G$,
Groves and Manning \cite{Groves_Manning} construct a \emph{proper}
Gromov hyperbolic space $Y$ (called the
{\em cusped space} --- see \cite{Groves_Manning} \S~3)
on which $G$ acts by isometries properly
discontinuously, but not cocompactly; and orbit acylindricity follows from
the fact that the space $Y$ is proper, and the action of $G$ on $Y$ is
properly discontinuous. Positive drift holds by Kaimanovich
\cite{Kaimanovich}, Thm.~7.3, and linear progress follows from Maher
\cite{Maher_exp}.  Although the results of \cite{Maher_exp} are stated
in terms of the action of the mapping class group on the complex of
curves, the results hold in the (simpler) case of an action of a
non-elementary group on a proper Gromov hyperbolic space $Y$.

In case (3), the complex of curves is a locally infinite Gromov
hyperbolic simplicial complex on which the mapping glass group acts
discontinuously by simplicial isometries, and acylindricity is a
theorem of Bowditch \cite{Bowditch}, Thm.~1.3. Positive drift and
linear progress follow from Maher \cite{Maher_linear}, Thm.~1.1.
\end{proof}

\subsection{Proximal points and unfolded walks}

Since $S$ is finite, the length of successive steps $d(p_i,p_{i+1})$ is uniformly
bounded by a constant, and therefore we can think of the random walk as a (coarse)
path in $Y$ of length $\le \con{1}n$. We would like to use this estimate to show
that with very high probability, ``most'' of the $p_i$ are within a bounded distance
of the geodesic from $p_0$ to $p_n$. Actually, it turns out to be easier (and just
as useful) to show that most points on the geodesic from $p_0$ to $p_n$ are within
a bounded distance of some $p_i$, and moreover this fact can be deduced directly from
linear progress (see Definition~\ref{def:nondegenerate}) and elementary hyperbolic geometry.

\begin{definition}\label{def:bounded_increments}
Let $p_0,p_1,\cdots,p_n$ be a walk on a $\delta$-hyperbolic space $Y$. If for all $i$ there is
an inequality $d(p_i,p_{i+1})\le \con{1}$ we say $p_i$ has {\em $\con{1}$-bounded increments}.
\end{definition}

\begin{definition}\label{def:proximal_point}
Let $p_0,p_1,\cdots,p_n$ be a walk on a $\delta$-hyperbolic space $Y$, and
let $\gamma$ be a geodesic from $p_0$ to $p_n$. A point $q\in \gamma$ is {\em $K$-proximal}
if $d_Y(q,p_i) \le K$ for some $p_i$. We denote the $K$-proximal subset of $\gamma$ by $\gamma_K$.
\end{definition}

\begin{lemma}\label{lem:most_points_proximal}
Let $p_0,p_1,\cdots,p_n$ be a walk on a $\delta$-hyperbolic space $Y$ with $\con{1}$-bounded
increments. Let $\gamma$ be a geodesic from $p_0$ to $p_n$. Suppose $\length(\gamma)\ge \con{2}n$.
Then for any $\epsilon > 0$ there is a constant $K(\epsilon,\con{1},\con{2})$ so that
if $\gamma_K$ denotes the $K$-proximal subset of $\gamma$, there is an estimate
$$\length(\gamma_K)/\length(\gamma) \ge 1-\epsilon$$
\end{lemma}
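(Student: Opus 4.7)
The plan is to shadow the walk by its nearest-point projection to $\gamma$. Let $q_i := \pi(p_i) \in \gamma$ be a nearest-point projection of $p_i$, and $d_i := d_Y(p_i, \gamma)$ the altitude. Because nearest-point projection onto a geodesic in a $\delta$-hyperbolic space is coarsely $1$-Lipschitz, $d_\gamma(q_i, q_{i+1}) \le d_Y(p_i, p_{i+1}) + 2\delta \le \con{1} + 2\delta$; combined with $q_0 = p_0$ and $q_n = p_n$ at the two endpoints of $\gamma$, every point of $\gamma$ lies within $\gamma$-distance $(\con{1}+2\delta)/2$ of some $q_i$. Using the two-sided estimate $d_Y(p_i, q) = d_i + d_\gamma(q_i, q) + O(\delta)$ which is standard in $\delta$-hyperbolic spaces, a point $q \in \gamma$ is $K$-proximal whenever some index $i$ satisfies $d_i + d_\gamma(q_i, q) \le K - O(\delta)$.

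Setting $D := K - (\con{1} + 2\delta)/2 - O(\delta)$, the complement $\gamma \setminus \gamma_K$ is covered by two kinds of bad pieces: (a) single-step ``skips'' where the projection jumps across an interval, which by coarse Lipschitzness have length at most $\con{1} + 2\delta$; and (b) intervals on $\gamma$ that the walk visits only in projection during excursions of altitude $> D$. For (a), the detour estimate in hyperbolic geometry shows that avoiding the $K$-neighborhood of a geodesic interval costs the walk at least $2K - O(\delta)$ of excess length; combined with the overall walk-length minus geodesic-length excess of at most $(\con{1} - \con{2})n$, this bounds both the number of skip-intervals and their combined length by something of order $(\con{1}+2\delta)(\con{1}-\con{2})n/(2K - O(\delta))$. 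For (b), the strongly contracting property of projection at large altitudes in $\delta$-hyperbolic spaces (equivalently, Gromov's exponential divergence of geodesics) controls the total projection sweep at altitudes above $D$ by a quantity $g(D)n$ where $g(D) \to 0$ as $D \to \infty$.

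Adding these contributions gives a bound of the form $\length(\gamma \setminus \gamma_K) \le (\con{1}+2\delta)(\con{1}-\con{2})n/(2K - O(\delta)) + g(D) n$, and choosing $K$ a sufficiently large constant depending only on $\epsilon$, $\con{1}$, $\con{2}$, $\delta$ reduces the right-hand side to at most $\epsilon \con{2} n \le \epsilon \length(\gamma)$, as required. The main technical obstacle will be extracting the strongly contracting (equivalently, exponential divergence) estimate in the precise quantitative form needed for a general $\delta$-hyperbolic simplicial complex that is not assumed locally finite; this is ultimately a consequence of thin-triangle arguments together with the coarse-Lipschitz property of projection, but requires careful bookkeeping of additive constants.
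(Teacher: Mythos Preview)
Your setup --- project each $p_i$ to $q_i=\pi(p_i)\in\gamma$ and use that this is coarsely $1$-Lipschitz --- is exactly how the paper begins. But the argument as written has a genuine gap in part (b), and part (a) is a red herring.

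First, the (a)/(b) split is unnecessary. You already observe that every point of $\gamma$ lies within $(\con{1}+O(\delta))/2$ of some $q_i$; so any non-$K$-proximal point $q$ is near some $q_i$, and since $d_Y(p_i,q)\le d_i+O(\con{1},\delta)>K$ forces $d_i\gtrsim K$, that point is automatically in your case (b). There is nothing for a separate ``skip'' case (a) to do, and the detour-cost argument you sketch for it is not needed (nor clearly correct as stated).

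The real issue is your claim that the total projection sweep at altitudes $>D$ is bounded by $g(D)n$ with $g(D)\to 0$. The strongly contracting property only says: if $p_i,p_{i+1}$ are both at altitude $>D$ with $D\gg \con{1},\delta$, then $d(q_i,q_{i+1})\le O(\delta)$ --- a bound \emph{independent of $D$}. Summing over all high-altitude steps gives at best $O(\delta)\cdot n$, which does not tend to $0$. Your appeal to ``exponential divergence'' suggests you are thinking of CAT$(-1)$ geometry, where projection at altitude $D$ contracts by $\sim e^{-D}$; but in a general $\delta$-hyperbolic complex (not assumed locally finite or CAT$(-1)$) there is no such exponential rate --- contraction is only to $O(\delta)$. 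To extract decay in $D$ you must instead group $\sim D/\con{1}$ \emph{consecutive} high-altitude steps into a block: the whole block lies in a ball of radius $<D/2$ at altitude $>D$, so its entire projection has diameter $O(\delta)$, giving sweep $O(\delta \con{1}/D)$ per step. That blocking, turned into a pigeonhole count (each well-separated non-proximal point ties up $\sim K/\con{1}$ distinct walk indices), is precisely the paper's proof; once you see it, the rest of your scaffolding can be discarded.
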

\begin{proof}
We assume $K\gg C \gg \delta$ for convenience.

There is a nearest point projection $\pi$ from the $p_i$ to $\gamma$
so that $d(\pi(p_i),\pi(p_{i+1})) \le C+O(\delta)$ and consequently
every point in $\gamma$ is within distance $C/2 + O(\delta)$ from some
$\pi(p_i)$.  If $p \in \gamma$ is not $K$-proximal, then
$d(p_i,\gamma) \ge K-O(C)$ and therefore $d(p_j,\gamma) \ge 3K/4-O(C)$
for $|j-i|\le K/4C$. On the other hand, $d(p_i,p_j) \le |j-i|C$ so any
geodesic from $p_j$ to $p_i$ does not come within distance $K/2$ of
$\gamma$. By $\delta$-thinness, we can conclude that
$d(\pi(p_j),\pi(p_i)) = O(\delta)$ for $|j-i| \le K/4C$.

If the set of non $K$-proximal points has length at least $\epsilon\cdot \length(\gamma)$, 
there are at least $\epsilon\cdot \length(\gamma)/O(\delta)$ such points whose mutual pairwise
distances is at least $O(\delta)$. To each such point we can associate a sequence of
$K/2C$ points $p_i$ whose projections to $\gamma$ are within $O(\delta)$ of it, and therefore
these collections of points are {\em disjoint}. The total number of $p_i$ in these collections
is at least $(K/2C) \epsilon C^{-1}n/O(\delta) = nK\epsilon C^{-2}/O(\delta)$ so if 
$K> \epsilon^{-1} C^2 O(\delta)$ we get a contradiction, as desired.
\end{proof}

From the linear progress axiom, we deduce the following:

\begin{lemma}[proximal]\label{lem:close_on_big_subset}
Let $(G,\mu,Y)$ be nondegenerate. Let $p_0,\cdots,p_n$ be a random walk, and $\gamma$ a geodesic
from $p_0$ to $p_n$. There are constants $\con{1},\con{2}>0$ so that for any $\epsilon > 0$ there is a
further constant $K(\con{1},\epsilon)$ so that 
$$\Pr\left((\length(\gamma)\ge \con{1}n) \wedge (\length(\gamma_K)/\length(\gamma) \ge 1-\epsilon)\right) \ge 1 - e^{-n\con{2}}$$
\end{lemma}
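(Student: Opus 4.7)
The plan is to read this as an essentially immediate corollary of the linear progress axiom in Definition \ref{def:nondegenerate} combined with the deterministic estimate of Lemma \ref{lem:most_points_proximal}. First I would note that because the support $S$ of $\mu$ is finite, every sample path $p_0,p_1,\ldots,p_n$ automatically has $\con{0}$-bounded increments in the sense of Definition \ref{def:bounded_increments}, where $\con{0} := \max_{s \in S} d_Y(p, sp)$ is a deterministic constant depending only on the action of $S$ on $Y$.

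Next I would invoke the linear progress axiom to extract constants $\con{1}' > 1$ and $\con{2} > 0$ (independent of $n$) so that
$$\Pr\bigl(d_Y(p_0,p_n) \geq L(\con{1}')^{-1} n\bigr) \geq 1 - e^{-n\con{2}}.$$
Since $\gamma$ is a geodesic from $p_0$ to $p_n$, we have $\length(\gamma) = d_Y(p_0,p_n)$, so setting $\con{1} := L(\con{1}')^{-1}$ gives $\length(\gamma) \geq \con{1} n$ on this event.

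On this same event, Lemma \ref{lem:most_points_proximal} applies pathwise: the walk has $\con{0}$-bounded increments and $\length(\gamma) \geq \con{1} n$, so for the given $\epsilon > 0$ the lemma produces a constant $K = K(\epsilon, \con{0}, \con{1})$ (which depends only on $\epsilon$, the action, and $\con{1}$, but not on $n$ or on the particular sample path) such that the $K$-proximal subset satisfies $\length(\gamma_K)/\length(\gamma) \geq 1 - \epsilon$. Taking the intersection of the two conclusions on this single event of probability $1 - e^{-n\con{2}}$ yields the claimed bound.

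There is no real obstacle here: the probabilistic content has been packaged into the linear progress axiom, and the geometric content has been packaged into Lemma \ref{lem:most_points_proximal}. The only minor point to verify is that $K$ in the application of Lemma \ref{lem:most_points_proximal} depends only on $\epsilon$, the deterministic increment bound $\con{0}$, and $\con{1}$, and not on $n$ — which is precisely what that lemma asserts — so a single choice of $K$ works uniformly on the good event.
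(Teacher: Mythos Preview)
Your argument is correct and matches the paper's own approach exactly: the paper simply remarks that the lemma follows from the linear progress axiom, noting that the first condition $\length(\gamma)\ge \con{1}n$ implies the second via Lemma~\ref{lem:most_points_proximal}. Your proof just spells this out in slightly more detail, including the observation that finite support of $\mu$ gives uniformly bounded increments.
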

Note that the first condition implies the second by Lemma~\ref{lem:most_points_proximal}, but it
is convenient to state both conditions explicitly.

\begin{definition}\label{def:long_geodesic}
We say $\gamma$ as above is {\em $\con{1}$-long} (or just {\em long} if $\con{1}$ is understood)
if $\length(\gamma)\ge \con{1}n$.
\end{definition}

In the sequel we use the convention that oriented geodesics are parameterized proportional to
arclength. We also use the convention that $\gamma$ is oriented from $p_0$ to $p_n$, so we write
$\gamma(0)=p_0$ and $\gamma(1)=p_n$. 

Now, by definition, for every $s,t$ with $\gamma(s),\gamma(t)\in \gamma_K$
there are indices $i,j$ so that 
$d_Y(p_i,\gamma(s))\le K$ and $d_Y(p_j,\gamma(t))\le K$. It is not necessarily true, however, that $s<t$ implies
$i<j$. Nevertheless, this should be true whenever $d_Y(\gamma(s),\gamma(t))$ is sufficiently large,
with big probability. We quantify this.

\begin{definition}\label{def:unfolded}
Fix some big constant $M\gg 0$. We say a walk $p_0,\cdots, p_n$ is {\em $(K,M\log{n})$-unfolded} (or just
{\em unfolded} if $K$ and $M$ are understood) if for every $j>i$ with $j-i>M\log{n}$ and every geodesic
$\gamma_i$ from $p_0$ to $p_i$, we have $d_Y(p_j,\gamma_i)>K$.
\end{definition}

\begin{lemma}[unfolded]\label{lem:unfolded}
Let $(G,\mu,Y)$ be nondegenerate. Let $p_0,\cdots, p_n$ be a random walk. For any $K$ there is
$\con{1}>0$ so that
$$\Pr(\text{walk is $(K,M\log{n})$-unfolded})\ge 1-n^{2-\con{1}M}$$
\end{lemma}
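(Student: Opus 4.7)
The plan is to fix a single pair $(i,j)$ with $j - i > M\log{n}$, bound the probability of the ``folding event'' $d_Y(p_j,\gamma_i)\le K$ by roughly $n^{-cM}$, and then take a union bound over the $O(n^2)$ such pairs. Since any two geodesics from $p_0$ to $p_i$ in a $\delta$-hyperbolic space lie within Hausdorff distance $O(\delta)$, it is harmless to fix one choice of $\gamma_i$ at the cost of inflating $K$ by a constant depending only on $\delta$.

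The first step is a geometric translation: I claim that whenever $d_Y(p_j,\gamma_i)\le K$ with nearest point $y\in\gamma_i$, then $p_j\in S_{p_i}(y,K+O(\delta))$. This follows from a short Gromov product computation: the inequalities $d_Y(y,p_j)\le K$ and $d_Y(p_i,p_j)\ge d_Y(p_i,y) - K$ give
\begin{equation*}
(y\cdot p_j)_{p_i} = \tfrac{1}{2}\bigl(d_Y(p_i,y) + d_Y(p_i,p_j) - d_Y(y,p_j)\bigr) \ge d_Y(p_i,y) - K.
\end{equation*}
Discretising $\gamma_i$ to unit-spaced points $y_t$ then lets me bound the folding event by a union of shadow events indexed by $t$, at the cost of another additive $O(1)$ in the shadow parameter.

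Next I would invoke the Markov property: conditional on the walk up to time $i$, the sequence $p_i,p_{i+1},\ldots,p_j$ is an independent random walk of length $m=j-i$ starting at $p_i$, and by $G$-equivariance both linear progress and exponential decay apply to it verbatim. Linear progress applied to this subwalk (with $m>M\log{n}$) gives $d_Y(p_i,p_j)\ge c'M\log{n}$ off a set of probability at most $n^{-c''M}$. On the complementary event, any nearest point $y\in\gamma_i$ to $p_j$ must satisfy $d_Y(p_i,y)\ge c'M\log{n} - K$, so only $y_t$'s far from $p_i$ are relevant. Exponential decay then bounds $\Pr(p_j\in S_{p_i}(y_t,K+O(\delta)))$ by $c_3 e^{-c_4 d_Y(p_i,y_t)}$, and summing the resulting geometric tail over $d_Y(p_i,y_t)\ge c'M\log{n}-O(K)$ yields a bound of the form $n^{-c_5 M}$.

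Combining the linear-progress failure probability with this shadow sum, then union-bounding over the $O(n^2)$ pairs $(i,j)$, produces $1 - n^{2-\con{1} M}$ as claimed after a final adjustment of constants. The main obstacle is really the first step: packaging ``close to the past geodesic'' as a shadow hit from the current basepoint in a way that loses only $O(\delta)$, so that exponential decay from Definition~\ref{def:nondegenerate} can be brought to bear. Once that translation is in hand, the remainder is a routine application of the Markov property together with the linear progress and exponential decay axioms, plus union-bound bookkeeping.
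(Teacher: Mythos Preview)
Your proposal is correct and follows essentially the same route as the paper: fix a pair $(i,j)$, use the Markov property and linear progress to force $d_Y(p_i,p_j)\ge c'M\log n$ except with probability $n^{-c''M}$, translate the folding event into a shadow event based at $p_i$, apply exponential decay, and union-bound over the $O(n^2)$ pairs. The only notable difference is in the packaging of the shadow step: you discretise $\gamma_i$ into unit-spaced points $y_t$ and sum a geometric tail, whereas the paper observes that the entire portion of the $K$-neighbourhood of $\gamma_i$ lying outside $B(p_i,c'M\log n)$ is contained in a \emph{single} shadow $S_{p_i}(y,K+O(\delta))$, where $y$ is the point on $\gamma_i$ at distance $c'M\log n$ from $p_i$. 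This saves the discretisation and the sum, but the two arguments are otherwise identical in structure and yield the same bound.
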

\begin{proof}
Pick indices $i,j$ so that $j>i$ and $j-i>M\log{n}$. By the linear
progress property, the walk $p_i,p_{i+1},\cdots,p_j$ satisfies
$d_Y(p_i,p_j)>\con{1}M\log{n}$ with probability $1-n^{-\con{2}M}$ for
suitable $\con{1},\con{2}$,. The subset of the $K$-neighborhood of
$\gamma_i$ which lies outside of $B(p_i, \con{1}M \log{n})$ is contained
in a shadow $S_{p_i}(y , K + \con{3})$, where $y$ is the point
distance $\con{1} M \log{n}$ from $p_i$ along $\gamma_i$, and
$\con{3}$ only depends on $\delta$, the constant of hyperbolicity. 
By linear progress and the the exponential decay property for shadows, the probability that $p_j$ is
within distance $K$ of $\gamma_i$ is at most $n^{-\con{4}M}$, for some
constant $\con{4}$. Hence
$$\Pr\left((d_Y(p_i,p_j)>\con{1}M\log{n})\wedge(d_Y(p_j,\gamma_i')>K)\right)>1-n^{-\con{5}M}$$
There are fewer than $n^2$ indices $i,j$ as above; the lemma follows.
\end{proof}

\begin{remark}
The condition that a walk should be $(K,M\log{n})$-unfolded is very strong, probably much stronger
than we need. But it does simplify the proofs to come. If we just insist that {\em most} pairs of
indices $i,j$ with $j-i>M\log{n}$ are ``unfolded'' in the obvious sense, then the probability
will be at least $1-e^{-n\con{1}}$, for sufficiently big (but fixed) $M$. We do not use this fact
in the sequel.
\end{remark}

\subsection{Anti-aligned segments and matching}

Let $p_0,p_1,\cdots,p_n$ as above be a walk in $Y$ with bounded increments, and $\gamma$ the 
geodesic from $0$ to $n$. We assume $\gamma$ is long, and let $N=\length(\gamma)$. We
fix some constants $M\gg 0$ and $K\gg 0$ (to be determined later).

\begin{definition}
An {\em $R$-match} (or just a {\em match} if $R$ is understood)
is a triple $(\alpha,\beta,h)$ where $\alpha,\beta$ are geodesic segments
of $\gamma$ of length $R$, and $h \in G$ so that $h\alpha$ {\em anti-aligns} $\beta$; 
i.e.\/ $d(h\alpha(0),\beta(1)) \le K$ and $d(h\alpha(1),\beta(0))\le K$.
\end{definition}

Ultimately we will be concerned with $R$-matches where $R=M\log{n}$. It is convenient for our
$R$-matches not to be too close to each other; that can be achieved by the following ``cut-in-half'' lemma:

\begin{lemma}[cut in half]\label{lem:trim_match}
Let $(\alpha,\beta,h)$ be an $R$-match, and let $\gamma_K \subset \gamma$ so that
$\length(\gamma_K \cap \alpha) \ge (1-\epsilon)R$ and 
$\length(\gamma_K \cap \beta) \ge (1-\epsilon)R$.
Then there are subsegments $\alpha' \subset \alpha$
and $\beta' \subset \beta$ with endpoints in $\gamma_K\cap \alpha$ and $\gamma_K\cap\beta$
respectively, so that $(\alpha',\beta',h)$ is an $R'$-match and 
$d_Y(\alpha',\beta')\ge R/2$, for some $R'/R \in [1/4-4\epsilon,1/4+4\epsilon]$.
Furthermore $\length(\gamma_K \cap \alpha') \ge (1-5\epsilon)R'$ and similarly for $\beta'$.
\end{lemma}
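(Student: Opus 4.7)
The plan is to cut each of $\alpha$ and $\beta$ to a subsegment of length roughly $R/4$ at the ``far ends'' picked out by the match, namely the ends near $\alpha(0)$ and $\beta(R)$. Parameterize $\alpha,\beta:[0,R]\to\gamma$ by arclength. Since $h\alpha$ and the reverse of $\beta$ are two geodesics in $Y$ whose corresponding endpoints are within $K$ of each other, $\delta$-thinness of triangles gives a uniform fellow-traveling estimate $d_Y(h\alpha(s),\beta(R-s))\le K+O(\delta)$ for every $s\in[0,R]$. Consequently, for any sub-interval $[s_0,t_0]\subset[0,R]$, defining $\alpha':=\alpha|_{[s_0,t_0]}$ and $\beta':=\beta|_{[R-t_0,R-s_0]}$ automatically produces an $R'$-match with $R'=t_0-s_0$, after absorbing the additive $O(\delta)$ slack into the original $K$ at the outset.

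The core task is therefore to locate $s_0$ and $t_0$ so that the four endpoints $\alpha(s_0),\alpha(t_0),\beta(R-t_0),\beta(R-s_0)$ all lie in $\gamma_K$. The ``bad'' parameter set
\[B=\{s\in[0,R]:\alpha(s)\notin\gamma_K\ \text{or}\ \beta(R-s)\notin\gamma_K\}\]
has measure at most $2\epsilon R$ by the two density hypotheses. I would pick $s_0$ from the complement of $B$ inside the window $[0,(2+\eta)\epsilon R]$ and $t_0$ from the complement of $B$ inside $[R/4-(2+\eta)\epsilon R,\,R/4]$ for a small fixed $\eta>0$; each window has length strictly greater than $2\epsilon R$, so a valid choice exists. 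This yields $R'=t_0-s_0\in[R/4-(4+2\eta)\epsilon R,\,R/4]$, hence $R'/R\in[1/4-4\epsilon,\,1/4+4\epsilon]$ after taking $\eta$ sufficiently small.

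For the distance lower bound I would use the symmetry $(\alpha,\beta,h)\leftrightarrow(\beta,\alpha,h^{-1})$ to assume without loss of generality that $\alpha$ precedes $\beta$ along $\gamma$; if $\alpha$ spans arclength positions $[a,a+R]$ and $\beta$ spans $[b,b+R]$ with $a\le b$, then $\alpha'$ sits inside $[a+s_0,a+t_0]$ and $\beta'$ inside $[b+R-t_0,b+R-s_0]$, and the arclength gap along $\gamma$ equals $(b-a)+R-2t_0\ge R-2\cdot R/4=R/2$ using $t_0\le R/4$. For the density statement, the non-$\gamma_K$ portion of $\alpha'$ has absolute measure at most $\epsilon R$, and as a proportion of $R'\ge(1/4-4\epsilon)R$ it is at most $4\epsilon/(1-16\epsilon)\le 5\epsilon$ for $\epsilon$ small; the identical estimate applies to $\beta'$.

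The only real subtlety is coordinating four endpoint conditions using a single pair of parameters $(s_0,t_0)$: the match-induced pairing $s\leftrightarrow R-s$ is what collapses these four conditions into two simultaneous constraints on a single variable, so the density hypothesis $(1-\epsilon)R$ is strong enough to produce nonempty good sets in each short search window. Everything else is routine hyperbolic bookkeeping.
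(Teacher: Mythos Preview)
Your argument is correct and is essentially the paper's own proof: both exploit the match-induced correspondence $s\leftrightarrow R-s$ to collapse the four endpoint constraints into a single bad parameter set $B$ of measure at most $2\epsilon R$, and then locate good endpoint parameters inside a quarter-length window. The paper reaches the well-separated quarter by successive bisection (halves, then quarters) rather than your WLOG ordering, and takes $\alpha',\beta'$ as the convex hull of the good set inside that quarter (giving $R'\ge R/4-2\epsilon R$ directly), which incidentally sidesteps the small slip in your constants where search windows of width $(2+\eta)\epsilon R$ yield only $R'/R\ge 1/4-(4+2\eta)\epsilon$ rather than the stated $1/4-4\epsilon$.
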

\begin{proof}
If $\nu$ is an oriented geodesic, let $\nu^-$ denote the initial half of $\nu$, and $\nu^+$
the terminal half. One of the pairs $(\alpha^+,\beta^-)$ and $(\alpha^-,\beta^+)$ must be disjoint,
and therefore one of the pairs $(\alpha^{++},\beta^{--}),\cdots,(\alpha^{--},\beta^{++})$ must 
have segments separated from each other by distance
at least $R/2$. Each of these segments has length $R/4$, so the length of their intersections with
$\gamma_K$ are both at least $R/4-\epsilon R$, and there must be further subsets in each
of length at least $R/4-2\epsilon R$ matched by $h$. Let $\alpha',\beta'$ be maximal subsegments with
endpoints in these subsets.
\end{proof}

\begin{definition}\label{def:well_matched}
Let $\gamma$ be a geodesic in $Y$ of length $N$.
Let $\alpha_i$ be successive subpaths of $\gamma$ of length $M\log{N}$, so that there are
$N/M\log{N}$ of them. We say $\gamma$ is {\em well-matched} if for {\em every} subset $I$ of
indices with $|I|>N/10 M\log{N}$ there are at least $9|I|/10$ disjoint geodesics $\beta_j$
in $\gamma$ and elements $h_j \in G$ and indices $i(j) \in I$ so that $(\alpha_{i(j)},\beta_j,h_j)$
is a match.
\end{definition}

Note that the definition of a match and well-matched implicitly depend on $K$ and $M$.
If we need to specify them we use the terminology {\em $(K,M\log{N})$ well-matched}.

\begin{lemma}\label{lem:no_matching_quasimorphism}
Let $G$ act simplicially on a $\delta$-hyperbolic simplicial space $Y$.
Let $p$ be a basepoint, and let $\gamma$ be a geodesic from $p$ to $g(p)$ for some
$g\in G$. Then there are constants $K$ and $\con{1}$
depending only on $\delta$, so that if $\gamma$ is not $(K,M\log{N})$ well-matched, there is a
counting quasimorphism $\phi$ on $G$, supported on words of length
$M\log{N}$, with $D(\phi) \le \con{1}$ and $\phi(g)\ge N/(100M\log{N})$.
\end{lemma}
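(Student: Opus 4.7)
The plan is to build $\phi$ as (the homogenization of) a small counting quasimorphism $h_\Sigma$, with $\Sigma$ drawn from the $\alpha_i$ themselves. I would first choose $K$ in the lemma statement to exceed by an $O(\delta)$ margin the universal stability constant $K_0 = K_0(\delta)$ ensuring that every $(2,4)$-quasigeodesic in $Y$ lies within Hausdorff distance $K_0$ of a geodesic with the same endpoints. Given that $\gamma$ is not $(K, M\log N)$ well-matched, I would extract (by definition) a subset $I$ of indices with $|I| > N/(10 M\log N)$ for which no collection of $9|I|/10$ disjoint matches $(\alpha_{i(j)}, \beta_j, h_j)$ with $i(j)\in I$ exists, and set $\Sigma := \{\alpha_i : i \in I\}$, viewed as a collection of orbit classes of oriented geodesic segments in $Y$.

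The lower bound $c_\Sigma(g) \ge |I|$ is immediate from testing the definition of $c_\Sigma(g)$ against the geodesic $\gamma$ itself, which has length exactly $N$ and already contains the $\alpha_i$ for $i\in I$ as pairwise disjoint subpaths. The substantive step is to upper bound $c_{\Sigma^{-1}}(g)$. For this I would examine a realizing path $\gamma'$; by Lemma~\ref{Fujiwara_antialign_lemma} it is a $(2,4)$-quasigeodesic from $p$ to $g(p)$, so it $K_0$-fellow-travels $\gamma$ and projects to $\gamma$ approximately monotonically (up to $O(\delta)$ slippage). Each subpath $h\alpha_i^{-1}$ of $\gamma'$ then has endpoints within $K_0$ of two points of $\gamma$ in the correct order, bounding a subsegment $\beta\subset \gamma$ of length $M\log N \pm O(\delta)$ such that $(\alpha_i, \beta, h)$ is a $K$-match; disjoint copies of $\Sigma^{-1}$-elements in $\gamma'$ yield matches whose $\beta$'s pairwise overlap in $\gamma$ by at most $O(\delta)$. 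Since $M\log N \gg \delta$, I can pass to a subcollection of genuinely disjoint matches at a multiplicative cost of only $1+O(\delta/(M\log N))$, which is strictly less than $10/9$ for $N$ sufficiently large.

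Combining the two bounds, if $c_{\Sigma^{-1}}(g) \ge 9|I|/10$ then the above correspondence would exhibit (after the small trimming loss) at least $9|I|/10$ honestly disjoint matches with indices in $I$, contradicting the failure of the well-matched property. Hence $c_{\Sigma^{-1}}(g) < 9|I|/10$, which gives
\[
h_\Sigma(g) \;>\; |I| - \tfrac{9}{10}|I| \;=\; \tfrac{1}{10}|I| \;>\; N/(100 M\log N).
\]
The defect bound $D(h_\Sigma)\le \con{1}$ with $\con{1}$ depending only on $\delta$ is Lemma~\ref{Fujiwara_defect_lemma}, and homogenizing via Lemma~\ref{homogenization_is_quasimorphism} loses at most $\con{1}$ on the value at $g$ and at most a factor of two on the defect, both absorbable into the final constants. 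The hard part will be cleanly arguing the correspondence between disjoint copies of $\Sigma^{-1}$-elements in the realizing path $\gamma'$ and (nearly) disjoint $K$-matches in $\gamma$: this rests entirely on the approximate monotonicity of nearest-point projection from a $(2,4)$-quasigeodesic to a geodesic in a $\delta$-hyperbolic simplicial space, combined with choosing the slack parameter $K$ generously enough to swallow the $O(\delta)$ fluctuations coming from quasigeodesic stability.
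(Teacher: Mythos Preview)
Your proof is correct and follows essentially the same route as the paper's: take $\Sigma = \{\alpha_i : i \in I\}$ for a witnessing index set $I$, bound $c_\Sigma(g) \ge |I|$ by using $\gamma$ as a test path, and bound $c_{\Sigma^{-1}}(g) < 9|I|/10$ by projecting disjoint copies of $\alpha_i^{-1}$ in a realizing $(2,4)$-quasigeodesic onto nearly-disjoint matches in $\gamma$. The paper's argument is terser, relegating to a subsequent remark the key technical point you spell out carefully (that disjoint $\Sigma^{-1}$-copies in the realizing path correspond to segments of $\gamma$ overlapping only in length $\le 2K + O(\delta)$, which can be trimmed). One minor comment: the lemma only asks for a counting quasimorphism, not a homogeneous one, so your final homogenization step is unnecessary here.
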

\begin{proof}
Choose a set of indices $I$ as above witnessing $\gamma$'s failure to be $K$ well-matched.
Then no realizing path for $g$ can contain more than $9|I|/10$ disjoint anti-aligned copies of $K$.
Define $c_I$ to be the counting function which counts {\em disjoint} copies of any of the $\alpha_i$ with
$i \in I$ in a realizing path. We can take $\gamma$ itself as a realizing path for $c_I$,
so that $c_I(g) \ge |I|$. On the other hand, any realizing path for $g^{-1}$ for $c_I$ contains at most
$9|I|/10$ disjoint copies of $\alpha_i$, so $\phi(g):=c_I(g) - c_I(g^{-1}) \ge |I|/10$.
\end{proof}

\begin{remark}
We have implicitly used the fact that disjoint copies of $\alpha_i^{-1}$ on
any realizing path for $g^{-1}$ are close to geodesic segments of $\gamma$ which overlap only
in segments of length $\le 2K+O(\delta)$; trimming these overlaps, we can assume the nearby
segments in $\gamma$ are disjoint.
\end{remark}

It remains to understand when $\gamma$ associated to a random walk is $(K,M\log{N})$ well-matched, 
for suitable constants $K,M$. 

The next lemma shows that any well-matching can be ``relativized'' to any subset $\gamma_K$ with
$\length(\gamma_K)/\length(\gamma)\ge 1-\epsilon$.

\begin{lemma}\label{lem:pigeonhole}
Let $\gamma$ be a geodesic of length $N$, and let $\beta_j$ a collection of at least $9N/(100M\log{N})$
disjoint subpaths each of length $M\log{N}$. 
Let $\gamma_K$ be a subset of $\gamma$ with $\length(\gamma_K)/\length(\gamma) \ge 1-\epsilon$.
Then at least $(1-\sqrt{\epsilon})$ of the $\beta_j$ satisfy 
$$\length(\gamma_K \cap \beta_j)/\length(\beta_j) \ge 1-12\sqrt{\epsilon}$$
\end{lemma}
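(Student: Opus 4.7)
The plan is to run a straightforward double-counting (Markov/pigeonhole) argument, using only the disjointness of the $\beta_j$ and the density assumption on $\gamma_K$. There is no geometry involved at this stage; the statement is essentially a measure-theoretic estimate about how a small exceptional set can be distributed among many disjoint subintervals.

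First, I would call a subpath $\beta_j$ \emph{bad} if $\length(\gamma_K \cap \beta_j) < (1-12\sqrt{\epsilon}) M\log N$, equivalently if the portion of $\beta_j$ outside $\gamma_K$ has length strictly greater than $12\sqrt{\epsilon} M\log N$. Let $B$ denote the collection of bad $\beta_j$, and write $J$ for the total number of $\beta_j$, so $J \ge 9N/(100 M\log N)$ by hypothesis.

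Next, since the $\beta_j$ are pairwise disjoint subpaths of $\gamma$, the sets $\beta_j \setminus \gamma_K$ are also pairwise disjoint and all contained in $\gamma \setminus \gamma_K$. Summing over $B$,
\[
|B| \cdot 12\sqrt{\epsilon}\, M\log N \;\le\; \sum_{\beta_j \in B} \length(\beta_j \setminus \gamma_K) \;\le\; \length(\gamma \setminus \gamma_K) \;\le\; \epsilon N.
\]
Rearranging gives $|B| \le \sqrt{\epsilon}\, N / (12 M\log N)$.

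Finally, I would just compare $|B|$ with $J$:
\[
\frac{|B|}{J} \;\le\; \frac{\sqrt{\epsilon}\, N/(12 M\log N)}{9N/(100 M\log N)} \;=\; \frac{100\sqrt{\epsilon}}{108} \;<\; \sqrt{\epsilon},
\]
so a fraction at least $1-\sqrt{\epsilon}$ of the $\beta_j$ are good, as desired. There is no real ``hard part'' here; the only thing to be careful about is the matching of constants $(12\sqrt{\epsilon}$ in the per-interval threshold versus $\sqrt{\epsilon}$ in the global fraction$)$, which the $9/100$ slack in the hypothesis on $J$ is tailored to absorb.
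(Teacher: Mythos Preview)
Your argument is correct and is essentially the same as the paper's: both run the Markov/Chebyshev-type double count, bounding the total complement length $\epsilon N$ below by $12\sqrt{\epsilon}$ times the total length of the bad $\beta_j$, and then using the $9/100$ slack to conclude that the bad fraction is at most $100\sqrt{\epsilon}/108 < \sqrt{\epsilon}$.
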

\begin{proof}
This is essentially just Chebyshev's inequality. 
The total length of the $\beta_j$ is at least $9N/100$.
For each $\beta_j$ which fails to satisfy the desired inequality, 
$\length(\gamma_K^C \cap \beta_j) \ge 12\sqrt{\epsilon} \cdot\length(\beta_j)$
and therefore 
$$\epsilon N \ge \length(\gamma_K^C) \ge 12\sqrt{\epsilon}\delta 9N/100$$ 
where $\delta$ is the proportion of ``failing'' $\beta_j$. We conclude $\delta \le \sqrt{\epsilon}$
as claimed.
\end{proof}

Combining Lemma~\ref{lem:pigeonhole} with Lemma~\ref{lem:trim_match} we deduce the following:

\begin{proposition}[relative well-matching]\label{prop:relative_well_matching}
Let $\gamma$ be a geodesic in $Y$ of length $N$.
Suppose $\gamma$ is $(K,M\log{N})$ well-matched, and let $\gamma_K$ be a subset of $\gamma$
with $\length(\gamma_K)/\length(\gamma)\ge 1-\epsilon$. 

If $\epsilon$ is sufficiently small,
there are at least $8N/(10M\log{N})$ segments $\alpha_j$ in $\gamma$, and $8N/(10M\log{N})$
{\em disjoint} segments $\beta_j$ in $\gamma$, each of length at least $M\log{N}/5$, so that for each
$\beta_j$ there is a match $(\alpha_j,\beta_j,h_j)$ satisfying
\begin{enumerate}
\item{the endpoints of $\beta_j$ are in $\gamma_K$ and similarly for $\alpha_{i(j)}$;}
\item{there is an inequality $\length(\gamma_K \cap \beta_j)/\length(\beta_j)\ge 1-100\sqrt{\epsilon}$
and similarly for $\alpha_{i(j)}$; and}
\item{$d_Y(\beta_j,\alpha_{i(j)})\ge M\log{N}/2$.}
\end{enumerate}
\end{proposition}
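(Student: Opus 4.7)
The plan is to sandwich the well-matching hypothesis between two applications of Lemma~\ref{lem:pigeonhole} (first to restrict to $\alpha_i$'s that lie mostly in $\gamma_K$, then to restrict the matched $\beta_j$'s to those with the same property), and finish by applying Lemma~\ref{lem:trim_match} to each surviving pair to push endpoints into $\gamma_K$ and force the two segments to be well-separated.

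Concretely, I would first take the canonical partition of $\gamma$ into successive length-$M\log N$ pieces $\alpha_i$. Since their total length is $N\gg 9N/100$, Lemma~\ref{lem:pigeonhole} (applied with its $\beta_j$'s taken to be these $\alpha_i$'s) shows that all but a $\sqrt{\epsilon}$ fraction of indices $i$ satisfy $\length(\gamma_K\cap\alpha_i)/\length(\alpha_i)\ge 1-12\sqrt{\epsilon}$. Let $I$ be this good set, so $|I|\ge (1-\sqrt{\epsilon})N/(M\log N)$, which for $\epsilon$ small exceeds the $N/(10M\log N)$ threshold needed to invoke the well-matching hypothesis. That hypothesis then delivers at least $9|I|/10$ disjoint $\beta_j$ of length $M\log N$ and matches $(\alpha_{i(j)},\beta_j,h_j)$ with $i(j)\in I$.

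A second application of Lemma~\ref{lem:pigeonhole}, now to the $\beta_j$'s (whose total length is at least $9|I|M\log N/10\ge 9(1-\sqrt{\epsilon})N/10$, comfortably above $9N/100$), shows that all but a $\sqrt{\epsilon}$ fraction of the $\beta_j$ also satisfy $\length(\gamma_K\cap\beta_j)/\length(\beta_j)\ge 1-12\sqrt{\epsilon}$. Discarding the bad ones leaves at least $(9/10)(1-\sqrt{\epsilon})^2 N/(M\log N)$ pairs in which both $\alpha_{i(j)}$ and $\beta_j$ have density $\ge 1-12\sqrt{\epsilon}$ in $\gamma_K$; for $\epsilon$ small this exceeds $8N/(10M\log N)$. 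Then, for each surviving pair, apply Lemma~\ref{lem:trim_match} with its $\epsilon$ taken to be $12\sqrt{\epsilon}$: this yields subsegments $\alpha'_j\subset\alpha_{i(j)}$ and $\beta'_j\subset\beta_j$ of common length $R'$ with $R'/(M\log N)\in[1/4-48\sqrt{\epsilon},1/4+48\sqrt{\epsilon}]$, endpoints in $\gamma_K$, separation $d_Y(\alpha'_j,\beta'_j)\ge M\log N/2$, and density $\ge 1-60\sqrt{\epsilon}\ge 1-100\sqrt{\epsilon}$ in $\gamma_K$. For $\epsilon$ small enough, $R'\ge M\log N/5$, giving all three bullets of the conclusion.

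The only real obstacle is purely quantitative: one must pick a single threshold $\epsilon_0$ small enough that the constants $9/10\to 8/10$ (in the count), $1/4\to 1/5$ (in the length after trimming), and the compounding of $\sqrt{\epsilon}$ through two pigeonhole steps and one cut-in-half all land below the claimed bounds simultaneously. Given the generous slack built into each of the preceding lemmas this is routine bookkeeping rather than a genuine difficulty.
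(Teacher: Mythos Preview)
Your proposal is correct and follows essentially the same route as the paper's proof: two applications of Lemma~\ref{lem:pigeonhole} (first to the $\alpha_i$'s, then to the matched $\beta_j$'s), followed by Lemma~\ref{lem:trim_match} on each surviving pair. Your bookkeeping is in fact slightly more careful than the paper's (you retain the $9/10$ factor from the well-matching step when counting survivors, and you track the constants through Lemma~\ref{lem:trim_match} explicitly rather than just writing ``$1/4 - 100\sqrt{\epsilon}$''), but the argument is the same.
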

\begin{proof}
Of all the successive segments $\alpha_i$ of $\gamma$ of length $M\log{N}$, let $I'$ be
the set of indices for which $\length(\gamma_K\cap \alpha_i)/\length(\alpha_i)\ge 1-12\sqrt{\epsilon}$.
Then $|I'|\ge (1-\sqrt{\epsilon})N/(M\log{N})$ by Lemma~\ref{lem:pigeonhole}.
Since $\gamma$ is well-matched, we can find at least $9|I'|/10$ disjoint geodesics $\beta_j$ in $\gamma$
and matches $(\alpha_{i(j)},\beta_j,h_j)$ with $i(j)\in I'$. Applying Lemma~\ref{lem:pigeonhole}
again, there is a subset $I \subset I'$ with $|I|\ge (1-\sqrt{\epsilon})^2N/(M\log{N})$ so that
further $\length(\gamma_K\cap \beta_j)/\length(\beta_j)\ge 1-12\sqrt{\epsilon}$.

For each $\beta_j$, choose some $\alpha_{i(j)}$ as above, and relabel it as $\alpha_j$. Note that
although the $\beta_j$ are distinct for different $j$, we do {\em not} assume the $\alpha_j$ are
distinct for different $j$ (for all we know, they might all be the same!).
Applying Lemma~\ref{lem:trim_match} to each $(\alpha_j,\beta_j,h_j)$ gives rise to new
$(\alpha_j',\beta_j',h_j)$ of at least $1/4-100\sqrt{\epsilon}$ the length, satisfying the desired
properties. Notice now that even if $\alpha_i = \alpha_j$ we might have $\alpha_i' \ne \alpha_j'$.
\end{proof}

\subsection{Uniform lower bounds for hyperbolic spaces}

We are now ready to return to probability. 

\begin{lemma}\label{lem:random_matching}
Let $(G,\mu,Y)$ be nondegenerate, and let $p_0,\cdots,p_n$ be a random walk of length $n$.
Fix $\con{1}$ and $K$. Then there is a constant $\con{2}$ so that for any $M$ the following holds.
Consider the collection of indices $a<a'<b<c<c'<d$ for which there are 
geodesics $\alpha$ from $p_a$ to $p_b$ and
$\beta$ from $p_c$ to $p_d$ with the following properties:
\begin{enumerate}
\item{$\length(\alpha)\ge M\con{1}\log{n}$ and similarly for $\beta$;}
\item{there is $t \in [0.1,0.2]$ so that $d_Y(p_{a'},\alpha(1-t))\le K$ 
and $d_Y(p_{c'},\beta(t))\le K$;}
\item{there is some $h\in G$ so that $d_Y(h\alpha(0),\beta(1))\le K$ 
and $d_Y(h\alpha(1),\beta(0))\le K$.}
\end{enumerate}
The probability that this collection of indices is nonempty is at most $n^{6-\con{2}M}$.
\end{lemma}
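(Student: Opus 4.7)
The plan is to fix a 6-tuple of indices $(a,a',b,c,c',d)$, bound the probability of the associated event by $O(n^{-\con{2}' M})$ for some constant $\con{2}'$ independent of $M$ and $n$, and then union-bound over the at most $n^6$ tuples. I would condition on the increments $s_1,\ldots,s_{c'}$ of the random walk, which fixes $p_a,p_{a'},p_b,p_c,p_{c'}$ and a choice of geodesic $\alpha$ from $p_a$ to $p_b$, leaving $p_d$ as the endpoint of an independent random walk of length $d-c'$ starting from $p_{c'}$. Given this conditioning, the event requires the existence of an $h\in G$ with $d_Y(hp_b,p_c)\le K$ and $d_Y(hp_a,p_d)\le K$, together with a geodesic $\beta$ from $p_c$ to $p_d$ of length $\ge M\con{1}\log n$ passing within $K$ of $p_{c'}$ at the common parameter $t\in[0.1,0.2]$ that also witnesses the $p_{a'}$ condition.

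Because anti-aligned geodesics in a $\delta$-hyperbolic space fellow-travel within $K+O(\delta)$, chaining the four near-equalities $d_Y(hp_a,p_d)\le K$, $d_Y(hp_b,p_c)\le K$, $d_Y(p_{a'},\alpha(1-t))\le K$, and $d_Y(p_{c'},\beta(t))\le K$ produces $d_Y(hp_{a'},p_{c'})\le 3K+O(\delta)$. Thus every valid $h$ simultaneously pins $p_b$ near $p_c$ and $p_{a'}$ near $p_{c'}$, each within a uniform constant $O(K+\delta)$. Writing $L=\length(\alpha)$, the separation $d_Y(p_b,p_{a'})\ge tL-K\ge 0.1M\con{1}\log n-K$ exceeds the acylindricity distance threshold of Definition~\ref{def:nondegenerate}(3) once $n$ is large enough, so acylindricity applied to the pair $(p_b,p_{a'})$ bounds the number of admissible $h$ by a uniform constant $A$ depending only on the action.

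For any admissible $h$, using that $h$ is an isometry I would write
$$d_Y(p_{c'},hp_a)\ge d_Y(hp_{a'},hp_a)-d_Y(hp_{a'},p_{c'})=d_Y(p_a,p_{a'})-O(K)\ge 0.8M\con{1}\log n-O(K).$$
Since $B(hp_a,K)\subseteq S_{p_{c'}}(hp_a,K)$ (any geodesic from $p_{c'}$ to a point of $B(hp_a,K)$ ends within $K$ of $hp_a$), the exponential decay axiom (Definition~\ref{def:nondegenerate}(5)) applied to the walk of length $d-c'$ from $p_{c'}$ yields
$$\Pr(p_d\in B(hp_a,K))\le C_{\mathrm{dec}}\,e^{-c_{\mathrm{dec}}(0.8M\con{1}\log n-O(K))}=O(n^{-0.8c_{\mathrm{dec}}\con{1}M}).$$
Summing over the at most $A$ admissible $h$, averaging over the conditioning, and union-bounding over $\le n^6$ tuples produces $O(n^{6-0.8c_{\mathrm{dec}}\con{1}M})$; the stated bound follows for any $\con{2}<0.8c_{\mathrm{dec}}\con{1}$ and $n$ sufficiently large.

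The main subtlety lies in bounding the set of valid $h$. With only the single pinning $hp_b\approx p_c$ the set could a priori be infinite (it may contain a coarse stabilizer of $p_b$), so I crucially use condition~(2) of the lemma \emph{both at $p_{a'}$ and at $p_{c'}$}, together with fellow-traveling of anti-aligned geodesics, to extract a second pinning $hp_{a'}\approx p_{c'}$. The parameter range $t\in[0.1,0.2]$ then plays a dual role: the lower bound $t\ge 0.1$ separates the pinning points $p_b,p_{a'}$ by $\Theta(M\log n)$, above the acylindricity threshold; the upper bound $t\le 0.2$ keeps the target $hp_a$ at distance $\ge 0.8L$ from $p_{c'}$, which supplies the $n^{-cM}$ factor from the exponential decay axiom.
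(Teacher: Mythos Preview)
Your proposal is correct and follows essentially the same approach as the paper: condition on the walk up to step $c'$, use acylindricity (via the two pinnings $hp_b\approx p_c$ and $hp_{a'}\approx p_{c'}$, the second deduced from fellow-travelling of the anti-aligned geodesics) to reduce to a bounded set of candidate $h$, then apply exponential decay of shadows to the remaining walk from $p_{c'}$ to $p_d$, and finally union-bound over the $\le n^6$ index tuples. The paper phrases the acylindricity step in terms of the geodesic points $\alpha(1),\alpha(1-t),\beta(0),\beta(t)$ rather than the walk points $p_b,p_{a'},p_c,p_{c'}$, but since these differ by at most $K$ this is the same argument up to constants.
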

\begin{proof}
Fix a collection of indices $a<a'<b<c<c'<d$. The key point is that we don't care about
$\alpha$ and $\beta$ {\it per se}, but only on their $G$-orbits $G\alpha$ and $G\beta$, since
$\beta$ is only compared with a translate $h\alpha$. Speaking somewhat loosely, 
since $b<c$, the random variables
$G\alpha$ and $G\beta$ are {\em independent}. In fact, if we let $\beta''$ be a geodesic 
from $p_c$ to $p_{c'}$, and $\beta'$ a geodesic from $p_{c'}$ to $p_d$
then the orbit $G\beta'$ is independent of $G\alpha$ and $G\beta''$.

By orbit acylindricity (this is the only place in the entire argument that
acylindricity is used), there are only {\em boundedly} many $h \in G$
with $d(h\alpha(1),\beta(0))\le K$ and $d(h\alpha(1-t),\beta(t))\le
3K$, providing $n$ is bigger than some universal constant. So we can
suppose that we are given a {\em fixed} geodesic $\alpha'$ of length
$\ge (1-t)M\con{1}\log{n}$ starting within distance $3K$ of $p_{c'}$,
and we want to estimate the probability that a random walk from
$p_{c'}$ to $p_d$ ends up within distance $K$ of the endpoint
$h\alpha(0)$. This can be bounded from above by the probability that a
random walk started at $p_{c'}$ lies in the finite collection of
shadows $S_{p_{c'}}(h \alpha(0), K + \con{2})$, for some constant
$\con{2}$ which only depends on $\delta$, the constant of
hyperbolicity.  By the exponential decay property for shadows, this
probability is at most $n^{-\con{3}M}$ for some $\con{3}$.

Summing over all possible choices of indices gives probability at most
$n^{6-\con{3}M}$.
\end{proof}

We deduce our main theorem on nondegenerate random walks on hyperbolic spaces.

\begin{theorem}[Hyperbolic lower bound]\label{thm:hyperbolic_lower_bound}
Let $Y$ be a $\delta$-hyperbolic simplicial complex (not assumed to be locally finite),
and let $G$ be a finitely generated group that
acts simplicially on $Y$. Let $\mu$ be a symmetric probability measure of
finite support on $G$ so that $(G,\mu,Y)$ is nondegenerate, in the sense of 
Definition~\ref{def:nondegenerate}.

Let $g$ be obtained by random walk on $G$ (with respect to $\mu$) of length $n$. Then
for any $\con{1}$ there is a $\con{2}>0$ and $\con{3}$ depending only on $\delta$ and $G$,
so that with probability at least $1-n^{-\con{1}}$ there is a homogeneous quasimorphism
$\phi$ on $G$ satisfying the following properties:
\begin{enumerate}
\item{$\phi(g)\ge n\con{2}/\log{n}$;}
\item{$D(\phi)\le \con{3}$;}
\item{$|\phi(h)|\le 2d_Y(q,hq)\con{3}/\log{n}$ for any $g \in G$ and any $q\in Y$.}
\end{enumerate}
In particular, for any $\con{1}>0$ there is a constant $C>1$ 
so that if we condition on $g \in [G,G]$ (for $n$ even), then
$$\Pr(C^{-1}n/\log{n} \le \scl(g) \le Cn/\log{n}) \ge 1-n^{-\con{1}}$$
\end{theorem}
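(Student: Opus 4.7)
The plan is to apply the counting quasimorphism template of Proposition~\ref{lower_bound_template} after verifying its geometric hypothesis probabilistically. Concretely, let $\gamma$ be a geodesic from $p_0$ to $p_n$ of length $N$; the goal is to show that with probability at least $1 - n^{-\con{1}}$, the geodesic $\gamma$ fails to be $(K, M\log N)$-well-matched, so that Lemma~\ref{lem:no_matching_quasimorphism} produces the desired quasimorphism. The strategy is: control the coarse geometry of the walk (long, proximal, unfolded); rule out interlaced anti-aligned subwalks via Lemma~\ref{lem:random_matching}; and transfer this impossibility back to $\gamma$ via Proposition~\ref{prop:relative_well_matching} by contraposition.

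First, I would combine the three probabilistic inputs. By Lemma~\ref{lem:close_on_big_subset}, with probability $1 - e^{-\con{2} n}$ the geodesic $\gamma$ has $N \ge \con{1} n$ and admits a subset $\gamma_K$ with $\length(\gamma_K)/N \ge 1 - \epsilon$. By Lemma~\ref{lem:unfolded}, with probability $1 - n^{2 - \con{1} M}$ the walk is $(K, M\log n)$-unfolded. By Lemma~\ref{lem:random_matching}, with probability $1 - n^{6 - \con{2} M}$ there is no configuration of indices $a<a'<b<c<c'<d$ witnessing a match. Choosing the constant $M$ sufficiently large (depending on $\con{1}$), all three events hold simultaneously with probability at least $1 - n^{-\con{1}}$.

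The deductive heart of the argument is to deduce on this event that $\gamma$ is not $(K, M\log N)$-well-matched. Arguing by contraposition, suppose $\gamma$ were well-matched. Applying Proposition~\ref{prop:relative_well_matching} to the proximal subset $\gamma_K$ produces a large family of matches $(\alpha_j, \beta_j, h_j)$ with endpoints in $\gamma_K$, disjoint $\beta_j$, and $d_Y(\alpha_j, \beta_j) \ge M\log N / 2$. For any single such match, proximality supplies random walk indices $a, b$ with $p_a, p_b$ within $K$ of the endpoints of $\alpha_j$, and $c, d$ with $p_c, p_d$ within $K$ of the endpoints of $\beta_j$; the geodesic from $p_a$ to $p_b$ then closely fellow-travels $\alpha_j$, and similarly for $\beta_j$. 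The separation $d_Y(\alpha_j, \beta_j) \ge M\log N/2$ together with the unfolded property forces $b<c$, so the indices interlace as $a<b<c<d$. Proximality also supplies an interior point of each of $\alpha_j$ and $\beta_j$ at parameter roughly $1-t, t \in [0.1,0.2]$, producing the intermediate indices $a', c'$ required. The resulting configuration is exactly the one Lemma~\ref{lem:random_matching} forbids, a contradiction.

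Given that $\gamma$ is not well-matched, Lemma~\ref{lem:no_matching_quasimorphism} yields a counting quasimorphism $\phi$ with $D(\phi) \le \con{3}$ depending only on $\delta$, and $\phi(g) \ge N/(100 M\log N) \ge \con{2} n/\log n$. The bound $|\phi(h)| \le 2 d_Y(q, hq) \con{3}/\log n$ follows exactly as in Proposition~\ref{lower_bound_template}: any realizing path for $h$ is a $(2,4)$-quasigeodesic of length at most $2 d_Y(q, hq) + 4$ and so contains at most that many subpaths of length $M\log n$. For the $\scl$ corollary, Bavard duality (Theorem~\ref{Bavard_duality_theorem}) applied to $\phi$ gives $\scl(g) \ge C^{-1} n/\log n$, while the upper bound $\scl(g) \le C n/\log n$ is the Universal Upper Bound Theorem~\ref{universal_upper_bound_random_walk}; conditioning on $g \in [G,G]$ for even $n$ costs at most a polynomial factor from a local limit theorem for the abelianized walk (analogous to Proposition~\ref{homology_local_limit_theorem}) and is absorbed by enlarging $\con{1}$.

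The main obstacle I expect is the contrapositive transfer in paragraph three: abstract matches of subsegments of $\gamma$ need to be converted into actual configurations of random walk indices satisfying the strict interlacing $a<a'<b<c<c'<d$ of Lemma~\ref{lem:random_matching}. This depends in an essential way on all three ingredients working together, namely proximality (to locate endpoints of $\alpha_j, \beta_j$ among the $p_i$), the separation built into Proposition~\ref{prop:relative_well_matching} (to create enough room between the subsegments), and the unfolded property (to certify that the walk does not double back and destroy the interlacing); checking that these constants can be chosen compatibly is where care is needed.
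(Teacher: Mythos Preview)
Your proposal is correct and follows essentially the same approach as the paper's proof: combine the long/proximal, unfolded, and random-matching lemmas, suppose for contradiction that $\gamma$ is well-matched, apply Proposition~\ref{prop:relative_well_matching} to extract a separated match with endpoints in $\gamma_K$, convert to ordered walk indices via proximality and the unfolded property, and contradict Lemma~\ref{lem:random_matching}; then invoke Lemma~\ref{lem:no_matching_quasimorphism}. Your closing caveat about the index-ordering step is apt --- that is exactly the point the paper also glosses over, asserting without further detail that unfoldedness forces the indices $a<a'<b<c<c'<d$ to appear in the same order as the corresponding points along $\gamma$.
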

\begin{proof}
We use the notation $K$, $M$ as above.

Let $p_0,\cdots, p_n$ be the associated walk from the basepoint $p$ to $gp$, and let
$\gamma$ be the geodesic from $p$ to $gp$. We assume that
$\gamma$ is long (see Definition~\ref{def:long_geodesic}), that the walk is unfolded 
(see Definition~\ref{def:unfolded}), and that the conclusions of 
Lemma~\ref{lem:random_matching} hold. By Lemma~\ref{lem:close_on_big_subset},
 Lemma~\ref{lem:unfolded} and Lemma~\ref{lem:random_matching},
the probability of all three of these things happening
is at least $1-n^{6-\con{1}M}$ for some $\con{1}$, where $M$ is as above.

Since $\gamma$ is long, $\length(\gamma)=Cn$ for some constant $C$ bounded above and below.
If $\gamma$ is not $(K,M\log{(Cn)})$ well-matched, the desired conclusion follows from 
Lemma~\ref{lem:no_matching_quasimorphism}, so we suppose $\gamma$ is well-matched.

By Proposition~\ref{prop:relative_well_matching} we can find a pair of segments (in fact, many
such pairs) $\alpha$, $\beta$ of length $M\log{(Cn)}/5$, with endpoints in $\gamma_K$ with 
$\length(\gamma_K\cap \alpha)/\length(\alpha) \ge 1-100\sqrt{\epsilon}$ (and similarly for
$\beta$), and which are distance at least $\ge M\log{(Cn)}/2$ apart in $\gamma$. Pick
$t\in [0.1,0.2]$ with $\alpha(1-t)$ and $\beta(t)$ in $\gamma_K$. Since $\gamma$ is
unfolded, the points $\alpha(0)$, $\alpha(1-t)$, $\alpha(1)$, $\beta(0)$, $\beta(t)$, 
$\beta(1)$ are within distance $K$ of points $p_a$, $p_{a'}$, $p_b$, $p_c$, $p_{c'}$, $p_d$ 
on the walk so that the indices appear in the same order that the corresponding 
points appear in $\gamma$.

But the conclusion of Lemma~\ref{lem:random_matching} says that no such pairs 
$\alpha$, $\beta$ can exist, so we get a contradiction. It follows that $\gamma$ is
not well-matched after all, and the desired quasimorphism exists.
\end{proof}

From Proposition~\ref{prop:nondegenerate_examples} we obtain the following corollaries:

\begin{corollary}
Let $G$ be either a hyperbolic group (resp. the mapping class group of a surface $\Sigma$), 
and let $\mu$ a symmetric probability measure on $G$
with finite support generating a nonelementary subgroup (resp. a subgroup which 
is not reducible or virtually abelian). Then for any $\con{1}$ there is $C$
so that if $g$ is obtained by random walk in $G$ of length $n$ (even)
and conditioned to lie in $[G,G]$ we have
$$\Pr(C^{-1}n/\log{n} \le \scl(g) \le Cn/\log{n}) \ge 1-n^{-\con{1}}$$
\end{corollary}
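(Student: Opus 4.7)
The plan is to observe that this corollary is essentially immediate by combining three previously established results: Proposition~\ref{prop:nondegenerate_examples} (which verifies nondegeneracy in the two cases of interest), Theorem~\ref{thm:hyperbolic_lower_bound} (which gives both bounds under the nondegenerate hypothesis, conditional on landing in the commutator subgroup), and the Universal Upper Bound Theorem~\ref{universal_upper_bound_random_walk} (which gives a cleaner upper bound in the hyperbolic case when a genuine generating set is available). The only nontrivial work is to check that the hypotheses of Proposition~\ref{prop:nondegenerate_examples} are met in each case and to sort out the conditioning on $[G,G]$.

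First I would dispose of the two cases separately. In the hyperbolic case, let $Y$ be the Cayley graph of $G$ with respect to a fixed finite generating set. The hypothesis that $\langle S\rangle$ is a nonelementary subgroup is exactly the hypothesis (1) of Proposition~\ref{prop:nondegenerate_examples}, so $(G,\mu,Y)$ is nondegenerate. In the mapping class group case, let $Y$ be the complex of curves; the hypothesis that $\langle S\rangle$ is not reducible or virtually abelian is exactly hypothesis (3) of Proposition~\ref{prop:nondegenerate_examples}, so $(G,\mu,Y)$ is again nondegenerate. Note that in both cases we do not require $S$ to generate all of $G$ for the lower bound.

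Now I would invoke Theorem~\ref{thm:hyperbolic_lower_bound} directly. In both cases, since $(G,\mu,Y)$ is nondegenerate, Theorem~\ref{thm:hyperbolic_lower_bound} yields, for any $\con{1}>0$, a constant $C>1$ so that conditioned on $g\in [G,G]$ (with $n$ even), the inequality $C^{-1}n/\log{n}\le \scl(g) \le Cn/\log{n}$ holds with probability at least $1-n^{-\con{1}}$. This is precisely the conclusion of the corollary, so modulo the verification above there is essentially nothing further to prove.

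The main subtlety, though it is hidden inside Theorem~\ref{thm:hyperbolic_lower_bound} and Proposition~\ref{homology_local_limit_theorem}, is the conditioning argument: one needs the probability that a length-$n$ random walk lies in $[G,G]$ to be only polynomially small in $n$ (of order $n^{-k/2}$ where $k=\mathrm{rank}(H_1(G))$), so that conditioning does not destroy the high-probability events underlying the upper and lower bounds. In the hyperbolic case this is Proposition~\ref{homology_local_limit_theorem}; in the mapping class group case, $H_1(G)$ is finite (except for a few small sporadic cases, for which a similar local limit argument still applies), and the conditioning is essentially trivial. So up to this conditioning issue, which has already been handled within the statements we are invoking, the two assertions of the corollary really are just a packaging of the main theorems of the section.
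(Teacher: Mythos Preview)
Your proposal is correct and takes essentially the same approach as the paper: the corollary is stated immediately after Theorem~\ref{thm:hyperbolic_lower_bound} with the sentence ``From Proposition~\ref{prop:nondegenerate_examples} we obtain the following corollaries,'' so the intended argument is exactly to verify nondegeneracy via Proposition~\ref{prop:nondegenerate_examples} and then read off the two-sided bound from the ``In particular'' clause of Theorem~\ref{thm:hyperbolic_lower_bound}. One small quibble: your citation of Proposition~\ref{homology_local_limit_theorem} for the conditioning is slightly off, since that proposition concerns the uniform (combing) measure on spheres rather than random walk; for random walk the relevant fact is the classical local limit theorem for the pushforward walk on $H_1(G)$, which is invoked in the proof of Theorem~\ref{universal_upper_bound_random_walk} and is what actually underlies the ``In particular'' clause of Theorem~\ref{thm:hyperbolic_lower_bound}.
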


Since reducible elements in mapping class groups fix points in the complex of curves,
we obtain:

\begin{corollary}\label{cor:reducible_lower_bound}
Let $\mu$ be a symmetric probability measure on the mapping class group of $\Sigma$
of finite support, and suppose the subgroup it generates is not reducible or virtually
abelian. Then for any $\con{1}$ there is $C$ so that if $g$ is obtained by random walk
in $G$ of length $n$, then $g$ cannot be expressed as a product of fewer than
$Cn/\log{n}$ reducible elements, with probability at least $1-n^{-\con{1}}$.
\end{corollary}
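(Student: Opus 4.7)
The plan is to deduce this corollary essentially immediately from the Hyperbolic Lower Bound Theorem (Theorem~\ref{thm:hyperbolic_lower_bound}), by exploiting the fact that a reducible mapping class preserves an essential multicurve and therefore fixes a simplex in the curve complex.

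First I would invoke Proposition~\ref{prop:nondegenerate_examples}(3) to conclude that the triple $(G,\mu,Y)$, with $Y$ the curve complex of $\Sigma$, is nondegenerate in the sense of Definition~\ref{def:nondegenerate}. Fix $\con{1}>0$. Applying Theorem~\ref{thm:hyperbolic_lower_bound} to $(G,\mu,Y)$, with probability at least $1-n^{-\con{1}}$ there is a homogeneous quasimorphism $\phi$ on $G$ with $\phi(g)\ge n\con{2}/\log{n}$, defect $D(\phi)\le \con{3}$, and satisfying the third bullet: for every $h\in G$ and every $q\in Y$,
\[
|\phi(h)| \le 2 d_Y(q,hq)\con{3}/\log{n}.
\]

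The second step is the geometric observation that any reducible $h\in G$ fixes a vertex $q_h$ of $Y$ (take any component of an invariant essential multicurve). Applying the third bullet with $q=q_h$ forces $\phi(h)=0$ for every reducible $h$. Hence, if $g=h_1h_2\cdots h_k$ is any factorization of $g$ into reducible elements, iterating the defect inequality $k-1$ times yields
\[
|\phi(g)| = \left|\phi(g) - \sum_{i=1}^{k}\phi(h_i)\right| \le (k-1)D(\phi) \le (k-1)\con{3}.
\]
Combined with the lower bound $\phi(g)\ge n\con{2}/\log n$, this gives $k \ge (\con{2}/\con{3})\cdot n/\log n + 1$, so one may take $C=\con{2}/\con{3}$, and the stated probability bound is inherited directly from Theorem~\ref{thm:hyperbolic_lower_bound}.

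There is no substantive obstacle in the present corollary: it is essentially a one-line consequence of the Hyperbolic Lower Bound Theorem together with the defining property of reducibility. All of the real work---Bowditch's acylindricity of the mapping class group action on the curve complex, and Maher's positive-drift and exponential-shadow-decay estimates---has been carried out upstream, packaged into Proposition~\ref{prop:nondegenerate_examples}(3) and hence absorbed into the hypotheses of Theorem~\ref{thm:hyperbolic_lower_bound}. The only potential pitfall worth noting is that the conclusion does \emph{not} require conditioning on $g\in[G,G]$, because we use the quasimorphism directly to bound reducible length rather than passing through $\scl$.
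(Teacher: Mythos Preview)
Your argument is correct and matches the paper's approach exactly: the paper simply notes that reducible elements fix points in the complex of curves, and your proof spells out how this combines with the third bullet of Theorem~\ref{thm:hyperbolic_lower_bound} and the defect inequality to bound the number of reducible factors. One small quibble: a reducible $h$ need only preserve a multicurve \emph{setwise}, possibly permuting its components, so it may not fix a vertex; but it fixes the barycenter of the corresponding simplex (or, equivalently, some power fixes a vertex and $\phi$ is homogeneous), so $\phi(h)=0$ still follows.
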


From monotonicity of $\scl$ under homomorphisms, we obtain:

\begin{corollary}\label{cor:growth_obstruction}
Let $G$ be a group and $\mu$ a symmetric probability measure with finite support which
generates $G$. Suppose that for any $\epsilon>0$ there is a $\delta>0$ so that if $g$ is
obtained by random walk on $G$ of length $n$ (even) conditioned to lie in $[G,G]$, we have
$$\Pr(\scl(g)<\epsilon\cdot n/\log{n})>\delta$$
Then every homomorphism from $G$ to a hyperbolic group or to a mapping class group has
virtually abelian image.
\end{corollary}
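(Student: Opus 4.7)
The plan is to argue by contrapositive: assume $\phi\colon G\to H$ is a homomorphism to a hyperbolic group or mapping class group $H$ with $\phi(G)$ not virtually abelian, and derive a contradiction with the hypothesis. The key observation is that $\phi_*\mu$ is a symmetric, finitely supported measure on $H$ whose support generates $\phi(G)$, and a $\mu$-random walk $g_n$ in $G$ pushes forward under $\phi$ to a $\phi_*\mu$-random walk of the same length in $H$. I would apply the Hyperbolic Lower Bound Theorem~\ref{thm:hyperbolic_lower_bound} to $(H,\phi_*\mu,Y)$ for an appropriate hyperbolic complex $Y$, producing (with probability at least $1-n^{-\con{1}}$ for any prescribed $\con{1}$) a homogeneous quasimorphism $\psi\in Q(H)$ with $\psi(\phi(g_n))\ge \con{2} n/\log n$ and $D(\psi)\le \con{3}$. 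Then $\psi\circ\phi\in Q(G)$ has the same growth and bounded defect, so Bavard duality (Theorem~\ref{Bavard_duality_theorem}) gives $\scl(g_n)\ge (\con{2}/(2\con{3}))n/\log n$ whenever $g_n\in[G,G]$. Combining with the lower bound $\Pr(g_n\in[G,G])\ge \con{4} n^{-k/2}$ from Proposition~\ref{homology_local_limit_theorem} (where $k=\mathrm{rk}\,H_1(G)$), the choice $\con{1}>k/2$ forces the conditional probability of $\scl(g_n)<(\con{2}/(4\con{3}))\,n/\log n$ given $g_n\in[G,G]$ to tend to zero, contradicting the hypothesis with $\epsilon=\con{2}/(4\con{3})$.

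Nondegeneracy of $(H,\phi_*\mu,Y)$ in the two easy cases follows from Proposition~\ref{prop:nondegenerate_examples}. If $H$ is hyperbolic, then since hyperbolic groups contain no $\Z^2$, every virtually abelian subgroup of $H$ is virtually cyclic and hence elementary; so $\phi(G)$ non-virtually-abelian forces $\phi(G)$ non-elementary, and we take $Y=C_S(H)$. If $H=\MCG(\Sigma)$ and $\phi(G)$ is not reducible, Proposition~\ref{prop:nondegenerate_examples}(3) applies directly with $Y=C(\Sigma)$.

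The hard part will be the residual case $H=\MCG(\Sigma)$ with $\phi(G)$ reducible but not virtually abelian, since $C(\Sigma)$ then carries a global $\phi(G)$-fixed simplex and cannot witness non-elementarity. I would handle this by induction on the complexity $\xi(\Sigma)=3g(\Sigma)-3+b(\Sigma)$. Pass to the finite-index subgroup $G_0\le G$ whose image fixes componentwise the canonical reduction system $\{c_1,\ldots,c_k\}$, and consider the restriction maps $\phi_i\colon G_0\to \MCG(\Sigma_i)$ to complementary subsurfaces. Because the kernel of $\prod_i\phi_i$ is contained in the abelian group of Dehn twists about the $c_i$, some $\phi_i(G_0)$ is non-virtually-abelian; since $\xi(\Sigma_i)<\xi(\Sigma)$, the inductive hypothesis applied to $G_0$ (equipped with a suitable induced measure, e.g.\ the first-return measure of the $\mu$-walk to $G_0$) and to $\phi_i$ yields a quasimorphism on $\MCG(\Sigma_i)$ of bounded defect and growth of order $n/\log n$ on the projected walk. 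Pulling back through $\phi_i$ and transferring from $G_0$ to $G$ via the standard finite-index transfer in $Q/H^1$ (which scales defect by at most $[G:G_0]$) then produces the required quasimorphism on $G$. The main technical obstacle is to verify that the growth-obstruction hypothesis itself transfers from $(G,\mu)$ to $(G_0,\mu_0)$: this is subtle because $\scl_{G_0}\ge\scl_G$ in general, so small $\scl_G$ does not automatically imply small $\scl_{G_0}$, and the transfer must be arranged through a careful quantitative analysis of the first-return walk on $G_0$.
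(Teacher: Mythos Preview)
Your approach is essentially the one the paper intends: the paper's proof is a single sentence (``This just depends on the observation that reducible subgroups of mapping class groups are themselves mapping class groups of simpler surfaces''), and your contrapositive via pushforward of the walk, the Hyperbolic Lower Bound Theorem, and Bavard duality is the correct unpacking of that sentence.

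Two comments. First, a minor citation slip: Proposition~\ref{homology_local_limit_theorem} concerns \emph{random geodesics in hyperbolic groups}, not random walks in arbitrary groups. The fact you need---that a symmetric finitely supported random walk of even length $n$ lies in $[G,G]$ with probability $\Theta(n^{-k/2})$---is just the local CLT for the abelianized walk on $\Z^k$, and is invoked (without a numbered reference) in the proof of Theorem~\ref{universal_upper_bound_random_walk}. Your use of it is fine; only the pointer is wrong.

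Second, on the reducible case: you are right that the paper's one-line proof sweeps the finite-index passage under the rug, and you have correctly located the genuine difficulty. But your framing of the obstacle is slightly off. You do \emph{not} need to transfer the growth-obstruction hypothesis from $(G,\mu)$ to $(G_0,\mu_0)$; in the contrapositive you only need to produce a quasimorphism on $G$ that is large on the $\mu$-walk. The actual obstacle is that the Hyperbolic Lower Bound Theorem, applied to $G_0$ acting on $C(\Sigma_i)$, hands you a quasimorphism $\psi$ on $G_0$ that is large on a $\mu_0$-walk on $G_0$, and you must relate this to the $\mu$-walk on $G$. The first-return measure $\mu_0$ has infinite support, so Theorem~\ref{thm:hyperbolic_lower_bound} does not apply to it directly; and while the transfer $Q(G_0)\to Q(G)$ controls the defect, it does not automatically control the \emph{value} on a $\mu$-random element of $G$. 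One clean way around this is to observe that $G$ itself (not just $G_0$) acts on the disjoint union of the curve complexes of the subsurfaces in the $G$-orbit of $\Sigma_i$, and that the counting-quasimorphism construction adapts to this setting; alternatively one can work with $\mu^{*N}$ restricted to $G_0$ for a fixed large $N$ and track constants. Either route requires real work that the paper's proof omits.
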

\begin{proof}
This just depends on the observation that reducible subgroups of mapping class groups are
themselves mapping class groups of simpler surfaces.
\end{proof}

\subsection{Geometry of the scl norm of a random subspace}\label{random_norm_subsection}

As remarked earlier, stable commutator length is not merely a function, but actually
defines a pseudo-norm on the space $B_1(G)$ of (real) group $1$-boundaries; i.e.\/
formal real linear combinations 
$\sum t_i g_i$ with $t_i \in \R$ and $g_i \in G$, representing $0$ in $H_1(G;\R)$.
In fact, $\scl$ descends to a pseudo-norm on the ``homogenized'' quotient 
$B_1^H(G):=B_1(G)/\langle g-hgh^{-1}, g^n-ng\rangle$ --- see 
\cite{Calegari_scl} \S~2.6 for details.

A nice corollary of Theorem~\ref{thm:hyperbolic_lower_bound} is to obtain {\it a priori}
geometric control over the geometry of a random subspace of $B_1^H(G)$ of fixed dimension.

\begin{theorem}[Random norm ball theorem]\label{thm:random_norm_ball}
Let $Y$ be a $\delta$-hyperbolic simplicial complex (not assumed to be locally finite),
and let $G$ be a finitely generated group that
acts simplicially on $Y$. Let $\mu$ be a symmetric probability measure of
finite support on $G$ so that $(G,\mu,Y)$ is nondegenerate, in the sense of 
Definition~\ref{def:nondegenerate}.

Fix $k$ and let $g_1,g_2,\cdots,g_k$ 
be obtained by random walk on $G$ (with respect to $\mu$) of (even) lengths 
$\ell_1n, \ell_2n,\cdots, \ell_kn$, all conditioned to lie in $[G,G]$. Then
for any $\con{1}$ there is a $\con{2}>0$ and $\con{3}$ depending only on $\delta$ and $G$,
so that with probability at least $1-n^{-\con{1}}$, for any formal sum
$\sum t_i g_i$ there is an estimate
$$\Pr(C^{-1}(\sum_i t_i\ell_i n)/\log{n} \le \scl(\sum t_i g_i) \le C(\sum_i t_i\ell_i n)/\log{n}) \ge 1-n^{-\con{1}}$$
\end{theorem}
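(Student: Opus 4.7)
The proof will be a two-sided bi-Lipschitz comparison between $\scl$ and the natural $L^1$-norm $(t_i)\mapsto \sum_i |t_i|\ell_i$ on the random $k$-dimensional subspace of $B_1^H(G)$ spanned by $g_1,\ldots,g_k$. The upper half is immediate: since $\scl$ is a pseudo-seminorm on $B_1^H(G)$, we have $\scl(\sum t_i g_i) \le \sum |t_i|\scl(g_i)$, and by applying Theorem~\ref{thm:hyperbolic_lower_bound} to each $g_i$ separately and taking a union bound, $\scl(g_i)\le C\ell_i n/\log n$ holds simultaneously for all $i$ with probability at least $1-kn^{-\con{1}}$, yielding the upper bound.

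For the lower half we use Generalized Bavard Duality. Apply Theorem~\ref{thm:hyperbolic_lower_bound} separately to each $g_i$ to obtain homogeneous quasimorphisms $\phi_i$ with $\phi_i(g_i)\ge a\ell_i n/\log n$ and $D(\phi_i)\le b$, for constants $a,b$ depending only on $\delta$ and $G$. Form
$$\phi := \sum_{i=1}^k \mathrm{sgn}(t_i)\,\phi_i, \qquad D(\phi)\le kb,$$
and split the value of $\phi$ on the formal sum into diagonal and cross contributions:
$$\phi\!\left(\sum_j t_j g_j\right) = \sum_i |t_i|\phi_i(g_i) + \sum_{i\ne j}\mathrm{sgn}(t_i)\,t_j\,\phi_i(g_j).$$
The diagonal piece is at least $a\sum_i|t_i|\ell_i n/\log n$. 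Provided we can prove the cross-term bound $|\phi_i(g_j)|\le \epsilon\,\ell_j n/\log n$ for all $i\ne j$, with $\epsilon\le a/(2(k-1))$, the off-diagonal piece is absorbed into at most half the diagonal, and Bavard duality (Theorem~\ref{Bavard_duality_theorem}, extended to $B_1^H(G)$) gives $\scl(\sum_j t_j g_j)\ge (a/4kb)\sum_i|t_i|\ell_i n/\log n$.

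The main obstacle is establishing this cross-term bound. Bullet~(3) of Theorem~\ref{thm:hyperbolic_lower_bound} by itself only gives $|\phi_i(g_j)|\le O(\ell_j n/\log n)$, since $d_Y(p,g_j p)$ is typically of order $\ell_j n$ by linear progress; this is the same order as the diagonal term and therefore useless. Instead one must exploit the independence of the walks. The quasimorphism $\phi_i$ is, up to homogenization, a small counting quasimorphism built from a collection $\Sigma_i$ of $O(\ell_i n/\log n)$ subpaths, each of length $M\log n$, of the $Y$-geodesic from $p$ to $g_i p$; the value $|\phi_i(g_j)|$ is controlled by the number of disjoint $G$-translates of elements of $\Sigma_i$ appearing along a $(2,4)$-quasigeodesic from $p$ to $g_j p$. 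Since $g_j$ is independent of $g_i$, conditioning on $\Sigma_i$ leaves $g_j$ genuinely random, and the acylindricity and exponential-decay axioms of Definition~\ref{def:nondegenerate} imply, by the same shadow argument as in Lemma~\ref{lem:random_matching}, that for each fixed $\alpha\in\Sigma_i$ and each starting position along the walk, the probability that a $G$-translate of $\alpha$ anti-aligns there is at most $n^{-c\,M}$ for a constant $c>0$. Summing over the $O(\ell_i n/\log n)$ elements of $\Sigma_i$ and the $O(\ell_j n)$ possible positions gives an expected total of $O(\ell_i\ell_j n^{2-cM}/\log n)$, which is $o(\ell_j n/\log n)$ once $M$ is chosen large enough in terms of $k$ and $\epsilon$. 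A concentration argument directly analogous to Lemma~\ref{lem:random_matching}, applied now to the pair of independent walks rather than to two subintervals of a single walk, upgrades the expectation bound to a high-probability bound, and a final union bound over the $k(k-1)$ ordered pairs $(i,j)$ completes the proof.
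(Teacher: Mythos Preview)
Your argument is correct and rests on the same core ingredient as the paper's: controlling cross-terms between the walks via independence, acylindricity, and the exponential-decay shadow estimate (your invocation of the Lemma~\ref{lem:random_matching} argument for a pair of \emph{independent} walks rather than two disjoint subwalks of one walk). The paper is quite terse here, simply asserting ``by the argument of Theorem~\ref{thm:hyperbolic_lower_bound}, again with high probability we can find subsets $I_i$\ldots''; your write-up actually spells out the mechanism.

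There is, however, a genuine structural difference. You form the linear combination $\phi=\sum_i \mathrm{sgn}(t_i)\phi_i$ of the individual counting quasimorphisms and accept a defect bound $D(\phi)\le kb$ that grows with $k$. The paper instead trims each segment collection $\Sigma_i$ by an $\epsilon$-fraction (throwing out those segments that happen to anti-align with some other $\gamma_j$) and then builds a \emph{single} small counting quasimorphism $\phi_{1,\ldots,k}$ from the union $\bigcup_i \Sigma_i$. The point is that Lemma~\ref{Fujiwara_defect_lemma} gives a universal defect bound for \emph{any} small counting quasimorphism, so $D(\phi_{1,\ldots,k})\le\con{3}$ regardless of how many collections were merged. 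The paper also handles the orthants by replacing $g_i$ with $g_i^{-1}$ (using $t_ig_i=-t_ig_i^{-1}$ in $B_1^H$) rather than by flipping the sign of $\phi_i$; this is equivalent to your choice. For the theorem as stated, with $k$ fixed, the $k$-dependence of your constant is harmless; the paper's packaging is tidier and would matter if one wanted uniformity in $k$.
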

\begin{proof}
We give the argument in the case $k=2$ and $\ell_1=\ell_2=1$. 
The general case follows by a minor modification of the argument.

We know that $\scl(g_1)$ and $\scl(g_2)$ are between $C^{-1}n/\log{n}$ and $Cn/\log{n}$ with
high probability. By the definition of a norm, $\scl(g_1 + g_2) \le C2n/\log{n}$. 
By the argument of Theorem~\ref{thm:hyperbolic_lower_bound}, again with high probability
we can find subsets $I_i$ ($i=1,2$)
of the indices associated to the quasimorphisms $\phi_i$ certifying the lower bound for
$\scl(g_i)$ so that $|I_i|/|I|\ge (1-\epsilon)$ and such that if $\phi_{1,2}$ is the
small counting quasimorphism associated to the union of the $I_i$ segments, then $\phi_{1,2}(g_i)\ge
(1-\epsilon)\phi_i(g_i)$. Note that $D(\phi_{1,2})\le \con{3}$ for the same constant as in 
Theorem~\ref{thm:hyperbolic_lower_bound}, since this constant is universal for {\em any} small
counting quasimorphism. Moreover, $\phi_{1,2}(t_1 g_1 + t_2 g_2) = 
t_1\phi_{1,2}(g_1) + t_2\phi_{1,2}(g_2) \ge (t_1+t_2)C^{-1}n/\log{n}$. This gives the desired
estimate in the orthant where the $t_i$ are both positive. In an orthant in which $t_i$ is negative,
use $g_i^{-1}$ in place of $g_i$ together with the observation that $t_ig_i = -t_ig_i^{-1}$ in $B_1^H$.
\end{proof}

\subsection{Concentration versus compression}

It is natural, in view of Theorem~\ref{free_group_theorem} to believe that the estimates in
Theorem~\ref{thm:hyperbolic_lower_bound} and Theorem~\ref{thm:random_norm_ball} can be sharpened.
We make the following conjecture:

\begin{conjecture}[Concentration conjecture]\label{con:hyperbolic_concentration}
Let $(G,\mu,Y)$ be nondegenerate in the sense of 
Definition~\ref{def:nondegenerate}. Then there is a constant $C>0$ and $\con{1}>0$ so that
for any $\epsilon>0$, if $g$ is obtained by random walk on $G$ of length $n$ (even), conditioned to
lie in $[G,G]$, then
$$\Pr(|\scl(g)\log{n}/n - C|\le \epsilon) \ge 1 - e^{-n^{\con{1}}}$$
\end{conjecture}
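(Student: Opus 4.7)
The plan is to extend the sharp free-group analysis of \cite{Calegari_Walker} to the nondegenerate setting, which amounts to identifying the exact constant $C$ so that the upper and lower bounds of Theorem~\ref{thm:hyperbolic_lower_bound} can be brought together. The first step would be to establish a local limit theorem at scale $\log n$: letting $\gamma$ be the geodesic from $p_0$ to $p_n$ and dividing it into successive subsegments of length $M\log n$, the empirical distribution of these segments (as orbit classes of labeled subpaths in $Y$) should converge exponentially fast to a deterministic measure $\mu_\infty^M$. This would refine the qualitative comparison in Lemmas~\ref{mu_i_measures_roughly_equal} and \ref{mu_approximate_inverse} into a quantitative statement, and should follow by combining the exponential decay in shadows with the Markov structure of the random walk and a Chernoff-type bound analogous to Proposition~\ref{Chernoff_estimate}.

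Given such a local limit theorem, I would sharpen both sides of Theorem~\ref{thm:hyperbolic_lower_bound}. On the upper side, rather than pairing arbitrary $K$-anti-aligned subpaths as in Proposition~\ref{cl_upper_bound}, I would pair them \emph{optimally} against their approximate inverses drawn from the same empirical distribution, yielding $\scl(g)\le C_u\, n/\log n + o(n/\log n)$ with $C_u$ an explicit entropy-like functional of $\mu_\infty^M$. On the lower side, I would build a counting quasimorphism whose counting family consists precisely of those subsegments which are asymptotically scarce on the reverse geodesic: by the local limit theorem these can be chosen so that the associated counting function takes value $\approx C_\ell\, n/\log n$ on $g$ while its reverse vanishes; Lemma~\ref{Fujiwara_defect_lemma} then gives $\scl(g)\ge C_\ell\, n/(2\con{3}\log n) - o(n/\log n)$ via Bavard duality.

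The main obstacle, and the genuinely new content of the conjecture, is proving $C_u=C_\ell$. In the free group case this is done in \cite{Calegari_Walker} by a delicate linear-programming argument on extremal admissible surfaces that relies essentially on the tree structure, and produces the specific constant $\log(2k-1)/6$. For a general nondegenerate action no comparable combinatorial certificate is available; one would need either an abstract duality argument identifying the common constant intrinsically (perhaps via a limiting form of Bavard duality applied to the measure $\mu_\infty^M$ itself), or else an explicit candidate formula for $C$ in terms of the Patterson--Sullivan or hitting measure on $\partial_\infty Y$ which could then be verified separately against the upper and lower constructions. The upgrade from polynomial $1-n^{-\con{1}}$ to exponential $1-e^{-n^{\con{1}}}$ concentration should follow more or less for free once the local limit theorem is in place, since the probability bound in Lemma~\ref{lem:random_matching} is rate-limited by the polynomial count of index sextuples and can be replaced by a Chernoff-style bound on the empirical distribution of length-$\log n$ subsegments.
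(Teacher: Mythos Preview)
This statement is a \emph{conjecture} in the paper, not a theorem: the authors explicitly leave it open and do not provide a proof. So there is no argument in the paper to compare your proposal against. What you have written is a reasonable research strategy rather than a proof, and you yourself correctly identify the genuine gap.

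Your diagnosis that the crux is closing the gap between the upper constant $C_u$ and the lower constant $C_\ell$ is exactly right, and this is precisely what is open. The authors note in the discussion after Theorem~\ref{hyperbolic_geodesic_theorem} that the gap ``is presumably an artefact of the method of proof'', and even in the hyperbolic-group (geodesic) setting the only case where concentration is actually known is the free group with free generating set, via the linear-programming analysis of \cite{Calegari_Walker}. No analogue of that argument is known for general hyperbolic groups, let alone for nondegenerate actions on non-locally-finite complexes.

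A few specific concerns about the strategy. First, in the random-walk setting the upper bound in Theorem~\ref{thm:hyperbolic_lower_bound} does not come from a pairing argument on $\gamma$ at all: it comes from pushing forward the free-group estimate (Theorem~\ref{universal_upper_bound_random_walk}) via monotonicity of $\scl$. So your proposed sharpening of the upper side would require an entirely new direct argument in the target group, not merely an optimization of the existing one. Second, on the lower side, the defect bound in Lemma~\ref{Fujiwara_defect_lemma} is only an order-of-magnitude estimate; getting the \emph{exact} defect of a small counting quasimorphism is known to be substantially harder (the paper says so explicitly in the introduction), and without a sharp defect bound Bavard duality cannot give you a sharp constant. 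Third, your claim that the upgrade from $1-n^{-\con{1}}$ to $1-e^{-n^{\con{1}}}$ ``should follow more or less for free'' is optimistic: the polynomial loss in Lemma~\ref{lem:random_matching} and Lemma~\ref{lem:unfolded} comes from a union bound over polynomially many index tuples, and replacing this by a Chernoff-type argument on empirical distributions of length-$\log n$ segments in a random walk on a non-locally-finite $Y$ is itself a nontrivial step with no analogue in the paper. Finally, your ``local limit theorem at scale $\log n$'' for subsegments of $\gamma$ would be new even for hyperbolic groups in the random-walk (as opposed to combing) model; Proposition~\ref{Chernoff_estimate} lives in the Markov-chain world and does not transfer directly.
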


Conjecture~\ref{con:hyperbolic_concentration} would imply the following conjecture about the 
geometry of the norm in a random subspace:

\begin{conjecture}[Norm conjecture]\label{con:norm_concentration}
Let $(G,\mu,Y)$ be nondegenerate in the sense of 
Definition~\ref{def:nondegenerate}. Then there is a constant $C>0$ and $\con{1}>0$ so that for
any finite integer $k$ and any $\epsilon>0$, if $g_1,\cdots,g_k$ are obtained by independent random walks on $G$
of length $\ell_1n,\cdots,\ell_kn$ (even), conditioned to lie in $[G,G]$, then
$$\Pr(|\scl(\sum t_i g_i)\log{n}/n - C\sum t_i\ell_i|\le \epsilon) \ge 1 - e^{-n^{\con{1}}}$$
\end{conjecture}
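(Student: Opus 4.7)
The plan is to derive Conjecture~\ref{con:norm_concentration} from Conjecture~\ref{con:hyperbolic_concentration}, as the sentence preceding the statement explicitly asserts this implication. I would establish matching upper and lower bounds on $\scl(\sum t_i g_i)$, both concentrating at $C \sum |t_i| \ell_i \cdot n/\log n$. (I interpret the statement as involving $|t_i|$, since otherwise the right-hand side can be negative while $\scl$ is not.) The case $k=2$ already captures all the essential ideas; the general case follows by straightforward induction, exactly as for Theorem~\ref{thm:random_norm_ball}.

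\textbf{Upper bound.} Since $\scl$ descends to a \emph{norm} on $B_1^H(G)$, subadditivity gives
\[ \scl\Bigl(\sum_i t_i g_i\Bigr) \le \sum_i |t_i| \scl(g_i). \]
By the symmetry of $\mu$, each $g_i^{-1}$ has the same distribution as $g_i$, so I may replace $g_i$ with $g_i^{-1}$ whenever $t_i<0$. Applying Conjecture~\ref{con:hyperbolic_concentration} to each walk of length $\ell_i n$ and noting $\log(\ell_i n) = \log n + O(1)$, I obtain $\scl(g_i) \le (C + \epsilon/k) \ell_i n/\log n$ off an event of probability $e^{-(\ell_i n)^{\con{1}}}$. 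Summing over $i$ yields the upper bound with total failure probability $O(k e^{-n^{\con{1}'}})$.

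\textbf{Lower bound.} For each $i$, Conjecture~\ref{con:hyperbolic_concentration}, used in conjunction with the construction in Theorem~\ref{thm:hyperbolic_lower_bound}, provides a homogeneous quasimorphism $\phi_i$ with $D(\phi_i) \le \con{3}$ and $\phi_i(g_i) \ge 2(C - \epsilon/k) \con{3} \ell_i n/\log n$, each $\phi_i$ arising as a small counting quasimorphism based on $O(\log n)$-length geodesic subsegments of a quasigeodesic representative of $g_i$. Following the combining argument in the proof of Theorem~\ref{thm:random_norm_ball}, I would form a \emph{single} small counting quasimorphism $\phi$ supported on the union of these subsegments across all $i$, first trimming from walk $i$ any subsegment that $K$ anti-aligns a subsegment of walk $j$ for some $j \neq i$. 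Because the walks are \emph{independent}, the argument of Lemma~\ref{lem:random_matching}, applied now to the concatenated walk with acylindricity controlling cross-matches between different $i$'s, shows that only a negligible fraction of segments in walk $i$ are so contaminated, with probability $1 - O(n^{-M})$ for $M$ arbitrarily large. The defect $D(\phi) \le \con{3}$ is preserved by Lemma~\ref{Fujiwara_defect_lemma}, and $\phi(g_i) \ge (1-\epsilon)\phi_i(g_i)$. Extending $\phi$ linearly to $B_1^H$, we have $\phi(\sum t_i g_i) = \sum_i t_i \phi(g_i)$; replacing $g_i$ by $g_i^{-1}$ in any orthant where $t_i<0$ and redoing the construction, Bavard duality (Theorem~\ref{Bavard_duality_theorem}) delivers the matching lower bound $\scl(\sum t_i g_i) \ge (C - 2\epsilon)(\sum_i |t_i| \ell_i) n/\log n$.

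\textbf{The main obstacle.} The implication ``Concentration $\Rightarrow$ Norm'' sketched above is largely routine given Theorem~\ref{thm:random_norm_ball}; the genuine difficulty is Conjecture~\ref{con:hyperbolic_concentration} itself. Theorem~\ref{thm:hyperbolic_lower_bound} produces only \emph{some} quasimorphism with $\phi(g) \ge \con{2} n/\log n$ for a non-explicit constant $\con{2}$, and there is no \textit{a priori} reason this $\con{2}$ should equal the sharp constant $C$ appearing in the upper bound (which in the free-group case is $\log(2k-1)/6$, by Theorem~\ref{free_group_theorem}). Pinning down $C$ even for a surface group acting on its Cayley graph would seem to require a sharp Chernoff-type estimate (in the spirit of Proposition~\ref{Chernoff_estimate}) for the asymptotic frequencies of fixed subwords appearing in random \emph{walks} rather than random geodesics, together with a sharper comparison between the random-walk and random-geodesic distributions than the crude monotonicity-of-$\scl$ argument of the Universal Upper Bound Theorem. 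Secondarily, one would need an anti-alignment estimate for random walks matching Proposition~\ref{not_many_anti_aligned}, which is essentially the content of replacing the qualitative Lemma~\ref{lem:random_matching} by its quantitative sharpening.
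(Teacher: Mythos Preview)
This is a conjecture; the paper offers no proof, only the one-line assertion that it would follow from Conjecture~\ref{con:hyperbolic_concentration}. Your upper bound via subadditivity is correct, and you rightly isolate Conjecture~\ref{con:hyperbolic_concentration} as the genuine open problem.

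Your lower-bound argument, however, has a gap. You assert that Conjecture~\ref{con:hyperbolic_concentration} combined with the construction of Theorem~\ref{thm:hyperbolic_lower_bound} yields a \emph{small counting} quasimorphism $\phi_i$ with $\phi_i(g_i)/(2\con{3})\ge(C-\epsilon)\ell_i n/\log n$. But the conjecture speaks only to the \emph{value} of $\scl(g_i)$; Bavard duality then produces \emph{some} near-extremal quasimorphism, with no control on its form, while the explicit counting quasimorphism of Theorem~\ref{thm:hyperbolic_lower_bound} achieves only the non-sharp constant $\con{2}$. You flag exactly this mismatch in your final paragraph --- but as an obstacle to Conjecture~\ref{con:hyperbolic_concentration}, when in fact it already breaks the present step: the segment-combining machinery of Theorem~\ref{thm:random_norm_ball} cannot be invoked at the sharp constant $C$ without knowing the extremal $\phi_i$ are of counting type.

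The implication can be salvaged by a structure-free argument. For each sign pattern $\varepsilon\in\{\pm1\}^k$, the product $g_1^{\varepsilon_1}\cdots g_k^{\varepsilon_k}$ is (by symmetry of $\mu$) itself a random walk of length $(\sum_i\ell_i)n$; applying Conjecture~\ref{con:hyperbolic_concentration} to \emph{this} walk gives $\scl(\sum_i\varepsilon_ig_i)\approx C(\sum_i\ell_i)n/\log n$, since product and formal sum differ in $\scl$ by $O(k)$, and the extra block-wise conditioning on $[G,G]$ has only polynomial cost against the stretched-exponential bound. Bavard duality then supplies a homogeneous quasimorphism $\phi$ with $\phi(\sum_i\varepsilon_ig_i)/(2D(\phi))$ close to this value; since $|\phi(g_i)|/(2D(\phi))\le\scl(g_i)\approx C\ell_in/\log n$ and the sum $\sum_i\varepsilon_i\phi(g_i)$ already saturates $\sum_i 2D(\phi)C\ell_in/\log n$, each summand is forced individually, so $\phi(g_i)/(2D(\phi))\approx\varepsilon_iC\ell_in/\log n$. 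This single $\phi$ then certifies the lower bound for every $\sum_i t_ig_i$ in the $\varepsilon$-orthant, with no need to know anything about its structure.
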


It is tempting to conjecture further that $C=\lambda/6$ where $\lambda$ is the entropy of $\mu$, but
this might be premature without first understanding a wider range of examples.

\subsection{Random walks on Out($F_n$)} \label{subsection:outfn}

In this section we summarize our current knowledge of the behavior of
random walks on Out($F_n$). In this context, it is natural to consider
random walks on the isometry group of a non-locally compact Gromov
hyperbolic simplicial complex. In this section we show that if $\mu$
is a probability distribution with finite support on the isometry
group of a non-locally compact Gromov hyperbolic simplicial complex
$Y$, whose support generates a discrete non-elementary subgroup, then
all of the axioms from Definition~\ref{def:nondegenerate} are
satisfied, except possibly for acylindricity. A key step is to show
that random walks converge to the boundary.

\begin{theorem} \label{theorem:convergence} %
Let $\mu$ be a probability measure with finite support on the isometry
group of a (not-necessarily proper) Gromov hyperbolic simplicial
complex $Y$ with basepoint $y_0$, whose support generates a discrete
non-elementary subgroup. Then almost every sample path $\{ g_n y_0 \}$
converges to the Gromov boundary $\partial Y$, and the hitting measure
$\nu$ is non-atomic, and is the unique $\mu$-stationary measure on
$\partial Y$.
\end{theorem}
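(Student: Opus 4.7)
The plan is to follow the Kaimanovich--Vershik blueprint for boundary convergence, adapted to the non-proper setting in the spirit of Maher--Tiozzo. The input we need is purely hyperbolic/combinatorial: a non-elementary discrete subgroup of $\textnormal{Isom}(Y)$ contains two independent loxodromic isometries (found by a standard ping-pong argument on $\partial Y$), and we may freely use that the support of $\mu$ is finite so that all moment assumptions are satisfied trivially.

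First I would establish \emph{positive drift}, i.e.\ $d_Y(y_0, g_n y_0)/n \to L$ for some $L > 0$ almost surely. Existence of the limit is Kingman's subadditive ergodic theorem. Positivity is the key point in the non-proper setting: I would argue as in Maher \cite{Maher_exp,Maher_linear}, using the two independent loxodromics to produce, with uniformly positive probability at every stage, a step that moves the walk into a definite ``shadow'' direction and away from the return set to a bounded ball; this makes $n^{-1}d_Y(y_0, g_ny_0)$ bounded below in probability by a positive constant, forcing $L>0$.

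Given positive drift, I would deduce \emph{convergence to the boundary} by showing that the Gromov products $(g_n y_0 \cdot g_m y_0)_{y_0}$ tend to infinity as $\min(n,m) \to \infty$ almost surely. Using Markov property and stationarity, write $g_m = g_n h$ with $h$ an independent walk of length $m-n$; by hyperbolicity, $(g_n y_0 \cdot g_m y_0)_{y_0}$ exceeds $\min(d_Y(y_0,g_n y_0), d_Y(y_0,g_m y_0))$ minus the (Gromov) overlap between the reverse walk from $g_n y_0$ to $y_0$ and the forward walk from $g_n y_0$ to $g_m y_0$. A shadow-decay / exponential overlap estimate (the non-proper analogue of Maher \cite{Maher_exp}, valid once one has positive drift and non-elementarity) shows this overlap is of sublinear order with overwhelming probability, so by positive drift $(g_n y_0 \cdot g_m y_0)_{y_0} \to \infty$. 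The sequence $\{g_n y_0\}$ is therefore Cauchy in the Gromov sequential compactification and converges to a point $\xi \in \partial Y$; the distribution of $\xi$ is the hitting measure $\nu$, and $\mu$-stationarity of $\nu$ is immediate from the Markov property.

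For \emph{non-atomicity}: if $\nu$ had an atom at some $\xi$, let $m$ be the maximal atomic mass. Stationarity gives $\nu = \sum_g \mu^{*n}(g) g_* \nu$, so the set of atoms of mass $m$ is $\mu^{*n}$-invariant for every $n$, hence invariant under $\langle S\rangle$. Non-elementarity forces this orbit to be infinite, violating finiteness of $\nu$. For \emph{uniqueness}: any $\mu$-stationary $\nu'$ on $\partial Y$ gives a bounded martingale $M_n = g_n{}_*\nu'$ which converges weakly almost surely; the convergence to the boundary established above identifies the limit as $\delta_\xi$, and taking expectations yields $\nu' = \nu$. The main obstacle throughout is the absence of local compactness of $Y$, which invalidates the tightness and Poisson boundary arguments of the classical proper case; everything must be rephrased in terms of shadow estimates and Gromov products on the sequential boundary, making the exponential decay of shadows (analogous to the last bullet of Definition~\ref{def:nondegenerate}, but proved \emph{a posteriori} here rather than assumed) the crucial technical input.
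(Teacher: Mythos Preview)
Your proposal takes a genuinely different route from the paper. The paper's key device is the \emph{horofunction compactification} $Y_h$: since $Y_h$ is compact even when $Y$ is not proper, the convolution measures $\mu^{*n}$ automatically have a weak limit $\nu$ there. The paper then constructs a coarsely well-defined $G$-equivariant map $Y_h \to Y \cup \partial Y$, pushes $\nu$ forward, and uses a general lemma (Lemma~\ref{lemma:measure zero}) to show $\nu$ is supported on $\partial Y$. With a stationary measure on $\partial Y$ in hand, Kaimanovich's (CP)/(CS) machinery applies verbatim, since in Kaimanovich's argument compactness was used \emph{only} to produce such a measure. Positive drift, linear progress, and exponential decay of shadows are then deduced \emph{afterwards}, as consequences of convergence to the boundary.

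You invert this logical order: you want positive drift first, then shadow decay, then Gromov product divergence, then convergence. This is essentially the later Maher--Tiozzo strategy and can be made to work, but note two things. First, your drift and shadow-decay steps are exactly the estimates the paper derives \emph{from} Theorem~\ref{theorem:convergence}, so you must be careful that your direct proofs of them do not implicitly appeal to convergence or to a pre-existing stationary measure on $\partial Y$; your sketch (``moves the walk into a definite shadow direction'') does not make this self-containedness clear. Second, the horofunction approach buys something concrete: it replaces all of your quantitative estimates by a single soft compactness argument, so the existence of $\nu$ comes for free and the hard work reduces to the short Lemma~\ref{lemma:measure zero}. Your route is more hands-on and more quantitative, but correspondingly more delicate to execute without circularity in the non-proper setting.
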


The arguments from \cite{Maher_linear} and \cite{Maher_exp} then go
through in this case, to give linear progress, positive drift and
exponential decay. We can then show:

\begin{theorem}[Linear translation length]\label{theorem:linear_translation}
Let $G$ be a group of isometries of a simplicial Gromov hyperbolic
space $Y$, which is not necessarily locally compact. Let $\mu$ be a
probability distribution with finite support on $G$, such that the
support of $\mu$ generates a non-elementary subgroup of $G$. Then
there are constants $L > 0$ and $c < 1$ such that
\[ \Pr( \tau(w_n) \le Ln ) \le O(c^n), \]
where $w_n$ is the group element obtained by a random walk of length
$n$, and $\tau(w_n)$ is the translation length of $w_n$ acting on $Y$.
\end{theorem}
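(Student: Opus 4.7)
The plan is to reduce the theorem to two quantitative estimates on the random walk and then verify them using the probabilistic structure the paper has already announced in this generality: linear progress, convergence to the boundary (Theorem~\ref{theorem:convergence}), and exponential decay of shadows, all from the arguments of \cite{Maher_exp, Maher_linear}. The geometric input is the standard hyperbolic inequality
$$\tau(g) \ge d_Y(y_0, gy_0) - 2(gy_0 \cdot g^{-1}y_0)_{y_0} - \con{\delta},$$
valid for any isometry $g$ of a $\delta$-hyperbolic space (proved by applying the four-point inequality iteratively to the orbit $y_0, gy_0, g^2y_0,\ldots$ to show $d_Y(y_0, g^k y_0) \ge k[d_Y(y_0, gy_0) - 2(gy_0 \cdot g^{-1}y_0)_{y_0}] - O_\delta(1)$). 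Linear progress gives $d_Y(y_0, w_n y_0) \ge L_1 n$ outside an event of probability $O(c_1^n)$, so it suffices to show that for some $L_2 < L_1/2$ and $c_2 < 1$,
$$\Pr\bigl((w_n y_0 \cdot w_n^{-1} y_0)_{y_0} \ge L_2 n\bigr) \le O(c_2^n),$$
yielding $\tau(w_n) \ge (L_1 - 2L_2)n - O(1) = \Omega(n)$.

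The key difficulty in this Gromov-product bound is that $w_n$ and $w_n^{-1}$ are deterministic functions of the same random sequence $s_1,\ldots,s_n$, so they are far from independent. I decouple via the split $w_n = u \cdot v$ with $u = s_1 \cdots s_{\lfloor n/2 \rfloor}$ and $v = s_{\lfloor n/2 \rfloor + 1} \cdots s_n$; then $u$ and $v$ are independent $\mu$-walks of length $\approx n/2$, and $w_n^{-1} = v^{-1}u^{-1}$. A thin-quadrilateral computation in $\delta$-hyperbolic geometry (using that, in the aligned regime, the geodesic $[y_0, uvy_0]$ passes near $uy_0$ and $[y_0, v^{-1}u^{-1}y_0]$ passes near $v^{-1}y_0$) yields an estimate of the form
$$(uvy_0 \cdot v^{-1}u^{-1}y_0)_{y_0} \le (uy_0 \cdot v^{-1}y_0)_{y_0} + 2(u^{-1}y_0 \cdot vy_0)_{y_0} + \con{\delta}.$$
Conditioning on $v$, the event $\{(uy_0 \cdot v^{-1}y_0)_{y_0} \ge T\}$ reduces, modulo an $O(\delta)$ fudge, to the event that $uy_0$ lands in a shadow $S_{y_0}(z, K)$ centered at a point $z$ at distance $\approx T$ from $y_0$ along $[y_0, v^{-1}y_0]$; exponential decay (Definition~\ref{def:nondegenerate}(5)) bounds its probability by $O(e^{-\con{1}T})$ uniformly in $v$. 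Conditioning on $u$, the event $\{(u^{-1}y_0 \cdot vy_0)_{y_0} \ge T\}$ is similarly bounded, this time by exponential decay for the $\mu$-walk $v$ around the fixed point $u^{-1}y_0$. Choosing $T = L_1 n/8$ gives $(w_n y_0 \cdot w_n^{-1}y_0)_{y_0} \le L_1 n/4 + O(1)$ outside an event of probability $O(c^n)$, completing the proof with $L = L_1/4$.

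The main technical obstacle is verifying the thin-quadrilateral Gromov-product inequality rigorously. In the tree case ($\delta = 0$) one checks it directly by case analysis; the $\delta$-hyperbolic case follows by the usual tree-approximation arguments for quadrilaterals, but requires care in the regime where the alignment Gromov product $(u^{-1}y_0 \cdot vy_0)_{y_0}$ is not small compared to the displacement $d_Y(y_0, uy_0)$. A secondary subtlety is that the exponential decay constants in Definition~\ref{def:nondegenerate}(5) must apply uniformly after conditioning; this holds because those constants depend only on $(G,\mu,Y)$ and not on the starting configuration, the walk being translation-invariant in the natural sense. Everything else in the proof — the standard hyperbolic lower bound on $\tau$, linear progress, and exponential decay of shadows — is quoted from the earlier discussion.
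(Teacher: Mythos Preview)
Your approach is correct and shares the same skeleton as the paper's proof: both use the inequality $\tau(g) \ge d_Y(y_0, gy_0) - 2(gy_0 \cdot g^{-1}y_0)_{y_0} - O(\delta)$, invoke linear progress for the displacement term, and reduce to showing $\Pr\bigl((w_ny_0 \cdot w_n^{-1}y_0)_{y_0} \ge cn\bigr)$ decays exponentially. The difference lies in this last step. The paper rephrases the Gromov-product condition as membership of $w_n^{-1}y_0$ in the shadow $S_{y_0}(w_ny_0, \tfrac{3}{4}Ln)$ and applies exponential decay directly, appealing to symmetry of $\mu$; it does not explicitly confront the dependence between $w_n$ and $w_n^{-1}$. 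Your midpoint split $w_n = uv$ makes this decoupling explicit, reducing to Gromov products between genuinely independent pieces and applying exponential decay after conditioning on one half. Your route is a bit longer but more transparent about the dependence issue, and as a bonus it does not require $\mu$ to be symmetric (a hypothesis the theorem statement does not impose but the paper's proof invokes). On the thin-quadrilateral inequality you flag: you do not need it in the strong form you wrote. It suffices to first pass to the high-probability event where $(u^{-1}y_0 \cdot vy_0)_{y_0}$ is small (alignment), so that $[y_0, uvy_0]$ and $[y_0, v^{-1}u^{-1}y_0]$ pass within $O(\delta)$ of $uy_0$ and $v^{-1}y_0$ respectively; on that event one has directly $(uvy_0 \cdot v^{-1}u^{-1}y_0)_{y_0} \le (uy_0 \cdot v^{-1}y_0)_{y_0} + O(\delta)$, provided the latter is below $\min(d_Y(y_0,uy_0), d_Y(y_0,v^{-1}y_0))$, which linear progress guarantees.
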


The group Out($F_n$) acts on a (locally infinite) simplicial complex,
the complex of free factors, which Bestvina and Feighn
\cite{Bestvina_Feighn} have shown is Gromov hyperbolic, so the results
above apply in this case. If the action of Out($F_n$) on the complex of
free factors is acylindrical, then this would imply that the growth
rate of scl for random walks in Out($F_n$) is $n / \log n$. However,
the acylindricity of the action of Out($F_n$) is still unknown.  If
an element of Out($F_n$) acts on the complex of free factors with
positive translation length, then it is fully irreducible with fully irreducible
powers (some use the terminology {\em iwip}) and so this
generalizes results of Rivin \cite{Rivin2} and Kowalski
\cite{Kowalski}.  Finally, we remark that Out($F_n$) also acts on the
complex of free splittings, which Handel and Mosher
\cite{Handel_Mosher} have shown is Gromov hyperbolic, but in this case
the action is definitely not acylindrical.

\begin{remark}
In a recent preprint, Sisto \cite{Sisto} introduces the notion of a
{\em weakly contracting} element in a group (with respect to a so-called
{\em path system}), and shows (for example)
that an element of Out($F_n$) obtained by (suitable) random walk will be weakly contracting
(and therefore iwip)
with probability going to 1 exponentially fast with the length of the walk.
Sisto proves many other interesting results in his preprint, with methods that
do not seem to overlap much with ours.
\end{remark}

\begin{remark}
Since this paper was first posted, more facts have been established about
the action of Out($F_n$) on various hyperbolic complexes. We refer
the reader to \cite{Bestvina_Feighn_free_factor,Handel_Mosher} for details. 
Theorem~\ref{theorem:linear_translation} applies
to all known actions. However, since in no case is the action known to be
acylindrical, Theorem~\ref{thm:hyperbolic_lower_bound} does not apply (as far as we know).
\end{remark}

We now prove Theorem \ref{theorem:convergence}.  Kaimanovich
\cite{Kaimanovich} showed that a random walk on a non-elementary
subgroup of isometries of a proper Gromov hyperbolic space $Y$
converges to the boundary $\partial Y$ almost surely. If $Y$ is
proper, then $Y \cup \partial Y$ is compact. However, the only place
where this is used is to show that there is a weak limit of the
convolution measures $\mu^{*n}$ which is a probability measure on $Y
\cup \partial Y$. Our initial task is therefore to show that for a
random walk on a non-elementary subgroup of isometries of a non-proper
Gromov hyperbolic space, the convolution measures $\mu^{*n}$ converge
to a $\mu$-invariant probability measure supported on the Gromov
boundary $\partial Y$.

\begin{lemma} \label{lemma:hitting measure}
Let $G$ be the isometry group of a (not necessarily proper) Gromov
hyperbolic simplicial complex $Y$, and let $\mu$ be a probability
distribution with finite support which generates a non-elementary
subgroup of $G$. Then there is a probability distribution $\nu$ on
the Gromov boundary $\partial Y$ which is a weak limit of the
convolution measures $\mu^{*n}$.
\end{lemma}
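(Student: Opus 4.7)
The plan is to sidestep the lack of local finiteness of $Y$ by working in the horofunction compactification $\overline{Y}^h$, which is compact for any metric space, and then to push the resulting limit measure forward to $\partial Y$ via the natural map from horofunctions to Gromov boundary points. First, fix the basepoint $y_0 \in Y$ and embed $Y$ into $\R^Y$ via the normalized distance functions $h_y(x) := d(y,x) - d(y,y_0)$. Each $h_y$ is $1$-Lipschitz with $h_y(y_0) = 0$, so the image lies in the compact product $\prod_{x \in Y}[-d(x,y_0), d(x,y_0)]$. Let $\overline{Y}^h$ be the closure of the image in the topology of pointwise convergence; it is compact Hausdorff, and the horofunction boundary is $\partial^h Y := \overline{Y}^h \setminus Y$. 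Since the convolutions $\mu^{*n}$ live on the countable orbit $\langle S \rangle y_0$, we may work inside the closure of this orbit in $\overline{Y}^h$, which is second countable and hence metrizable.

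Next, view each $\mu^{*n}$ as a probability measure on this compact metric space, supported on the image of $Y$. By Banach--Alaoglu and Prokhorov's theorem, some subsequence $\mu^{*n_k}$ converges weakly to a probability measure $\nu^h$ on $\overline{Y}^h$. I would then show $\nu^h$ is concentrated on $\partial^h Y$. Non-elementarity implies that $\langle S \rangle$ contains two independent loxodromic elements (the Tits-alternative analogue for actions on Gromov hyperbolic spaces), hence a free subgroup, so $\langle S \rangle$ is non-amenable and Kesten's theorem gives exponential decay of $\mu^{*n}(g)$ uniformly in $g$. Combined with the discreteness of the action, which controls how many elements of $\langle S\rangle$ of bounded word length can lie in $B_Y(y_0, R)$, this forces $\mu^{*n}(B_Y(y_0, R)) \to 0$, so $\nu^h(Y) = 0$.

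Finally, project to $\partial Y$. In a $\delta$-hyperbolic space there is a natural partially defined map $\pi : \partial^h Y \to \partial Y$ on the set of \emph{Busemann} horofunctions: if $h$ is the pointwise limit of $h_{y_i}$ along a sequence $y_i$ with $h(y_i) \to -\infty$, then $\delta$-hyperbolicity (without properness) forces the pairwise Gromov products $(y_i \cdot y_j)_{y_0}$ to tend to infinity, and $(y_i)$ determines a point $\pi(h) \in \partial Y$. To conclude I would show $\nu^h$ puts full mass on the domain of $\pi$ by exhibiting a Schottky pair of loxodromic elements in $\langle S \rangle$ and arguing via ping-pong that sample paths are $\nu^h$-a.s.\ shadowed by quasi-geodesic rays in this free subgroup, so the horofunction limit is of Busemann type. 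Setting $\nu := \pi_\ast \nu^h$ gives a Borel probability on $\partial Y$, and compatibility of the horofunction and Gromov topologies at Busemann points yields $\mu^{*n_k} \to \nu$ in the weak sense on $Y \cup \partial Y$.

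The hard part will be the Busemann step: without already having positive drift (which in this paper's logical order follows \emph{from} this lemma), one must use only non-elementarity and the finite support of $\mu$ to rule out $\nu^h$ sitting on exotic non-Busemann horofunctions. In the proper setting this difficulty is invisible because $Y \cup \partial Y$ is compact directly and one can avoid the horofunction detour. Here, the quantitative Schottky/ping-pong control needed to push the whole mass onto Busemann horofunctions is the technical heart of the argument, and is what makes the non-proper case genuinely harder than Kaimanovich's proper-space result.
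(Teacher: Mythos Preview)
Your overall strategy---pass to the horofunction compactification, take a weak limit there, then push forward to $Y\cup\partial Y$---is exactly the paper's. But you have introduced an unnecessary difficulty and a gap.

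The unnecessary difficulty is your ``Busemann step.'' You restrict the projection $\pi$ to horofunctions $h$ with $\inf h=-\infty$ and then worry about whether $\nu^h$ is supported on such points. The paper avoids this entirely by defining a \emph{total} coarsely well-defined $G$-equivariant map $Y_h\to Y\cup\partial Y$: if $\inf h>-\infty$ it sends $h$ to any point of $Y$ where $h$ is within $\delta$ of its infimum (and checks, using \eqref{eq:horofunction}-style estimates, that this is well-defined up to bounded error), while if $\inf h=-\infty$ it sends $h$ to the Gromov limit of any sequence $y_n$ with $h(y_n)\to-\infty$. With a total map in hand, one simply pushes $\nu^h$ forward to $Y\cup\partial Y$ and then shows the image measure gives $Y$ mass zero. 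No Schottky or ping-pong control on sample paths is needed.

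The gap is in your argument that $\nu^h(Y)=0$. Kesten gives $\mu^{*n}(g)\le C\rho^n$ uniformly in $g$, but to conclude $\mu^{*n}(B_Y(y_0,R))\to 0$ you need a bound on $|\{g:gy_0\in B_Y(y_0,R)\}|$, or at least on the number of such $g$ in the support of $\mu^{*n}$, growing slower than $\rho^{-n}$. ``Discreteness of the action'' does not give this in a non-proper space: infinitely many group elements can move $y_0$ a bounded distance. The paper instead uses only that the pushed-forward limit $\nu$ is $\mu$-stationary: it invokes a lemma (Lemma~\ref{lemma:measure zero}) saying that any set $X$ admitting, for every translate $gX$, a sequence of pairwise-disjoint translates $w_igX$ with $|w_i|_G$ bounded independently of $g$, has $\nu(X)=0$. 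Applying this with $X=B(y_0,r)$ and $w_i=f^i$ for a single loxodromic $f\in\langle S\rangle$ of large translation length gives $\nu(B(y_0,r))=0$ for every $r$, hence $\nu(Y)=0$. This argument uses neither properness nor Kesten.
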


As the Gromov boundary is not compact, it will be convenient to
consider an alternative compactification, namely the horofunction
compactification, which we now describe.  Let $Y$ be a Gromov
hyperbolic simplicial space, which is not necessarily locally compact,
and let $C(Y, \mathbb{R})$ be the space of continuous functions on
$Y$, with the compact-open topology, which in this case is
equivalent to the topology of uniform convergence on compact sets.  As
$\mathbb{R}$ is Hausdorff, $C(Y, \mathbb{R})$ is also Hausdorff.  Let
$y_0 \in Y$ be a basepoint.  There is a map from $Y$ to $C(Y,
\mathbb{R})$, defined by sending $y$ to the corresponding
horofunction, $h_y(z) = d(z, y) - d(y_0, y)$. Let $Y_h$ be the closure
of $Y$ in $C(Y, \mathbb{R})$, which is sequentially compact, and hence
compact as $C(Y, \mathbb{R})$ is Hausdorff. The space $Y_h$ is known
as the horofunction compactification of $Y$, and $Y_h \setminus Y$ is
called the horofunction boundary.

We now define a ``local minimum'' map $\phi \colon Y_h \to Y
\cup \partial Y$.  Given a function $h \in Y_h$, consider $\inf(h) =
\inf_{y \in Y} h(y)$, which takes values in $[ -\infty, 0]$. If a
horofunction corresponds to a point in $Y$, i.e $h = h_y$ for some $y
\in Y$, then $\inf(h_y) = -d(y_0, y)$. We now consider the two cases
depending on whether $\inf(h) > - \infty$, or $\inf(h) = - \infty$.
If $\inf(h) > - \infty$, then there is a point $y$ such that $h(y) \le
\inf(h) + 1$, and we shall set $\phi(h) = y$. Note that if $h$ is
equal to $h_y$ for some $y \in Y$ then we may choose $\phi(h) = y$. If
$\inf(h) = - \infty$, then choose a sequence $y_n$ with $h(y_n) \to -
\infty$, and set $\phi(h)$ equal to the limit of $y_n$ in $\partial
Y$. Note that we can choose this sequence $y_n$ to be quasigeodesic,
and such that $h(y_n) = - n$.

\begin{proposition}
The local minimum map $\phi \colon Y_h \to Y \cup \partial Y$ is
coarsely well defined if $\inf(h) > - \infty$, and well defined if
$\inf(h) = - \infty$.
\end{proposition}

\begin{proof}
We start with some preliminary observations about horofunctions $h =
h_y$ corresponding to points $y \in Y$, i.e. $h_y(z) = d(z, y) -
d(y_0, y)$. In this case $h_y$ achieves its minimum value of $-d(y_0,
y)$ at the point $y$. Furthermore, for any geodesic $\gamma$, the
restriction of $h_y$ to $\gamma$ has a coarsely well defined minimum a
bounded distance away from the closest point projection $p$ of $y$ to
$\gamma$. The value of $h_y$ at $p$ is equal to $d(p, y) - d(y_0, y)$,
up to bounded error depending only on the constant of hyperbolicity
$\delta$, and for any other point $q \in \gamma$, the value of
$h_y(q)$ is equal to $d(p, q) + d(p, y) - d(y_0, y)$, up to bounded
error depending only on $\delta$, i.e.
\begin{equation} \label{eq:horofunction}
h_y(q) -K \le d(p, q) + d(p, y) - d(y_0, y) \le h_y(q) + K,
\end{equation}
for some constant $K$, depending only on $\delta$.

Let $y_1$ and $y_2$ be two points in $Y$ with $h(y_i) \le
\inf(h) + \epsilon$, and let $\gamma$ be a geodesic connecting
them. Let $y_n$ be a sequence of points in $Y$ such that the
corresponding horofunctions $h_{y_n}$ converge to $h$. As $\gamma$ is
compact, for any number $\epsilon > 0$ there is an $N$ such that for
all $n \ge N$, and for all points $y \in \gamma$, $\norm{h(y) -
  h_{x_n}(y)} \le \epsilon$. Let $p_n$ be the nearest point projection
of $y_n$ to $\gamma$, then by equation \eqref{eq:horofunction},
\[ h_{y_n}(p_n) \le \inf (h) + 2 \epsilon - d(y_1, y_2) / 2 + K, \]
where $K$ depends only on $\delta$, and so
$d(y_1, y_2)$ is bounded by a constant which only depends on $\delta$.

If $\inf(h) = - \infty$, there is a sequence of points $\{ y_n \}$
such that $h(y_n) \to - \infty$. We now show that this sequence $\{
y_n \}$ converges to a point in the Gromov boundary, and we shall map
$h$ to this point. Recall that a sequence $\{ y_n \}$ converges to the
boundary if for every number $B$ there is a constant $N$ such that the
Gromov product $(y_m \cdot y_n)_{y_0} \ge B$ for all $m \ge N$ and $n \ge
N$. So if the sequence $\{ y_n \}$ does not converge, then there is a
constant $B$ such that for all $N$ there are points in the sequence
$y_n$ and $y_m$ with $m \ge N$ and $n \ge N$ such that $(y_m \cdot
y_n)_{y_0} \le B$.

Let $\gamma$ be a geodesic from $y_n$ to $y_m$, and let $p$ be the
closest point on $\gamma$ to the basepoint $y_0$. The distance from $p$
to $y_0$ is equal to the Gromov product, up to an error which only
depends on $\delta$, so $d(y_0, p) \le B + K$, where $K$ only depends on
$\delta$. In particular this implies that the value of $h(p)$ is at
bounded below by $-B - K$. However, the value of $h$ at any point on
$\gamma$ is bounded above by the value of $h$ at the endpoints, up to
an error which only depends on $\delta$, and so as $h(y_n) \to -
\infty$, this implies that $h(p) \to -\infty$, which is a
contradiction. Therefore the sequence $y_n$ converges to a point in
the Gromov boundary.
\end{proof}

We now consider the $G$-equivariance of the local minimum map $\phi$.

\begin{proposition}
The local minimum map $\phi \colon Y_h \to Y \cup \partial Y$ is
coarsely $G$-equivariant if $\inf(h) > - \infty$, and $G$-equivariant
if $\inf(h) = - \infty$.
\end{proposition}

\begin{proof}
First suppose that $\inf(h) > - \infty$. From the definition of a
horofunction, $gh(z) = h(g^{-1} z) - h(g^{-1} y_0)$, so $\inf(g h) =
\inf(h) - h(g^{-1} y_0)$, so the action of $G$ preserves the set of
horofunctions with $\inf(h) > - \infty$. Furthermore, 
\begin{align*}
gh( g \phi(h) ) & = h( \phi(h) ) - h( g^{-1} y_0 ) \\
& \le \inf( gh ) + 1.
\end{align*}
Therefore both $g \phi(h)$ and $\phi(gh)$ have the property that their
values under $gh$ are within $1$ of $\inf(gh)$, and so they are a
bounded distance apart.  Therefore, $\phi$ is coarsely $G$-equivariant
on the set of elements with $\inf(h) > - \infty$.

Now suppose that $\inf(h) = - \infty$, and let $y_n$ be a sequence in
$Y$ such that $h_{y_n} \to h$. Recall that $\phi(h_{y_n}) = y_n$, and
$\phi(h)$ is the limit point of $y_n$ in the Gromov boundary. The
group $G$ acts by homeomorphisms on $Y \cup \partial Y$, so $g
\phi(h_{y_n}) = g y_n$, which converges to $g \phi(h)$. As $G$ also
acts by homeomorphisms on $Y_h$, the sequence $g h_{y_n}$ converges to
$g h$. Then $\phi(gh)$ is equal to the limit of $\phi( g h_{y_n} ) = g
y_n$, and this sequence converges to $g \phi(h)$, as
required. Therefore $\phi$ is $G$-equivariant on the set of $h$ such
that $\inf(h) = - \infty$.
\end{proof}

We now show that the map $\phi$ is measurable on the set of $Y_h$ with
$\inf(h) = - \infty$. In fact, it is continuous.

\begin{proposition}
The map $\phi \colon Y_h \to \partial Y$ is continuous on the set of
$h \in Y_h$ with $\inf(h) = - \infty$.
\end{proposition}

\begin{proof} 
Let $h_n$ be a sequence in $Y_h$, with $\inf(h_n) = \infty$, which
converges to a point $h \in Y_h$, also with $\inf(h) = - \infty$. It
suffices to show that $\phi(h_n)$ converges to $\phi(h)$.

For each $h_n$, there is a quasigeodesic sequence $(y_{n,m})$ with
$h_n(y_{n, m}) = -m$, and also for $h$ there is a quasi-geodesic
sequence $y_m$ with $h(y_m) = -m$. By the definition of $\phi$, the
sequence $\phi(h_n)$ is equal to the limit of $(y_{n, m})$ and
$\phi(h)$ is equal to the limit of $y_m$.

We now show that the Gromov product $( \phi(h_n)\cdot \phi(h) )_{y_0} \to
\infty$. It suffices to show that the quasigeodesics $(y_{n, m})_{m
  \in \mathbb{N}}$ and $(y_m)$ fellow travel on longer and longer
initial subsequences as $n \to \infty$. Suppose not, then no sequence
$(y_{n, m})$ fellow travels with $(y_m)$ past some given point, $y_p$
say. Consider the point $y_{q}$ for $q > p$. The function $h_n$ is
decreasing along the quasigeodesic $(y_{n, m})$, and has no maximum on
any geodesic. By thin triangles, any geodesic from $y_{n, m}$ to $y_q$
passes close to $y_q$, and as it is increasing from $y_{n, m}$ to
$y_q$, it must also be increasing from $y_p$ to $y_q$. In particular,
this implies that $h_n(y_q) \ge h_n(y_p)$ for all $n$. As $h_n \to h$,
the functions $h_n$ must in particular converge pointwise at $p$ and
$q$, so $h_n(y_p) \to h(y_p)$ and $h_n(y_q) \to h_n(y_q)$. This gives
a contradiction, as $h$ is decreasing along $y_m$, and $h(y_q) <
h(y_p)$. Therefore $\phi(x_n) \to \phi(x)$, so $\phi$ is continuous on
those $h \in Y_h$ with $\inf(h) = - \infty$, as required.
\end{proof}
 
Consider the convolution measures $\mu^{*n}$ on $Y_h$. As $Y_h$ is
compact, there is a weak limit $\nu$, which is a $\mu$-stationary
probability measure. The measure $\nu$ pushes forward to a
$\mu$-stationary probability measure on $Y \cup \partial Y$, which by
abuse of notation we shall also refer to as $\nu$. Actually this requires
some elaboration: the map from $Y_h$ to $Y \cup \partial Y$ is only coarsely
defined when the image is in $Y$, but is well-defined when the image is in
$\partial Y$. On the other hand, it will turn out as a consequence of the next lemma
that the part of $Y_h$ having image
in $Y$ has zero $\nu$-measure, and therefore the pushforward of $\nu$ to
$\partial Y$ is well-defined and $\mu$-stationary, as claimed.

\begin{lemma}[\cite{Maher_heegaard}, Lem.~3.5]\label{lemma:measure zero}
Consider a random walk generated by a symmetric finitely supported
probability distribution $\mu$ on the isometry group of a (not
necessarily proper) Gromov hyperbolic simplicial complex $Y$, such
that the group $G$ generated by the support of $\mu$ is
non-elementary. Let $\nu$ be a $\mu$-stationary probability measure on
$Y \cup \partial Y$, and let $\norm{\ \cdot\ }_G$ be any proper metric
on $G$.

Let $X$ be a set with the property that there is a sequence $\{ k_i
\}_{i \in \mathbb{N}}$ such that for any translate $g X$ of $X$ there
is a sequence $\{ w_i\}_{i \in \mathbb{N}}$ of elements of $G$, such
that the translates $gX, w_1gX, w_2gX, \ldots$ are all disjoint, and
$\norm{w_n}_G \le k_n$ for all $n$. Then $\nu(X) = 0$.
\end{lemma}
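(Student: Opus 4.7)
My plan is to argue by contradiction via a maximal-mass principle, modeled on the standard proof that $\mu$-stationary measures on the boundary of a nonelementary group are non-atomic. Suppose $\nu(X) = a > 0$, and let $M = \sup_{g \in G} \nu(gX)$, which lies in $(0,1]$. The first step is to show that $M$ is attained (or nearly attained): by the disjoint-translate hypothesis applied to any near-maximizer, the bound $\sum_i \nu(w_i g X) \le 1$ forces the set $\{w_i g : \nu(w_i g X) \ge M - \epsilon\}$ to be finite for every $\epsilon > 0$, so a standard countability/properness argument produces a genuine maximizer $g^* \in G$ with $\nu(g^* X) = M$.

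The second step exploits $\mu$-stationarity to spread the maximal mass around. Iterating $\nu = \mu * \nu$ yields, for every $n \ge 0$,
\[ M = \nu(g^* X) = \sum_{h \in G} \mu^{*n}(h)\, \nu(h^{-1} g^* X). \]
Since each summand is at most $M$ and the sum equals $M$, every $h$ in the support of $\mu^{*n}$ must satisfy $\nu(h^{-1} g^* X) = M$. Letting $H := \langle \text{supp}(\mu) \rangle$ and using symmetry of $\mu$, we conclude that $\nu(h g^* X) = M$ for every $h \in H$.

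The third step is to derive the contradiction. Apply the disjoint-translate hypothesis to $g^* X$ to obtain a sequence $w_i \in G$ with $\norm{w_i}_G \le k_i$ and $\{g^* X, w_1 g^* X, w_2 g^* X, \ldots\}$ pairwise disjoint. Since $\norm{\cdot}_G$ is proper, balls in $G$ are finite, so the $w_i$ are infinitely many distinct elements; choosing the proper metric on $G$ to restrict to a proper metric on $H$ (which we may arrange since $H \le G$ and $\mu$ is finitely supported), the $w_i$ lie in $H$. By the second step, each $\nu(w_i g^* X) = M$, and disjointness yields $\sum_i \nu(w_i g^* X) \le 1$; but this sum has infinitely many terms each equal to $M > 0$, forcing $M = 0$, contradicting $\nu(X) > 0$.

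The main obstacle is the last step: ensuring that the $w_i$ for which we have the disjoint-translate property actually lie in $H$ rather than possibly in $G \setminus H$. In applications this is essentially automatic because the relevant ``room to move'' comes from acting by elements of $H$; in full generality, one handles it either by choosing $\norm{\cdot}_G$ so that balls of radius $k_i$ intersect $H$ in infinite cumulative cardinality, or by refining the maximality argument in the second step to propagate the equality $\nu(g^* X) = M$ along $G$-orbits using a nontrivial Radon--Nikodym cocycle bound derived from the nonamenability of the $H$-action. Either route suffices to complete the contradiction.
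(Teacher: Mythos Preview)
First, a clarification: the paper does not prove this lemma; it is quoted from \cite{Maher_heegaard}, Lemma~3.5, and used as a black box. So there is no ``paper's own proof'' to compare against here.

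Second, you have misread the hypothesis. In the statement, $G$ is \emph{defined} to be the group generated by the support of $\mu$; there is no larger ambient group in play. Hence your $H$ equals $G$, and the entire last paragraph of your proposal addresses a nonexistent obstacle. The elements $w_i$ are in $G$ by hypothesis, so once you have established that $\nu(h g^* X)=M$ for all $h\in G$, the contradiction in your third step is immediate.

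That said, there is a genuine gap in your first step. Your argument for attainment of the supremum $M$ is that for a near-maximizer $g$, the bound $\sum_i \nu(w_i g X)\le 1$ forces only finitely many of the specific translates $w_i g X$ to have measure at least $M-\epsilon$. This is true, but it says nothing about the global set $\{g'\in G:\nu(g'X)\ge M-\epsilon\}$, which is what you would need to be finite in order to extract a maximizer. In fact, for a bounded $\mu$-harmonic function on a nonamenable group the supremum need not be attained: take $G$ free on two generators, $\nu$ the harmonic measure on the Gromov boundary, and $X$ a half-space shadow; then $\sup_g \nu(gX)=1$ is not attained. Of course that $X$ fails the disjoint-translate hypothesis, so this is not a counterexample to the lemma, but it shows that attainment is not automatic from stationarity alone and that your ``standard countability/properness argument'' is doing real work that you have not supplied. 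If you can establish attainment, the remainder of your argument (with $H=G$) is correct; otherwise you need a different route that exploits the uniform bound $\|w_i\|_G\le k_i$ more directly.
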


We shall choose $X = B(y_0, r)$, the ball of radius $r$ in $Y$.  As $G$
is non-elementary it contains a hyperbolic isometry $f$ with
translation length greater than $r + 2 \delta$. Then for any translate
$g B(y_0, r)$, the translates $f^n g B(y_0, r)$ are all disjoint. Choose
$\norm{\ \cdot \ }_G$ to be word length with respect to the generating
set consisting of the support of $\mu$. As $\norm{f^n}_G \le n
\norm{f}_G$, we may choose $k_n = n \norm{f}_G$, and then Lemma
\ref{lemma:measure zero} implies that $\nu(B(y_0, r)) = 0$. As this
holds for every $r$, this implies that $\nu(Y) = 0$, as required.
This completes the proof of Lemma \ref{lemma:hitting measure}, showing
that there is weak limit of the convolution measures supported on the
Gromov boundary.

We now prove Theorem \ref{theorem:convergence}, convergence to the
boundary, following the arguments of Kaimanovich
\cite{Kaimanovich}. The action of $G$ on $Y \cup \partial Y$ satisfies
the following two properties:

\begin{itemize}

\item[(CP)] If the sequence $g_n y_0$ converges to a point in $\partial
  Y$, then the sequence $g_n h y_0$ converges to the same point, for any
  $h \in G$.

\item[(CS)] The boundary $\partial Y$ consists of at least three
  points, and there is a $G$-equivariant Borel map $S$ assigning to
  pairs of distinct points $b_1, b_2$ in $\partial Y$ subsets (strips)
  $S(b_1, b_2) \subset Y$, such that for any three pairwise distinct
  points $b_i \in \partial Y$, $i=0,1,2$, there are neighborhoods
  $b_0 \in U_0 \subset Y$, and $b_i \in U_i \subset \partial Y$, $i =
  1, 2$ with the property that $S(b_1, b_2) \cap U_0 = \varnothing$
  for both $b_i \in U_i$, $i=1, 2$.

\end{itemize}

We have shown that $(\partial Y, \nu)$ is a $\mu$-boundary for
$G$. Furthermore, the action of $G$ on $Y \cup \partial Y$ satisfies
properties (CP) and (CS) from Kaimanovich \cite{Kaimanovich}, and so
all hypotheses of \cite{Kaimanovich} Thm~2.4 are satisfied, except
compactness. However, in the proof of Thm~2.4 compactness is only used
to guarantee the existence of a $\mu$-stationary probability measure,
and we have shown how to construct such a measure above. Therefore the
conclusion of Thm~2.4 holds in this setting, and this completes the
proof of Theorem \ref{theorem:convergence}.

The properties of exponential decay and linear progress now follow
from \cite{Maher_linear}, as these arguments only use
$\delta$-hyperbolicity and convergence to the boundary, which we have
now established. We may now complete the proof of Theorem
\ref{theorem:linear_translation}.

\begin{proof}(of Theorem \ref{theorem:linear_translation}) 
We briefly review some properties of quasigeodesics from
\cite{bridson_haefliger} Section III.H, see also Fujiwara
\cite{Fujiwara2} Section 1.2.  Let $\alpha$ be a path which is locally
quasigeodesic, i.e. there are constants $D, K$ and $c$ such that every
subpath of length at most $D$ is a $(K, c)$-quasigeodesic. Then there
is a constant $D_0$, which depends only on the constant of
hyperbolicity $\delta$, and the quasigeodesic constants $K$ and $c$,
such that for all $D \ge D_0$, the locally quasigeodesic path $\alpha$
is globally a $(K', c')$-quasigeodesic, where $K'$ and $c'$ depend
only on $\delta$.  A local quasigeodesic may be constructed by
concatenating geodesic segments with bounded overlaps. More precisely,
let $\alpha$ and $\beta$ be two geodesics in $Y$ such that the final
point of $\alpha$ is equal to the initial point of $\beta$, and let
$\gamma$ be a geodesic from the initial point of $\alpha$ to the final
point of $\beta$. We define the \emph{overlap} $\mathcal{O}(\alpha ,
\beta)$ of $\alpha$ and $\beta$ to be the largest distance from any
point on $\alpha \cup \beta$ to $\gamma$. There is a constant $Q$,
which only depends on $\delta$, such that if $\alpha$ is a path
consisting of a union of geodesics, each of length at least $Q$, such
that each successive pair overlap by at most $2 \delta$, then $\alpha$
is a quasigeodesic, with quasigeodesic constants depending only on
$\delta$.

Let $\gamma$ be a geodesic from $y_0$ to $g y_0$, and let
$\overline{\gamma}$ be $\gamma$ with the reverse orientation. Consider
the path formed from the union of the geodesic segments $g^k
\gamma$. Let $m$ be the midpoint of $\gamma$, and let $\alpha$ be a
geodesic from $m$ to $g m$, then the union of the geodesic segments
$g^k \alpha$ also forms a path in $Y$. Let $B$ be the overlap of
$\gamma$ and $g \gamma$. By thin triangles, there is a constant $K$,
which only depends on $\delta$, such that if the length of $\gamma$ is
at least $2B + Q + K$, then then length of $\alpha$ is at least $Q$,
and so the union of the $g^k \alpha$ forms a $(K', c')$-quasigeodesic,
where $K'$ and $c'$ only depend on $\delta$. Furthermore, the distance
between $m$ and $gm$ is equal to $\text{length}(\gamma) - 2B$, up to
an additive error which depends only on $\delta$, and this in turn is
equal to the translation length of $g$, again up to an additive error
which only depends on $\delta$.

If the size of of the overlap between $\gamma$ and $g \gamma$ is at
least $B$, then $\gamma$ has an initial segment of length $B$ which
fellow travels with a final segment of $g^{-1} \gamma$, which is the
initial segment of $\overline{\gamma}$ from $y_0$ to $g^{-1} y_0$. In
particular, this means that $g^{-1} y_0 \in S_{y_0}(g y_0, B + K )$,
for some constant $K$ which only depends on $\delta$.

We now estimate the probability that this occurs for a random walk of
length $n$.  Recall that by exponential decay, there are constants $K$
and $c_1 < 1$ such that for any $g \in G$
\[ \Pr ( w_n y_0 \in S_{y_0}(g y_0, r) ) \le K c_1^{d(y_0, g y_0) -
  r}. \]
As $\mu$ is symmetric, and choosing $r = 3Ln/4$, we obtain
\[ \Pr( w_n^{-1} y_0 \in S_{y_0}(w_n y_0,  \tfrac{3}{4} Ln) ) \le K
c_1^{d(y_0, w_n y_0) - \tfrac{3}{4} Ln}. \]
By linear progress, the probability that $d(y_0, w_n y_0) \le Ln$
decays exponentially in $n$, which implies
\[ \Pr( w_n^{-1} y_0 \in S_{y_0}(w_n y_0,  \tfrac{3}{4} Ln) ) \le K
c_1^{\tfrac{1}{4} Ln} + O(c_2^n), \]
where $c_2< 1$ is the exponential decay constant from linear progress.
Therefore the probability that the overlap of $\gamma$ and $w_n
\gamma$ is at most $Ln/4$ decays exponentially in $n$. As we have
shown that the translation length of $w_n$ is equal to $\length(\gamma)
- 2 \mathcal{O}(\gamma, \overline{\gamma})$, up to additive error
depending only on $\delta$, this implies that the probability that
translation length of $w_n$ is at least $Ln/2$ tends to one
exponentially fast, as required.
\end{proof}

\section{Universal lower bounds}\label{lower_bound_section}

In this section we shift our focus abruptly, and concentrate on obtaining uniform
{\em lower bounds} on $\scl$ for random walks in arbitrary finitely generated groups.
If $G$ is any group, then Bavard duality implies that
either $\scl$ vanishes identically on $[G,G]$, or else there is a homogeneous
quasimorphism $\phi$ on $G$. In the former case, there is nothing to say. In the latter
case, one obtains uniform lower bounds on $\scl$ from an estimate on the distribution
of values of $\phi$.

It turns out that there is a {\em central limit theorem} for $\phi$, proved by 
Bj\"orklund--Hartnick \cite{Bjorklund_Hartnick}. This theorem allows us to obtain
uniform lower bounds on $\scl$ of order $O(\sqrt{n})$. An important special case
of this theorem concerns quasimorphisms obtained from actions of groups on circles
(we obtained this special case independently of Bj\"orklund--Hartnick, though the
method of proof is similar). In the next section we prove the central limit theorem
for circle actions, and derive some geometric applications, of independent
interest.

\subsection{Groups acting on circles}\label{circle_clt_subsection}

Let $\homeo^+(S^1)$ denote the group of orientation-preserving homeomorphisms of the
circle. This group has a universal central extension consisting of the 
group of homeomorphisms of $\R$ that commute with integer translation; we 
denote this group $\homeo^+(\R)^\Z$.

Poincar\'e defined a function $\rot:\homeo^+(\R)^\Z \to \R$ called {\em rotation
number} by the formula
$$\rot(h) = \lim_{n \to \infty} \frac {h(n)} n$$
This descends to a function $\rot:\homeo^+(S^1) \to \R/\Z$.
The function $\rot$ is a homogeneous quasimorphism on $\homeo^+(\R)^\Z$ with
defect $D(\rot)=1$.

Now let $G$ be any group. Suppose $G$ acts on the circle
by orientation-preserving homeomorphisms; i.e.\/ suppose we have $G \to \homeo^+(S^1)$.
The preimage of $G$ in $\homeo^+(\R)^\Z$ is a (possibly split) central extension
$\overline{G}$ of $G$ and we obtain a homogeneous quasimorphism $\rot$ on $\overline{G}$.
The defect of $\rot$ on $\overline{G}$ is usually equal to $1$, but might be smaller,
for instance if the centralizer of $\overline{G}$ in $\homeo^+(\R)^\Z$ is bigger
than the center $\Z$.

Let $\overline{G} \to \homeo^+(\R)^\Z$ be some representation as above, and let $\overline{S}$
be a finite generating set for $\overline{G}$ (note that $\overline{G}$ is 
finitely generated if and only if $G$ is). Let $\mu$ be the uniform probability
measure on $\overline{S}$, and let $\mu_n$ be the $n$-fold convolution of $\mu$. That is, $\mu_n$
is the probability measure associated to a random walk of length $n$ on $G$ in the
generators $\overline{S}$. Note that we do {\em not} require $S$ to be symmetric or to generate
$G$ as a group.

\begin{theorem}[Central limit theorem]\label{circle_clt}
Let $\overline{G}$ be a subgroup of $\homeo^+(\R)^\Z$, and let $\overline{S}$ be a finite 
generating set. Let $g_0,g_1,\cdots$ be a Markov process on $G$,
where $g_0=\id$, and where each $g_n$ is obtained from $g_{n-1}$ by right
multiplication by a random element $s \in \overline{S}$ (in the
uniform measure). Then there
is a central limit theorem for $\rot$; i.e\/ there are constants $E$ and $\sigma$
so that $n^{-1/2}(\rot(g_n) - En)$ converges in probability to the 
Gaussian measure $N(0,\sigma)$.
\end{theorem}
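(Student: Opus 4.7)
The plan is to reduce the central limit theorem for $\rot(g_n)$ to the martingale central limit theorem via a Burger--Monod-style harmonic function that approximates $\rot$ within bounded error. As a first step, I would apply the Kakutani--Markov fixed point theorem to the convolution action of $\mu$ on probability measures on $S^1$ to produce a $\mu$-stationary probability measure $\nu$. Writing $\til g \in \homeo^+(\R)^\Z$ for the lift and observing that $\phi_g(x) := \til g(x) - x$ is $\Z$-periodic (hence descends to $S^1$), I would then define
\[ h(g) := \int_{S^1} \phi_g(x) \, d\nu(x). \]
The standard pointwise estimate $|\til g(x) - x - \rot(g)| \le 1$ gives $|h(g) - \rot(g)| \le 1$, so any limit theorem for $h(g_n)$ immediately transfers to $\rot(g_n)$.

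The second step is to produce a martingale structure. The cocycle identity $\phi_{gs}(x) = \phi_g(s\cdot x) + \phi_s(x)$ combined with the stationarity of $\nu$ yields, by a short calculation,
\[ \Ex_{s \sim \mu}[h(gs)] \;=\; h(g) + E, \qquad E := \sum_{s \in \overline S} \mu(s)\, h(s). \]
Hence $M_n := h(g_n) - nE$ is a martingale with respect to the natural filtration of the walk. Since $\phi_g$ has oscillation at most $1$ on $S^1$ and $\overline S$ is finite, the differences $M_n - M_{n-1}$ are uniformly bounded, so the Lindeberg condition for the martingale CLT is automatic. This already identifies the constant $E$ appearing in the theorem.

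The technical heart of the argument --- and the main obstacle --- is to show that the normalized conditional quadratic variation
\[ \frac{1}{n}\sum_{i=1}^n \Ex\bigl[(M_i - M_{i-1})^2 \mid \mathcal F_{i-1}\bigr] \]
converges in probability to a deterministic constant $\sigma^2 \ge 0$. An explicit computation shows that the conditional variance at step $i$ is a functional of $g_{i-1}$ depending only on the bounded function $\phi_{g_{i-1}} - \rot(g_{i-1})\cdot\mathbf{1} \in C(S^1)$; the cocycle identity above describes the evolution of this function under the walk as a skew product over the Bernoulli shift on $(\overline S^{\mathbb N},\mu^{\mathbb N})$. I would establish the desired convergence by identifying an invariant probability measure on this skew product and applying Birkhoff's theorem, with $\sigma^2$ coming out as an explicit integral against that invariant measure. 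Once this is done, the Brown--Eagleson martingale CLT delivers $M_n/\sqrt n \Rightarrow N(0,\sigma^2)$, and the sandwich $|\rot(g_n) - (M_n + nE)| \le 1$ transfers the conclusion to $\rot$. The subtlest remaining point is verifying ergodicity of the skew product and characterizing the degenerate case $\sigma = 0$, which corresponds to $\rot$ being cohomologically trivial along the walk.
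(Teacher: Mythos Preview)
Your overall strategy---replace $\rot$ by a bounded harmonic approximation and invoke the martingale CLT---is exactly the paper's. The divergence is in \emph{which} harmonic function you choose, and that choice is where your proposal gets into real difficulty.

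You take $h(g)=\int_{S^1}\phi_g\,d\nu$, and correctly observe that the conditional variance at step $n$ then depends on the whole function $[\phi_{g_{n-1}}]\in C(S^1)/\text{const}$. You propose to control the Ces\`aro averages of these variances by Birkhoff on a skew product over the Bernoulli shift with fiber $C(S^1)/\text{const}$, under the affine evolution $[\phi_{gs}]=s^*[\phi_g]+[\phi_s]$. But you give no construction of an invariant \emph{probability} measure on this skew product, no topology in which the fiber is compact enough for one to exist, and no argument for ergodicity. You flag this yourself as ``the subtlest remaining point''; it is in fact the entire content of the quadratic-variation step, and as written it is a gap rather than a technicality.

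The paper sidesteps this completely by \emph{evaluating at a single point} instead of integrating. With $\overline\nu$ the lift of $\nu$ to $\R$ and $F(t)=\overline\nu([0,t])$, one tracks $F(g_n(0))$; stationarity of $\nu$ makes $F(g_n(0))-nE$ a martingale just as in your setup, and $|F(t)-t|\le 1$ plays the role of your $|h-\rot|\le 1$. The payoff is that the conditional variance at step $n$ now depends only on $g_{n-1}(0)\bmod\Z$, a point of $S^1$. One takes $\nu$ \emph{ergodic} from the outset (not merely stationary, as Kakutani--Markov gives; the atomic case is disposed of separately, since then $\rot$ is an honest homomorphism), and the random ergodic theorem for the $\Delta_S$-action on $(S^1,\nu)$ immediately yields $\tfrac{1}{n}\sum_i V_i\to\sigma^2$ in probability. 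The reduction of the state space from an infinite-dimensional function space to the circle itself is the idea your argument is missing.
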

\begin{proof}
Let $G$ be the image of $\overline{G}$ in $\homeo^+(S^1)$, and let $S$ be the
image of $\overline{S}$. Further, let $F$ be the semigroup generated by $S$ (which
might be smaller than $G$).
Let $M(S^1)$ denote the space of probability measures on $S^1$,
and let $\Delta_S:M(S^1) \to M(S^1)$ be defined by
$$\Delta_S(\nu) = \frac 1 {|S|} \sum_{s \in S} s_*\nu$$
A fixed point of $\Delta_S$ is called a {\em stationary} (or {\em harmonic}) 
measure on $S^1$. We let $\nu$ be an ergodic stationary measure. By construction,
the action of $F$ on $S^1$ is absolutely continuous with respect to $\nu$.

If $\nu$ contains atoms, then there is an atom of biggest measure, supported
at some point $p$. Since $\nu(p)$ is equal to the average of $\nu(sp)$, it follows
that the measure of every $\nu(sp)$ is equal to the measure of $\nu(p)$. It follows
that every point in $Fp$ has the same (atomic) measure, and therefore this set is
finite and $F$-invariant. Since the action is by homeomorphisms, this 
set is invariant under $G$, and therefore $G$ preserves a probability measure on
$S^1$. In this case it is well-known that $\rot$ is a {\em homomorphism} from
$\overline{G}$ to $\R$, and the ordinary central limit theorem applies. So we assume
in the sequel that $\nu$ contains no atoms.

Since $\nu$ is stationary, for any measurable $I\subset S^1$) (in particular, for
every interval $I$), there is an equality
$$\frac 1 {|S|} \sum_{s\in S} \nu(S(I)) = \nu(I)$$
Let $\overline{\nu}$ be the Radon measure
on $\R$ obtained by identifying $\R$ locally with $S^1$. Observe that $\overline{\nu}$
is invariant under integer translation, and furthermore it satisfies
$\overline{\nu}([t,t+1])=1$ for all $t\in \R$. Furthermore, $\overline{\nu}$ is
evidently stationary for $\overline{S}$; i.e.\/ $\frac 1 {|S|} \sum_{s\in \overline{S}}
\overline{\nu}(s(I)) = \overline{\nu}(I)$ for all measurable $I\subset \R$.

Define $F:\R \to \R$ as follows. For each $t$ choose $T\ll 0$ and $T\ll t$, and define
$$F(t) = \overline{\nu}[T,t] - \overline{\nu}[T,0]$$
(evidently, $F$ does not depend on the choice of sufficiently negative $T$).
For any $t\in \R$, define
$$f(t) = \left( \frac 1 {|S|} \sum_s F(s(t)) \right) - F(t)$$
If $u$ is arbitrary, and $I$ is the interval with extremal points $t$ and $u$, then
$$f(u) = f(t) + \left( \frac 1 {|S|} \sum_s \overline{\nu}(s(I)) \right) - \overline{\nu}(I) = f(t)$$
Hence $f(\cdot)$ is {\em constant}, and equal to some fixed $E$, which we call
the {\em drift} of $S$.

It follows from this that the function $F(g_n(0))-nE$ is a (bounded) 
{\em martingale}; that is, the expected value of $F(g_n(0))-nE$ given 
$g_{n-1}$ is $F(g_{n-1}(0))-(n-1)E$.

Now, if $X_i$ is any martingale with bounded increments, if
$\sigma_i^2$ is the expectation of $(X_{i+1}-X_i)^2$ given $X_1,X_2,\cdots,X_i$,
and if $\tau_n$ is the minimum $n$ so that $\sum_{i=1}^n \sigma_i^2 \ge n$, then
the Martingale central limit theorem (see e.g.\/ \cite{Hall_Heyde}, especially
Thm.~3.2. on page 58) says that 
$X_{\tau_n}/\sqrt{n}$ converges in probability to a normal distribution $N(0,1)$.
In our particular case, it turns out that $\tau_n/n$ converges in probability
to $\sigma^2$ for some constant $\sigma$. For, the expectation of $(F(g_n)-F(g_{n-1})-E)^2$
given $g_{n-1}$ depends only on $g_{n-1}(0)$ mod $\Z$, and since by hypothesis
the measure $\nu$ is ergodic for the action of $\Delta_S$, the random
ergodic theorem (see \cite{Furman}, Thm.~3.1) implies such convergence in
probability (even in $L^1$).

Now, $|\rot(g)-g(0)|\le 1$ for any $g\in \overline{G}$, and moreover $|F(t)-t|\le 1$.
Therefore a central limit theorem for the function $F(g_n(0))-nE$ implies one for $\rot(g_n)-nE$
and the theorem is proved.
\end{proof}

\begin{remark}\label{symmetric_drift_vanishes}
If $S=S^{-1}$ (symmetric random walk in a group), then the random process is
invariant under taking inverses, and therefore $E=0$. Similarly, if
$S$ is conjugated to itself (in $\homeo^+(\R)^\Z$) by some reflection 
$t \to 2C-t$, then $E=-E=0$ by symmetry.
\end{remark}

\begin{corollary}\label{scl_sqrt_corollary}
Let $G$ be a finitely generated subgroup of $\homeo^+(S^1)$, and let
$\overline{G}$ be the preimage in $\homeo^+(\R)^\Z$. Suppose further that
$\scl$ vanishes identically on $G$. Then if $\scl_n$ denotes the value of
$\scl$ on a random walk in $\overline{G}$ (in some finite symmetric generating
set), there is some $\sigma$ for which the following is true:
$$\lim_{n \to \infty} \Pr(a< \scl_n/\sigma \sqrt{n} < b) = \frac 2 {2\pi} \int_a^b \1_{[0,\infty)}
e^{-x^2/2} dx$$
\end{corollary}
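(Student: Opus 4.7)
The plan is to combine Bavard duality (Theorem~\ref{Bavard_duality_theorem}) with the rotation-number central limit theorem (Theorem~\ref{circle_clt}). The hypothesis that $\scl$ vanishes on $[G,G]$ will be used to show that, up to homomorphisms, $\rot$ is essentially the \emph{only} homogeneous quasimorphism on $\overline{G}$; consequently $\scl_n$ becomes a deterministic positive multiple of $|\rot(g_n)|$, and the corollary drops out of Theorem~\ref{circle_clt} combined with Remark~\ref{symmetric_drift_vanishes}.

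The first step is the structural claim: every $\phi\in Q(\overline{G})$ has the form $\phi=c\rot+\psi$ with $c\in\R$ and $\psi:\overline{G}\to\R$ a homomorphism. Indeed, the restriction of $\phi$ to the central $\Z$ is a homogeneous quasimorphism on $\Z$, hence linear: $\phi|_\Z(z)=cz$ for some $c$. Since $\rot|_\Z(z)=z$, the quasimorphism $\phi':=\phi-c\rot$ vanishes on $\Z$. For any $g\in\overline{G}$ and $z\in\Z$, centrality of $z$ gives $(gz)^n=g^nz^n$, and homogeneity of $\phi'$ together with $\phi'(z^n)=0$ then forces $\phi'(gz)=\phi'(g)$. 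Thus $\phi'$ is constant on $\Z$-cosets and descends to a homogeneous quasimorphism $\bar\phi'$ on $G$. By Bavard duality applied to $G$, the hypothesis $\scl\equiv 0$ on $[G,G]$ forces $\bar\phi'\equiv 0$ on $[G,G]$; and a homogeneous quasimorphism vanishing on commutators is a homomorphism, so $\psi:=\phi'$ is a homomorphism on $\overline{G}$.

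Applying Bavard duality now on $\overline{G}$, any homomorphism term $\psi$ in $\phi=c\rot+\psi$ vanishes on $[\overline{G},\overline{G}]$, and $D(c\rot+\psi)=|c|D(\rot)$, so for $g\in[\overline{G},\overline{G}]$ the supremum reduces to
$$\scl_{\overline{G}}(g)=\frac{|\rot(g)|}{2D(\rot)}.$$
Since the generating set $\overline{S}$ is symmetric, Remark~\ref{symmetric_drift_vanishes} gives drift $E=0$, and then Theorem~\ref{circle_clt} asserts that $\rot(g_n)/\sqrt{n}$ converges in distribution to $N(0,\sigma')$ for some $\sigma'\ge 0$. Setting $\sigma:=\sigma'/(2D(\rot))$ gives $\scl_n/(\sigma\sqrt{n})\to |N(0,1)|$ in distribution, the half-normal law on the right-hand side of the display.

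The main technical obstacle is in handling those $g_n$ not lying in $[\overline{G},\overline{G}]$, on which $\scl$ is not classically defined: one should either condition on $g_n\in[\overline{G},\overline{G}]$ (which occurs with asymptotically polynomial probability by the local limit theorem methods of \S~\ref{homology_section}) or interpret $\scl_n$ via the extended $\scl$ pseudonorm on $B_1^H(\overline{G})$ of \cite{Calegari_scl},~\S~2.6; in either case the central limit behaviour of $\rot(g_n)$ is unaffected. A degenerate case occurs when $\sigma'=0$: this would mean $\rot|_{\overline{G}}$ differs from a homomorphism by a bounded amount, and hence (by the argument at the start of the proof of Theorem~\ref{circle_clt}) that the $G$-action on $S^1$ preserves a probability measure, in which case the conclusion holds trivially with both sides equal to $0$.
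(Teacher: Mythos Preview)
Your proof is correct and follows essentially the same strategy as the paper: show that $Q(\overline{G})/H^1(\overline{G})$ is one-dimensional and spanned by $\rot$, deduce $\scl(g)=|\rot(g)|/2D(\rot)$ on $[\overline{G},\overline{G}]$ via Bavard duality, and then apply Theorem~\ref{circle_clt} together with Remark~\ref{symmetric_drift_vanishes}.

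The only real difference is in how the structural claim is established. The paper argues cohomologically: from $\scl\equiv 0$ on $G$ it deduces $Q(G)/H^1(G)=0$ via the exact sequence $0\to H^1(G)\to Q(G)\to H^2_b(G)\to H^2(G)$, then uses that $\overline{G}$ is the central extension associated to the Euler class $[\delta\rot]\in H^2(G)$ to conclude $Q(\overline{G})/H^1(\overline{G})$ is spanned by $\rot$. Your argument is more direct and elementary: you subtract off a multiple of $\rot$ to kill the restriction to the central $\Z$, observe that the remainder descends to $G$, and invoke Bavard duality on $G$ (together with the identity $D(\phi)=\sup_{g,h}|\phi([g,h])|$ for homogeneous $\phi$) to see the descended quasimorphism is a homomorphism. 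Your route avoids any mention of bounded cohomology or the Euler class, at the cost of being slightly less conceptual; both reach the same conclusion. You also explicitly flag the issues of $g_n\notin[\overline{G},\overline{G}]$ and of $\sigma'=0$, which the paper passes over in silence.
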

\begin{proof}
For any group $G$ there is a short exact sequence
$$0 \to H^1(G) \to Q(G) \to H^2_b(G) \to H^2(G)$$
where $H^2_b$ denotes bounded cohomology. Since $\scl$ vanishes identically
on $G$ by hypothesis, $Q(G)/H^1(G)=0$ by Bavard duality; i.e.\/ Theorem~\ref{Bavard_duality_theorem}.
However, the coboundary $\delta\rot$ exists as an element of $H^2_b(G)$ whose
image in $H^2(G)$ is nontrivial, and equal to the familiar {\em Euler class}.

Since $\overline{G}$ is the central extension associated to the Euler class, it
follows that $Q(\overline{G})/H^1(\overline{G})$ is one dimensional, and spanned
by $\rot$. So $\scl(g)=|\rot(g)|/2D(\rot)$ on the commutator subgroup of
$\overline{G}$, and the conclusion follows.
\end{proof}

\begin{example}
Corollary~\ref{scl_sqrt_corollary} applies to many naturally occurring families of groups, including
Hilbert modular groups $\SL(2,\O(n))$ where $\O(n)$ is the ring of integers in $\Q(\sqrt{n})$ for
$n$ square-free, $\SL(2,\Z[1/2])$, Thompson's circle group $F$ and certain generalized Stein-Thompson groups, 
and many others. The fact that $\scl$ vanishes identically on these groups follows from the stronger property that
they are {\em boundedly generated by commutators}. 

For Hilbert modular groups, this is a consequence of a deep theorem of
Carter--Keller--Paige, namely \cite{Carter_Keller_Paige} Thm.~6.1 which says that if $A$ is
the ring of integers in a number field $K$ containing infinitely many units, and $T$ is an element
of $\SL(2,A)$ which is not a scalar matrix, then $\SL(2,A)$ has a finite index normal subgroup which
is boundedly generated by conjugates of $T$.

The case $\SL(2,\Z[1/2])$ is due to Liehl \cite{Liehl} who proves that the group is boundedly
generated by elementary matrices (which are themselves products of commutators of bounded length).

The case of Thompson's group is due to Ghys--Sergiescu \cite{Ghys_Sergiescu}, and some generalizations
are due to Zhuang \cite{Zhuang}. For an introduction to Thompson's groups and their properties, see
\cite{Cannon_Floyd_Parry}.
\end{example}

\subsection{Random turtles in the hyperbolic plane}

In this subsection we give a geometric application of
Theorem~\ref{circle_clt} of independent interest. Consider the following random process.
A turtle starts at the origin in the hyperbolic plane, and moves by alternately moving forward
some fixed distance $\ell$, and by turning either left or right through some fixed angle 
$\alpha$. Let $p_0,p_1,\cdots$
denote the locations of the turtle after each successive move forward. Note $p_0$ is the origin,
and $d(p_i,p_{i+1})=\ell$ for each $i$. We think of the $p_i$ as the vertices of a random
polygonal path.

For each $n$, let $P_n$ be the polygon with $n+1$ (cyclic) vertices $p_0,p_1,\cdots,p_n$.
There are (at least) two natural geometric quantities to associate to $P_n$. If $\gamma:S^1 \to \R^2$
is a $C^1$ immersion, the {\em winding number} is the degree of the Gauss map 
$\theta \to \gamma'(\theta)/|\gamma'(\theta)|$. Moreover, the {\em algebraic area} enclosed
by $\gamma$ is the integral $\int_{\R^2} \wind(\gamma,p) d\area(p)$, where the {\em local 
winding number} $\wind(\gamma,p)$ is the degree of the map $\theta \to (\gamma(\theta)-p)/|(\gamma(\theta)-p)|$.

Although $\partial P_n$
is only piecewise linear, it can be smoothed canonically by rotating the tangent vector left or right
through an angle $\alpha$ at each vertex $p_i$ with $0<i<n$
(according to the behavior of the turtle), and then
in an arbitrary way at $p_0$ and $p_n$. Thus we can assign to the sequence $p_n$ two geometric
quantities $W_n$, the winding number of $\partial P_n$, and $A_n$, the algebraic area enclosed by
$\partial P_n$.

Let $\alpha_i$ be the signed turning angle of $P_n$ at the vertex $p_i$. Then the 
Gauss--Bonnet theorem for immersed polygons says there is an equality
$$2\pi W_n -  \sum_i \alpha_i = - A_n$$

\begin{theorem}[Area and Winding Theorem]\label{random_area_theorem}
Fix some angle $\alpha$ and length $\ell$. Let $P_n$ be a random polygon in the hyperbolic plane
with (cyclic) vertices $p_0,p_1,\cdots,p_n$, where $d(p_i,p_{i+1})=\ell$ for each $0\le i\le n-1$
and an angle of $\pm \alpha$ at each $p_i$ with $0<i<n$, with signs independently and
uniformly chosen from $\pm 1$. Let $A_n$ be (as above) the algebraic area enclosed by $P_n$,
and $W_n$ the winding number of $\partial P_n$. Then
$A_n$ and $W_n$ both satisfy a central limit theorem with mean $0$.
\end{theorem}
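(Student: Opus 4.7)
The plan is to encode the turtle's motion as a random walk in $\til{\PSL(2,\R)}\subset\homeo^+(\R)^\Z$ and obtain both central limit theorems from Theorem~\ref{circle_clt} applied to the rotation number quasimorphism.

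First, identify the unit tangent bundle $UT\H^2$ with $\PSL(2,\R)$, and for each sign $\epsilon\in\{+,-\}$ set $s_\epsilon:=T_\ell R_{\epsilon\alpha}\in\PSL(2,\R)$, where $T_\ell$ is hyperbolic translation by $\ell$ along a fixed axis $A$, and $R_{\pm\alpha}$ is rotation by $\pm\alpha$ at the forward endpoint of $T_\ell$ on $A$. The cumulative frame after $n$ turtle steps is $g_n\cdot(p_0,\vec v_0)$ with $g_n=s_{\epsilon_1}\cdots s_{\epsilon_n}$. Passing to canonical lifts $\tilde s_\pm\in\til{\PSL(2,\R)}$, let $\overline G:=\langle\tilde s_+,\tilde s_-\rangle$, $\overline S:=\{\tilde s_+,\tilde s_-\}$, and $\mu$ the uniform measure on $\overline S$; then $\tilde g_n:=\tilde s_{\epsilon_1}\cdots\tilde s_{\epsilon_n}$ is exactly the Markov chain of Theorem~\ref{circle_clt}. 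The orientation-reversing reflection of $\H^2$ across $A$ induces an involution of $\partial\H^2=S^1$ that lifts to a reflection $t\mapsto 2C-t$ of $\R$ conjugating $\tilde s_+$ to $\tilde s_-$; by Remark~\ref{symmetric_drift_vanishes} the drift $E$ therefore vanishes, and $\rot(\tilde g_n)/\sqrt n$ converges in distribution to a centered Gaussian.

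Second, I identify $W_n$ with $\rot(\tilde g_n)$ up to a uniformly bounded error. The smoothed boundary $\partial P_n$ is a closed loop in $UT\H^2$, whose class in $\pi_1(\PSL(2,\R))=\Z$ is by definition $W_n$. Equivalently, in $\til{\PSL(2,\R)}$ the lifted loop ends at the central element $z^{W_n}$, where $z$ generates the center and satisfies $\rot(z)=1$; and this lifted endpoint equals $\tilde g_n\,\tilde h_{\text{close}}$, where $\tilde h_{\text{close}}\in\til{\PSL(2,\R)}$ is built from the closing geodesic from $p_n$ to $p_0$ and the two smoothing turns at $p_0$ and $p_n$. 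A direct computation using the defect $1$ of $\rot$, the fact that $\rot(\tilde T_d)=0$ for any hyperbolic translation, and $\rot(\tilde R_\beta)=O(1)$ for turning angles $\beta\in[-\pi,\pi]$, gives $\rot(\tilde h_{\text{close}})=O(1)$ uniformly in $n$; one more application of the defect yields $W_n=\rot(\tilde g_n\,\tilde h_{\text{close}})=\rot(\tilde g_n)+O(1)$. The CLT for $W_n$ with mean $0$ is then immediate from step one.

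Finally, I handle the area via Gauss--Bonnet: $A_n=\sum_{i=0}^n\alpha_i-2\pi W_n$. Since $\alpha_1,\ldots,\alpha_{n-1}$ are i.i.d.\ uniform on $\{\pm\alpha\}$ and the two closing angles $\alpha_0,\alpha_n$ lie in $[-\pi,\pi]$, this becomes $A_n=\alpha\sum_{i=1}^{n-1}\epsilon_i-2\pi W_n+O(1)$. The sum $\alpha\sum\epsilon_i$ is a bounded-increment martingale with mean $0$; and by step two together with the proof of Theorem~\ref{circle_clt}, $2\pi W_n=2\pi F(\tilde g_n(0))+O(1)$, where $F(\tilde g_n(0))$ is the bounded-increment martingale constructed there (using $E=0$). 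Hence $A_n$ is, up to a uniformly bounded error, itself a bounded-increment martingale with mean $0$, and the Martingale CLT (as cited in the proof of Theorem~\ref{circle_clt}) yields the CLT for $A_n/\sqrt n$. The main technical obstacle, exactly paralleling the corresponding step in Theorem~\ref{circle_clt}, is to verify that the conditional variance of this combined martingale stabilizes to a positive constant; this follows from the random ergodic theorem applied to the joint stationary Markov chain $(\tilde g_n(0)\bmod\Z,\ \epsilon_{n+1})$.
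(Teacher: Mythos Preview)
Your argument is correct and follows the paper's own proof closely: both encode the turtle walk as a random product in $\til{\PSL(2,\R)}\subset\homeo^+(\R)^\Z$, invoke the reflection symmetry (Remark~\ref{symmetric_drift_vanishes}) to get $E=0$, apply Theorem~\ref{circle_clt} for $W_n$, and then observe that subtracting the turning-angle sum $\sum\alpha_i/2\pi$ from the martingale $F(g_n(0))$ yields another bounded-increment martingale which, via Gauss--Bonnet, controls $A_n$. Your treatment is somewhat more explicit than the paper's in two places---the $\pi_1(\PSL(2,\R))$ closing-path justification of $|W_n-\rot(\tilde g_n)|=O(1)$, and the ergodicity check for the conditional variance of the combined martingale---but these are elaborations of steps the paper simply asserts, not a different strategy.
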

\begin{proof}
Let $L$ and $R$ be the hyperbolic isometries which translate the origin a distance $\ell$ and then
rotate either left or right through angle $\alpha$. As matrices in $\PSL(2,\R)$, we can take
$$R=\begin{pmatrix} e^{\ell/2}\cos(\alpha/2) & e^{\ell/2}\sin(\alpha/2) \\ -e^{-\ell/2}\sin(\alpha/2) & e^{-\ell/2}\cos(\alpha/2) \end{pmatrix}
\quad
L= \begin{pmatrix}e^{\ell/2} \cos(\alpha/2) & -e^{\ell/2}\sin(\alpha/2) \\ e^{-\ell/2}\sin(\alpha/2) & e^{-\ell/2}\cos(\alpha/2) \end{pmatrix}$$

The successive $p_i$ are obtained from $p_0$
by right multiplication by a random sequence of $R$'s and $L$'s. We think of $p_i = w_ip_0$ where $w_i$
is an element of the free semigroup generated by $R$ and $L$.
By exponentiation, the
unit tangent circle at each point in $\H^2$ is canonically identified with the ideal circle $S^1_\infty$
(this defines a canonical flat projective connection on the unit tangent bundle). Therefore,
the winding number can be computed from the (lifted) action on the ideal circle. We think of
$R$ and $L$ as elements of $\homeo^+(\R)^\Z$, normalized to move some basepoint distance $<1$ in
the positive and negative directions respectively. Then $|W_i - \rot(w_i)|<1$, so 
Theorem~\ref{circle_clt} shows that $W_n$ satisfies a central limit theorem.
Note that $E=0$, by the left-right symmetry (see Remark~\ref{symmetric_drift_vanishes}).

With notation as in the proof of Theorem~\ref{circle_clt}, the central limit theorem for $W_n$
follows from the martingale property of the function $F(w_n(0))$. However, the function
$F(w_n(0)) - \sum_{i=1}^n \alpha_i/2\pi$ is also a martingale, so the same proof gives a central
limit theorem for $A_n$, as claimed.
\end{proof}

\begin{remark}\label{left_right_remark}
If $\alpha$ is small enough
compared to $\ell$, the polygonal path $p_0,p_1,p_2,\cdots$ is uniformly quasigeodesic. 
In this case, the areas of successive triangles $p_0,p_n,p_{n+1}$
define a H\"older continuous function on the one-sided shift space on the alphabet
$\lbrace L,R \rbrace$. The H\"older continuity amounts to the observation that for four points $a,b,c,d$
in $\H^2$ with $d(a,b)=d(c,d)=\ell$, the difference of $\area(a,c,d)$ and $\area(b,c,d)$ is bounded
by a constant that decays exponentially fast in the distance from $b$ to $c$. The central limit theorem
for H\"older functions on shift spaces gives a different proof in this case. This kind of argument is 
implicit in \cite{Picaud} and a related argument is pursued in \cite{Calegari_Fujiwara, Horsham_Sharp}.
\end{remark}

\begin{remark}
There is nothing very special (apart from its charm) about the particular random model we chose
for the polygons $P_n$. We could just as easily fix some finite subset $S \subset \PSL(2,\R)$ and
define a random sequence $p_i$ by $p_i = sp_{i-1}$ for some random $s\in S$. The winding number
and algebraic area of $P_n$ satisfy a central limit theorem in this case too, and with essentially
the same proof. Note in this generality, the drift might be nonzero.
\end{remark}

\begin{remark}
It is interesting to study how the statistical quantities associated to $A_n$ and $W_n$ vary as
a function of the parameters. Let's fix $\alpha$ and let $\ell$ vary, and consider the random
winding number $W_n$. By the left-right symmetry, the drift $E(\ell)$ is zero for all $\ell$. 
But the standard deviation $\sigma(\ell)$ undergoes a phase transition: it is zero for 
$\ell \in [2\cosh^{-1}(1/\sin(\alpha/2)),\infty)$, and increases monotonically to $\alpha$ as 
$\ell$ decreases to $0$. In this case $\sigma(\ell)$ is real analytic (as a function of $\ell$) 
on $[0,2\cosh^{-1}(1/\sin(\alpha/2)))$.
\end{remark}

\subsection{Central limit theorem for arbitrary quasimorphisms}

In fact, very shortly after proving Theorem~\ref{circle_clt}, we learned that a completely general 
statement has independently been obtained by Bj\"orklund--Hartnick \cite{Bjorklund_Hartnick}. 
They prove the following theorem (in fact, their results hold in considerably greater generality):

\begin{theorem}[Bj\"orklund--Hartnick \cite{Bjorklund_Hartnick}, Thm.~1.1]\label{BH_clt_theorem}
Let $G$ be a finitely generated group, and $S$ a finite generating set. Let
$f:G \to \R$ be a quasimorphism, and $X_n:=s_n \cdots s_1$ an i.i.d. left-random walk on $G$ in
the generating set $S$. Then $f$ satisfies a central limit theorem with respect to $X_n$. Moreover,
if the homogenization of $f$ is nonzero, then the central limit is non-degenerate. 
\end{theorem}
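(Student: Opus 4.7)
The plan is to follow the martingale route sketched in the paragraph preceding the statement, mirroring the special case of circle rotations (Theorem~\ref{circle_clt}) in full generality. The starting point is the Burger--Monod theorem: for any quasimorphism $f$ on the finitely generated group $G$ and any probability measure $\mu$ on $G$ of finite support, there is a harmonic representative $\tilde{f}$, meaning
\begin{equation*}
\sum_{s \in S} \mu(s)\, \tilde{f}(sg) \;=\; \tilde{f}(g) + E
\end{equation*}
for some constant drift $E=E(f,\mu)$, and $\tilde{f}$ differs from $f$ by a uniformly bounded amount (controlled by $D(f)$). Concretely $\tilde{f}$ is obtained as a Poisson-type integral of a boundary cocycle built from $f$, mimicking the role played by $F$ in the proof of Theorem~\ref{circle_clt}.

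Next I would write down the martingale decomposition. For the left random walk $X_n=s_n\cdots s_1$ and the filtration $\mathcal{F}_n$ generated by $(s_1,\ldots,s_n)$, set $M_n := \tilde{f}(X_n) - nE$. Harmonicity gives $\Ex[M_{n+1}\mid\mathcal{F}_n]=M_n$, so $M_n$ is a martingale, and its increments $M_{n+1}-M_n = \tilde{f}(s_{n+1}X_n) - \tilde{f}(X_n) - E$ are uniformly bounded because $\tilde{f}$ is within $O(1)$ of the quasimorphism $f$, and $s_{n+1}$ ranges over the finite set $S$. Since $|f-\tilde{f}|=O(1)$, any distributional limit statement for $M_n/\sqrt{n}$ immediately transfers to one for $(f(X_n)-nE)/\sqrt{n}$.

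The main step, and the one I expect to be the principal obstacle, is verifying the hypothesis of the Hall--Heyde martingale central limit theorem (\cite{Hall_Heyde}, Thm.~3.2), namely that the conditional variance sums
\begin{equation*}
V_n \;:=\; \sum_{i=1}^{n} \Ex\!\left[(M_i-M_{i-1})^2 \;\big|\; \mathcal{F}_{i-1}\right]
\end{equation*}
satisfy $V_n/n \to \sigma^2$ in probability for some deterministic $\sigma^2 \ge 0$. The increment variance at time $i$ is a bounded function of the ``boundary-value'' of $X_{i-1}$ seen through $\tilde{f}$, so the convergence reduces to a random ergodic theorem (cf.\ \cite{Furman}, Thm.~3.1) applied to the $\mu$-stationary action on the Poisson boundary of $(G,\mu)$; this is exactly the ergodic input that replaces the $\Delta_S$-ergodicity of $\nu$ in the circle argument. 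The technical work is to show $\tilde{f}$ really is the Poisson extension of an $L^2$ boundary cocycle, so that the hypotheses of the random ergodic theorem are met.

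Finally, for nondegeneracy, suppose $\sigma^2=0$. Then the bounded-increment martingale $M_n$ is $L^2$-bounded and has asymptotically vanishing quadratic variation, which forces $M_n$ to stabilize almost surely, and standard arguments using the spanning property of $S$ and harmonicity then force $\tilde{f}$ to be $G$-invariant, hence bounded on every orbit, hence bounded. But a bounded quasimorphism has vanishing homogenization, contradicting $\bar f\neq 0$. Hence $\sigma^2>0$ as soon as $\bar f\neq 0$, and $(f(X_n)-nE)/\sqrt{n}$ converges in distribution to the non-degenerate Gaussian $N(0,\sigma^2)$.
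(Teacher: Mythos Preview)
Your proposal is correct and follows precisely the approach the paper sketches (the paper does not give its own detailed proof of this theorem, citing \cite{Bjorklund_Hartnick} instead): replace $f$ by a (bi-)harmonic representative $\tilde f$ differing from $f$ by a bounded amount, observe that $\tilde f(X_n)-nE$ is a bounded-increment martingale, and then apply the martingale CLT (\cite{Hall_Heyde}, Thm.~3.2), with the conditional-variance convergence supplied by the random ergodic theorem on the Poisson boundary --- exactly paralleling the role of $F$ and the stationary measure $\nu$ in the proof of Theorem~\ref{circle_clt}. The only point worth flagging is that the paper and \cite{Bjorklund_Hartnick} emphasize \emph{bi}-harmonicity of $\tilde f$, which is what guarantees that the conditional variance at step $i$ descends to a function on the boundary so that the random ergodic theorem applies; you gesture at this (``$\tilde f$ really is the Poisson extension of an $L^2$ boundary cocycle'') but it is the substantive technical input, not a routine verification.
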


By Bavard Duality we immediately conclude the following:

\begin{corollary}\label{sqrt_n_gap}
Let $G$ be any finitely generated group, and suppose $H_1(G)$ is finite.
Let $S$ be a finite symmetric generating set. 
Suppose $Q(G)$ is finite dimensional but nonzero.  Let $g_n$ be obtained by random walk of length $n$
with respect to the uniform measure on $S$. Then 
for any $\epsilon$ there are positive constants $a,b$ depending on $\epsilon$
such that $\Pr(a < \scl(g_n)/\sqrt{n} < b) \ge 1-\epsilon$ for $n\gg 0$.
\end{corollary}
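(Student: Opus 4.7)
The plan is to use Bavard duality (Theorem~\ref{Bavard_duality_theorem}) to reduce the question about $\scl(g_n)$ to one about the values of a fixed finite collection of homogeneous quasimorphisms on $g_n$, and then to invoke Theorem~\ref{BH_clt_theorem} termwise. Since $H_1(G)$ is finite (of order $m$, say), $H^1(G;\R)$ vanishes, so $Q(G)=Q(G)/H^1(G)$ is a finite-dimensional real vector space on which the defect $D$ is a genuine norm. Any $g\in G$ then satisfies $g^m\in [G,G]$, so $\scl(g):=\scl(g^m)/m$ is well-defined on all of $G$, and homogeneity of $\phi\in Q(G)$ makes Bavard duality consistent with this extension.

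Fix a basis $\phi_1,\dots,\phi_k$ of $Q(G)$ with $D(\phi_i)=1$. By finite-dimensional compactness of the defect unit ball, any $\phi\in Q(G)$ with $D(\phi)\le 1$ can be written as $\phi=\sum c_i\phi_i$ with $|c_i|\le C_0$ for some constant $C_0$ depending only on the basis, and combining this with Bavard duality yields the two-sided comparison
$$\tfrac{1}{2}\max_i|\phi_i(g)|\;\le\;\scl(g)\;\le\;\tfrac{1}{2}C_0 k\,\max_i|\phi_i(g)|,$$
valid for every $g\in G$.

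I would then apply Theorem~\ref{BH_clt_theorem} coordinatewise. For the upper bound, each $\phi_i(g_n)/\sqrt{n}$ converges in distribution to some (possibly degenerate) Gaussian, so for each $i$ there is some $b_i$ with $\Pr(|\phi_i(g_n)|\le b_i\sqrt{n})\ge 1-\epsilon/(2k)$ for all large $n$; a union bound over the finite basis gives $\max_i|\phi_i(g_n)|\le b\sqrt{n}$ with probability at least $1-\epsilon/2$, hence $\scl(g_n)\le(\tfrac{1}{2}C_0 k) b\sqrt{n}$. For the lower bound, pick any single $\phi_j$ in the basis; since $\phi_j$ is already its own homogenization and is nonzero, Theorem~\ref{BH_clt_theorem} forces the limiting Gaussian $N(\mu_j,\sigma_j^2)$ to have $\sigma_j>0$, so there is some $a'>0$ with $\Pr(|\phi_j(g_n)|\ge a'\sqrt{n})\ge 1-\epsilon/2$ for large $n$. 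Combining the two events by a final union bound gives the claimed concentration interval $[a,b]$ for $\scl(g_n)/\sqrt{n}$.

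The argument is essentially a formal consequence of Bavard duality together with Theorem~\ref{BH_clt_theorem}. The only step that uses the hypothesis on $Q(G)$ in a serious way is the two-sided comparison between $\scl(g)$ and $\max_i|\phi_i(g)|$: without finite-dimensionality the union bound over a basis fails, and one must confront the large-scale geometry of the defect norm on $Q(G)$ directly, as in Theorem~\ref{thm:random_norm_ball} and the earlier sections of the paper.
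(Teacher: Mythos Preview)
Your proof is correct and follows exactly the route the paper indicates: the paper states the corollary with no proof beyond the sentence ``By Bavard Duality we immediately conclude the following,'' and you have supplied precisely those details --- the finite-dimensionality of $Q(G)$ (with $D$ a genuine norm since $H^1(G;\R)=0$) gives a two-sided comparison of $\scl$ with $\max_i|\phi_i|$ over a fixed basis, and then Theorem~\ref{BH_clt_theorem} applied coordinatewise plus a union bound finishes. One small point you leave implicit is that the drift in each coordinate CLT vanishes; this holds because $S$ is symmetric and each $\phi_i$ is homogeneous (so $\phi_i(g^{-1})=-\phi_i(g)$), cf.\ Remark~\ref{symmetric_drift_vanishes}.
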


Theorem~\ref{BH_clt_theorem} generalizes Theorem~\ref{circle_clt} considerably, but the proof
turns out not to be too much harder. Under very general conditions, Bj\"orklund--Hartnick show 
that a quasimorphism $f$ on a group $G$ with a probability measure $\mu$ (satisfying some conditions)
has a {\em (bi-)harmonic representative}; i.e.\ there is a function $f'$ that differs from
$f$ by a bounded amount, and with the property that $f'$ is invariant under convolution with $\mu$
(the existence of such a harmonic representative was also proved in quite a different way by 
Burger--Monod \cite{Burger_Monod}).
It is this harmonic feature of $f'$ that lets one prove a CLT using the martingale CLT, as in
the proof of Theorem~\ref{circle_clt}.

\begin{remark}
We would like to point out that in the first version of their paper, Bj\"orklund--Hartnick
required that the probability measure on $S$ be {\em symmetric}, whereas we never required this
as a hypothesis.
\end{remark}

\begin{remark}
Burger--Monod and Bj\"orklund--Hartnick both work with individual quasimorphisms.
A finite collection of quasimorphisms $\phi_1,\phi_2,\cdots,\phi_m$ can be put together
into a single function $\Phi:G \to \R^m$ whose coordinates are the $\phi_i$. It makes
sense to ask for some harmonic representative $\Phi'$ with $|\Phi' - \Phi|<\infty$ and
a CLT for $\Phi'$ (and therefore also for $\Phi$). In fact, the arguments 
in \cite{Burger_Monod, Bjorklund_Hartnick} easily generalize to this situation. The
existence of a harmonic representative is essentially elementary. The quasimorphism
property implies by definition that there is a constant $C$ so that for any $g,h$
we have $|1/2(\Phi(gh) + \Phi(gh^{-1}))-\Phi(g)| \le C$. It follows that for $\mu$
symmetric, the $n$-fold convolutions $\Phi_n(g):=\int_G \Phi(gh)d\mu^{*n}(h)$ lie in a precompact family, and
therefore we obtain in the usual way a fixed point for convolution with $\mu$; i.e.\/ a harmonic
representative. Following Bj\"orklund--Hartnick we deduce a CLT 
for $\Phi$.
\end{remark}

The condition in Corollary~\ref{sqrt_n_gap}
that $H_1(G)$ is finite is an annoying technical restriction. The problem is that
one cannot perform the ``homological correction trick'' that we used earlier for two reasons.
Firstly, there is no compression of the support of a Gaussian measure, and therefore conditioning
on a rare event can change the order of magnitude of the distribution. Secondly, the order of
magnitude of the error term (i.e.\/ the size of the image in homology) is the same as the
term we are trying to control, so it is impossible to naively
obtain lower bounds, since the two terms might cancel. Nevertheless, the following conjectures
seem reasonable. The first just asserts that the hypothesis that $H_1(G)$ is finite in
Corollary~\ref{sqrt_n_gap} can be removed:

\begin{conjecture}[Local limit conjecture]
Let $G$ be any finitely generated group.
Let $S$ be a finite symmetric generating set. 
Suppose $Q(G)$ is finite dimensional but nonzero. Let $g_n$ be obtained by random walk of length $n$
with respect to the uniform measure on $S$, conditioned to lie in $[G,G]$. Then 
for any $\epsilon$ there are positive constants $a,b$ depending on $\epsilon$
such that $\Pr(a < \scl(g_n)/\sqrt{n} < b) \ge 1-\epsilon$ for $n \gg 0$.
\end{conjecture}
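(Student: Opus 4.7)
The plan is to combine the Bj\"orklund--Hartnick central limit theorem (Theorem~\ref{BH_clt_theorem}) with a local central limit theorem for the abelianization, via a joint CLT for homomorphisms and quasimorphisms. The difficulty flagged in the text is that the rare event $g_n \in [G,G]$ has probability of order $n^{-k/2}$ when $H_1(G;\R)$ has rank $k > 0$, so the conditioning cannot be finessed by a homological correction of the kind used in \S~\ref{homology_section}.

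Fix a basis $\phi_1,\ldots,\phi_d$ of $Q(G)/H^1(G)$ and let $\alpha\colon G \to \R^k$ be the abelianization modulo torsion, with coordinate homomorphisms $\alpha_1,\ldots,\alpha_k$. By Bavard duality (Theorem~\ref{Bavard_duality_theorem}) and finite-dimensionality of $Q(G)/H^1(G)$, there are constants $c_1, c_2 > 0$ (depending only on the basis) such that
$$c_1 \max_i |\phi_i(g)| \;\le\; \scl(g) \;\le\; c_2 \max_i |\phi_i(g)| \qquad \text{for all } g \in [G,G].$$
The conjecture therefore reduces to showing that, conditional on $\alpha(g_n) = 0$, the vector $(\phi_1(g_n),\ldots,\phi_d(g_n))/\sqrt{n}$ converges to a non-degenerate distribution on $\R^d$. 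The unconditional joint limit is a centered Gaussian $N(0,\Sigma)$ on $\R^{k+d}$: by the vector-valued extension of \cite{Bjorklund_Hartnick} sketched in the remark after Corollary~\ref{sqrt_n_gap}, the map $(\alpha,\phi)$ admits a $\mu$-harmonic representative differing from $(\alpha,\phi)$ by a bounded amount, so $(\alpha(g_n), \phi(g_n))$ is an $\R^{k+d}$-valued martingale up to bounded error, and the multidimensional martingale CLT applies. Non-degeneracy of $\Sigma$ is automatic: any linear dependence would exhibit some $\sum_i c_i \alpha_i + \sum_j d_j \phi_j$ as a bounded function on the walk; homogenization forces $\sum_j d_j \phi_j \in H^1(G)$, hence $d_j = 0$ for all $j$, and then $\sum_i c_i \alpha_i$ vanishes on the generating set $S$ and so on all of $G$.

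The remaining and hardest step is a \emph{joint} local CLT of the form
$$\Pr\bigl(\alpha(g_n) = 0,\ \phi(g_n)/\sqrt{n} \in A\bigr) = \Theta(n^{-k/2})\cdot \nu_{\mathrm{cond}}(A)\bigl(1 + o(1)\bigr),$$
where $\nu_{\mathrm{cond}}$ is the Gaussian on $\R^d$ obtained from $N(0,\Sigma)$ by conditioning the first $k$ coordinates to vanish, which is non-degenerate by the above. The abelian marginal satisfies a classical lattice local CLT under mild aperiodicity of $\alpha_*\mu$ on $\Z^k$, and the $\phi$-direction is controlled by the unconditional CLT; combining them requires Fourier inversion on $\mathbb{T}^k \times \R^d$ of the $n$-fold convolution of the joint characteristic function $\widehat{\mu}(\theta,\xi)$. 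The main obstacle is that $\phi$ is defined only up to bounded error, so $\widehat{\mu}(\theta,\xi)$ need not decay as $|\xi| \to \infty$ and naive Fourier inversion fails. The plan for overcoming this is to mollify $\phi$ at a scale that is $o(\sqrt{n})$ (with the mollification error absorbed into the $o(1)$ factor), perform a saddle-point analysis in the $\theta$ variable on $\mathbb{T}^k$, and show that off-critical frequencies contribute a term smaller than $n^{-k/2}$. Granting the joint local CLT, dividing by $\Pr(\alpha(g_n)=0) = \Theta(n^{-k/2})$ produces a non-degenerate Gaussian limit for the conditional distribution of $\phi(g_n)/\sqrt{n}$, and Bavard duality in the form of the display above then yields the conjecture.
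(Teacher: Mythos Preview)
The statement you have attempted is a \emph{conjecture} in the paper, not a theorem: the paper gives no proof and explicitly flags it as open. So there is nothing to compare your argument against; the relevant question is whether your proposal actually closes the gap.

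It does not. Your reduction via Bavard duality to a statement about the conditional distribution of $(\phi_1(g_n),\ldots,\phi_d(g_n))/\sqrt{n}$ is correct, and the unconditional joint CLT for $(\alpha,\phi)$ is fine (the vector-valued extension of \cite{Bjorklund_Hartnick} works as you describe, and non-degeneracy follows as you argue). But you explicitly write ``Granting the joint local CLT\ldots'', and that is precisely the content of the conjecture. The plan you sketch for it does not work as stated: there is no ``$n$-fold convolution of the joint characteristic function $\widehat{\mu}(\theta,\xi)$'', because the increments $\phi(g_ns) - \phi(g_n)$ of a quasimorphism are not i.i.d.\ --- they depend on $g_n$, not just on $s$. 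Passing to the harmonic representative makes the process a martingale, but martingale increments are still not i.i.d., and local limit theorems for martingales on infinite state spaces require mixing or spectral-gap hypotheses that an arbitrary finitely generated group need not satisfy. The mollification trick you propose addresses a different issue (non-decay of $\widehat{\mu}$ in $\xi$) and does not repair the absence of a convolution structure.

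In short: you have correctly identified the strategy and the obstacle, but the obstacle is exactly why this is stated as a conjecture rather than a corollary, and your proposal does not overcome it.
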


The second conjecture is more ambitious, and asserts that the $O(\sqrt{n})$ growth rate
should characterize (finitely presented)groups with $Q(G)$ finite dimensional:

\begin{conjecture}[Finite dimensionality]
Let $G$ be a finitely presented group, and let $g_n$ be obtained by random walk of length $n$
conditioned to lie in $[G,G]$. Suppose that for any $\epsilon$ there is a positive constant $b$
so that
$$\Pr(\scl(g_n)/\sqrt{n} < b) \ge 1-\epsilon$$
for $n\gg 0$.
Then $Q(G)$ is finite dimensional.
\end{conjecture}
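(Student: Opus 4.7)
The plan is to prove the contrapositive: assuming $Q(G)/H^1(G)$ is infinite dimensional, I will exhibit an $\epsilon > 0$ for which no constant $b$ satisfies $\Pr(\scl(g_n) < b\sqrt{n}) \ge 1-\epsilon$ for all large $n$. The two tools driving the attempt are Bavard duality (Theorem~\ref{Bavard_duality_theorem}), which turns any homogeneous quasimorphism $\phi$ with large value on $g_n$ into a lower bound $\scl(g_n) \ge \phi(g_n)/2D(\phi)$, and the Bj\"orklund--Hartnick central limit theorem (Theorem~\ref{BH_clt_theorem}), which asserts that each nontrivial $\phi$ has $\phi(g_n)/\sqrt{n}$ converging to a nondegenerate centered Gaussian.

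The first step is set-up: for each $K$, choose $\phi_1,\ldots,\phi_K$ representing linearly independent classes in $Q(G)/H^1(G)$, renormalized so that $D(\phi_k)\le 1$. Using the vector-valued refinement of Bj\"orklund--Hartnick mentioned in the remark after Theorem~\ref{BH_clt_theorem}, the vector $(\phi_1(g_n),\ldots,\phi_K(g_n))/\sqrt{n}$ converges to a centered Gaussian $N(0,\Sigma_K)$ on $\R^K$, with $\Sigma_K$ positive semidefinite and with strictly positive diagonal entries (from nondegeneracy of each marginal). Bavard duality then gives $2\scl(g_n)\ge \max_k |\phi_k(g_n)|$, so the conjecture would fail the moment we can arrange $\phi_k$ producing a Gaussian of unbounded variance as $K$ and the choice of $\phi_k$ vary.

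The crux is the analysis of the positive semidefinite bilinear form
\[ B(\phi,\psi) := \lim_{n\to\infty} \Ex[\phi(g_n)\psi(g_n)]/n \]
defined (via polarization and the BH CLT) on $Q(G)/H^1(G)$. The hypothesis of the conjecture forces $B(\phi,\phi) \le 4b^2$ whenever $D(\phi) \le 1$, so $B$ is a bounded operator on $Q/H^1$ equipped with the defect pseudonorm. I would pursue the following dichotomy: either the spectrum of $B$ on the defect-unit ball is unbounded, in which case diagonalizing yields a normalized $\phi$ with arbitrarily large variance and Bavard immediately violates the $b\sqrt n$ bound; or $B$ has an infinite-dimensional kernel, in which case any nonzero $\phi$ in that kernel satisfies $\Ex[\phi(g_n)^2]/n \to 0$, contradicting the nondegeneracy clause of Theorem~\ref{BH_clt_theorem} since the homogenization of a homogeneous quasimorphism is itself.

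The main obstacle --- and the reason this remains conjectural --- is that the dichotomy above is not truly dichotomous: a bounded positive operator on an infinite-dimensional space can perfectly well be compact and injective with bounded spectrum, and the BH theorem alone does not rule out this intermediate regime. Closing the gap will presumably require either a local-limit refinement of Bj\"orklund--Hartnick (giving quantitative control on atoms and small-scale structure of the limit) or a structural result exploiting finite presentation of $G$ to bound the combinatorial complexity of the harmonic representatives $\Phi'$ of quasimorphism vectors produced by Burger--Monod. The contrast with the actual $n/\log n$ growth known in hyperbolic groups suggests that the true obstruction to $\sqrt n$ concentration is strictly stronger than infinite dimensionality of $Q$ alone, and that any honest proof must go beyond CLT technology.
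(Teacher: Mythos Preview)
The statement you are addressing is a \emph{conjecture}: the paper states it as an open problem and offers no proof whatsoever. There is therefore nothing in the paper to compare your proposal against.

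Your proposal is not a proof either, and to your credit you say so explicitly. The dichotomy you set up --- either $B$ has unbounded spectrum on the defect-unit ball, or $B$ has infinite-dimensional kernel --- is, as you note, a false dichotomy: a bounded injective positive operator on an infinite-dimensional space (e.g.\ a diagonal operator with eigenvalues $1/k$) is neither. So the argument does not close, and what you have written is a sketch of why the CLT machinery alone seems insufficient, not a proof of the conjecture.

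One further wrinkle worth flagging: your claim that the hypothesis forces $B(\phi,\phi)\le 4b^2$ for $D(\phi)\le 1$ is not quite right as stated. The hypothesis gives, for each $\epsilon$, a $b=b(\epsilon)$ with $\Pr(\scl(g_n)<b\sqrt{n})\ge 1-\epsilon$; via Bavard this bounds $\Pr(|\phi(g_n)|<2b\sqrt{n})\ge 1-\epsilon$, and passing to the Gaussian limit bounds the $\epsilon$-quantile of $N(0,B(\phi,\phi))$, hence $B(\phi,\phi)\le (2b(\epsilon)/z_{\epsilon/2})^2$ where $z$ is the standard normal quantile. This still gives a uniform bound on $B$ over the defect-unit ball (since $b$ depends only on $\epsilon$, not on $\phi$), so your subsequent analysis survives, but the constant is not $4b^2$.

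In short: there is no proof in the paper, your proposal is an honest but incomplete attempt, and the conjecture remains open.
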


Finally, we ask for finitely presented groups for which $\scl$ has {\em intermediate growth}:

\begin{question}\label{intermediate_growth_question}
Is there a finitely presented group $G$ so that if $g_n$ is obtained by random walk of length $n$
conditioned to lie in $[G,G]$, then for any $C>0$ and $\epsilon>0$, there is an estimate
$$\Pr(\scl(g_n) < C\sqrt{n} \text{ or } \scl(g_n) > C^{-1} n/\log{n}) < \epsilon$$ for
$n\gg 0$?
\end{question}

We believe that it should be possible to produce finitely {\em generated} groups with the 
property sought by Question~\ref{intermediate_growth_question}, by a careful small 
cancellation argument. It would also be very interesting to find examples of (uncountable)
{\em transformation groups} $G$ containing finitely generated subgroups $\Gamma$ so that the
growth rate of $\scl_G$ on random walk in $\Gamma$ is intermediate between $\sqrt{n}$ and
$n/\log{n}$.

\begin{question}
Let $\Sigma$ be a closed surface with $\chi(\Sigma)<0$, equipped with a smooth area form.
Let $G$ be the group of Hamiltonian diffeomorphisms of $\Sigma$. Is there a well-defined growth
rate of $\scl_G$ on random walk in $\Gamma$ for $\Gamma$ a ``generic'' finitely generated
subgroup of $G$? Does it grow like $n/\log{n}$?
\end{question}

\section{Acknowledgments}
Danny Calegari was supported by NSF grants DMS 0707130 and DMS
1005246. Joseph Maher was supported by NSF grant DMS 0706764 and
PSC-CUNY Award 60019-43-20, and would like to thank the Hausdorff
Research Institute for Mathematics for its hospitality during work on
this paper.  We would like to thank Jason Behrstock, Michael
Bj\"orklund, Sean Cleary, David Fisher, Alex Furman, Bradley Groff, Anders Karlsson,
Nikolai Makarov, Curt McMullen, Andr\'es Navas, Jay Rosen, Richard Sharp, Brian Simanek,
Alessandro Sisto, Alden Walker and the anonymous referee 
for some useful conversations about this material.

\end{document}